\newtheorem{theorem}{Theorem}[chapter]
\newtheorem{lemma}[theorem]{Lemma}
\newtheorem{corollary}[theorem]{Corollary}
\newtheorem{proposition}[theorem]{Proposition}
\theoremstyle{remark}
\newtheorem{remark}{Remark}[chapter]
\newtheorem{remarks}[remark]{Remarks}
\numberwithin{equation}{chapter}
\newcommand{\Z}{\mathbb{Z}}
\newcommand{\ZP}{\mathbb{Z}_+}
\newcommand{\R}{\mathbb{R}}
\newcommand{\RP}{\mathbb{R}_+}
\newcommand{\N}{\mathbb{N}}
\renewcommand{\SS}{\mathbb{S}}
\newcommand{\ud}{\textup{d}}
\newcommand{\re}{{\mathrm{e}}}
\newcommand{\rc}{{\mathrm{c}}}
\newcommand{\eps}{\varepsilon}
\newcommand{\ubar}[1]{\underline{#1\mkern-4mu}\mkern4mu }
\renewcommand{\Pr}{\mathbb{P}}
\newcommand{\Exp}{\mathbb{E}\,}
\newcommand{\Var}{\mathbb{V} {\rm ar}}
\newcommand{\hull}{\mathop \mathrm{hull}}
\newcommand{\as}{{\ \mathrm{a.s.}}}
\newcommand{\As}{{\ \mathrm{as}\ }}
\newcommand{\ext}{\mathop \mathrm{ext}}
\newcommand{\Int}{\mathop \mathrm{int}}
\newcommand{\cF}{{\mathcal{F}}}
\newcommand{\cH}{{\mathcal{H}}}
\newcommand{\cA}{{\mathcal{A}}}
\newcommand{\cL}{{\mathcal{L}}}
\newcommand{\cC}{{\mathcal{C}}}
\newcommand{\cK}{{\mathcal{K}}}
\newcommand{\cS}{{\mathcal{S}}}
\newcommand{\cN}{{\mathcal{N}}}
\newcommand{\NN}{{\mathcal{N}}}
\newcommand{\HH}{{\mathcal{H}}}
\newcommand{\FF}{{\mathcal{F}}}
\newcommand{\CC}{{\mathcal{C}}}
\newcommand{\V}{{\mathcal{V}}}
\newcommand{\Sp}{\mathbb{S}}
\newcommand{\leb}{\cA}
\newcommand{\sperp}{\sigma^2_{\mu_\per}}
\newcommand{\spara}{\sigma^2_{\mu}}
\newcommand{\blob}{\mkern 1.5mu \raisebox{1.7pt}{\scalebox{0.4}{$\bullet$}} \mkern 1.5mu}
\DeclareMathOperator*{\argmin}{arg \mkern 1mu \min}
\DeclareMathOperator*{\argmax}{arg \mkern 1mu \max}
\DeclareMathOperator{\trace}{tr} 
\newcommand{\tra}{{\scalebox{0.6}{$\top$}}}
\newcommand{\per}{{\mkern -1mu \scalebox{0.5}{$\perp$}}}
\newcommand{\1}{{\,\bf 1}}
\newcommand{\be}{{\bf e}}
\newcommand{\by}{{\bf y}}
\newcommand{\bx}{{\bf x}}
\newcommand{\bz}{{\bf z}}
\newcommand{\0}{{\bf 0}}
\newcommand{\bu}{{\bf u}}
\newcommand{\bv}{{\bf v}}
\newcommand{\bX}{{\bf X}}
\newcommand{\tod}{\stackrel{d}{\longrightarrow}}
\newcommand{\topr}{\stackrel{p}{\longrightarrow}}
\newcommand{\toas}{\stackrel{a.s.}{\longrightarrow}}
\newcommand{\tolp}{\stackrel{L^p}{\longrightarrow}}
\def\namedlabel#1#2{\begingroup  
    (#2)%
    \def\@currentlabel{#2}%
    \phantomsection\label{#1}\endgroup
}
\begin{document}

\setcounter{secnumdepth}{5}
\setcounter{tocdepth}{3}

\thispagestyle{empty}

\begin{figure}
\centering \scalebox{1.5}{\includegraphics{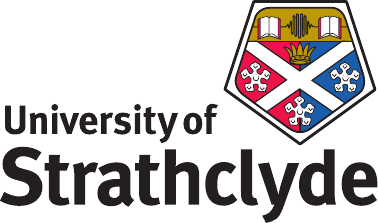}}
\end{figure}
\vspace*{3cm}
\begin{center}
\LARGE{Convex Hulls of Planar Random Walks}

\vspace{3cm}
{\large \date*{\today}}
\bigskip

{\Large{Chang Xu \\
Department of Mathematics and Statistics\\
University of Strathclyde\\
Glasgow, UK}\\
}
\end{center}
\bigskip
\begin{center}\normalsize{This thesis is submitted to the University of Strathclyde for the\\
 degree of Doctor of Philosophy in the Faculty of Science.}
\end{center}

\newpage

\thispagestyle{empty} \noindent The copyright of this thesis belongs
to the author under the terms of the United Kingdom Copyright Acts
as qualified by University of Strathclyde Regulation 3.50. Due
acknowledgement must always be made of the use of any material in,
or derived from, this thesis.


\setcounter{page}{1}
\pagenumbering{roman}

\chapter*{Acknowledgements}

My deepest gratitude goes to my supervisor Doctor Andrew Wade and Professor Xuerong Mao. 
This thesis would not exist without their constant support.

I also need to thank Prof. Mikhail Menshikov at Durham University, my current and former colleagues Andreas Wachtel, 
Dr. Hongrui Wang, Dr. Wei Liu, Dr. Jiafeng Pan et al. for plenty of valuable advice and help during my PhD study.

Last, I would like to express my thanks to my family and the University of Strathclyde for the financial support.

\chapter*{Abstract}

For the perimeter length $L_n$ and the area $A_n$ of the
convex hull of the first $n$ steps of a planar random walk,
this thesis study $n \to \infty$ mean and variance asymptotics and establish distributional limits. 
The results apply to random walks both with drift (the mean of random walk increments) and with no drift 
under mild moments assumptions on the increments.

Assuming increments of the random walk have finite second moment and non-zero mean,
Snyder and Steele
showed that $n^{-1} L_n$ converges almost surely to a deterministic limit, and proved an upper bound on the variance $\Var [ L_n] = O(n)$.
We show that $n^{-1} \Var [L_n]$ converges and give a simple expression for the limit,
which is non-zero for walks outside a certain degenerate class.
This answers a question of Snyder and Steele.
Furthermore, we prove a central limit theorem for $L_n$ in the non-degenerate case.

Then we focus on the perimeter length with no drift and area with both drift and zero-drift cases. These results complement
and contrast with previous work and establish non-Gaussian distributional limits.
We deduce these results from weak convergence statements for the convex hulls of random walks
to scaling limits defined in terms of convex hulls of certain Brownian motions. We give bounds
that confirm that the limiting variances in our results are non-zero.

\tableofcontents

\newpage

\section*{Notations}

\begin{longtable}[!h]{lllr}
 & & & \quad Page\\
$S_n$, $Z_i$ & : & random walk with location $S_n$ and increments $Z_i$ & \pageref{S_n,Z_i} \\
$\hull ( S_0, \ldots, S_n )$  & : & the convex hull of random walk $S_n$ & \pageref{hull S} \\
$\cS_n$ & : & the random walk $\{ S_0, S_1, \ldots, S_n \}$ & \pageref{cS_n} \\
$L_n$ & : & the perimeter length of $\hull ( S_0, \ldots, S_n )$ & \pageref{L_n} \\
$A_n$ & : & the area of $\hull ( S_0, \ldots, S_n )$ & \pageref{A_n} \\
$\| \blob \|$ & : & the Euclidean norm & \pageref{2-norm} \\
$\mu$ & : & the mean drift vector & \pageref{mu} \\
$\Sigma$ & : & the covariance matrix associated with $Z_i$ & \pageref{Sigma} \\
$\Sigma^{1/2}$ & : & the matrix square-root of $\Sigma$ & \pageref{Sigma^1/2} \\
$\sigma^2$ & : & $= \trace \Sigma$ & \pageref{sigma^2} \\
$\hat \mu$ & : & $= \| \mu \|^{-1} \mu$ for $\mu \neq 0$ & \pageref{hat mu} \\
$\spara$ & : & $= \Exp \left[ \left( ( Z_1 - \mu) \cdot \hat \mu \right)^2 \right]$ & \pageref{spara} \\
$\sperp$ & : & $= \sigma^2 - \spara$ & \pageref{sperp} \\
$\cC ( [0,T] ; \R^d )$ & : & the class of continuous functions from $[0,T]$ to $\R^d$ & \pageref{cC} \\
$\cC^0 ( [0,T] ; \R^d )$ & : & $= \{ f \in \cC ( [0,T] ; \R^d ) : f(0) = \0 \}$ & \pageref{cC^0} \\
$\rho_\infty (\blob\, , \blob)$ & : & the supremum metric & \pageref{rho_infty} \\
$\rho(\bx,A)$ & : & $= \inf_{\by \in A} \rho(\bx,\by)$ for  $A \subseteq \R^d$ and a point $\bx \in \R^d$ & \pageref{rho(x,A)} \\
$\cC_d$ & : & $= \cC ( [0,1] ; \R^d )$  & \pageref{cC_d} \\
$\cC_d^0$ & : & $= \{ f \in \cC_d : f(0) = \0 \}$ & \pageref{cC_d^0} \\
$\SS_{d-1}$ & : & $= \{ \bu \in \R^d : \| \bu \| = 1 \}$, the unit sphere in $\R^d$ & \pageref{cS_d-1} \\
$\cK_d$ & : & the collection of convex compact sets in $\R^d$ & \pageref{cK_d} \\
$\cK_d^0$ & : & $= \{ A \in \cK_d : \0 \in A \}$ & \pageref{cK_d^0} \\
$\rho_H ( \blob\, , \blob )$ & : & the Hausdorff metric & \pageref{rho_H} \\
$\pi_r (\blob)$ & : & the parallel body at distance $r$ & \pageref{pi_r} \\
$b$ & : & $=( b(s) )_{s \in [0,1]}$, standard Brownian motion in $\R^d$ & \pageref{b} \\
$\cA(\blob)$ & : & the area of convex compact sets in the plane & \pageref{cA} \\
$\cL(\blob)$ & : & the perimeter length of convex compact sets & \pageref{cL} \\
$h_A (\blob)$ & : & the support function of $A \in \cK^0_d$ & \pageref{h_A()} \\
$H(f)$ & : & $= \hull \left( f [ 0,1 ] \right)$ for $f \in \cC_d$ & \pageref{H()} \\
$h_t$ & : & the convex hull of the Brownian path up to time $t$ & \pageref{h_t} \\
$\ell_t$ & : & $= \cL ( h_t )$ & \pageref{eqn:def of Lt At for BM} \\
$a_t$ & : & $= \cA ( h_t )$ & \pageref{eqn:def of Lt At for BM} \\
$\Rightarrow$ & : & weak convergence & \pageref{thm:donsker} \\
$w$ & : & $=( w(s) )_{s \in [0,1]}$, standard Brownian motion in $\R$  & \pageref{w} \\
$\tilde b (s)$ & : & $=  (  s , w(s) )$, for $s \in [0,1]$  & \pageref{tilde b} \\
$\tilde h_t$ & : & $= \hull \tilde b [0,t] \in \cK_2^0$  & \pageref{tilde h_t} \\
$\tilde a_t$ & : & $= \cA ( \tilde h_t )$  & \pageref{tilde a_t} \\
$\! \1\{\blob \}$ & : & the indicator function  & \pageref{kac} \\
$x^+$ & : & $= x \1\{ x>0 \}$  & \pageref{x^+} \\
$x^+$ & : & $= -x \1\{ x<0 \}$  & \pageref{x^-} \\
$u_0 ( \Sigma )$ & : & $= \Var \cL ( \Sigma^{1/2} h_1 )$  & \pageref{eq:var_constants} \\
$v_+$, $v_0$ & : & defined in equation \eqref{eq:two_vars10}  & \pageref{eq:two_vars10} \\
\end{longtable}

\pagenumbering{arabic}

\pagestyle{myheadings} \markright{\sc Chapter 1}

\chapter{Introduction}
\label{chapter1}

\section{Background on Random Walk}

Let $Z_1, Z_2, \dots$ \label{S_n,Z_i} be independent identically distributed (i.i.d.) random variables taking values in $\R^d$ and
let $S_n = \sum_{i=1}^n Z_i$. $S_n$ is a \emph{random walk} \cite[p.\,88]{gut}.

Random walk theory is a classical and well-studied topic in probability theory. In 1905, Albert Einstein studied the Brownian motion in his paper 
\emph{``On the Movement of Small Particles Suspended in a Stationary Liquid Demanded by the Molecular-Kinetic Theory of Heat''}. 
Brownian motion is the random motion of particles in a fluid which is found by the botanist Robert Brown in 1827 \cite[Sec. 2.1]{hughes}. He noted that the pollen grains in water kept moved through 
randomly. Einstein explained in details how the motion that Brown had observed was a result of the pollen being moved by individual water molecules. 

Scientists then gave the mathematical formalisation for the Brownian motion and its generalisation: random walk. The term \emph{random walk} was first used by Karl Pearson in 1905.
In a letter to Nature, he gave a simple model to describe a mosquito infestation in a forest. At each time step, a single mosquito moves a fixed length in a randomly chosen direction. Pearson wanted to know the distribution of the mosquitoes after many steps had been taken. The letter was answered by Lord Rayleigh, who had already solved a more general form of
this problem in 1880, in the context of sound waves in heterogeneous materials. 
Modelling a sound wave travelling through the material can be thought of as summing up a sequence of random wave-vectors
 of constant amplitude but random phase since sound waves in the material have roughly
constant wavelength, but their directions are altered at scattering sites within the material.

There are some classical results we need to bear in mind when we study random walks. First we need to introduce the concepts of recurrence and transience.
A random walk $S_n$ taking values in $\R^d$ is called \emph{point-recurrent} if
$$\Pr(S_n = 0 \text{ infinitely often}) = 1$$
and \emph{point-transient} if
$$\Pr(S_n = 0 \text{ infinitely often}) = 0.$$
If the random walk is not discrete then these definitions are not very useful. Instead we say that the random walk is
\emph{neighbourhood-recurrent} if for some $\eps > 0$,
$$\Pr(|S_n| < \eps \text{ infinitely often}) = 1 $$
and \emph{neighbourhood-transient} if
$$\Pr(|S_n| < \eps \text{ infinitely often}) = 0 .$$

In the discrete case, for a simple random walk we have the P\'olya's theorem \cite{polya}. A random walk $S_n = \sum_{i=1}^n Z_i$ on $\Z^d$ is \emph{simple} if 
for any $i \in \N$,
$$\Pr(Z_i = e) = \begin{cases}\phantom{2} (2d)^{-1} & \text{if } e \in \Z^d \text{ and } \|e\|=1 , \\
\quad 0 & \text{otherwise} . 
\end{cases}$$

\begin{theorem}[P\'olya]
A simple random walk $S_n = \sum_{i=1}^n Z_i$ in $\Z^d$ is recurrent for $d = 1$ or $d = 2$ and transient for $d \geq 3$. 
\end{theorem}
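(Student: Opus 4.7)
The plan is to reduce the dichotomy to an analytic criterion on the return probabilities $p_n := \Pr(S_n = \0)$, and then to estimate these probabilities dimension by dimension.

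First, I would establish the Green's function criterion. Let $T := \inf\{n \geq 1 : S_n = \0\}$ and $q := \Pr(T < \infty)$. By the strong Markov property the successive excursions from $\0$ are i.i.d., so the total number of visits $N := \sum_{n \geq 0} \1\{ S_n = \0 \}$ is geometric with parameter $1-q$, giving $\Exp N = (1-q)^{-1}$ (with the convention $\Exp N = \infty$ when $q = 1$). On the other hand $\Exp N = \sum_{n \geq 0} p_n$ by linearity, so point-recurrence is equivalent to divergence of $\sum_n p_n$. (Since the walk is irreducible on the even sublattice, neighbourhood- and point-recurrence coincide.)

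Second, dimensions $1$ and $2$ fall to a direct count. Parity forces $p_{2n+1} = 0$. In $d=1$, $p_{2n} = \binom{2n}{n} 2^{-2n} \sim (\pi n)^{-1/2}$ by Stirling's formula, so $\sum p_{2n} = \infty$. For $d=2$ I would use the classical \emph{coordinate decoupling}: writing $S_n = (X_n, Y_n)$, the rotated sums $U_n := \tfrac{1}{2}(X_n + Y_n)$ and $V_n := \tfrac{1}{2}(X_n - Y_n)$ are independent one-dimensional simple random walks on $\tfrac12 \Z$, since each step of $S_n$ produces independent $\pm\tfrac12$ increments in $(U_n, V_n)$. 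Hence $p_{2n} = \bigl(\binom{2n}{n} 2^{-2n}\bigr)^2 \sim (\pi n)^{-1}$, and the series still diverges.

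For $d \geq 3$ I would work through the multinomial expansion. In $d=3$, summing over the numbers of $\pm e_j$ steps ($j=1,2,3$) gives
\begin{equation*}
p_{2n} \; = \; \binom{2n}{n} \, 2^{-2n} \cdot 3^{-2n} \sum_{a+b+c=n} \binom{n}{a,b,c}^{2} .
\end{equation*}
The trinomial identity $\sum_{a+b+c=n} 3^{-n} \binom{n}{a,b,c} = 1$ lets me bound the inner sum by $3^{-n} \max_{a+b+c=n} \binom{n}{a,b,c}$; Stirling, with the maximum attained near $a \approx b \approx c \approx n/3$, shows this maximum is of order $3^n/n$. Hence $p_{2n} = O(n^{-3/2})$ and $\sum_n p_{2n} < \infty$. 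The analogous estimate in dimension $d$ yields $p_{2n} = O(n^{-d/2})$, giving transience for every $d \geq 3$.

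The main obstacle is the Stirling bookkeeping in the last step: verifying the $3^n/n$ behaviour of the peak multinomial (and its generalisation $d^n/n^{(d-1)/2}$) and confirming that this yields exactly the borderline exponent $d/2$. A cleaner alternative worth noting is the characteristic-function formula $\sum_n p_n = (2\pi)^{-d} \int_{[-\pi,\pi]^d} (1 - \phi(\theta))^{-1} \, \ud \theta$ with $\phi(\theta) = d^{-1} \sum_{j=1}^{d} \cos \theta_j$; the expansion $1 - \phi(\theta) \sim |\theta|^{2}/(2d)$ near the origin immediately identifies $d = 3$ as the threshold, since $|\theta|^{-2}$ is locally integrable on $\R^d$ precisely when $d \geq 3$, and this avoids the combinatorial estimates in one stroke.
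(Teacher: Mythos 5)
The thesis does not prove this statement: P\'olya's theorem is quoted in Chapter 1 as classical background with only the citation \cite{polya}, so there is no in-paper argument to compare yours against. Judged on its own, your proposal is the standard textbook proof and it is correct. The Green's function criterion (recurrence iff $\sum_n \Pr(S_n=\0)=\infty$, via the geometric number of returns) is sound; the $d=1$ Stirling estimate and the $d=2$ coordinate-decoupling into two independent walks on $\tfrac12\Z$ are both right; and in $d\geq 3$ the bound $\sum_i x_i^2 \leq (\max_i x_i)(\sum_i x_i)$ with $x_i = d^{-n}\binom{n}{a_1,\dots,a_d}$ and $\sum_i x_i = 1$ correctly reduces everything to the peak multinomial coefficient, whose order $d^n/n^{(d-1)/2}$ is routine Stirling bookkeeping (one only needs the maximiser to within $O(1)$ of $n/d$ in each coordinate, so divisibility of $n$ by $d$ is not an issue). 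This yields $p_{2n}=O(n^{-d/2})$ and hence transience for $d\geq 3$; note that for $d\geq 3$ one should add the one-line observation that finiteness of $\Exp N$ forces $N<\infty$ a.s., which is exactly the statement $\Pr(S_n=\0 \text{ i.o.})=0$. Your closing remark is also apt: the Fourier criterion $\sum_n p_n = (2\pi)^{-d}\int_{[-\pi,\pi]^d}(1-\phi(\theta))^{-1}\,\ud\theta$ with $1-\phi(\theta)\asymp \|\theta\|^2$ near $\0$ handles all dimensions at once (local integrability of $\|\theta\|^{-2}$ precisely when $d\geq 3$) and avoids the combinatorics, at the cost of justifying the interchange of sum and integral (e.g.\ via an Abelian limit $t\uparrow 1$ in $\sum_n t^n p_n$). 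Either route is a complete proof; the combinatorial one is closer to P\'olya's original and more self-contained given the tools this thesis assembles in Chapter 2.
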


This theorem was generalised by Chung and Fuchs \cite{chung-fuchs} in 1951.
\begin{theorem}[Chung--Fuchs]
Let $S_n$ be a random walk in $\R^d$. Then,
\begin{enumerate}[(i)]
\item If $d = 1$ and $n^{-1}S_n \to 0$ in probability, then $S_n$ is neighbourhood-recurrent. 
\item If $d = 2$ and $n^{-1/2}S_n$ converges in distribution to a centred normal distribution, then $S_n$ is neighbourhood-recurrent. 
\item If $d \geq 3$ and the random walk is not contained in a lower-dimensional subspace, then it is neighbourhood-transient.
\end{enumerate}
\end{theorem}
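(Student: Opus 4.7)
The plan is to reduce all three parts of the theorem to a single Fourier-analytic criterion for recurrence, and then to verify that criterion in each of the three regimes. The starting point is the \emph{recurrence dichotomy}: $S_n$ is neighbourhood-recurrent if and only if for some (equivalently every) $\eps > 0$,
\[
\sum_{n=0}^\infty \Pr( \| S_n \| < \eps ) = \infty .
\]
I would prove this by a last-exit decomposition combined with stationarity of the increments (for divergence implying recurrence), together with a direct Borel--Cantelli argument for the other direction.

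Next I would introduce a non-negative bump function $g_\eps$ supported in the ball of radius $\eps$ whose Fourier transform $\hat g_\eps$ is real and strictly positive on a neighbourhood of the origin, and establish the Chung--Fuchs identity
\[
\sum_{n=0}^\infty r^n\, \Exp g_\eps(S_n) = \int_{\R^d} \frac{\hat g_\eps(t)}{1 - r \phi(t)}\, \frac{\ud t}{(2\pi)^d} , \qquad 0 < r < 1,
\]
where $\phi(t) := \Exp \re^{i t \cdot Z_1}$. By Abel summation, divergence of the recurrence series is equivalent to the left-hand side blowing up as $r \uparrow 1$, and by positivity of $\hat g_\eps$ near $\0$ this is in turn controlled by the local behaviour of $\text{Re}\, (1 - \phi(t))^{-1}$ near $t = \0$.

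For (iii), non-degeneracy yields a full-rank covariance $\Sigma$, hence $1 - \text{Re}\, \phi(t) \sim \tfrac12 t^\top \Sigma t$ near the origin, so $| 1 - \phi(t) | \geq c \| t \|^2$ for $\| t \|$ small; since $\int_{\| t \| < \delta} \| t \|^{-2}\, \ud t < \infty$ precisely when $d \geq 3$, the walk is transient. For (ii), the Gaussian limit hypothesis produces the same quadratic expansion of $1 - \phi$ near the origin, but now $d = 2$ renders $\int \| t \|^{-2}\, \ud t$ logarithmically divergent, forcing recurrence. For (i), the hypothesis $n^{-1} S_n \to 0$ in probability translates, via L\'evy's continuity theorem applied to $\phi(t/n)^n$, into $n\bigl( 1 - \phi(t/n) \bigr) \to 0$ for each $t$, i.e.\ $\text{Re}\, (1 - \phi(t)) = o(|t|)$ as $t \to 0$; in $d = 1$ this is exactly what is needed for $\int_{-\delta}^{\delta} \text{Re}\, (1 - r\phi(t))^{-1}\, \ud t \to \infty$ as $r \uparrow 1$, hence recurrence.

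The principal obstacle is establishing the Chung--Fuchs identity cleanly and justifying the Abelian limit $r \uparrow 1$, in particular controlling possible zeros of $1 - \phi(t)$ away from the origin, which requires either a periodicity assumption or a careful truncation argument on $\R^d$. The most delicate regime is (i), where only convergence in probability is assumed and no moments exist in general, so the passage from $n^{-1} S_n \to 0$ to a usable pointwise estimate on $\text{Re}\,(1 - \phi(t))/t$ needs care; once that is in hand, parts (ii) and (iii) reduce to the elementary dimension-counting for $\int \| t \|^{-2}\, \ud t$.
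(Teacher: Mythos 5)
The thesis does not actually prove this theorem: it is stated in Chapter 1 as classical background and attributed to \cite{chung-fuchs}, so there is no in-paper proof to compare yours against. Judged on its own, your outline follows the standard Fourier-analytic route (recurrence dichotomy, Chung--Fuchs integral identity, dimension counting), but it contains one genuine error, in part (iii). You derive the key bound $|1-\phi(t)|\geq c\|t\|^2$ from ``a full-rank covariance $\Sigma$'' and the expansion $1-\mathrm{Re}\,\phi(t)\sim\tfrac12 t^\top\Sigma t$. Part (iii) carries no moment assumption at all --- the increments may have infinite variance --- so $\Sigma$ need not exist and that Taylor expansion is unavailable. The correct route is the truncated second-moment bound
\[
\mathrm{Re}\bigl(1-\phi(t)\bigr)=\Exp\left[1-\cos(t\cdot Z_1)\right]\geq c\,\Exp\bigl[(t\cdot Z_1)^2\wedge 1\bigr]\geq c'\|t\|^2 \quad\text{for } \|t\| \text{ small},
\]
where the last inequality uses only that $Z_1$ is not supported on a hyperplane, via compactness of the unit sphere applied to the strictly positive continuous function $u\mapsto\Exp[(u\cdot Z_1)^2\wedge 1]$. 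With that repaired (and the analogous local analysis at any non-zero zeros of $1-\phi$), the convergence of $\int_{\|t\|<\delta}\|t\|^{-2}\,\ud t$ for $d\geq 3$ does give transience.

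Part (i) is the other weak point, and you flag it without resolving it: from $n^{-1}S_n\to 0$ in probability one gets $n(1-\phi(t/n))\to 0$ for each \emph{fixed} $t$, but your divergence argument needs control of $\mathrm{Re}(1-\phi(t))/|t|$ uniformly as $t\to 0$, and that upgrade is precisely the hard step of the Fourier proof. A cleaner and fully elementary alternative for (i) --- closer to Chung and Fuchs' original and the one given in \cite{durrett} --- avoids characteristic functions entirely: combine the recurrence dichotomy with the covering inequality $\sum_{n\geq 0}\Pr(\|S_n\|<m\eps)\leq(2m)^d\sum_{n\geq 0}\Pr(\|S_n\|<\eps)$ and the hypothesis in the form $\Pr(|S_n|<n\delta)\to 1$ to force divergence of $\sum_n\Pr(|S_n|<\eps)$ directly. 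Part (ii) as you sketch it is essentially sound, since the Gaussian-limit hypothesis supplies the two-sided quadratic control of $1-\phi$ near the origin that produces the logarithmic divergence in $d=2$.
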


\section{Background on geometric probability}

A central theme of classical geometric probability or stochastic geometry concerns the study of the properties of random point sets in Euclidean space and associated structures. For example, a large literature is devoted to study of the lengths of graphs on random vertex sets in Euclidean space $\R^d$, $d \geq 2$. 
The interests are primarily in the lengths of those graphs representing the solutions to problems in Euclidean combinatorial optimization (see \cite{steele2} or \cite{yukich}). 
In the classical setting, the random point sets are generated by i.i.d. random variables.
Some typical problems involve the construction of the shortest possible network of some kind:

Let $X_0, X_1, \dots, X_n$ be i.i.d. random points with common distribution on $\R^d$ and $V=\{X_i\}_{i=0}^n$.
\begin{enumerate}[(i)]
\item Travelling salesman problem. Find the length of shortest closed path traversing each vertex in $V$ exactly once.
\item Minimal spanning tree. Find the minimal total edge length of a spanning tree through $V$.
\item Minimal Euclidean matching. Find the minimal total edge length of a Euclidean matching of points in $V$. 
\end{enumerate}

Many of the questions of geometric probability or stochastic geometry are equally valid for point sets generated by random walk trajectories.

\section{Random convex hulls}

We first define the convex hull here. A set $C$ in $\R^d$ is \emph{convex} if it has the following property \cite[p.\,42]{gruber}:
$$(1 - \lambda)x + \lambda y \in C \ \text{for any } x, y \in C, 0 \leq \lambda \leq 1. $$
Given a set $A$ in $\R^d$, its \emph{convex hull} is the intersection of all convex sets
in $\R^d$ which contain $A$. Since the intersection of convex sets is always convex,
the convex hull of $A$ is convex and it is the smallest convex set in $\R^d$ with respect to set inclusion, which contains $A$. 

One of the motivations to study the convex hulls is to find the extreme values in the random points. For the 1-dimensional case, the extreme values are just the maximum and minimum values. For higher dimensional cases, the extreme values could be determined by the convex hulls.

However, the extreme values have different meanings in these two different main settings of classical stochastic geometry. 
For the setting of i.i.d. random points, one important concern is the outlier detection in random sample. 
For the setting of trajectories of stochastic processes, extremes are important for study of record values. 
It gives two related but different streams of research, with different underlying probabilistic models and different motivating questions, 
though generally the motivations are all comes from multidimensional theory of extremes. See for example \cite{barnett1}, \cite{barnett2}, \cite{barnett-lewis} and \cite{nevzorov}.

\subsection{i.i.d. random points}

Convex hulls of iid. random points, also known as \emph{random polytopes}, were first studied by Geffroy \cite{geffroy} (1961), R\'enyi and Sulanke \cite{renyi-sulanke} (1963), and Efron \cite{efron} (1965). In the case where the points are normally distributed, the resulting convex hulls are known as \emph{Gaussian polytopes}. See Reitzner \cite[Random polytopes, pp.\,45-76]{reitzner} (2010) and Hug \cite{hug} (2013) for recent surveys.

Motivation arises in statistics (multivariate extremes) and convex geometry (approximation of convex sets), and there are connection to the isotropic constant in functional analysis: see Reitzner \cite{reitzner}. He also listed some other applications including to the analysis of algorithms and optimization.

For the multivariate extremes, let $X_0, X_1, \dots, X_n$ be the iid. random points with common distribution on $\R^d$ and $V=\{X_i\}_{i=0}^n$.
In the case of $d=1$, iid. points extremes are used in outlier detection in statistics.
In the case of $d \geq 2$, Green \cite{green} describes the peeling algorithm for detection of multivariate outliers via the iterated removal of points on the boundary of the convex hulls.

For the approximation of convex sets, Reitzner \cite{reitzner} insulates the algorithms to efficiently compute convex hull for large point set in $\R^d$.

\subsection{Trajectories of stochastic process}

Before the study of random polytopes, L\'evy \cite{levybook} had considered the convex hull of planar Brownian motion. The study of convex hull of random walk goes back to Spitzer and Widom \cite{sw}. Generally, the convex hull of a stochastic process is an interesting geometrical object, related to extremes of the stochastic processes, giving a multivariate analogue of \emph{record values}. 

In one dimension, a value of a process is a record value if it is either less than all previous values (a lower record) or greater than all previous values (an upper record). In higher dimensions, a natural definition of ``record'' is then a point that lies outside the convex hull of all previous values.

More recent work on convex hull of Brownian motion includes Burdzy \cite{burdzy} (1985), Cranston, Hsu and March \cite{chm} (1989), Eldan \cite{eldan} (2014), Evans \cite{evans} (1985), Pitman and Ross \cite{pitman-ross} (2012). 

For general stochastic processes, convex hulls and related convex \emph{minorants} or \emph{majorants}, are studied by Bass \cite{bass} (1982) and Sinai \cite{sinai} (1998).

\section{Applications for convex hulls of random walks}

In recent studies of random walks, attention has focussed on various geometrical aspects of random walk trajectories.
Many of the questions of stochastic geometry, traditionally concerned with functionals of independent random points, are also of interest for point sets generated by random walks. 

Study of the convex hull of planar random walk goes back to Spitzer and Widom \cite{sw}
 and the continuum analogue, convex hull of planar Brownian motion,
to L\'evy \cite[\S52.6, pp.~254--256]{levybook}; both have received renewed interest recently, in part motivated by
applications arising for example in modelling the `home range' of animals.
Random walks have  been extensively used to model the movement of animals; Karl Pearson's original
motivation for the random walk problem originated with modelling the migration of animal
species such as mosquitoes, and subsequently random walks have been used to model the locomotion
of microbes: see~\cite{codling,smouse} for surveys. If the trajectory of the random walker
represents the locations visited by a roaming animal,
then the convex hull is a natural estimate of the `home range' of the animal~\cite{worton1,worton2}.
Natural properties of interest are the perimeter length and area of the convex hull.
See \cite{mcr} for a recent survey
of motivation and previous work. 
The method of Chapter \ref{chapter3} in part relies on an analysis of \emph{scaling limits},
and thus links the discrete and continuum settings.

\section{Introduction of the model}
\label{sec:intro}

On each unsteady step, a drunken gardener deposits one of $n$ seeds. Once the flowers have bloomed, what is the minimum length of fencing required to enclose the garden?

Let $Z_1, Z_2, \ldots$ be a sequence of independent, identically distributed (i.i.d.)
random vectors on $\R^2$. Write $\0$ for the origin in $\R^2$.
Define the random walk $(S_n; n \in \ZP)$ by $S_0 := \0$
and for $n \geq 1$, $S_n := \sum_{i=1}^n Z_i$.
Let
$\hull ( S_0, \ldots, S_n )$  \label{hull S} 
be the convex hull of positions of the walk up to and including the $n$th step, which is the smallest convex set that contains $S_0, S_1 \ldots, S_n$.
Let $L_n $ \label{L_n} denote the length of the perimeter of $\hull ( S_0, \ldots, S_n )$ and $A_n $ \label{A_n} be the area of the convex hull. (See Figure \ref{fig:Intro}.)

\begin{figure}[h!]
\center
\includegraphics[width=0.75\textwidth]{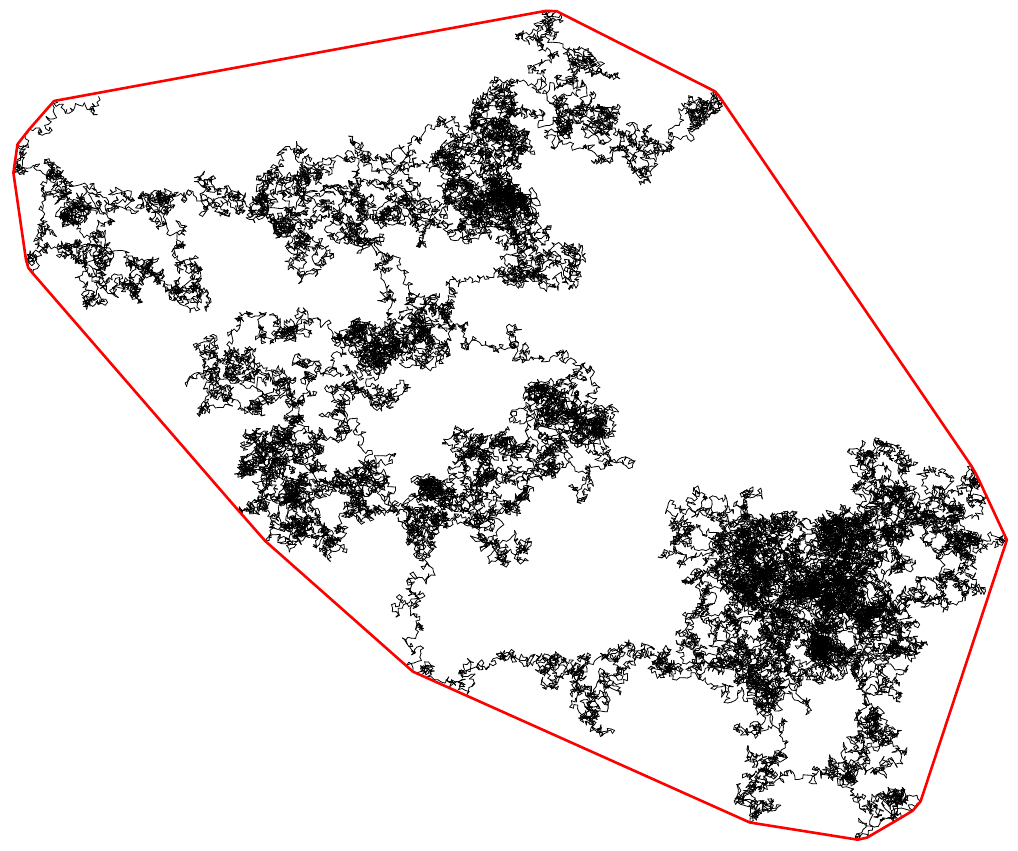}
\caption{Simulated path of a zero-drift random walk and its convex hull.}
\label{fig:Intro}
\end{figure} 

We will impose a moments condition of the following form:
\begin{description}
\item[\namedlabel{ass:moments}{M$_p$}]
Suppose that $\Exp [ \| Z_1 \|^p ] < \infty$.
\end{description} \label{2-norm}
For almost everything that follows, we will assume that at least the
$p=1$ case of \eqref{ass:moments} holds, and frequently we
will assume the $p = 2$ case.
For several of our results we 
assume that \eqref{ass:moments} holds for some $p>2$.
In any case, we will be explicit about which case we
assume at any particular point.

Given that \eqref{ass:moments} holds for some $p \geq 1$, 
then
 $\mu := \Exp Z_1 \in \R^2$, \label{mu} the mean drift vector
of the walk, is well defined.
If \eqref{ass:moments} holds for some $p \geq 2$, 
then
$\Sigma := \Exp [ (Z_1 - \mu)(Z_1-\mu)^\tra]$, \label{Sigma} the covariance
matrix associated with $Z$, is well defined;
 $\Sigma$ is positive semidefinite and symmetric.
We write $\sigma^2 := \trace \Sigma = \Exp [ \| Z_1 - \mu \|^2 ]$. \label{sigma^2}  Here and elsewhere   $Z_1$ and $\mu$ are viewed as column vectors, and $\| \blob \|$ is the Euclidean norm. 
We also introduce the decomposition $\sigma^2 = \spara + \sperp$ with \label{sperp}
\[ \spara := \Exp \left[ \left( ( Z_1 - \mu) \cdot \hat \mu \right)^2 \right] = \Exp [ ( Z_1 \cdot \hat \mu )^2 ] - \| \mu \|^2 \in \RP.\] \label{spara}
Here and elsewhere,  `$\cdot$' denotes the scalar product,  $\hat \mu := \| \mu \|^{-1} \mu$ for $\mu \neq 0$, \label{hat mu}
and $\RP := [0,\infty)$. 

\begin{figure}[h!]
  \centering
	\includegraphics[width=0.99\textwidth]{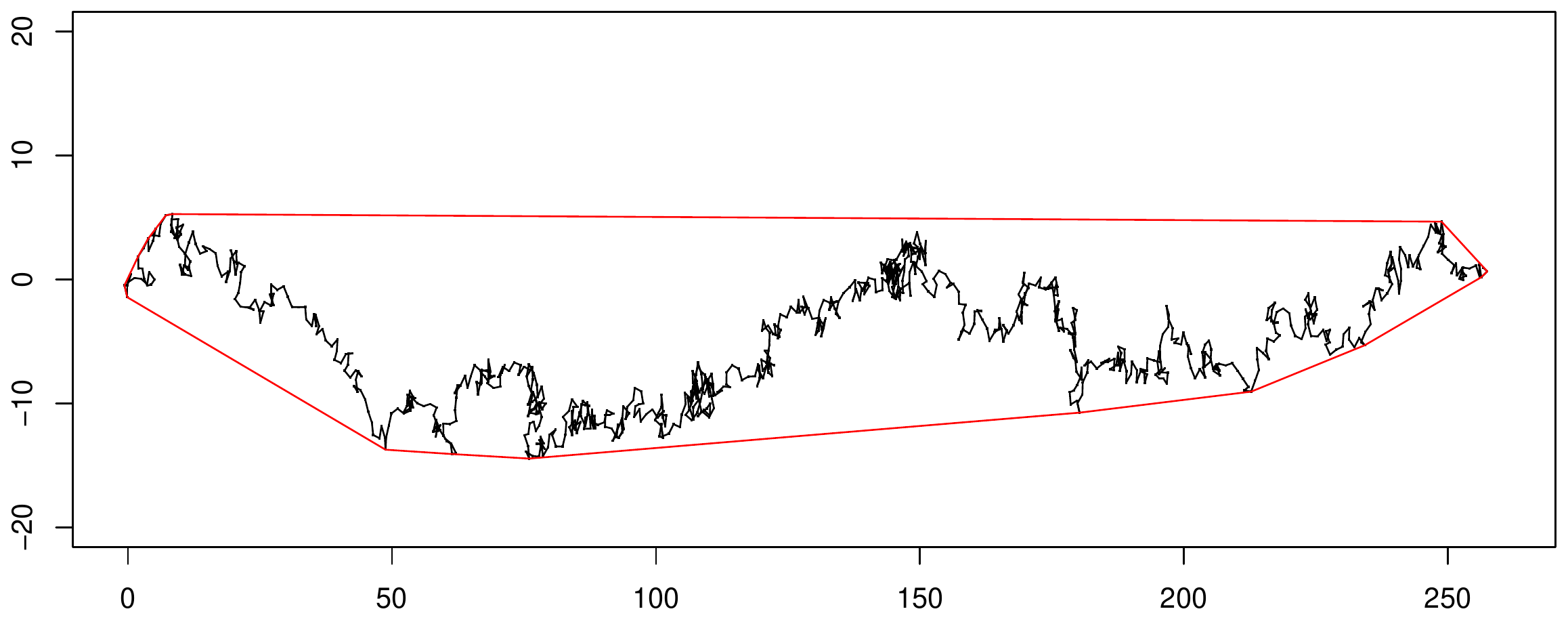}
  \caption{Example with mean drift $\Exp [ Z_1]$ of magnitude $\|\mu\| = 1/4$ and $n = 10^3$ steps.}\label{fig1}
\end{figure}

Convex hulls of random points have received much attention over the last several decades: see \cite{mcr} for an extensive survey,
 including more than 150 bibliographic references, and sources
of motivation more serious than our drunken gardener, such as modelling the `home-range' of animal populations.
An important tool in the study of random convex hulls
is provided by  a result  of Cauchy in classical convex geometry.
Spitzer and Widom \cite{sw}, using Cauchy's formula, and later Baxter \cite{baxter}, using a combinatorial argument,
showed that
\begin{equation}
\label{SW formula}
\Exp [ L_n ] = 2 \sum_{i=1}^n \frac{1}{i} \Exp \| S_i \| . \end{equation}
Note that  $\Exp [ L_n]$ thus scales like $n$ in the case where the one-step mean drift vector
$\Exp [ Z_1 ] \neq \0$ but like $n^{1/2}$
in the  case where $\Exp [ Z_1 ] = \0$ (provided $\Exp [ \| Z_1 \|^2 ] < \infty$).
The Spitzer--Widdom--Baxter result, in common with much of the literature,
is concerned with first-order properties of $L_n$:
see \cite{mcr} for a summary
of results in this direction for various random convex hulls, with a specific focus on (driftless) planar Brownian motion.

Much less is known about higher-order properties of $L_n$. 
There is a clear distinction between the zero drift case ($\Exp[Z_1]=\0$) and the non-zero drift case ($\| \Exp[ Z_1 ] \| > 0$). For example, denote $r_n := \inf_{\bx \in \partial \text{hull}(S_0, \dots, S_n)} \| \bx \|$. Note that $r_n$ is non decreasing in $n$, because $S_0 = \0 \in \text{hull}(S_0, \dots, S_n) \subseteq \text{hull}(S_0, \dots, S_{n+1})$. We investigated the asymptotic behaviour of $r_n$ in the following two different cases.

\begin{proposition}
\begin{enumerate}[(i)]
\item 
Suppose $\Exp [ \| Z_1 \|^2 ] < \infty$ and $\Exp [Z_1]= \0$. Then $\lim_{n \to \infty} r_n = \infty$ a.s.
\item
Suppose $\Exp \| Z_1 \| < \infty$ and $\Exp [Z_1]\neq \0$. Then $\lim_{n \to \infty} r_n < \infty$ a.s.
\end{enumerate}
\end{proposition}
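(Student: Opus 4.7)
My plan is to treat the two parts separately: part (ii) follows almost immediately from the strong law of large numbers, while for part (i) I would invoke Donsker's invariance principle together with a classical property of planar Brownian motion. Both parts make use of the non-decrease of $r_n$, which comes from $\hull(S_0,\ldots,S_n) \subseteq \hull(S_0,\ldots,S_{n+1})$ combined with the monotonicity of the inradius about a fixed point under set inclusion; note also that $r_n$ equals exactly this inradius of $\hull(S_0,\ldots,S_n)$ about $\0$.

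For part (ii), set $\bv := \hat\mu$. The one-dimensional walk $(S_n \cdot \bv)_{n \geq 0}$ has positive drift $\|\mu\|$, so by the SLLN $S_n \cdot \bv \to +\infty$ a.s.; hence $m := \inf_{n \geq 0} S_n \cdot \bv$ is a.s.\ finite, and $m \leq S_0 \cdot \bv = 0$. The whole trajectory, and therefore its convex hull, lies in the half-plane $\{\bx \in \R^2 : \bx \cdot \bv \geq m\}$. The ray from $\0$ in direction $-\bv$ must cross the boundary of the hull within distance $|m|$ (beyond that we leave the half-plane), so $r_n \leq |m|$ for every $n$, giving $\lim_n r_n \leq |m| < \infty$ a.s.

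For part (i) I tacitly assume $\Sigma$ is non-degenerate (else the walk is trapped on a line through the origin, the hull has empty interior, $\0$ lies on its boundary and $r_n \equiv 0$). Since $r_n$ is non-decreasing, to prove $\lim r_n = \infty$ a.s.\ it suffices to show $\Pr[r_n \leq M] \to 0$ as $n \to \infty$ for each fixed $M > 0$. Donsker's invariance principle for vector-valued walks gives $(n^{-1/2} S_{\lfloor nt \rfloor})_{t \in [0,1]} \Rightarrow (\Sigma^{1/2} b(t))_{t \in [0,1]}$ in $\CC([0,1]; \R^2)$, where $b$ is standard planar Brownian motion. Since the map $f \mapsto \hull(f[0,1])$ is continuous from $\CC([0,1]; \R^2)$ (sup-norm) to the space of convex bodies (Hausdorff metric), and the inradius about $\0$ is continuous at bodies containing $\0$ in their interior, the continuous mapping theorem yields $n^{-1/2} r_n \Rightarrow \rho$, where $\rho$ denotes the inradius about $\0$ of $\hull(\Sigma^{1/2} b[0,1])$.

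The crux is then to verify $\rho > 0$ almost surely, i.e., that $\0 \in \Int \hull(\Sigma^{1/2} b[0,1])$. This is the classical fact that a non-degenerate planar Brownian path winds around its starting point in any positive time interval, so it cannot be contained in any closed half-plane through $\0$. Granted this, for every $M > 0$ we obtain $\Pr[r_n \leq M] = \Pr[n^{-1/2} r_n \leq M / \sqrt n] \to \Pr[\rho \leq 0] = 0$, which combined with monotonicity of $r_n$ completes the proof.
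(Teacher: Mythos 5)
Your proof is correct, but it takes a genuinely different route from the paper in both parts. For part (ii) the paper does not use your half-plane argument: it takes a wedge $W_i$ of angle less than $\pi$ bisected by $\mu$, argues via the strong law that only finitely many $S_j$ lie outside the wedge $W_k$ for a suitable $k$, and then observes that one of these finitely many exterior points must remain on $\partial \hull(S_0,\dots,S_n)$ for all large $n$, whence $r_n \leq \|S_{\sigma_t}\|$. Your argument --- $m := \inf_n S_n \cdot \hat\mu > -\infty$ a.s.\ by the SLLN, the hull sits inside the half-plane $\{\bx : \bx \cdot \hat\mu \geq m\}$, and the ray from $\0$ in direction $-\hat\mu$ meets $\partial \hull(S_0,\dots,S_n)$ within distance $|m|$ --- is shorter and avoids the slightly delicate claim that a specific exterior point persists on the boundary. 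For part (i) the paper argues by recurrence: the zero-drift walk visits each of four balls placed at the corners of a large square around $\0$ infinitely often, so the hull eventually contains $B(\0;r)$ for every $r$; this is elementary and needs only neighbourhood-recurrence. Your route instead goes through Donsker's theorem, the continuity of $f \mapsto \hull(f[0,1])$ and of the inradius about $\0$ (which, via $\inf_{e \in \SS_1} h_A(e)$ and \eqref{eq:hausdorff_support}, is in fact $1$-Lipschitz on all of $\cK_2^0$, so your restriction to bodies with $\0$ in the interior is not needed), and the classical fact that $\0 \in \Int(\Sigma^{1/2}h_1)$ a.s. This is heavier machinery but fits the scaling-limit framework of Chapter \ref{chapter3} and yields more, namely that $r_n$ grows like $n^{1/2}$. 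Two small points: the final limit should be taken with a fixed cut-off, i.e.\ $\limsup_n \Pr[n^{-1/2}r_n \leq \eps] \leq \Pr[\rho \leq \eps] \downarrow 0$ as $\eps \downarrow 0$, rather than literally passing to $\Pr[\rho \leq 0]$ along the moving threshold $M/\sqrt{n}$; and your explicit non-degeneracy assumption on $\Sigma$ is genuinely required (the statement fails for a walk supported on a line through $\0$, where $r_n \equiv 0$) --- the paper's proof needs the same caveat implicitly when it asserts that the recurrent walk visits arbitrary balls infinitely often.
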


\begin{proof}
\begin{enumerate}[(i)]
\item
In the first case, the random walk ($S_n; n \in \Z_+$) is recurrent (see e.g. \cite{durrett}). There exists $h \in \R_+$, depending on the distributioon of $Z_1$, such that $S_n$ will visit any ball of radius at least $h$ infinitely often (e.g., in the case of simple symmetric random walk on $\Z^2$, it suffices to take $h=1$). Let $r>0$. Then, $S_n$ will visit $B((r+h)\by; h)$ infinitely often for each $\by \in \{(1,1),(-1,1),(1,-1),(-1,-1)\}$. Here the notation $B(\bx; r)$ is a Euclidean ball (a disk) with centre $\bx \in \R^2$ and radius $r \in \R_+$.

So there exists some random time $N$ with $N < \infty$ a.s. such that $\{S_0,\dots,S_N\}$ contains a point in each of these four balls, and so $\text{hull}(S_0,\dots,S_N)$ contains the square with these points as its corners, which in turn contains $B(\0; r)$. So $\liminf_{n \to \infty} r_n \geq r$ for any $r \in \R_+$. So $\lim_{n \to \infty} r_n =\infty$. 

\item
In the second case, the random walk is transient (see \cite{durrett}). Let $W_i$ be a wedge with apex $S_i$ with a angle $\theta < \pi$ (say $\theta = \pi/4$) so that $\theta$ is bisected by $\Exp Z_1$. By the Strong Law of Large Numbers, $\| S_n /n - \Exp Z_1 \| \to 0 \as$ and so $S_n/n \cdot \Exp Z_1^{\perp} \to 0 \as$, where $\Exp Z_1^{\perp}$ is the normal vector of $\Exp Z_1$. This implies the number of points outside the wedge $W_i$ is finite for any $i \in \Z^+$. We take some $S_k$ inside the wedge $W_0$ and denote the set of finitely many points outside $W_k$ by $\{ S_{\sigma_j}: j=1,2,\ldots,m \}$. Note that $S_0$ is outside $W_k$ so the set $\{ S_{\sigma_j} \}$ is non-empty. Hence, there must be some $S_{\sigma_t} \in \{ S_{\sigma_j} \}$ standing on the boundary of the convex hull, $S_{\sigma_t} \in \partial \text{hull}(S_0, \dots, S_n)$ for all $n \geq \sigma_t$. Then, $\limsup_{n \to \infty} r_n \leq \| S_{\sigma_t} \| < \infty$, which implies $\lim_{n \to \infty} r_n < \infty$ a.s. since $r_n$ is non decreasing.
\end{enumerate}
\end{proof}

\begin{remark}
The key property for (i) is not (compact set) recurrence, but \emph{angular recurrence} in the sense that $S_n$ visits any cone with apex at $\0$ and non-zero angle infinitely often. Thus the same distinction between (i) and (ii) persists for random walks in $\R^d$, $d \geq 3$, with the notation extended in the natural way.
\end{remark}

Because of this distinction, we always separate the arguments of $L_n$ and $A_n$ into the cases of non-zero and zero drift.

To illustrate our model, here we give some pictures of simulation examples (see Figure \ref{sim123}).
\begin{figure}[!h]
	\includegraphics[width=0.43\textwidth]{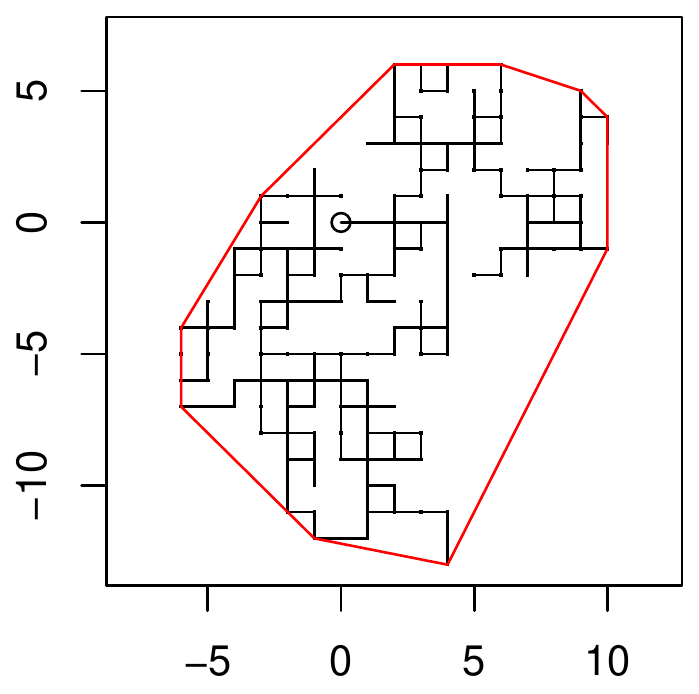} ~~
	\includegraphics[width=0.43\textwidth]{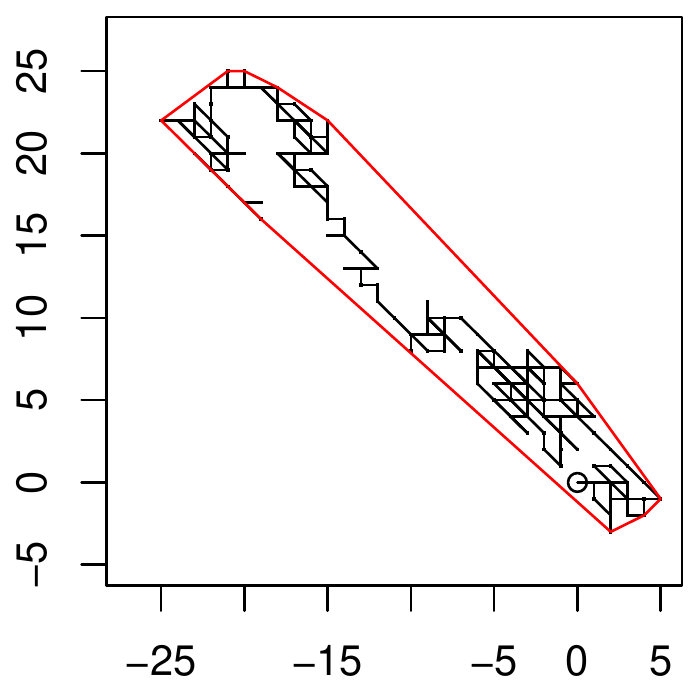} \vspace{1em} \\
	\includegraphics[width=0.43\textwidth]{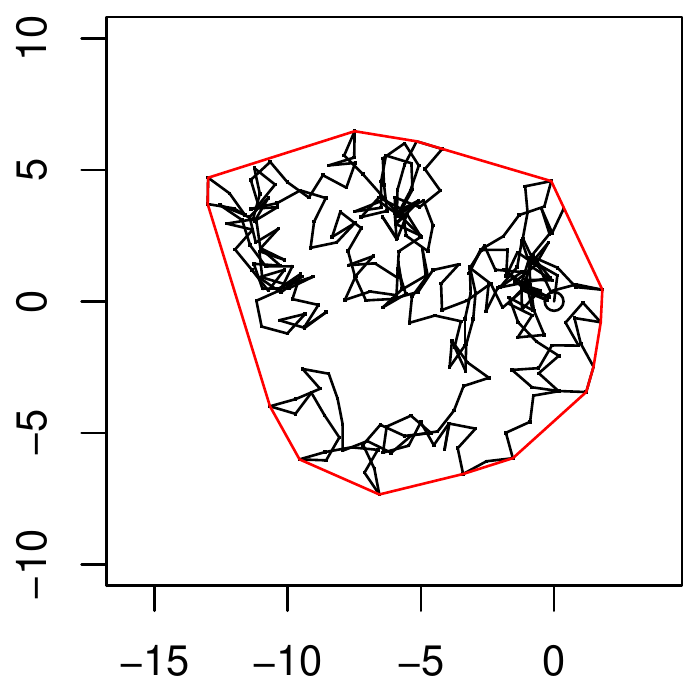}
  \caption{The number of steps $n=300$ for all three examples. The top left: Simple random walk on $\Z^2$. $Z_i$ takes $(\pm 1,0)$, $(0,\pm 1)$ each with probability 1/4. \newline
           The top right: $Z_i$ takes $(\pm 1,0)$, $(0,\pm 1)$, $(-1,1)$, $(1,-1)$ each with probability 1/6. \newline
           The bottom left: Pearson--Rayleigh random walk. $Z_i$ takes value uniformly on the unit circle. }\label{sim123}
\end{figure}

\section{Outline of the thesis}

Chapter \ref{chapter2} is some necessary mathematical prerequisites for our results. It includes the concepts of the study objects and the essential tools used in the rest chapters.

In Chapter \ref{chapter3}
we describe our scaling limit approach, and carry it through after presenting the necessary preliminaries;
the main new results of this chapter, Theorems \ref{thm:limit-zero} and \ref{thm:limit-drift},
give weak convergence statements for convex hulls of random walks in the case of zero and non-zero drift, respectively.
Armed with these weak convergence results, we present 
asymptotics for expectations and variances of the quantities $L_n$ and $A_n$ in Section 5.4, 6.4 and 6.5;
the arguments in this section rely in part on the scaling limit apparatus, and in part on direct random walk computations.
This section concludes with upper and lower bounds we found for the limiting variances. 

Snyder and Steele \cite{ss} showed that $n^{-1}L_n$ converges almost surely to a deterministic limit, and proved an upper bound on the variance $\Var[L_n]=O(n)$ \cite{ss}.
In Chapter \ref{chapter4}, we give a different approach to prove their major results, which includes 
the fact that $n^{-1}\Exp[L_n]$ converges (Proposition \ref{upper bound of E L_n}) and a simple expression for the limit in Proposition \ref{EL with drift}.
For the zero drift case, we give a new improved limit expression in Proposition \ref{limit of E L_n}.

Chapter \ref{chapter5} gives the convergence of $n^{-1}\Var[L_n]$ in Proposition \ref{upper bound for Var L_n}, which is first proved by Snyder and Steele \cite{ss}. 
They also gave the law of large numbers for $L_n$ in the non-zero drift case. But we found it also valid for the zero drift case (Proposition \ref{LLN for L_n}). 
Apart from that, the following of major results in this chapter are new.
For the non-zero drift case, we give a simple expression for the limit of $n^{-1}\Var[L_n]$ in Theorem \ref{thm1} \cite[Theorem 1.1]{wx}, which is non-zero for walks outside a certain degenerate class. This answers a question of Snyder and Steele. It is also the only case where the perimeter length $L_n$ is Gaussian. 
So we give a central limit theorem for $L_n$ in this case in Theorem \ref{thm2} \cite[Theorem 1.2]{wx}.
For the non-zero drift case, the limit expression of $n^{-1}\Var[L_n]$ is given in Proposition \ref{prop:var-limit-zero u0} \cite[Proposition 3.5]{wx2} and its upper and lower bounds are given by Proposition \ref{prop:var_bounds u0} \cite[Proposition 3.7]{wx2}.

Chapter \ref{chapter6} is an analogue of Chapter \ref{chapter5} for the area $A_n$. 
In Theorem \ref{prop:EA-zero} we give the asymptotic for the expected area $\Exp A_n$ with zero drift, which is a bit more general than the form given by Barndorff--Nielsen and Baxter \cite{bnb}.
Apart from that, the following of major results in this chapter are new.
We give the asymptotic for the expected area $\Exp A_n$ with drift in Proposition \ref{prop:EA-drift} \cite[Proposition 3.4]{wx2} and also
the asymptotics for their variance $\Var A_n$ in both zero drift (Proposition \ref{prop:var-limit-zero} \cite[Proposition 3.5]{wx2}) and non-zero drift cases (Proposition \ref{prop:var-limit-drift v+} \cite[Proposition 3.6]{wx2}).
Meanwhile, some upper and lower variance bounds are provided by the last section of this chapter.

\pagestyle{myheadings} \markright{\sc Chapter 2}

\chapter{Mathematical prerequisites}
\label{chapter2}

\section{Convergence of random variables}
\label{sec:convergence of random variables}

First of all, we define the different modes of convergence we will need in this thesis.

Let $X$ and $X_1, X_2, \dots $ be random variables in $\R$. 

$X_n$ converges \emph{almost surely} to $X$ ($X_n \toas X$) as $n \to \infty$ iff
$$\Pr\left(\{ \omega : X_n(\omega) \to X(\omega) \As n \to \infty\}\right) = 1 .$$

$X_n$ converges \emph{in probability} to $X$ ($X_n \topr X$) as $n \to \infty$ iff, for every $\eps > 0$,
$$\Pr\left(|X_n - X| > \eps \right) \to 0 \As n \to \infty .$$

The \emph{$L^p$ norm} of $X$ is defined by 
$$\|X\|_p := \left( \Exp |X|^p \right)^{1/p}.$$
$X_n$ converges \emph{in $L^p$} to $X$ ($X_n \tolp X$) for $p \geq 1$, as $n \to \infty$ iff
$$\Exp \left(|X_n - X|^p \right) \to 0,\text{ i.e. }\|X_n - X\|_p \to 0, \As n \to \infty .$$

Let $F_X(x) = \Pr(X \leq x), x \in \R$, be the distribution function of $X$ and let $C(F_X) = \{x : F_X(x) \text{ is continuous at } x\}$ be the continuity set of $F_X$. $X_n$ converges \emph{in distribution} to $X$ ($X_n \tod X$) as $n \to \infty$ iff
$$F_{X_n}(x) \to F_X(x) \As n \to \infty, \text{ for all } x \in C(F_X) .$$
The concept of convergence in distribution extends to random variables in $\R^d$ in terms of the joint distribution functions $\Pr[X_n^{(1)} \leq x^{(1)}, \dots, X_n^{(d)} \leq x^{(d)}]$.

These modes of convergence have the following logical relationships. 
\begin{align*}
X_n \tolp X & \,\rotatebox{-45}{$\Longrightarrow$}   \\
 & \qquad X_n \topr X \Longrightarrow X_n \tod X \\
X_n \toas X & \; \rotatebox{45}{$\Longrightarrow$}
\end{align*}

Now we collect some basic results on deducing convergence lemmas and theorems.

\begin{lemma}[Dominated convergence \cite{gut} p.57] \label{dominated convergence}
Let $X, Y$ and $X_1, X_2, \dots$ be random variables. Suppose that $|X_n| \leq Y$ for all $n$, where $\Exp Y < \infty$, and that $X_n \to X$ a.s. as $n \to \infty$. Then
$$\Exp |X_n - X| \to 0 \As n \to \infty ,$$
In particular,
$$\Exp X_n \to \Exp X \As n \to \infty .$$
\end{lemma}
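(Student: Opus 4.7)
The plan is to deduce this from Fatou's lemma, which I would take as a prerequisite from basic measure theory. The classical strategy is to apply Fatou to a non-negative sequence built from the dominating random variable $Y$ and the errors $|X_n-X|$.

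First I would observe that, since $|X_n|\leq Y$ a.s.\ and $X_n\to X$ a.s., passing to the limit gives $|X|\leq Y$ a.s., so in particular $\Exp|X|<\infty$ and $X$ is integrable. Hence $|X_n-X|\leq |X_n|+|X|\leq 2Y$ a.s., which means that $2Y-|X_n-X|$ is a non-negative random variable for every $n$.

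Next I would apply Fatou's lemma to the sequence $(2Y-|X_n-X|)_{n\geq 1}$. Because $X_n\to X$ a.s.\ the sequence $|X_n-X|\to 0$ a.s., so $\liminf_{n\to\infty}(2Y-|X_n-X|)=2Y$ a.s. Fatou then yields
\[
\Exp[2Y] \;=\; \Exp\!\left[\liminf_{n\to\infty}\bigl(2Y-|X_n-X|\bigr)\right] \;\leq\; \liminf_{n\to\infty}\Exp\bigl[2Y-|X_n-X|\bigr] \;=\; \Exp[2Y]-\limsup_{n\to\infty}\Exp|X_n-X|.
\]
Since $\Exp[2Y]<\infty$, I can subtract it from both sides to obtain $\limsup_{n\to\infty}\Exp|X_n-X|\leq 0$, and as $\Exp|X_n-X|\geq 0$ this forces $\Exp|X_n-X|\to 0$, which is the first claimed conclusion. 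The second claim is then immediate from Jensen's (or linearity and monotonicity): $|\Exp X_n-\Exp X|=|\Exp(X_n-X)|\leq \Exp|X_n-X|\to 0$.

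The only real obstacle is bookkeeping: one must be careful that $|X|\leq Y$ a.s.\ (so that $X$ is integrable and the subtraction $\Exp[2Y]-\limsup\Exp|X_n-X|$ is legitimate) and that the modification of $X_n$ on a null set where $|X_n|>Y$ or where convergence fails does not affect expectations. These are standard measure-theoretic checks, requiring only that we work with a common null set outside of which the pointwise inequalities and convergence hold simultaneously.
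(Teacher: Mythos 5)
Your proof is correct and complete: the reduction to Fatou's lemma applied to the non-negative sequence $2Y-|X_n-X|$, after first checking $|X|\leq Y$ a.s.\ so that $X$ is integrable and the subtraction of the finite quantity $\Exp[2Y]$ is legitimate, is the standard argument. Note that the paper does not prove this lemma at all --- it is stated as a prerequisite with a citation to Gut's textbook --- so there is no authorial proof to compare against; your write-up would serve as a valid self-contained substitute, with the final step $|\Exp X_n - \Exp X| = |\Exp(X_n - X)| \leq \Exp|X_n - X|$ being just the triangle inequality for expectations.
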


\begin{lemma}[Pratt's lemma \cite{gut} p.221] \label{pratt's lemma}
Let $X$ and $X_1, X_2, \dots$ be random variables. Suppose that $X_n \to X$ almost surely as $n \to \infty$, and that
$$ |X_n| \leq Y_n \text{ for all } n,\quad Y_n \to Y \as,\quad \Exp Y_n \to \Exp Y \As n \to \infty.$$
Then
$$ X_n \to X \text{ in } L^1 \quad\text{and}\quad \Exp X_n \to \Exp X \As n \to \infty .$$
\end{lemma}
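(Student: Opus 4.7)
The plan is a single application of Fatou's lemma to a well-chosen nonnegative sequence, yielding the $L^1$ convergence directly; the convergence of expectations then follows for free.

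First I would observe that the bound $|X_n| \leq Y_n$ passes to the a.s.\ limit to give $|X| \leq Y$, and in particular $\Exp |X| \leq \Exp Y < \infty$ (the finiteness of $\Exp Y$ being needed for $\Exp Y_n \to \Exp Y$ to be meaningful as a real limit). The triangle inequality then yields
$$ |X_n - X| \leq |X_n| + |X| \leq Y_n + Y \as, $$
so the sequence $W_n := Y_n + Y - |X_n - X|$ is nonnegative. Since $X_n \to X$ and $Y_n \to Y$ a.s., we also have $W_n \to 2Y$ a.s.

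Applying Fatou's lemma to $W_n$ gives
$$ 2\Exp Y = \Exp\bigl[\liminf_{n\to\infty} W_n\bigr] \leq \liminf_{n \to \infty} \Exp W_n = 2 \Exp Y - \limsup_{n \to \infty} \Exp |X_n - X|, $$
where in the last equality I use that $\Exp Y_n \to \Exp Y$ is a finite limit, so the $\liminf$ of the sum splits. Cancelling the finite quantity $2 \Exp Y$ gives $\limsup \Exp |X_n - X| \leq 0$; combined with $\Exp |X_n - X| \geq 0$, this forces $\Exp |X_n - X| \to 0$, which is $X_n \tolp X$ with $p=1$. The convergence of expectations is then immediate from $|\Exp X_n - \Exp X| \leq \Exp |X_n - X|$.

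The only subtlety lies in the bookkeeping around the finiteness of $\Exp Y$, which is what licenses the arithmetic manipulation of the Fatou inequality (specifically, subtracting $2\Exp Y$ from both sides and splitting $\liminf$ of a sum when one summand has a finite limit). No probabilistic input beyond Fatou's lemma is needed, and the proof specialises to the dominated convergence theorem in the case $Y_n \equiv Y$.
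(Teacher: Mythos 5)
Your argument is correct: the Fatou-on-$W_n = Y_n + Y - |X_n-X|$ device is exactly the standard proof of Pratt's lemma (the generalised dominated convergence theorem), and you handle the one genuine subtlety — that finiteness of $\Exp Y$ and $\Exp Y_n$ is what permits splitting the $\liminf$ and cancelling $2\Exp Y$. The thesis itself states this lemma without proof, citing Gut, so there is nothing to compare against beyond noting that your route coincides with the usual textbook one.
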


\begin{lemma}[The Borel--Cantelli lemma \cite{gut} p.96, 98] \label{borel-cantelli}
Let $\{A_n, n \geq 1\}$ be arbitrary events. Then
$$ \sum_{n=1}^{\infty} \Pr(A_n) < \infty \Longrightarrow \Pr(A_n \ \text{i.o.}) = 0 .$$
Moreover, 
suppose that $X_1, X_2, \dots $ are random variables. Then,
$$ \sum_{n=1}^{\infty} \Pr(|X_n| > \eps) < \infty \text{ for any } \eps > 0 \Longrightarrow X_n \to 0 \as \As n \to \infty  .$$
\end{lemma}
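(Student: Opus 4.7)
The plan is to handle the two parts in order, with the second following from the first plus a countable exhaustion of $\eps$-values.

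For the first part, I would start by unpacking the event $\{A_n \text{ i.o.}\} = \limsup_n A_n = \bigcap_{n \geq 1} \bigcup_{k \geq n} A_k$. Setting $B_n := \bigcup_{k \geq n} A_k$, the sequence $(B_n)$ is decreasing with $\bigcap_n B_n = \{A_n \text{ i.o.}\}$. Since the hypothesis $\sum_{n=1}^\infty \Pr(A_n) < \infty$ implies $\Pr(B_1) < \infty$, continuity of probability from above yields $\Pr(B_n) \to \Pr(A_n \text{ i.o.})$. Countable subadditivity gives
\[
\Pr(B_n) \leq \sum_{k=n}^\infty \Pr(A_k),
\]
and the right-hand side is the tail of a convergent series, hence tends to $0$. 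Combining these gives $\Pr(A_n \text{ i.o.}) = 0$, as required.

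For the second part, I would first apply the first part to each fixed $\eps > 0$ with $A_n = \{|X_n| > \eps\}$: by hypothesis the associated series converges, so $\Pr(\{|X_n| > \eps\} \text{ i.o.}) = 0$. The mild subtlety — and the only place where any thought is needed — is that ``almost sure convergence'' requires a single null set working for all $\eps$ simultaneously, whereas the above gives one null set per $\eps$. I would resolve this by the standard countable-exhaustion trick: let $N_k := \{|X_n| > 1/k \text{ i.o.}\}$, each of which is null by the preceding step, and set $N := \bigcup_{k \geq 1} N_k$. As a countable union of null sets, $N$ is itself null.

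Finally, for any $\omega \notin N$ and any $k \in \N$, the definition of $N_k$ forces $|X_n(\omega)| \leq 1/k$ for all sufficiently large $n$. Since $k$ is arbitrary, this is exactly $X_n(\omega) \to 0$, and so $X_n \to 0$ almost surely. The ``hard part'' is essentially non-existent here — this is a classical argument — the only place one has to be slightly careful is the transition from the ``per-$\eps$'' null set in the first part to the single null set needed for almost sure convergence, which is handled by the countable-$\eps_k = 1/k$ device above.
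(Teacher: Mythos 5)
Your proof is correct. The paper states this lemma as a cited prerequisite (Gut, pp.~96, 98) without giving a proof, and your argument — continuity from above plus the vanishing tail of a convergent series for the first part, then the countable exhaustion over $\eps_k = 1/k$ to produce a single null set for the second — is exactly the standard argument the cited reference uses, with the one genuinely delicate point (one null set per $\eps$ versus a single null set for almost sure convergence) correctly identified and handled.
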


\begin{lemma}[Slutsky's theorem \cite{gut} p.249] \label{slutsky}
Let $X_1, X_2, \dots$ and $Y_1, Y_2, \dots$ be sequences of random variables,  Suppose that
$$X_n \tod X \text{ and } Y_n \topr a \As n \to \infty ,$$
where $a$ is some constant. Then,
$$X_n + Y_n \tod X +a \text{ and } X_n \cdot Y_n \tod X \cdot a .$$
\end{lemma}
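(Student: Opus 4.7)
The plan is to prove the additive and multiplicative convergences in turn, with the additive one serving as a tool for the multiplicative one. The conceptual content is that convergence in probability to a constant can be absorbed into distribution-function estimates via an arbitrarily small deterministic shift, at the cost of an error event of vanishing probability.

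For the sum, I would fix a continuity point $x$ of $F_{X+a}$ and $\eps > 0$. Splitting on $\{|Y_n - a| \leq \eps\}$ and its complement yields the sandwich
\begin{equation*}
\Pr(X_n \leq x-a-\eps) - \Pr(|Y_n-a|>\eps) \leq \Pr(X_n+Y_n \leq x) \leq \Pr(X_n \leq x-a+\eps) + \Pr(|Y_n-a|>\eps).
\end{equation*}
The $Y_n$-terms vanish as $n \to \infty$ by hypothesis. Since $F_X$ has at most countably many discontinuities, $\eps$ may be chosen arbitrarily small with $x - a \pm \eps$ continuity points of $F_X$; for such $\eps$, the convergence $X_n \tod X$ shows that both outer probabilities tend to $F_X(x - a \pm \eps)$. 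Letting $\eps \downarrow 0$ through such values and invoking continuity of $F_{X+a}$ at $x$, which is continuity of $F_X$ at $x-a$, the upper and lower bounds both converge to $F_{X+a}(x)$.

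For the product, I would decompose $X_n Y_n = a X_n + X_n(Y_n - a)$. Convergence in distribution of $\{X_n\}$ implies tightness on $\R$, so for any $\eta > 0$ there exists $M < \infty$ with $\sup_n \Pr(|X_n| > M) < \eta$. Then for any $\delta > 0$,
\begin{equation*}
\Pr(|X_n(Y_n - a)| > \delta) \leq \Pr(|X_n| > M) + \Pr(|Y_n - a| > \delta/M) \leq \eta + \Pr(|Y_n - a| > \delta/M),
\end{equation*}
which is less than $2\eta$ for all sufficiently large $n$; since $\eta$ is arbitrary, $X_n(Y_n - a) \topr 0$. Combined with $aX_n \tod aX$ (trivial if $a = 0$, and otherwise immediate from $X_n \tod X$ by scaling), the already-established additive statement, applied to $a X_n$ and $X_n(Y_n - a)$, yields $X_n Y_n \tod aX$.

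The main obstacle, albeit minor, will be the interchange of limits $n \to \infty$ and $\eps \to 0$ in the additive step: one must select $\eps$ only at continuity points of $F_X$, which is handled by countability of the discontinuity set of any distribution function. The remaining care is bookkeeping, essentially a separate (trivial) treatment of $a = 0$ in the multiplicative statement, where the decomposition collapses and only the vanishing-product argument remains.
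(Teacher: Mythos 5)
Your proof is correct: the sandwich argument for the sum (splitting on $\{|Y_n-a|\leq\eps\}$, restricting $\eps$ so that $x-a\pm\eps$ are continuity points of $F_X$, which is possible since the discontinuity set is countable) and the decomposition $X_nY_n = aX_n + X_n(Y_n-a)$ with tightness of $\{X_n\}$ to show $X_n(Y_n-a)\topr 0$ are both carried out soundly, and the reduction of the product case to the sum case is legitimate. The paper itself offers no proof to compare against: Lemma \ref{slutsky} is stated as a standard result with a citation to Gut (p.~249), so your argument is a self-contained reconstruction of the classical textbook proof rather than a departure from anything in the thesis. The only point worth polishing is the tightness claim: convergence in distribution gives, for each $\eta>0$, an $M$ with $\limsup_n \Pr(|X_n|>M)<\eta$ directly, and upgrading this to a supremum over all $n$ requires enlarging $M$ to cover the finitely many initial indices — a triviality, but one you should state rather than assert.
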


Here we also introduce some useful concepts of uniform integrability.

A collection of random variables $X_i$, $i \in I$, is said to be \emph{uniformly integrable} if
$$\lim_{M \to \infty} \left(\sup_{i\in I} \Exp(|X_i| \1( |X_i| > M))\right) = 0 .$$

\begin{lemma}
Let $X$ and $X_1, X_2, \dots$ be random variables. If $X_n \to X$ in probability then the following are equivalent:
\begin{enumerate}[(i)]
\item $\{X_n\}_{i=1}^\infty$ is uniformly integrable.
\item $X_n \to X$ in $L^1$.
\item $\Exp |X_n| \to \Exp |X| < \infty$.
\end{enumerate}
\end{lemma}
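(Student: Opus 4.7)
The plan is to prove the cycle (ii) $\Rightarrow$ (iii) $\Rightarrow$ (i) $\Rightarrow$ (ii). The step (ii) $\Rightarrow$ (iii) is a one-line consequence of the reverse triangle inequality $\bigl| |X_n| - |X| \bigr| \leq |X_n - X|$, which forces $\Exp |X_n| \to \Exp |X|$ and simultaneously guarantees $\Exp |X| < \infty$ via $\Exp |X| \leq \Exp |X_n - X| + \Exp |X_n|$ for any fixed $n$.

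For (i) $\Rightarrow$ (ii), I would first establish $X \in L^1$: since $X_n \to X$ in probability, I would extract a subsequence converging almost surely and apply Fatou's lemma together with $\sup_n \Exp |X_n| < \infty$ (which follows from uniform integrability, by choosing $M$ with $\sup_n \Exp [ |X_n| \1\{ |X_n| > M \} ] \leq 1$ so that $\Exp |X_n| \leq M+1$). Consequently the family $\{ X_n - X \}$ is also uniformly integrable, and uniform integrability is equivalent to uniform absolute continuity of integrals: for every $\delta > 0$ there exists $\eta > 0$ such that $\Pr(A) < \eta$ implies $\sup_n \Exp \bigl[ |X_n - X| \1\{ A \} \bigr] < \delta$. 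Splitting at level $\eps$,
\[ \Exp |X_n - X| \leq \eps + \Exp \bigl[ |X_n - X| \1\{ |X_n - X| > \eps \} \bigr] , \]
and since $\Pr(|X_n - X| > \eps) \to 0$ by convergence in probability, the second summand is below $\delta$ for all large $n$. Letting $\delta, \eps \to 0$ then yields $X_n \to X$ in $L^1$.

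For (iii) $\Rightarrow$ (i) the key truncation identity is
\[ \Exp |X_n| = \Exp \bigl[ |X_n| \wedge M \bigr] + \Exp \bigl[ (|X_n| - M)^+ \bigr] , \]
and similarly for $|X|$. Because $x \mapsto |x| \wedge M$ is bounded and continuous, convergence in probability plus bounded convergence give $\Exp [ |X_n| \wedge M ] \to \Exp [ |X| \wedge M ]$ for every fixed $M$; combined with (iii) this yields $\Exp [ (|X_n| - M)^+ ] \to \Exp [ (|X| - M)^+ ]$. Since $\Exp |X| < \infty$, dominated convergence gives $\Exp [ (|X| - M)^+ ] \to 0$ as $M \to \infty$. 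For fixed $\eps > 0$, I would choose $M_0$ with $\Exp [ (|X| - M_0)^+ ] < \eps/2$, then an index $N$ with $\Exp [ (|X_n| - M_0)^+ ] < \eps$ for $n \geq N$, and finally enlarge $M_0$ to some $M^*$ if necessary to dispatch the finitely many $n < N$ (each having $\Exp [ (|X_n| - M)^+ ] \to 0$ as $M \to \infty$ by their own integrability). This produces $\sup_n \Exp [ (|X_n| - M^*)^+ ] < \eps$, and the inequality $|x| \1\{ |x| > 2M \} \leq 2 (|x| - M)^+$ converts the truncation bound into the defining tail bound of uniform integrability.

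The main obstacle is (iii) $\Rightarrow$ (i): the hypothesis of $L^1$-norm convergence is rather weak, and convergence in probability supplies only pointwise-in-$M$ passage to the limit in $n$. The argument must therefore interleave the two limits carefully, using the bounded truncation $|x| \wedge M$ as a continuity handle and peeling off the finitely many initial indices separately, so that uniformity in $n$ is preserved when $M$ is eventually sent to infinity.
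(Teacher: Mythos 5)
The paper states this lemma as a standard background fact and gives no proof of its own, so there is nothing to compare against; your argument stands or falls on its own merits, and it is correct. The cycle (ii) $\Rightarrow$ (iii) $\Rightarrow$ (i) $\Rightarrow$ (ii) is the standard textbook route (cf.\ Gut or Durrett): the reverse triangle inequality for (ii) $\Rightarrow$ (iii); Fatou along an a.s.\ subsequence to get $X \in L^1$, then uniform absolute continuity of the UI family $\{X_n - X\}$ combined with the $\eps$-split for (i) $\Rightarrow$ (ii); and the truncation identity $|x| = (|x| \wedge M) + (|x|-M)^+$ with the bounded continuous handle $|x| \wedge M$ for (iii) $\Rightarrow$ (i), finishing with $|x| \1\{|x| > 2M\} \leq 2(|x|-M)^+$. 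All of these steps are sound, and you correctly identify (iii) $\Rightarrow$ (i) as the delicate implication where the limits in $n$ and $M$ must be interleaved and the finitely many initial indices handled separately. The one caveat is that you implicitly assume each $X_n$ is integrable (e.g.\ when invoking ``their own integrability'' for $n < N$, and when writing $\Exp|X| \leq \Exp|X_n - X| + \Exp|X_n|$); under the paper's bare definition of $L^1$ convergence this is not automatic, and the equivalence can degenerate without it, so it would be worth stating that hypothesis explicitly at the outset.
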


\begin{lemma}[convergence of means \cite{kallenberg} p.45] \label{convergence of means}
Let $X, X_1, X_2, \dots$ be $\RP$-valued random variables with $X_n \tod X$. If $\{X_i\}_{i=1}^{\infty}$ is uniformly integrable, then $\Exp X_n \to \Exp X$ as $n \to \infty$.
\end{lemma}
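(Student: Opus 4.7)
The plan is to reduce the problem to the standard interplay between weak convergence and uniform integrability: truncate the $X_n$ at a level $M$, apply the Portmanteau theorem to the bounded continuous truncation, and then use uniform integrability to make the tail uniformly small.

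First I would record that uniform integrability of $\{X_n\}$ forces $\sup_n \Exp X_n < \infty$: choose $M$ so that $\sup_n \Exp[X_n \1\{X_n > M\}] \leq 1$, which gives $\Exp X_n \leq M + 1$ for every $n$. Combined with Fatou's lemma (applied to the bounded continuous map $x \mapsto x \wedge K$ and then letting $K \to \infty$), this shows that $\Exp X < \infty$, so all quantities involved are finite.

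Next, for each fixed $M > 0$, the function $\phi_M(x) = x \wedge M$ is bounded and continuous on $\RP$, so the continuous mapping theorem together with the definition of weak convergence gives
\[
\Exp[\phi_M(X_n)] \longrightarrow \Exp[\phi_M(X)] \As n \to \infty.
\]
The remainders satisfy
\[
0 \leq \Exp X_n - \Exp[\phi_M(X_n)] = \Exp[(X_n - M)^+] \leq \Exp[X_n \1\{X_n > M\}],
\]
and analogously $0 \leq \Exp X - \Exp[\phi_M(X)] \leq \Exp[X \1\{X > M\}]$. By uniform integrability, the supremum over $n$ of the first bound tends to $0$ as $M \to \infty$; for the $X$-term I would apply Fatou's lemma to $X_n \1\{X_n > M\} \geq (X_n - M)^+$, whose distributional limit is $(X - M)^+$, to dominate $\Exp[X \1\{X > M\}]$ by the same uniform quantity (possibly plus $o(1)$).

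The proof is then concluded by the standard triangle inequality
\[
|\Exp X_n - \Exp X | \leq |\Exp X_n - \Exp \phi_M(X_n)| + |\Exp \phi_M(X_n) - \Exp \phi_M(X)| + |\Exp \phi_M(X) - \Exp X|,
\]
taking $\limsup_{n \to \infty}$ to kill the middle term and then letting $M \to \infty$ to kill the other two. The only mildly delicate point, and the step I would think about most carefully, is passing the uniform-integrability bound from the sequence $\{X_n\}$ to the limit $X$; this is the reason for going through the truncation $\phi_M$ rather than attempting a direct argument, since $\phi_M$ is bounded and continuous and so plays nicely with $\tod$.
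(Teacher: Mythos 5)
Your proof is correct. Note that the paper does not prove this lemma at all --- it is stated as a quoted result from Kallenberg --- so there is no in-text argument to compare against; your truncation argument is the standard proof of this fact. Two small remarks on the one step you flag as delicate. First, once you have established $\Exp X < \infty$ (via $\Exp [X \wedge K] = \lim_n \Exp[X_n \wedge K] \leq \sup_n \Exp X_n$ and monotone convergence in $K$), the third remainder term is exactly $\Exp X - \Exp[\phi_M(X)] = \Exp[(X-M)^+]$, and this tends to $0$ as $M \to \infty$ by monotone convergence alone; there is no need to route back through the sequence with Fatou. Second, your Fatou detour as literally written aims to dominate $\Exp[X \1\{X>M\}]$, but Fatou applied to $(X_n - M)^+ \tod (X-M)^+$ only controls $\Exp[(X-M)^+]$, and since $X\1\{X>M\} \geq (X-M)^+$ the inequality points the wrong way for the larger quantity. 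This is harmless because $\Exp[(X-M)^+]$ is precisely the term you need, but the cleaner statement is to bound that term directly and drop the intermediate comparison with $\Exp[X\1\{X>M\}]$.
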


\section{Martingales}
\label{sec:martingales}

A sequence $\{X_n\}_{i=1}^\infty$ of random variables is \emph{$\{\cF_n\}$-adapted} if $X_n$ is $\cF_n$-measurable for all $n$, which means for any $k \in \R$, 
$\{\omega: X_n(\omega) \leq k\} \in \cF_n$.

An integrable \{$\cF_n$\}-adapted sequence {$X_n$} is called a \emph{martingale} if
$$ \Exp(X_{n+1} \mid \cF_n ) = X_n \as \text{ for all } n \geq 0. $$
It is called a \emph{submartingale} if
$$ \Exp(X_{n+1} \mid \cF_n ) \geq X_n \as \text{ for all } n \geq 0, $$
and a \emph{supermartingale} if
$$ \Exp(X_{n+1} \mid \cF_n ) \leq X_n \as \text{ for all } n \geq 0. $$
An integrable, \{$\cF_n$\}-adapted sequence \{$D_n$\} is called a \emph{martingale difference sequence} if
$$ \Exp (D_{n+1} \mid \cF_n)=0 \text{ for all } n \geq 0. $$
Then, the sequence of $M_n := \sum_{k=1}^n D_k$ is $\{\cF_n\}$-martingale since 
$$\Exp [M_{n+1} - M_n \mid \cF_n] = \Exp [D_{n+1} \mid \cF_n] = 0,$$
which indicate 
$$\Exp [M_{n+1} \mid \cF_n] = M_n.$$

\begin{lemma}[Orthogonality of martingale differences \cite{gut} p.488] \label{martingale diff. orth.}
Let $\{D_n\}_{n=0}^\infty$ be a martingale difference sequence. Then $\Exp[D_m D_n]=0$ for $m \neq n$. Hence,
$$\Var\left(\sum_{i=0}^n D_i\right) = \sum_{i=0}^n \Var(D_i) .$$
\end{lemma}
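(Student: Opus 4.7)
The plan is to prove the orthogonality $\Exp[D_m D_n]=0$ for $m\neq n$ directly from the defining martingale-difference property using the tower property of conditional expectation, and then derive the variance identity by expanding the square. Note that to make sense of the variance we implicitly require $D_n \in L^2$ for all $n$; I shall assume this, since otherwise $\Var(D_i)$ is not defined.

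First, without loss of generality assume $m<n$. Since $D_m$ is $\cF_m$-measurable and $\cF_m \subseteq \cF_{n-1}$, the random variable $D_m$ is $\cF_{n-1}$-measurable. By the tower property and the ``pulling out what is known'' rule,
\[
\Exp [ D_m D_n ] \,=\, \Exp \bigl[ \Exp [ D_m D_n \mid \cF_{n-1} ] \bigr] \,=\, \Exp \bigl[ D_m \, \Exp [ D_n \mid \cF_{n-1} ] \bigr] \,=\, 0,
\]
using the martingale-difference property $\Exp[D_n \mid \cF_{n-1}]=0$ in the last step. (Integrability of the product $D_m D_n$ is ensured by Cauchy--Schwarz from the $L^2$ assumption.)

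Second, I derive the variance identity. Each $D_i$ has mean zero, since by the tower property
\[
\Exp D_i \,=\, \Exp\bigl[ \Exp[D_i \mid \cF_{i-1}] \bigr] \,=\, 0,
\]
so $\Var(D_i) = \Exp[D_i^2]$ and $\Exp[\sum_{i=0}^n D_i]=0$. Expanding the square,
\[
\Var\Bigl(\sum_{i=0}^n D_i\Bigr) \,=\, \Exp\Bigl[\Bigl(\sum_{i=0}^n D_i\Bigr)^{\!2}\Bigr] \,=\, \sum_{i=0}^n \Exp[D_i^2] \,+\, 2 \sum_{0 \leq i<j \leq n}\Exp[D_i D_j] \,=\, \sum_{i=0}^n \Var(D_i),
\]
where the cross terms vanish by the orthogonality just established.

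There is really no main obstacle here: this is a textbook fact and the proof is two short applications of the tower property. The only points that might merit a sentence of care are (i) noting the implicit $L^2$ hypothesis so that $\Var(D_i)$ and $\Exp[D_m D_n]$ are well defined, and (ii) justifying the ``pulling out what is known'' step, which requires $D_m$ to be $\cF_{n-1}$-measurable and the product to be integrable.
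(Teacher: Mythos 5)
Your proof is correct and is the standard argument: the paper itself gives no proof of this lemma, simply citing Gut, and your two applications of the tower property (first to kill the cross terms via $\Exp[D_m \Exp[D_n \mid \cF_{n-1}]]=0$, then to expand the variance of the sum) are exactly the textbook route the citation points to. Your explicit remarks on the implicit $L^2$ hypothesis and on the measurability needed to pull $D_m$ out of the conditional expectation are appropriate and fill in details the paper leaves unstated.
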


We use a standard martingale difference construction based on resampling.
Consider the functional on $\R^n$, $f: \R^n \to \R$. Let $Y_1, Y_2, \dots, Y_n$ be iid. random variables and $W_n = f(Y_1, \dots, Y_n)$. 
Let $Y'_1, Y'_2, \dots, Y'_n$ be independent copies of $Y_1, Y_2, \dots, Y_n$ and 
$$W_n^{(i)} = f(Y_1, \dots, Y_{i-1}, Y'_i, Y_{i+1}, \dots, Y_n) .$$
Let $D_{n,i} = \Exp [W_n - W_n^{(i)} \mid \cF_i]$ where $\cF_i = \sigma(Y_1, \dots, Y_i)$.

\begin{lemma} \label{resampling}
Let $n \in \N$. Then
\begin{enumerate}[(i)]
\item $W_n - \Exp W_n = \sum_{i=1}^n D_{n,i}$;
\item $\Var (W_n) = \sum_{i=1}^n \Exp [D_{n,i}^2]$ whenever the latter sum is finite.
\end{enumerate}
\end{lemma}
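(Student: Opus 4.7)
The plan is to recognise the $D_{n,i}$ as a standard Doob martingale difference sequence for $W_n$ relative to $\{\cF_i\}$, and then apply the orthogonality lemma (Lemma \ref{martingale diff. orth.}) to get the variance identity.

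First I would show the key identity $\Exp[ W_n^{(i)} \mid \cF_i ] = \Exp[ W_n \mid \cF_{i-1} ]$. The point is that $W_n^{(i)}$ is a function of $Y_1,\ldots,Y_{i-1},Y'_i,Y_{i+1},\ldots,Y_n$; given $\cF_i = \sigma(Y_1,\ldots,Y_i)$, the variables $Y'_i, Y_{i+1}, \ldots, Y_n$ are independent of $\cF_i$, and $Y'_i$ has the same distribution as $Y_i$. So integrating out these $n-i+1$ variables produces the same function of $Y_1,\ldots,Y_{i-1}$ as one obtains by integrating out $Y_i,Y_{i+1},\ldots,Y_n$ from $W_n$, i.e.\ $\Exp[W_n \mid \cF_{i-1}]$. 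Consequently
\[
D_{n,i} \;=\; \Exp[W_n \mid \cF_i] \;-\; \Exp[W_n \mid \cF_{i-1}],
\]
with the convention $\cF_0 = \{\emptyset, \Omega\}$ so that $\Exp[W_n \mid \cF_0] = \Exp W_n$.

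Part (i) is then immediate by telescoping: summing $i = 1, \ldots, n$ gives
\[
\sum_{i=1}^n D_{n,i} \;=\; \Exp[W_n \mid \cF_n] - \Exp[W_n \mid \cF_0] \;=\; W_n - \Exp W_n,
\]
where the last equality uses that $W_n = f(Y_1,\ldots,Y_n)$ is $\cF_n$-measurable.

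For part (ii), the tower property gives $\Exp[D_{n,i} \mid \cF_{i-1}] = 0$, so $\{D_{n,i}\}_{i=1}^n$ is an $\{\cF_i\}$-martingale difference sequence. In particular $\Exp D_{n,i} = 0$, so $\Var(D_{n,i}) = \Exp[D_{n,i}^2]$. Applying Lemma \ref{martingale diff. orth.} together with part (i) yields
\[
\Var (W_n) \;=\; \Var\Bigl( \sum_{i=1}^n D_{n,i} \Bigr) \;=\; \sum_{i=1}^n \Var (D_{n,i}) \;=\; \sum_{i=1}^n \Exp[D_{n,i}^2],
\]
provided the right-hand side is finite (which, via Cauchy--Schwarz applied inside the conditional expectations defining $D_{n,i}$, is in turn guaranteed as soon as $W_n \in L^2$). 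The only mild subtlety is checking integrability of the $D_{n,i}$ so that the conditional expectations and the orthogonality lemma apply; this is handled by the finiteness hypothesis in (ii), since $\Exp[D_{n,i}^2] < \infty$ certainly implies $D_{n,i} \in L^1$. No step here is a genuine obstacle: the whole lemma reduces to the telescoping identity plus orthogonality of martingale differences.
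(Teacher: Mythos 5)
Your proposal is correct and follows essentially the same route as the paper: the identity $\Exp[W_n^{(i)} \mid \cF_i] = \Exp[W_n \mid \cF_{i-1}]$, the resulting Doob decomposition $D_{n,i} = \Exp[W_n \mid \cF_i] - \Exp[W_n \mid \cF_{i-1}]$, telescoping for (i), and orthogonality of martingale differences for (ii). Your write-up is in fact slightly more careful about why the key identity holds and about integrability than the paper's own proof.
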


\begin{proof}
The idea is well known. 
Since $W_n^{(i)}$ is independent of $Y_i$, 
$$\Exp [W_n^{(i)} \mid \cF_i] = \Exp[W_n^{(i)} \mid \cF_{i-1}] = \Exp [W_n \mid \cF_{i-1}] .$$
So,
$$D_{n,i} = \Exp[W_n \mid \cF_i] - \Exp[W_n \mid \cF_{i-1}] .$$
Hence $D_{n,i}$ is martingale differences, since
$$\Exp[D_{n,i} \mid \cF_{i-1}] = \Exp[W_n \mid \cF_{i-1}] - \Exp [W_n \mid \cF_{i-1}] = 0 $$
and
$$\sum_{i=1}^n D_{n,i} = \Exp[W_n \mid \cF_n] - \Exp[W_n \mid \cF_0] = W_n - \Exp W_n .$$
So,
$$\Exp\left[\left(\sum_{i=1}^n D_{n,i} \right)^2\right] = \Var (W_n) .$$
But by orthogonality of martingale differences, (Lemma \ref{martingale diff. orth.}),
$$\Var(W_n) = \sum_{i=1}^n \Exp [D_{n,i}^2] .$$
\end{proof}

Note that by the conditional Jensen's inequality $\left(\Exp([\,\xi \mid \cF])\right)^2 \leq \Exp [\,\xi^2 \mid \cF]$, we have
$$D_{n,i}^2 \leq \Exp\left[\left(W_n - W_n^{(i)}\right)^2 \mid \cF_i\right] .$$
So from part (ii) of Lemma \ref{resampling},
$$\Var(W_n) \leq \sum_{i=1}^n \Exp\left[\left( W_n^{(i)}-W_n \right)^2\right] .$$
This gives a upper bound for the variance of $W_n$, which is a factor of $2$ larger than the upper bound obtained from the Efron--Stein inequality (equation (2.3) in \cite{ss}):

\begin{lemma} \label{lem:efron-stein}
$$\Var(W_n) \leq \frac{1}{2}\sum_{i=1}^n \Exp\left[\left( W_n^{(i)}-W_n \right)^2\right] .$$
\end{lemma}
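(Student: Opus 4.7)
The plan is to refine the argument used for Lemma \ref{resampling}, exploiting the symmetry between $Y_i$ and its independent copy $Y'_i$ to recover the factor of $\tfrac{1}{2}$. Recall from the proof of Lemma \ref{resampling} the key identity $D_{n,i} = \Exp[W_n - W_n^{(i)} \mid \cF_i]$ and the decomposition $\Var(W_n) = \sum_{i=1}^n \Exp[D_{n,i}^2]$ from Lemma \ref{resampling}(ii). It therefore suffices to prove $\Exp[D_{n,i}^2] \leq \tfrac{1}{2}\Exp[(W_n - W_n^{(i)})^2]$ for each $i$ and sum.

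For fixed $i$, define
\[ \phi(y) := \Exp\bigl[ f(Y_1, \ldots, Y_{i-1}, y, Y_{i+1}, \ldots, Y_n) \bigm| \cF_{i-1}\bigr], \]
so that the inner expectation integrates out $Y_{i+1}, \ldots, Y_n$ only. Then $\Exp[W_n \mid \cF_i] = \phi(Y_i)$ and, by the tower property, $\Exp[W_n \mid \cF_{i-1}] = \Exp[\phi(Y_i) \mid \cF_{i-1}]$. Conditionally on $\cF_{i-1}$ the variables $Y_i$ and $Y'_i$ are i.i.d., so the elementary identity $\Var(X) = \tfrac{1}{2}\Exp[(X-X')^2]$ for an i.i.d. pair yields
\[ \Exp[D_{n,i}^2 \mid \cF_{i-1}] \;=\; \Var\bigl(\phi(Y_i) \bigm| \cF_{i-1}\bigr) \;=\; \tfrac{1}{2}\Exp\bigl[ (\phi(Y_i) - \phi(Y'_i))^2 \bigm| \cF_{i-1}\bigr]. \]

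Finally, since $\phi(Y_i) - \phi(Y'_i)$ is obtained by averaging $W_n - W_n^{(i)}$ over $Y_{i+1}, \ldots, Y_n$ with $\cF_{i-1}, Y_i, Y'_i$ held fixed, conditional Jensen's inequality gives
\[ (\phi(Y_i) - \phi(Y'_i))^2 \;\leq\; \Exp\bigl[ (W_n - W_n^{(i)})^2 \bigm| \cF_{i-1}, Y_i, Y'_i\bigr]. \]
Taking total expectations and summing over $i$ delivers the claim. I do not anticipate any substantive obstacle here: the argument is a short symmetrisation refinement of Lemma \ref{resampling}, and the only care required is bookkeeping --- in particular, justifying that $\phi(Y_i) - \phi(Y'_i)$ really equals the advertised conditional expectation of $W_n - W_n^{(i)}$, which rests on the fact that $Y'_i$ is independent of $(Y_1, \ldots, Y_n)$ and identically distributed to $Y_i$.
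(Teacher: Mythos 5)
Your proof is correct, and it is worth noting that it supplies something the thesis itself does not: the paper states this lemma without proof, attributing it to equation (2.3) of Snyder and Steele \cite{ss}, and the derivation it does carry out (applying conditional Jensen directly to $D_{n,i} = \Exp [ W_n - W_n^{(i)} \mid \cF_i ]$) only yields the weaker bound $\Var(W_n) \leq \sum_{i=1}^n \Exp [ ( W_n^{(i)} - W_n )^2 ]$, losing exactly the factor of $2$ that the text remarks upon. Your symmetrisation recovers that factor in the standard way: since $D_{n,i} = \phi(Y_i) - \Exp[\phi(Y_i) \mid \cF_{i-1}]$ with $Y_i$ and $Y_i'$ conditionally i.i.d.\ given $\cF_{i-1}$, the identity $\Var(X) = \tfrac{1}{2}\Exp[(X-X')^2]$ for an i.i.d.\ pair gives $\Exp[D_{n,i}^2 \mid \cF_{i-1}] = \tfrac{1}{2}\Exp[(\phi(Y_i)-\phi(Y_i'))^2 \mid \cF_{i-1}]$, and Jensen is then applied only to the pair difference $\phi(Y_i)-\phi(Y_i') = \Exp[W_n - W_n^{(i)} \mid \cF_{i-1}, Y_i, Y_i']$ rather than to $D_{n,i}$ itself. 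The two delicate points you flag are indeed the only ones needing care, and both hold: $\Exp[W_n \mid \cF_i] = \phi(Y_i)$ is the freezing lemma for independent variables, and the identification of $\phi(Y_i)-\phi(Y_i')$ as the advertised conditional expectation uses that $Y_i'$ is independent of $(Y_1,\ldots,Y_n)$ and distributed as $Y_i$. In short, the approach is the classical martingale-difference proof of the Efron--Stein inequality, and it is a genuine, self-contained replacement for the citation the paper relies on.
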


\section{Reflection principle for Brownian motion}

\begin{lemma}[Reflection principle \cite{morters} p.44] \label{reflection principle}
If $T$ is a stopping time and $\{w(t):t \geq 0 \}$ is a standard 1-dimensional Brownian motion, then the process $\{w^*(t):t\geq 0 \}$ called Brownian motion reflected at $T$ and defined by
$$w^*(t) = w(t)\1\{t\leq T\} + \left(2 w(T)- w(t)\right)\1\{t>T\} $$
is also a standard Brownian motion.
\end{lemma}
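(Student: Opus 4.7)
The plan is to reduce the claim to the strong Markov property of Brownian motion combined with the elementary symmetry $w \stackrel{d}{=} -w$. First, observe that on the event $\{T = \infty\}$ the definition gives $w^*(t) = w(t)$ for every $t \geq 0$, so there is nothing to check; we may therefore work on $\{T < \infty\}$.

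Second, by the strong Markov property, the post-$T$ increment process
\[ B(s) := w(T+s) - w(T), \quad s \geq 0, \]
is a standard Brownian motion independent of the stopped $\sigma$-field $\cF_T$. By the reflection symmetry of Brownian motion (the distribution of $w$ is invariant under $w \mapsto -w$), the process $-B$ has the same law as $B$, so $-B$ is also a standard Brownian motion independent of $\cF_T$. Consequently, the triples
\[ \bigl( w(\blob \wedge T), \, T, \, B \bigr) \quad \text{and} \quad \bigl( w(\blob \wedge T), \, T, \, -B \bigr) \]
have the same joint distribution in the appropriate product space.

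Third, note the key algebraic identity: for $t > T$,
\[ w(t) = w(T) + B(t-T), \qquad w^*(t) = 2w(T) - w(t) = w(T) - B(t-T). \]
Combined with $w^*(t) = w(t)$ for $t \leq T$, this shows that $w^*$ is obtained from the triple $(w(\blob \wedge T), T, B)$ by exactly the same measurable deterministic recipe that produces $w$ from the triple $(w(\blob \wedge T), T, -B)$. Since the two triples have equal distributions, it follows that $w^* \stackrel{d}{=} w$ as random elements of $\cC([0,\infty);\R)$, and in particular $w^*$ is a standard Brownian motion.

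The only genuinely non-trivial input is the strong Markov property at the stopping time $T$, which is the main technical ingredient; everything else is bookkeeping. A minor care point is measurability of the "gluing" map $(\omega_{\le T}, T, \beta) \mapsto \omega$ on path space, but this is a routine check once one works with the canonical cadlag (here continuous) representation. With these pieces in hand, the proof is immediate.
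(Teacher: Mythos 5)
Your proof is correct. The paper itself does not prove this lemma --- it is imported verbatim from M\"orters and Peres \cite{morters} as a known result --- and your argument (strong Markov property at $T$, the symmetry $B \stackrel{d}{=} -B$ of the post-$T$ increment process, and the observation that $w$ and $w^*$ are the same measurable gluing map applied to the identically distributed triples $(w(\blob\wedge T), T, B)$ and $(w(\blob\wedge T), T, -B)$) is precisely the standard proof given in that reference. Your handling of the event $\{T=\infty\}$ and your remark on the measurability of the concatenation map are the right care points; nothing is missing.
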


\begin{corollary} \label{reflection}
Suppose $r>0$ and $\{w(t):t \geq 0 \}$ is a standard 1-dimensional Brownian motion. Then, 
$$\Pr\left( \sup_{0 \leq s \leq t} w(s)>r \right)= 2\Pr\left(w(t)>r \right) .$$
\end{corollary}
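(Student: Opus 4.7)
The plan is to invoke Lemma \ref{reflection principle} with the first hitting time $T := \inf\{s \geq 0 : w(s) = r\}$, which is a stopping time for the natural Brownian filtration. First I would use continuity of paths, together with $\Pr(w(t) = r) = 0$, to identify $\{\sup_{0 \leq s \leq t} w(s) > r\}$ almost surely with $\{T < t\}$. Thus it suffices to show that $\Pr(T < t) = 2 \Pr(w(t) > r)$.

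Next I would split according to the sign of $w(t) - r$:
$$ \Pr(T < t) = \Pr(T < t,\, w(t) > r) + \Pr(T < t,\, w(t) < r), $$
using $\Pr(w(t) = r) = 0$ to discard the boundary event. The first summand already equals $\Pr(w(t) > r)$, because path-continuity forces $\{w(t) > r\} \subseteq \{T < t\}$ (the walker cannot be strictly above level $r$ at time $t$ without having hit $r$ beforehand). The main step is to identify the second summand with $\Pr(w(t) > r)$ as well, and this is where the reflection principle enters.

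To handle the second summand I would introduce $w^*$, the Brownian motion reflected at $T$, as supplied by Lemma \ref{reflection principle}. On $\{T < t\}$ continuity gives $w(T) = r$, whence $w^*(t) = 2r - w(t)$, and hence $\{T < t,\, w(t) < r\} = \{T < t,\, w^*(t) > r\}$. A short argument shows $\{w^*(t) > r\} \subseteq \{T < t\}$ almost surely: on $\{T > t\}$ we have $w^* = w$ on $[0,t]$ and $w(s) < r$ throughout this interval, forcing $w^*(t) < r$. Therefore the second summand coincides with $\Pr(w^*(t) > r)$, which equals $\Pr(w(t) > r)$ by the distributional equality $w^* \stackrel{d}{=} w$ from Lemma \ref{reflection principle}. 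Summing the two contributions yields $\Pr(T < t) = 2 \Pr(w(t) > r)$, as required. I do not anticipate any real obstacle; the only care is in discarding measure-zero boundary events such as $\{w(t) = r\}$ and $\{T = t\}$, which are null since $w(t)$ is Gaussian.
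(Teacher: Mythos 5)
Your proposal is correct: the thesis states this result as an immediate consequence of Lemma \ref{reflection principle} without writing out a proof, and your argument --- splitting $\Pr(T<t)$ according to the sign of $w(t)-r$ and using the reflected process $w^*$ to match the second summand with $\Pr(w(t)>r)$ --- is exactly the standard derivation the paper has in mind. The only cosmetic remark is that identifying $\{\sup_{0\le s\le t} w(s)>r\}$ with $\{T<t\}$ up to null sets uses, besides $\Pr(w(t)=r)=0$, the fact that Brownian motion started at level $r$ immediately exceeds $r$ (so that $\{T<t,\ \sup_{0\le s\le t} w(s)=r\}$ is null); this is standard and does not affect the validity of your argument.
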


\section{Useful inequalities}

We collect some useful inequalities which is useful in the next chapters.

\begin{lemma}[Markov's inequality \cite{gut} p.120]
Let $X$ be a random variable. Suppose that $\Exp|X|^r < \infty$ for some $r>0$, and let $x > 0$. Then,
$$ \Pr(|X| > x) \leq \frac{\Exp|X|^r}{x^r} . $$
\end{lemma}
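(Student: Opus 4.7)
The plan is the standard one-line proof via a pointwise indicator bound. The key observation is that on the event $\{|X|>x\}$ the function $t\mapsto t^r$ (which is strictly increasing on $[0,\infty)$ for every $r>0$) satisfies $|X|^r>x^r$, while on the complementary event both sides of the inequality we want are zero or non-negative. This yields the pointwise inequality
$$x^r \1\{|X|>x\} \;\leq\; |X|^r \1\{|X|>x\} \;\leq\; |X|^r,$$
valid on the whole sample space.

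Then I would apply $\Exp$ to both ends, using linearity and monotonicity together with $\Exp \1\{|X|>x\} = \Pr(|X|>x)$, to obtain
$$x^r \Pr(|X|>x) \;\leq\; \Exp[|X|^r].$$
Since $x>0$ we may divide by $x^r$ to conclude. The moment hypothesis $\Exp|X|^r<\infty$ is not needed for the inequality itself (it is trivial if the right-hand side is $+\infty$), but is stated to ensure the bound is informative.

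There is no real obstacle: the whole argument is one pointwise estimate followed by taking expectations. The only nuance worth flagging is the monotonicity of $t\mapsto t^r$ on $[0,\infty)$ for arbitrary $r>0$, which is elementary. Compared with the other lemmas in this section (Borel--Cantelli, Slutsky, etc.), this is by far the most direct, and needs no appeal to measure-theoretic machinery beyond non-negativity and monotonicity of the expectation.
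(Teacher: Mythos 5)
Your argument is correct and is the canonical proof of Markov's inequality: the pointwise bound $x^r \1\{|X|>x\} \leq |X|^r$ followed by monotonicity of expectation, with the division by $x^r$ justified since $x>0$. The paper states this lemma only as a cited textbook result (Gut, p.~120) and gives no proof of its own, so there is nothing to compare against; your proof is complete and needs no changes.
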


\begin{lemma}[Chebyshev's inequality \cite{gut} p.121]
Let $X$ be a random variable. Suppose that $\Var X < \infty$. Then for $x> 0$,
$$ \Pr(|X - \Exp X| > x) \leq \frac{\Var X}{x^2} . $$
\end{lemma}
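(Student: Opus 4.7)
The plan is to reduce Chebyshev's inequality to an immediate application of Markov's inequality, which is already available as the preceding lemma. First I would verify that $\Exp X$ is well-defined: since $\Var X < \infty$ implies $\Exp X^2 < \infty$, an application of the Cauchy--Schwarz inequality (or Jensen's inequality) gives $\Exp |X| \leq (\Exp X^2)^{1/2} < \infty$, so the centred variable $Y := X - \Exp X$ is a well-defined random variable with $\Exp Y^2 = \Var X < \infty$.

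Next I would observe the key event equality: because $x > 0$ and the map $t \mapsto t^2$ is strictly increasing on $\RP$,
$$\{ |X - \Exp X| > x \} = \{ (X - \Exp X)^2 > x^2 \}.$$
Applying Markov's inequality (the immediately preceding lemma) to the non-negative random variable $(X - \Exp X)^2$ with threshold $x^2$ and exponent $r = 1$ then gives
$$\Pr\bigl( |X - \Exp X| > x \bigr) = \Pr\bigl( (X - \Exp X)^2 > x^2 \bigr) \leq \frac{\Exp\bigl[ (X - \Exp X)^2 \bigr]}{x^2} = \frac{\Var X}{x^2},$$
which is the claimed bound. Equivalently, one can apply Markov's inequality directly to $|X - \Exp X|$ with $r = 2$, obtaining the same conclusion in one step.

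There is no genuine obstacle here: the result is a one-line corollary of Markov's inequality once one exploits the monotonicity of squaring on $\RP$. The only point worth flagging for completeness is the brief justification that $\Exp X$ exists, which follows from $\Var X < \infty$ as noted above.
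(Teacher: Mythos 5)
Your proof is correct and is the standard derivation: the paper itself states this lemma as a cited textbook result (Gut, p.~121) and gives no proof, so there is nothing to compare against; applying the preceding Markov inequality to $|X-\Exp X|$ with $r=2$, exactly as you do in your one-step variant, is the canonical argument. One very minor remark: your justification that $\Exp X$ exists is slightly circular as phrased (you deduce $\Exp X^2<\infty$ from $\Var X<\infty$, but the variance is defined in terms of $\Exp X$); the cleaner reading is simply that the hypothesis $\Var X<\infty$ presupposes that $X$ has a finite mean and finite second moment, after which everything you write goes through.
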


\begin{lemma}[The Cauchy--Schwarz inequality \cite{gut} p.130] \label{Cauchy-Schwarz ineq.}
Suppose that random variables $X$ and $Y$ have finite variances. Then,
$$ |\Exp XY| \leq \Exp |XY| \leq \|X\|_2  \|Y\|_2 = \sqrt{\Exp (X^2) \Exp (Y^2)} .$$
\end{lemma}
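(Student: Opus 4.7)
The plan is to prove the two inequalities separately. The left inequality $|\Exp XY| \leq \Exp|XY|$ is immediate from monotonicity of expectation applied to $-|XY| \leq XY \leq |XY|$, which after taking expectations and using that $\Exp|XY|$ dominates both $\Exp XY$ and $-\Exp XY$ gives the bound. Note that $\Exp|XY|$ is well defined (possibly infinite) because $|XY|$ is non-negative; we will show below it is actually finite under the hypotheses.

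For the main inequality $\Exp|XY| \leq \|X\|_2 \|Y\|_2$, my approach is the standard normalisation trick together with the elementary inequality $2ab \leq a^2 + b^2$ for $a,b \in \RP$ (which is just $(a-b)^2 \geq 0$). First I would handle the degenerate cases: if $\|X\|_2 = 0$, then $X = 0$ a.s., so $XY = 0$ a.s.\ and both sides are zero (and similarly if $\|Y\|_2 = 0$). Assume then that $\|X\|_2, \|Y\|_2 \in (0,\infty)$. Setting $a = |X|/\|X\|_2$ and $b = |Y|/\|Y\|_2$, the inequality $2ab \leq a^2 + b^2$ gives pointwise
\[
\frac{2 |XY|}{\|X\|_2 \|Y\|_2} \leq \frac{X^2}{\|X\|_2^2} + \frac{Y^2}{\|Y\|_2^2}.
\]
Taking expectations on both sides, the right-hand side evaluates to $1 + 1 = 2$ by the definition of $\|\blob\|_2$, yielding $\Exp |XY| \leq \|X\|_2 \|Y\|_2$ as required. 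In particular this shows $\Exp|XY| < \infty$, so $\Exp XY$ is well defined and the chain of inequalities is meaningful.

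There is essentially no obstacle here: the only subtlety is keeping track of the degenerate $\|X\|_2 = 0$ and $\|Y\|_2 = 0$ cases, and ensuring $\Exp XY$ is well defined before writing $|\Exp XY|$, which is handled as above. Alternatively one could prove it by expanding $0 \leq \Exp[(|X| - t|Y|)^2]$ as a quadratic in $t \in \R$ and demanding non-negative discriminant, but the symmetric AM--GM normalisation is cleaner and avoids any case analysis on the sign of $t$.
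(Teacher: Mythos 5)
Your proof is correct. Note, however, that the paper does not prove this lemma at all -- it is stated as a standard fact with a citation to Gut's textbook -- so there is no in-paper argument to compare against. Your normalisation via $2ab \leq a^2 + b^2$, with the degenerate cases $\|X\|_2 = 0$ or $\|Y\|_2 = 0$ treated separately and the finiteness of $\Exp |XY|$ established before invoking $|\Exp XY| \leq \Exp|XY|$, is the standard textbook argument and is complete as written.
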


The next result generalises the Cauchy--Schwarz inequality.

\begin{lemma}[The H\"older inequality \cite{gut} p.129]
Let $X$ and $Y$ be random variables. Suppose that $p^{-1} + q^{-1} = 1$, $\Exp |X|^p < \infty$ and $\Exp |Y|^q < \infty$, then
$$ |\Exp XY | \leq \Exp |XY | \leq \|X\|_p \|Y\|_q = (\Exp X^p)^{1/p} (\Exp Y^q)^{1/q}.$$
\end{lemma}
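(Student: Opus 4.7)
The plan is to derive this via the classical route through Young's inequality, since both sides of the claimed bound are invariant under scaling $X \mapsto \lambda X$, $Y \mapsto \mu Y$. First I would dispose of the degenerate cases: if $\|X\|_p = 0$ then $X = 0$ almost surely, so $XY = 0$ almost surely and every term in the display vanishes; similarly if $\|Y\|_q = 0$. The finiteness hypotheses ensure $\|X\|_p, \|Y\|_q < \infty$, so we may assume both norms lie in $(0,\infty)$ and divide through by $\|X\|_p \|Y\|_q$. The leftmost inequality $|\Exp XY| \le \Exp|XY|$ is immediate from the triangle inequality for expectations (Jensen applied to $x \mapsto |x|$), so the real work is the middle inequality.

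The key analytic input is Young's inequality: for all $a, b \ge 0$ and conjugate exponents $p, q > 1$ with $p^{-1} + q^{-1} = 1$,
\[
ab \le \frac{a^p}{p} + \frac{b^q}{q}.
\]
I would prove this from the concavity of $\log$ on $(0, \infty)$: assuming $a, b > 0$ (the case $a = 0$ or $b = 0$ being trivial), concavity gives
\[
\log\!\left(\frac{a^p}{p} + \frac{b^q}{q}\right) \ge \frac{1}{p} \log a^p + \frac{1}{q} \log b^q = \log a + \log b = \log(ab),
\]
and exponentiating yields Young's inequality.

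With Young's inequality in hand, I would set $a = |X|/\|X\|_p$ and $b = |Y|/\|Y\|_q$, viewed as non-negative random variables, and apply Young pointwise to obtain
\[
\frac{|XY|}{\|X\|_p \|Y\|_q} \le \frac{1}{p} \frac{|X|^p}{\|X\|_p^p} + \frac{1}{q} \frac{|Y|^q}{\|Y\|_q^q} \qquad \text{almost surely}.
\]
Taking expectations (both summands on the right are integrable by hypothesis) gives
\[
\frac{\Exp|XY|}{\|X\|_p \|Y\|_q} \le \frac{1}{p} \cdot \frac{\Exp|X|^p}{\|X\|_p^p} + \frac{1}{q} \cdot \frac{\Exp|Y|^q}{\|Y\|_q^q} = \frac{1}{p} + \frac{1}{q} = 1,
\]
and multiplying through by $\|X\|_p \|Y\|_q$ yields $\Exp|XY| \le \|X\|_p \|Y\|_q$, which combined with the first step completes the proof. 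The only nontrivial step is Young's inequality; everything else is bookkeeping. I anticipate no real obstacle, since the argument is entirely classical and the hypotheses guarantee that every expectation that appears is finite, so there are no integrability subtleties to navigate.
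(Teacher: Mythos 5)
Your proof is correct: the reduction to the normalised case, the derivation of Young's inequality from concavity of the logarithm, and the pointwise application followed by taking expectations are all sound, and the hypotheses do guarantee the integrability needed at each step. The paper itself offers no proof of this lemma --- it is stated as a standard result cited to Gut's textbook --- so there is nothing to compare against; your argument is the classical one and would serve as a self-contained proof if one were wanted. The only cosmetic remark is that the final expression in the paper's statement should read $(\Exp |X|^p)^{1/p} (\Exp |Y|^q)^{1/q}$ rather than $(\Exp X^p)^{1/p} (\Exp Y^q)^{1/q}$ for consistency with $\|X\|_p \|Y\|_q$; your proof correctly works with the absolute values throughout.
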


\begin{lemma}[The Minkowski inequality \cite{gut} p.129] \label{minkowski}
Let $p \geq 1$. Suppose that $X$ and $Y$ are random variables, such that $\Exp |X|^p < \infty$ and $\Exp |Y|^p < \infty$. Then,
$$ \|X + Y\|_p \leq \|X\|_p + \|Y\|_p .$$
\end{lemma}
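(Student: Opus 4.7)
The plan is to deduce the inequality from the Hölder inequality (stated immediately above) via a standard linearisation trick. First I would dispose of the trivial cases: the $p=1$ case reduces directly to the pointwise triangle inequality $|X+Y|\le |X|+|Y|$ followed by taking expectations; the conclusion is vacuous if either $\|X\|_p$ or $\|Y\|_p$ is infinite; and the case $\|X+Y\|_p=0$ is immediate. Before the main argument, I would also check that $\|X+Y\|_p$ is \emph{finite}, using the elementary convexity bound $|x+y|^p\le 2^{p-1}(|x|^p+|y|^p)$ for $p\ge 1$, which gives $\Exp|X+Y|^p<\infty$ from the assumed finiteness of $\Exp|X|^p$ and $\Exp|Y|^p$. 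This finiteness step is essential, because the final step of the proof divides by a positive power of $\|X+Y\|_p$.

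Now assuming $p>1$ and $\|X+Y\|_p\in(0,\infty)$, the key step is to write
\begin{equation*}
|X+Y|^p \;=\; |X+Y|\cdot|X+Y|^{p-1} \;\le\; |X|\cdot|X+Y|^{p-1}\;+\;|Y|\cdot|X+Y|^{p-1},
\end{equation*}
take expectations, and apply the Hölder inequality with conjugate exponent $q:=p/(p-1)$ to each of the two terms on the right. Using the identity $(p-1)q=p$, and hence
$\bigl\||X+Y|^{p-1}\bigr\|_q=\bigl(\Exp|X+Y|^p\bigr)^{(p-1)/p}=\|X+Y\|_p^{p-1}$, one obtains
\begin{equation*}
\|X+Y\|_p^p \;\le\; \bigl(\|X\|_p+\|Y\|_p\bigr)\,\|X+Y\|_p^{p-1}.
\end{equation*}
Dividing both sides by the positive finite quantity $\|X+Y\|_p^{p-1}$ then yields the stated inequality.

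The only genuinely delicate point is the preliminary finiteness check that legitimises the final division; without it, one cannot cancel a factor of $\|X+Y\|_p^{p-1}$. Once that is verified, the remainder is a mechanical application of Hölder's inequality and the algebraic identity $(p-1)q=p$, so I expect no further obstacle.
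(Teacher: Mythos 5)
The paper does not prove this lemma: it is stated as a standard textbook result, cited to Gut (p.~129), in the same catalogue of classical inequalities as H\"older and Cauchy--Schwarz, so there is no in-paper argument to compare against. Your proof is the standard and correct derivation of Minkowski from H\"older via the split $|X+Y|^p\le(|X|+|Y|)\,|X+Y|^{p-1}$, and you correctly attend to the two points that are usually glossed over, namely the preliminary finiteness of $\Exp|X+Y|^p$ via the convexity bound $|x+y|^p\le 2^{p-1}(|x|^p+|y|^p)$ and the exclusion of the case $\|X+Y\|_p=0$, both of which are needed to legitimise the final division by $\|X+Y\|_p^{p-1}$.
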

This is the triangle inequality for the $L^p$ norm.

Now we introduce some inequalities on martingales.

\begin{lemma}[Doob's inequality \cite{durrett} p.214] \label{Doob}
If $X_n$ is a martingale, then for $1 < p < \infty$,
$$ \Exp \left[\left( \max_{0 \leq m \leq n} |X_m| \right)^p \right] \leq \left(\frac{p}{p-1}\right)^p \Exp(|X_n|^p) .$$
\end{lemma}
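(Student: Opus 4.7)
The plan is to execute the classical two-step proof of Doob's $L^p$ maximal inequality: first derive a weak-type distributional estimate on $M_n := \max_{0 \leq m \leq n} |X_m|$, then promote it to the $L^p$ bound via the layer-cake formula together with H\"older's inequality.

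First I would note that since $x \mapsto |x|$ is convex, the conditional Jensen inequality shows $|X_n|$ is a nonnegative submartingale whenever $X_n$ is a martingale. Fix $\lambda > 0$ and introduce the stopping time $\tau := \inf \{ m \geq 0 : |X_m| \geq \lambda \} \wedge n$, which is bounded by $n$. Optional stopping applied to the submartingale $|X_m|$ gives $\Exp | X_\tau | \leq \Exp | X_n |$. On the event $\{ M_n \geq \lambda \}$ one has $\tau \leq n$ and $|X_\tau| \geq \lambda$, while on the complementary event $\tau = n$ and $|X_\tau| = |X_n|$; splitting both expectations along this dichotomy cancels the contribution on $\{ M_n < \lambda \}$ and yields the weak-type inequality
\[
\lambda \Pr ( M_n \geq \lambda ) \leq \Exp \bigl[ | X_n | \1\{ M_n \geq \lambda \} \bigr] .
\]

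Next, I would apply the layer-cake formula $\Exp [ M_n^p ] = p \int_0^\infty \lambda^{p-1} \Pr ( M_n \geq \lambda ) \, \ud \lambda$, substitute the weak-type bound, and swap the order of integration by Fubini; the right-hand side becomes $\frac{p}{p-1} \Exp [ | X_n | \, M_n^{p-1} ]$. Then H\"older's inequality with conjugate exponents $p$ and $p/(p-1)$ gives
\[
\Exp [ M_n^p ] \leq \frac{p}{p-1} \bigl( \Exp | X_n |^p \bigr)^{1/p} \bigl( \Exp M_n^p \bigr)^{(p-1)/p} ,
\]
whence dividing through by the last factor and raising to the $p$-th power produces the stated constant $(p/(p-1))^p$.

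The one piece of care needed is that the division step presupposes $\Exp M_n^p < \infty$, which is not in the hypothesis. I would handle this by running the whole argument with $M_n$ replaced by the bounded variable $M_n \wedge K$ throughout, obtaining a $K$-independent bound of the form $\Exp [ ( M_n \wedge K )^p ] \leq ( p / (p-1) )^p \, \Exp | X_n |^p$, and then letting $K \to \infty$ via monotone convergence. The main subtlety in the whole argument is really just this truncation bookkeeping; the martingale content is entirely packaged into the weak-type inequality, whose proof relies only on optional stopping at the first passage of $|X_m|$ across level $\lambda$.
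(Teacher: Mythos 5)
Your proposal is correct and complete: the weak-type bound via optional stopping at the first passage of $|X_m|$ across level $\lambda$, the layer-cake/Fubini step yielding $\tfrac{p}{p-1}\Exp[\,|X_n|\,M_n^{p-1}]$, the H\"older step, and the truncation $M_n \wedge K$ to justify the division are all in order. The paper itself gives no proof of this lemma --- it is quoted as a standard prerequisite with a citation to Durrett --- and your argument is precisely the classical proof found in that reference, so there is nothing to compare beyond noting that you have correctly reconstructed it, including the truncation bookkeeping that is the only genuinely delicate point.
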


\begin{lemma}[Azuma--Hoeffding inequality \cite{penrose} p.33] \label{azuma-hoeffding}
Let $D_{n,i}$ $(i=1,\dots,n)$ be a martingale difference sequence adapted to a filtration $\FF_i$, which means $D_{n,i}$ is $\FF_i$-measurable and $\Exp[D_{n,i}|\FF_{i-1}]=0$. Then, for any $t>0$,
$$\Pr\left(\Big| \sum_{i=1}^n D_{n,i} \Big| >t\right) \leq 2 \exp\left( -\frac{t^2}{2n d_{\infty}^2} \right),$$
where $d_{\infty}$ is such that $|D_{n,i}| \leq d_{\infty}$ a.s. for all $n,i$.
\end{lemma}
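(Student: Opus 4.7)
The plan is to follow the classical Chernoff-bound route, reducing the problem to a one-step exponential bound (Hoeffding's lemma) and then iterating via the martingale property. Write $S_n = \sum_{i=1}^n D_{n,i}$. For any $\lambda > 0$, Markov's inequality applied to $\re^{\lambda S_n}$ gives
\[ \Pr ( S_n > t ) \leq \re^{-\lambda t} \, \Exp [ \re^{\lambda S_n} ]. \]
The first main step is to control $\Exp [ \re^{\lambda S_n} ]$ by peeling off the last increment: conditioning on $\FF_{n-1}$ and using that $\re^{\lambda S_{n-1}}$ is $\FF_{n-1}$-measurable,
\[ \Exp [ \re^{\lambda S_n} ] = \Exp \bigl[ \re^{\lambda S_{n-1}} \, \Exp [ \re^{\lambda D_{n,n}} \mid \FF_{n-1} ] \bigr]. \]

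The second step, and the technical heart of the argument, is Hoeffding's lemma for the conditional moment generating function: if $X$ is a random variable with $\Exp X = 0$ and $|X| \leq d$ almost surely, then $\Exp \re^{\lambda X} \leq \re^{\lambda^2 d^2 / 2}$. This is proved by convexity: for $|x| \leq d$ one has $\re^{\lambda x} \leq \tfrac{d-x}{2d} \re^{-\lambda d} + \tfrac{d+x}{2d} \re^{\lambda d}$; taking expectations kills the linear term and leaves $\cosh(\lambda d)$, which satisfies $\cosh u \leq \re^{u^2/2}$ (for example, by comparing Taylor coefficients $(2k)! \geq 2^k k!$). Applying this conditionally to $D_{n,n}$ with $d = d_\infty$ yields $\Exp [ \re^{\lambda D_{n,n}} \mid \FF_{n-1} ] \leq \re^{\lambda^2 d_\infty^2 / 2}$ almost surely, so
\[ \Exp [ \re^{\lambda S_n} ] \leq \re^{\lambda^2 d_\infty^2 / 2} \, \Exp [ \re^{\lambda S_{n-1}} ]. \]
Iterating this bound $n$ times gives $\Exp [ \re^{\lambda S_n} ] \leq \re^{n \lambda^2 d_\infty^2 / 2}$.

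The third step is to optimise in $\lambda$. Combining with the Chernoff bound gives $\Pr (S_n > t) \leq \exp ( -\lambda t + n \lambda^2 d_\infty^2 / 2 )$; choosing $\lambda = t / (n d_\infty^2 )$ yields $\Pr (S_n > t) \leq \exp ( - t^2 / (2 n d_\infty^2) )$. The same argument applied to the martingale difference sequence $-D_{n,i}$ gives the matching lower-tail bound, and a union bound over the two events $\{ S_n > t \}$ and $\{ S_n < -t \}$ produces the factor $2$ in the stated inequality.

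The main obstacle is really only the Hoeffding lemma step: once the sub-Gaussian bound $\Exp [ \re^{\lambda X} ] \leq \re^{\lambda^2 d^2 / 2}$ is in hand, the rest is routine peeling and optimisation. A minor care-point is ensuring the conditional expectations are well-defined and the exponential moments finite, which follows from the almost-sure boundedness $|D_{n,i}| \leq d_\infty$.
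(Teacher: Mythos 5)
Your proof is correct. The thesis states this lemma without proof, citing Penrose \cite{penrose}; the Chernoff-bound-plus-conditional-Hoeffding-lemma argument you give (one-step bound $\Exp[\re^{\lambda D_{n,n}}\mid \FF_{n-1}]\leq \re^{\lambda^2 d_\infty^2/2}$, peeling, optimisation at $\lambda = t/(n d_\infty^2)$, and a two-tail union bound for the factor $2$) is exactly the standard proof used in that reference, and every step you write is sound.
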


We also introduce some inequalities for sums of independent random variables.

\begin{lemma}[Marcinkiewicz--Zygmund inequality \cite{gut} p.151] \label{marcinkiewicz}
Let $p \geq 1$. Suppose that $X, X_1, X_2, \dots, X_n$ are independent, identically distributed random variables with mean 0 and $\Exp |X|^p < \infty$. 
Set $S_n = \sum_{k=1}^n X_k$. Then there exists a constant $B_p$ depending only on $p$, such that
$$ \Exp |S_n|^p \leq \begin{cases} B_p n \Exp|X|, & \text{if  } 1 \leq p \leq 2 , \\
 B_p n^{p/2} \Exp |X|^{p/2}, & \text{if  } p >2 . 
\end{cases} $$
\end{lemma}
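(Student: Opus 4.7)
The plan is to treat the two ranges of $p$ separately; for both, the cleanest route combines a symmetrization step with a Khintchine-- or Burkholder--type square function inequality (the $p=2$ case being immediate from the orthogonality of uncorrelated mean-zero summands). I read the right-hand side of the stated bound as $\Exp|X|^p$ in each case, which is the standard form of the Marcinkiewicz--Zygmund inequality as cited in \cite{gut}; the powers $\Exp|X|$ and $\Exp|X|^{p/2}$ in the displayed statement appear to be typos.

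For $1 \leq p \leq 2$ I would first symmetrize: let $X_1',\ldots,X_n'$ be an independent copy of $X_1,\ldots,X_n$, and let $\varepsilon_1,\ldots,\varepsilon_n$ be independent Rademacher signs, independent of everything else. Jensen's inequality together with the mean-zero hypothesis gives
\[ \Exp|S_n|^p \leq \Exp\Bigl| \sum_{k=1}^n (X_k - X_k') \Bigr|^p = \Exp\Bigl| \sum_{k=1}^n \varepsilon_k (X_k - X_k') \Bigr|^p, \]
the equality by symmetry of $X_k-X_k'$. Conditioning on the $(X_k,X_k')$ and applying Khintchine's inequality to the Rademacher sum produces a bound of the form $C_p \Exp\bigl( \sum_k (X_k-X_k')^2 \bigr)^{p/2}$. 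Since $p/2 \leq 1$, the subadditivity inequality $(\sum a_k)^{p/2} \leq \sum a_k^{p/2}$ for $a_k \geq 0$ collapses this to $C_p \sum_k \Exp|X_k - X_k'|^p$, and the Minkowski inequality (Lemma~\ref{minkowski}) bounds $\Exp|X_k-X_k'|^p$ by $2^p \Exp|X|^p$, yielding $\Exp|S_n|^p \leq B_p \, n\, \Exp|X|^p$.

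For $p > 2$ the key ingredient is Burkholder's martingale square-function inequality applied to the $\cF_k$-martingale $(S_k)_{k\leq n}$ with differences $X_k$:
\[ \Exp|S_n|^p \leq C_p \, \Exp\Bigl( \sum_{k=1}^n X_k^2 \Bigr)^{p/2}. \]
Now $p/2 \geq 1$, so Jensen's inequality in the convex form $\bigl( n^{-1} \sum a_k \bigr)^{p/2} \leq n^{-1} \sum a_k^{p/2}$ gives $\bigl( \sum_k X_k^2 \bigr)^{p/2} \leq n^{p/2-1} \sum_k |X_k|^p$, so that $\Exp|S_n|^p \leq B_p \, n^{p/2} \Exp|X|^p$.

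The substantive obstacle is the appeal to Khintchine's inequality in the first case and Burkholder's inequality in the second; both are non-trivial results, but both are entirely classical and available in any standard probability reference, and once they are invoked the Marcinkiewicz--Zygmund bounds fall out from elementary convexity/concavity of $t\mapsto t^{p/2}$. An alternative route for $1 \leq p \leq 2$ that avoids Khintchine is the von Bahr--Esseen inequality $\Exp|S_n|^p \leq 2 \sum_k \Exp|X_k|^p$, provable by induction on $n$ using the pointwise estimate $|a+b|^p - |a|^p \leq p\,\mathrm{sgn}(a)\,|a|^{p-1} b + 2|b|^p$ (valid for $1\leq p \leq 2$) and eliminating the cross term via the mean-zero hypothesis and independence.
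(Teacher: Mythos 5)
The paper does not prove this lemma at all: it is quoted verbatim as a known result from Gut's book, so there is no in-paper argument to compare yours against. Your proof is correct and is essentially the standard textbook derivation (it is close to the one in the cited reference): for $1\leq p\leq 2$, symmetrization via Jensen and the mean-zero hypothesis, randomization by Rademacher signs, Khintchine's inequality conditionally on the data, and then concavity of $t\mapsto t^{p/2}$ to pass from the square function to the sum of $p$-th moments; for $p>2$, Burkholder's square-function inequality followed by the power-mean (Jensen) estimate $\bigl(\sum_k X_k^2\bigr)^{p/2}\leq n^{p/2-1}\sum_k |X_k|^p$. Each step is justified, and the von Bahr--Esseen alternative you mention for the lower range is a legitimate way to avoid Khintchine. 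You are also right that the right-hand sides in the displayed statement are typos: as written ($B_p n \Exp|X|$ and $B_p n^{p/2}\Exp|X|^{p/2}$) they are not homogeneous of degree $p$ in $X$ and cannot be correct; the intended bounds are $B_p\, n\, \Exp|X|^p$ for $1\leq p\leq 2$ and $B_p\, n^{p/2}\,\Exp|X|^p$ for $p>2$, which is what your argument delivers and what the paper actually uses later (e.g.\ in Lemma \ref{lem:walk_moments}, where the conclusion $O(n^{1\vee(p/2)})$ requires the $\Exp|X|^p$ form).
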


\begin{lemma}[Rosenthal's inequality \cite{gut} p.151] \label{rosenthal}
Let $p \geq 1$. Suppose that $X_1, X_2, \dots, X_n$ are independent random variables such that $E|X_k|^p < \infty$ for all $k$. Set $S_n = \sum_{k=1}^n X_k$. Then,
$$ \Exp |S_n|^p \leq \max\left\{2^p \sum_{k=1}^n \Exp |X_k|^p , 2^{p^2} \left( \sum_{k=1}^n \Exp |X_k| \right)^p \right\}. $$
\end{lemma}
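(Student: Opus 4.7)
The plan is to reduce to the mean-zero case by centering, apply Marcinkiewicz--Zygmund (Lemma~\ref{marcinkiewicz}) to the centered sum, and then do a case analysis matching the two regimes captured by the maximum on the right-hand side.

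First, I would decompose $S_n = T_n + \Exp S_n$ with $T_n := \sum_{k=1}^n (X_k - \Exp X_k)$ a sum of independent mean-zero variables, and use the elementary inequality $|a+b|^p \leq 2^{p-1}(|a|^p + |b|^p)$ to obtain
\[
\Exp|S_n|^p \;\leq\; 2^{p-1}\Exp|T_n|^p \,+\, 2^{p-1}|\Exp S_n|^p.
\]
The mean term is handled immediately: $|\Exp S_n|^p \leq \left(\sum_k \Exp|X_k|\right)^p$ by the triangle inequality, and since $2^{p-1} \leq 2^{p^2}$ for all $p \geq 1$, this contribution is absorbed into the second entry of the maximum with substantial slack.

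Second, I would apply Marcinkiewicz--Zygmund to $T_n$. Although Lemma~\ref{marcinkiewicz} is stated for i.i.d. sequences, its standard proof (via symmetrization and Khintchine) gives the analogous bound $\Exp|T_n|^p \leq B_p \sum_k \Exp|X_k - \Exp X_k|^p$ for $1 \leq p \leq 2$, and a bound carrying an extra factor of $n^{p/2-1}$ for $p > 2$. Since $\Exp|X_k - \Exp X_k|^p \leq 2^p \Exp|X_k|^p$ by convexity and Jensen's inequality, in the range $1 \leq p \leq 2$ this directly produces the first entry $2^p \sum_k \Exp|X_k|^p$ of the maximum, after consolidating constants.

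The main obstacle will be the case $p > 2$, where the extra $n^{p/2-1}$ factor has to be absorbed by exploiting the structure of the maximum. Here I would case-split on the distribution of the $\Exp|X_k|$: if a small number of indices dominate the sum, then $\sum_k \Exp|X_k|^p$ is comparable to the pth power of the largest term and the first entry of the maximum suffices; if instead the $\Exp|X_k|$ are roughly balanced across all $n$ indices with average $\bar\mu$, then $\left(\sum_k\Exp|X_k|\right)^p \asymp n^p \bar\mu^p$ while the Marcinkiewicz--Zygmund bound contributes $\asymp n^{p/2-1} \cdot n \bar\mu^p = n^{p/2}\bar\mu^p$, which is dwarfed by the second entry $2^{p^2} \left(\sum_k \Exp|X_k|\right)^p$ for $p > 2$. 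The very generous constants $2^p$ and $2^{p^2}$ in the statement are precisely what provides the slack needed to make this case analysis go through; the explicit bookkeeping of constants is routine but tedious, and for a textbook result like this I would cite Gut rather than reproduce the arithmetic.
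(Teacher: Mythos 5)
The paper does not prove this lemma at all; it is quoted verbatim from Gut with a page reference, so there is no in-text argument to compare against. Judged on its own terms, your proposal has a genuine gap in the case $p>2$, which is exactly the case where Rosenthal's inequality says something beyond Marcinkiewicz--Zygmund.

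The problem is the ``balanced'' branch of your case analysis. There you estimate the Marcinkiewicz--Zygmund contribution as $n^{p/2-1}\cdot n\bar\mu^{\,p}=n^{p/2}\bar\mu^{\,p}$, with $\bar\mu$ the average of the $\Exp|X_k|$; this silently replaces $\sum_k\Exp|X_k|^p$ by $n\bar\mu^{\,p}$, i.e.\ uses $\Exp|X_k|^p\approx(\Exp|X_k|)^p$. Jensen only gives $(\Exp|X_k|)^p\le\Exp|X_k|^p$, which is the wrong direction. Concretely, take the $X_k$ i.i.d.\ with $\Pr(X_1=M)=M^{-p}$ and $X_1=0$ otherwise, so that $\Exp|X_1|^p=1$ while $\Exp|X_1|=M^{1-p}$ is as small as you like. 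For large $M$ the right-hand side of the lemma is $\max\{2^p n,\;2^{p^2}n^pM^{p(1-p)}\}=2^p n$, and the lemma is true here (one checks $\Exp S_n^p\asymp n$ when $nM^{-p}$ is small), but your route only yields $n^{p/2-1}\sum_k\Exp|X_k|^p\asymp n^{p/2}\gg n$ for $p>2$. The $\Exp|X_k|$ are perfectly balanced, so neither branch of your dichotomy applies usefully: the bound you obtain is genuinely weaker than the one to be proved, not merely off by bookkeeping. (A secondary issue: even for $1\le p\le 2$, centering, MZ, and $\Exp|X_k-\Exp X_k|^p\le 2^p\Exp|X_k|^p$ accumulate a constant of order $B_p2^{2p}$ in front of $\sum_k\Exp|X_k|^p$, which cannot be ``consolidated'' down to the stated $2^p$.) The standard proofs avoid MZ entirely: for instance, one first treats non-negative summands by writing $\Exp S_n^p=\sum_k\Exp[X_kS_n^{p-1}]$, using $(X_k+S_n^{(k)})^{p-1}\le 2^{p-1}\bigl(X_k^{p-1}+(S_n^{(k)})^{p-1}\bigr)$ with $S_n^{(k)}=S_n-X_k$ independent of $X_k$, and bootstrapping; that is where the constants $2^p$ and $2^{p^2}$ come from.
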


\section{Useful theorems and lemmas}

\begin{lemma}[Fubini's theorem \cite{gut} p.65] \label{fubini}
Let ($\Omega_1, \cF_1, P_1$) and ($\Omega_2, \cF_2, P_2$) be probability spaces, and
consider the product space ($\Omega_1 \times \Omega_2, \cF_1 \times \cF_2, P$), where $P = P_1 \times P_2$ is
the product measure. Suppose that $\bX = (X_1, X_2)$ is a two-dimensional random variable, and that $g$ is $\cF_1 \times \cF_2$-measurable, and (i) non-negative or (ii) integrable. Then,
$$ \Exp g(\bX) = \int_{\Omega} g(\bX)\, \ud P = \int_{\Omega_1} \left( \int_{\Omega_2}g(\bX)\, \ud P_2\right) \ud P_1 = \int_{\Omega_2} \left( \int_{\Omega_1}g(\bX)\, \ud P_1\right) \ud P_2 .$$
\end{lemma}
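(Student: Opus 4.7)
The plan is to follow the classical measure-theoretic proof, building up the identity in stages of increasing generality, and I would organise the argument around the non-negative case (the Tonelli part) before deducing the integrable case by decomposition.

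First I would address the measurability prerequisites that must be verified before the iterated integrals even make sense. Namely: for each fixed $\omega_1 \in \Omega_1$ the section $\omega_2 \mapsto g(\omega_1, \omega_2)$ must be $\cF_2$-measurable, and the function $\omega_1 \mapsto \int_{\Omega_2} g(\omega_1, \omega_2)\, \ud P_2$ must then be $\cF_1$-measurable (and symmetrically with the roles swapped). I would establish this for indicators $g = \1_E$ with $E \in \cF_1 \times \cF_2$ via the good sets principle: the collection of $E$ for which both sections are measurable forms a Dynkin system (closed under proper differences and monotone countable unions using $P_1, P_2 < \infty$), and it contains the $\pi$-system of measurable rectangles $A_1 \times A_2$, so by Dynkin's $\pi$-$\lambda$ theorem it equals $\cF_1 \times \cF_2$.

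Next I would prove the identity for indicators of rectangles directly, since $\int_{\Omega_2} \1_{A_1 \times A_2}(\omega_1, \omega_2)\, \ud P_2 = \1_{A_1}(\omega_1) P_2(A_2)$, so both iterated integrals equal $P_1(A_1) P_2(A_2) = P(A_1 \times A_2)$. Another good-sets / monotone-class argument extends this to $\1_E$ for arbitrary $E \in \cF_1 \times \cF_2$: the class of $E$ for which the three-way equality holds contains rectangles, is closed under disjoint countable unions (monotone convergence, Lemma \ref{dominated convergence} applied to increasing sequences) and under proper differences. Linearity in $g$ immediately yields the statement for non-negative simple functions. For a general non-negative $\cF_1 \times \cF_2$-measurable $g$, take an increasing sequence of non-negative simple $g_n \uparrow g$ and pass to the limit by monotone convergence in all three integrals; this is case (i).

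For case (ii) I would decompose $g = g^+ - g^-$ where $g^{\pm} \geq 0$ are both measurable, apply case (i) separately to each, and invoke integrability $\int |g|\, \ud P < \infty$ to guarantee that $\int g^+$ and $\int g^-$ are both finite so the subtraction is meaningful; together with a null-set argument noting that the inner integrals are finite $P_1$-almost everywhere, the iterated integrals agree with $\int g\, \ud P$. I expect the genuinely delicate step to be the measurability claim for sections and iterated integrals in the very first stage, because it relies on the $\sigma$-finiteness of the measures (here trivial since $P_1, P_2$ are probability measures) and on a careful $\pi$-$\lambda$ argument; once this is in place, the rest is a routine induction up the standard hierarchy (rectangles $\to$ measurable sets $\to$ simple $\to$ non-negative measurable $\to$ integrable).
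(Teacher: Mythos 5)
The paper does not prove this statement at all: it is quoted as a standard textbook result (Gut, p.\ 65) in the prerequisites chapter, so there is no in-paper argument to compare against. Your outline is the standard and correct Tonelli--Fubini proof (sections and iterated-integral measurability via the $\pi$--$\lambda$ theorem, rectangles, monotone class, simple functions, monotone convergence, then $g = g^+ - g^-$ with the a.e.-finiteness caveat), and it is complete at the level of a sketch. One minor point: in the step extending the identity from rectangles to general $E \in \cF_1 \times \cF_2$ you invoke the paper's dominated convergence lemma for increasing sequences; what is really being used there is monotone convergence (or, for indicators under finite measures, bounded convergence), though in this setting the distinction is harmless.
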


\begin{lemma}
\label{convergence of Cesaro mean}
Let $\{y_n \}_{n=1}^{\infty}$ be a sequence of real numbers and let $y \in \R$. If $ y_n \to y$ as $n \to \infty$, then $n^{-1}\sum_{i=1}^n y_i \to y$ as $n \to \infty$.
\end{lemma}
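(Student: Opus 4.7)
The plan is a standard $\varepsilon/2$-splitting argument that isolates the tail, where the sequence $y_i$ is close to $y$, from the initial block of bounded length.

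First I would translate the claim into additive form by writing
\[ \frac{1}{n} \sum_{i=1}^n y_i - y = \frac{1}{n} \sum_{i=1}^n (y_i - y), \]
so that it suffices to show that if $z_n := y_n - y \to 0$ then $n^{-1} \sum_{i=1}^n z_i \to 0$. Fix $\varepsilon > 0$. By hypothesis there exists $N = N(\varepsilon) \in \N$ such that $|z_i| < \varepsilon/2$ for all $i \geq N+1$. Split the Cesàro sum into the head and the tail:
\[ \left| \frac{1}{n} \sum_{i=1}^n z_i \right| \leq \frac{1}{n} \sum_{i=1}^N |z_i| + \frac{1}{n} \sum_{i=N+1}^n |z_i|. \]

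The second term is bounded above by $\frac{n-N}{n} \cdot \frac{\varepsilon}{2} \leq \frac{\varepsilon}{2}$ for all $n > N$, using only the tail bound. For the first term, the numerator $C := \sum_{i=1}^N |z_i|$ is a fixed finite constant depending on $\varepsilon$ but not on $n$, so $C/n \to 0$ as $n \to \infty$; in particular, there exists $N' \geq N$ such that $C/n < \varepsilon/2$ for all $n \geq N'$. Combining these, $|n^{-1} \sum_{i=1}^n z_i| < \varepsilon$ for all $n \geq N'$. Since $\varepsilon$ was arbitrary, this proves $n^{-1} \sum_{i=1}^n z_i \to 0$, which is the desired conclusion.

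There is no real obstacle here: the only thing to be careful about is the order of quantifiers, namely that $N$ must be chosen before $n$ is sent to infinity, and that the head sum has a fixed (finite) value once $N$ is fixed, so dividing by $n$ does kill it. The result is sharp in the sense that convergence of the Cesàro means does not imply convergence of the original sequence, but only one direction is needed for the applications appearing later in the thesis.
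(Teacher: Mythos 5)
Your proof is correct and follows essentially the same head/tail splitting argument as the paper's own proof; the only cosmetic difference is that you use $\varepsilon/2$ for each piece to land exactly on $\varepsilon$, whereas the paper bounds each piece by $\varepsilon$ and concludes with $2\varepsilon$. No gaps.
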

\begin{proof}
By assumption, for any $\eps >0$ there exists $n_0 \in \N$ such that $|y_n -y| \leq \eps$ for all $n \geq n_0$. Then,
\begin{align*}
\left| \frac{1}{n} \sum_{i=1}^n y_i -y \right| & = \left| \frac{1}{n} \sum_{i=1}^n (y_i-y) \right| \\
& \leq \left| \frac{1}{n} \sum_{i=1}^{n_0}(y_i-y) \right| + \left| \frac{1}{n} \sum_{i=n_0+1}^n (y_i-y) \right| \\
& \leq \frac{1}{n} \sum_{i=1}^{n_0} |y_i-y| + \frac{1}{n} \sum_{i=n_0 +1}^{n} |y_i-y| \\
& \leq \frac{1}{n} \sum_{i=1}^{n_0} |y_i-y| + \eps \\
& \leq 2\eps ,\end{align*}
for all $n$ big enough. Since $\eps >0$ was arbitrary, the result follows.
\end{proof}

\section{Multivariate normal distribution}

Let $\Sigma$ be a symmetric positive semi-definite ($d \times d$) matrix. Then, there exists an unique positive semi-definite symmetric matrix $\Sigma^{1/2}$ such that $\Sigma = \Sigma^{1/2} \Sigma^{1/2}$ \cite{mardia}. The matrix
$\Sigma^{1/2}$ can also be regarded as a linear transform of $\R^d$ given by ${\bf x} \mapsto \Sigma^{1/2} {\bf x}$.

For a random variable $Y$, the notation $Y \sim \cN(0, \Sigma)$ means $Y$ has $d$ dimensional normal distribution with mean $0$ and covariance matrix $\Sigma$.
In the degenerate case, all entries of the covariance matrix is $0$, $\Sigma = 0$, which means that $Y=0$ almost surely.

\begin{lemma} \label{linear transformation}
Suppose $X \sim \cN(0, I)$ and let $Y = \Sigma^{1/2} X$. Then $Y \sim \cN(0, \Sigma)$.
\end{lemma}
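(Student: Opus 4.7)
The plan is to use characteristic functions, since these determine the distribution uniquely and handle the possibly degenerate case (where $\Sigma$ is only positive semi-definite rather than strictly positive definite) with no extra work. I would show that $Y = \Sigma^{1/2} X$ has the same characteristic function as a $\cN(0, \Sigma)$ random vector.

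First I would recall the characteristic function of the standard multivariate normal: for $X \sim \cN(0,I)$ and any $t \in \R^d$,
\[ \varphi_X (t) = \Exp [ e^{i t^\tra X} ] = e^{-\frac{1}{2} t^\tra t} = e^{-\frac{1}{2} \|t\|^2}. \]
Then I would compute the characteristic function of $Y$ by substitution, using that $\Sigma^{1/2}$ is symmetric (so $(\Sigma^{1/2})^\tra = \Sigma^{1/2}$):
\[ \varphi_Y (t) = \Exp[ e^{i t^\tra \Sigma^{1/2} X} ] = \Exp[ e^{i (\Sigma^{1/2} t)^\tra X}] = \varphi_X ( \Sigma^{1/2} t ) = e^{-\frac{1}{2} \| \Sigma^{1/2} t \|^2}. \]

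Next I would simplify the exponent using the defining property $\Sigma = \Sigma^{1/2} \Sigma^{1/2}$ from the paragraph preceding the lemma, namely
\[ \| \Sigma^{1/2} t \|^2 = ( \Sigma^{1/2} t)^\tra ( \Sigma^{1/2} t ) = t^\tra \Sigma^{1/2} \Sigma^{1/2} t = t^\tra \Sigma t, \]
so that $\varphi_Y (t) = e^{-\frac{1}{2} t^\tra \Sigma t}$, which is the characteristic function of $\cN(0, \Sigma)$. The uniqueness theorem for characteristic functions then yields $Y \sim \cN(0, \Sigma)$.

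There is not really a hard step here; the only subtlety worth flagging is that $\Sigma$ need not be invertible, which is why I would avoid any density-based argument (change of variables with Jacobian $\det \Sigma^{1/2}$) and stick with characteristic functions, which handle the degenerate case $\Sigma = 0$ (where $Y = 0$ a.s.) transparently. The symmetry of $\Sigma^{1/2}$, which is given as part of the construction preceding the lemma, is the one algebraic fact that must be cited explicitly.
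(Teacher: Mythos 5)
Your proof is correct. The paper itself states this lemma without proof (it is quoted as a standard fact, with the construction of $\Sigma^{1/2}$ attributed to \cite{mardia}), so there is no in-text argument to compare against; your characteristic-function computation is the standard way to supply one. The two points you flag are exactly the right ones: the symmetry of $\Sigma^{1/2}$ is what lets you move it across the inner product to write $t^\tra \Sigma^{1/2} X = (\Sigma^{1/2} t)^\tra X$, and working with characteristic functions rather than densities is what makes the argument valid when $\Sigma$ is singular --- a case the paper explicitly needs, since it allows $\Sigma = 0$ (degenerate normal with point mass at $0$) in the surrounding discussion. No gaps.
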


\begin{lemma}[Multidimensional Central Limit Theorem \cite{mardia} p.62] \label{Mult CLT}
Suppose $\{Z_i\}_{i=1}^\infty$ is a sequence of i.i.d. random variables on $\R^d$. $S_n = \sum_{i=1}^n Z_i$ is a random walk on $\R^d$. 
If $\Exp(\|Z_1\|^2) < \infty$, $\Exp Z_1 = 0$ and $\Exp(Z_1 Z_1^\top)= \Sigma$, then
$$n^{-1/2} S_n \tod \cN(0,\Sigma) .$$
\end{lemma}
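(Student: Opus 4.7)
The plan is to reduce the multivariate statement to the standard one-dimensional central limit theorem via the Cram\'er--Wold device. Recall that the Cram\'er--Wold device asserts that a sequence $(Y_n)$ of $\R^d$-valued random vectors converges in distribution to $Y$ if and only if $\mathbf{t}^\top Y_n \tod \mathbf{t}^\top Y$ for every fixed $\mathbf{t} \in \R^d$. This is exactly the tool that turns the $d$-dimensional problem into a family of one-dimensional problems indexed by $\mathbf{t}$.

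First I would fix an arbitrary $\mathbf{t} \in \R^d$ and consider the real-valued random variables $X_i := \mathbf{t}^\top Z_i$. Since the $Z_i$ are i.i.d.\ on $\R^d$, the $X_i$ are i.i.d.\ on $\R$. The Cauchy--Schwarz inequality (Lemma \ref{Cauchy-Schwarz ineq.}) gives $\Exp [ X_i^2 ] \leq \| \mathbf{t} \|^2 \Exp [ \| Z_1 \|^2 ] < \infty$ under the assumed moment hypothesis, and linearity of expectation gives $\Exp X_i = \mathbf{t}^\top \Exp Z_1 = 0$. A direct computation yields
\[
\Var ( X_i ) = \Exp [ ( \mathbf{t}^\top Z_1 ) ( Z_1^\top \mathbf{t} ) ] = \mathbf{t}^\top \Exp [ Z_1 Z_1^\top ] \mathbf{t} = \mathbf{t}^\top \Sigma \mathbf{t} .
\]
Thus $\mathbf{t}^\top S_n = \sum_{i=1}^n X_i$ is a sum of i.i.d.\ centred real random variables with finite variance $\mathbf{t}^\top \Sigma \mathbf{t}$, and the classical one-dimensional CLT gives
\[
n^{-1/2} \mathbf{t}^\top S_n \tod \mathcal{N}(0, \mathbf{t}^\top \Sigma \mathbf{t}) .
\]

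Second, I would identify the right-hand side as the distribution of $\mathbf{t}^\top Y$ for $Y \sim \mathcal{N}(0,\Sigma)$: by Lemma \ref{linear transformation} we may realise $Y = \Sigma^{1/2} X$ with $X \sim \mathcal{N}(0,I)$, and then $\mathbf{t}^\top Y = ( \Sigma^{1/2} \mathbf{t} )^\top X$ is a centred Gaussian with variance $\| \Sigma^{1/2} \mathbf{t} \|^2 = \mathbf{t}^\top \Sigma \mathbf{t}$, matching the previous display. Since $\mathbf{t} \in \R^d$ was arbitrary, the Cram\'er--Wold device upgrades the one-dimensional convergences to the desired $n^{-1/2} S_n \tod \mathcal{N}(0,\Sigma)$.

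The only subtle point is the Cram\'er--Wold step itself, which is a standard consequence of the L\'evy continuity theorem: if all linear projections converge in distribution, then the characteristic functions of $n^{-1/2} S_n$ converge pointwise to the characteristic function of $\mathcal{N}(0,\Sigma)$ at every $\mathbf{t}$, which is continuous at the origin, and L\'evy continuity gives weak convergence on $\R^d$. This is the main place where genuine multivariate machinery enters; everything else reduces to univariate computations that are immediate from the hypotheses.
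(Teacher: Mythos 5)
Your proof is correct: the Cram\'er--Wold reduction to the one-dimensional CLT is the standard textbook argument, and all of your univariate computations (finiteness of $\Exp[(\mathbf{t}^\top Z_1)^2]$ via Cauchy--Schwarz, the zero mean, the variance $\mathbf{t}^\top \Sigma \mathbf{t}$, and the identification of the limit law via Lemma \ref{linear transformation}) check out. The only case worth a passing remark is $\mathbf{t}^\top \Sigma \mathbf{t} = 0$, where $\mathbf{t}^\top Z_1 = 0$ a.s.\ and the one-dimensional convergence is to the degenerate law $\cN(0,0)$; this is trivially true and the paper's conventions explicitly allow degenerate normal distributions, so it is not a gap, but you could state it explicitly for completeness. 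Your route differs from the paper's, which gives no proof at all: the lemma is imported by citation to \cite{mardia}, and the only in-text derivation is the remark following Donsker's theorem (Lemma \ref{thm:donsker}), where the multivariate CLT is recovered as the $t=1$ marginal of the functional limit $n^{-1/2} X_n \Rightarrow \Sigma^{1/2} b$. That derivation is top-down and, from a foundational standpoint, somewhat circular, since proofs of Donsker's theorem themselves rest on the CLT for finite-dimensional distributions; your argument is bottom-up, self-contained modulo the scalar CLT and the L\'evy continuity theorem, and is the one you would actually use to establish the lemma from first principles. What the paper's viewpoint buys instead is the observation that the weak convergence of paths is strictly stronger information, which is the engine of Chapter \ref{chapter3}.
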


\section{Analytic and Geometric prerequisites}

We recall a few basic facts from real analysis: \cite{rudin} is an excellent general reference.
The \emph{Heine--Borel theorem} states that a set in $\R^d$ is compact if and only if it is closed
and bounded \cite[p.~40]{rudin}. Compactness is preserved under continuous mappings: if $(X,\rho_X)$ is a compact metric space and $(Y, \rho_Y)$ is a metric space,
and $f : (X, \rho_X) \to (Y, \rho_Y)$ is continuous, then the image $f(X)$ is compact \cite[p.~89]{rudin}; moreover $f$ is uniformly continuous on $X$ \cite[p.~91]{rudin}.
For any such uniformly continuous $f$, there is a monotonic \emph{modulus of continuity} $\mu_f : \RP \to \RP$ such that $\rho_Y ( f(x_1), f(x_2) ) \leq \mu_f ( \rho_X (x_1, x_2 ))$
for all $x_1, x_2 \in X$, and for which $\mu_f ( \rho ) \downarrow 0$ as $\rho \downarrow 0$ (see e.g.~\cite[p.~57]{kallenberg}).

Let $d$ be a positive integer.
For $T > 0$, let $\cC ( [0,T] ; \R^d )$ \label{cC} denote the class of continuous functions
from $[0,T]$ to $\R^d$. Endow $\cC ( [0,T] ; \R^d )$ with the supremum metric
\[ \rho_\infty ( f, g) := \sup_{t \in [0,T]} \rho  ( f(t), g(t) ) , ~\text{for } f,g \in \cC ( [0,T] ; \R^d ). \] \label{rho_infty}
Let $\cC^0 ( [0,T] ; \R^d )$ \label{cC^0} denote those functions in $\cC ( [0,T] ; \R^d )$  that map $0$ to the origin in $\R^d$.

Usually, we work with $T=1$, in which case we write simply
\[ \cC_d := \cC ( [0,1] ; \R^d ) , ~~\text{and}~~ \cC_d^0 := \{ f \in \cC_d : f(0) = \0 \} .\] \label{cC_d} 
\label{cC_d^0}

For $f \in \cC ( [0,T] ; \R^d )$ and $t \in [0,T]$, define $f [0,t] := \{ f(s) : s \in [0,t] \}$, the image of $[0,t]$ under $f$.  Note that, since $[0,t]$ is compact and $f$ is continuous,
the \emph{interval image} $f [0,t]$ is compact.
We view elements $f \in \cC ( [0,T] ; \R^d )$ as \emph{paths} indexed by time $[0,T]$, so that $f[0,t]$ is the section of the path up to time $t$.

We need some notation and concepts from convex geometry: we found \cite{gruber} to be very useful,
supplemented by \cite{sw} as a convenient reference for a little integral geometry.
Let $d$ be a positive integer.
Let $\rho(\bx,\by) = \| \bx - \by\|$ denote the Euclidean distance between $\bx$ and $\by$ in $\R^d$. For a set $A \subseteq \R^d$, write 
 $\partial A$ for the boundary of $A$ (the intersection
of the closure of $A$ with the closure of $\R^d \setminus A$), and $\Int (A) := A \setminus \partial A$ for the interior
of $A$. 
For  $A \subseteq \R^d$
and a point $\bx \in \R^d$, set $\rho(\bx,A) := \inf_{\by \in A} \rho(\bx,\by)$, \label{rho(x,A)}
with the usual convention that $\inf \emptyset = +\infty$.
We write $\lambda_d$ for Lebesgue measure on $\R^d$.
 Write $\SS_{d-1} := \{ \bu \in \R^d : \| \bu \| = 1 \}$ \label{cS_d-1}
for the unit sphere in $\R^d$.

Let $\cK_d$ \label{cK_d} denote the collection of convex compact sets in $\R^d$, and write
\[ \cK^0_d := \{ A \in \cK_d : \0 \in A \} \] \label{cK_d^0}
for those sets in $\cK_d$ that include the origin. The Hausdorff metric on $\cK^0_d$
will be denoted
\[ \rho_H ( A, B ) := \max \Big\{ \sup_{\bx \in B} \rho(\bx,A) , \sup_{\by \in A} \rho(\by,B) \Big\} ~\text{for } A,B \in \cK_d.\] \label{rho_H}
Given $A \in \cK_d$, for $r > 0$ set
\[ \pi_r ( A) := \{ \bx \in \R^d : \rho (\bx,A) \leq r \} ,\] \label{pi_r}
the \emph{parallel body} of $A$ at distance $r$.
Note that, 
two equivalent descriptions of $\rho_H$ (see e.g.\ Proposition 6.3 of \cite{gruber}) are
for $A, B \in \cK^0_d$,
\begin{align}
\label{eq:hausdorff_minkowski} \rho_H (A, B) & = \inf \left\{ r \geq 0 : A \subseteq \pi_r ( B ) \text{ and }  B \subseteq \pi_r ( A ) \right\};  \text{ and } \\
\label{eq:hausdorff_support} \rho_H (A,B) & = \sup_{e \in \Sp_{d-1} } \left| h_A (e) - h_B (e) \right| ,
\end{align}
where $h_A ( \bx) := \sup_{\by \in A} (\bx \cdot \by )$ is the \emph{support function} of $A$ and $\bx \cdot \by$ is the inner product of $\bx$ and $\by$, i.e. $(x_1,y_1)\cdot (x_2, y_2) = x_1 x_2 + y_1 y_2$. \label{h_A()}

\section{Continuous mapping theorem and Donsker's Theorem}
\label{sec:CMT and Donsker}

We consider random walks in $\R^d$ in this section. First we need to define the weak convergence in $\R^d$.

Suppose $(\Omega, \cF, \Pr)$ is a probability space and $(M, \rho)$ is a metric space. For $n \geq 1$, suppose that
$$X_n, X: \Omega \longrightarrow M$$
are random variables taking values in $M$. If 
$$\Exp f(X_n) \to \Exp f(X)\ \As n\to \infty,$$
for all bounded, continuous functional $f: M \longrightarrow \R$, then we say that $X_n$ \emph{converges weakly} to $X$ and write $X_n \Rightarrow X$.
The weak convergence generalises the concept of convergence in distribution for random variables on $\R^d$. 

\begin{lemma}[continuous mapping theorem \cite{kallenberg} p.41] \label{continuous mapping}
Fix two metric spaces $(M_1,\rho_1)$ and $(M_2,\rho_2)$. Let $X, X_1, X_2, \dots$ be random variables taking values in $M_1$ with $X_n \Rightarrow X$. Suppose $f$ is a
mapping on $(M_1,\rho_1) \to (M_2,\rho_2)$, which is continuous everywhere in $M_1$ apart from possible on a set $A \subseteq M_1$ with $\Pr(X \in A) = 0$. 
Then, $f(X_n) \Rightarrow f(X)$.
\end{lemma}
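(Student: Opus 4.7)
The plan is to reduce the claim to the Portmanteau theorem, which characterises weak convergence in a metric space: $Y_n \Rightarrow Y$ if and only if $\limsup_{n\to\infty} \Pr(Y_n \in F) \leq \Pr(Y \in F)$ for every closed set $F$. I would take this characterisation as given (it is standard; see e.g.\ Kallenberg) and deduce the continuous mapping theorem from it, working directly with closed sets rather than going via bounded continuous test functions.

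First I would fix a closed set $F \subseteq M_2$ and examine its preimage $f^{-1}(F) \subseteq M_1$. The issue is that $f^{-1}(F)$ need not be closed, because $f$ is only continuous off the set $A$. The key topological observation I would prove is that
\[
\overline{f^{-1}(F)} \subseteq f^{-1}(F) \cup D_f,
\]
where $D_f$ denotes the set of points at which $f$ is discontinuous. Indeed, if $x \in \overline{f^{-1}(F)}$ and $f$ is continuous at $x$, then there is a sequence $x_k \to x$ with $f(x_k) \in F$; continuity of $f$ at $x$ gives $f(x_k) \to f(x)$, and since $F$ is closed we obtain $f(x) \in F$, i.e.\ $x \in f^{-1}(F)$. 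Contrapositively, any $x$ in $\overline{f^{-1}(F)}\setminus f^{-1}(F)$ must lie in $D_f$.

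By hypothesis $D_f \subseteq A$ and $\Pr(X \in A) = 0$, so from the inclusion above
\[
\Pr\!\left(X \in \overline{f^{-1}(F)}\right) \leq \Pr(X \in f^{-1}(F)) + \Pr(X \in A) = \Pr(f(X) \in F).
\]
Since $\overline{f^{-1}(F)}$ is closed in $M_1$, the Portmanteau theorem applied to $X_n \Rightarrow X$ gives
\[
\limsup_{n\to\infty} \Pr(f(X_n) \in F) \leq \limsup_{n\to\infty} \Pr\!\left(X_n \in \overline{f^{-1}(F)}\right) \leq \Pr\!\left(X \in \overline{f^{-1}(F)}\right) \leq \Pr(f(X) \in F).
\]
As $F$ was an arbitrary closed subset of $M_2$, the Portmanteau theorem in the reverse direction yields $f(X_n) \Rightarrow f(X)$.

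The main obstacle is really bookkeeping rather than a substantive difficulty: one must check that the sets $\overline{f^{-1}(F)}$ and $A$ are Borel (the former is, being closed; for the latter this is part of the hypothesis, implicitly, since otherwise $\Pr(X \in A)$ is ill-defined), so that the probabilities in the chain of inequalities above are well defined even though $f$ itself need not be globally Borel measurable. Once this point and the Portmanteau characterisation are in hand, the entire argument rests on the single topological lemma $\overline{f^{-1}(F)} \subseteq f^{-1}(F) \cup D_f$, which is where the continuity-off-$A$ hypothesis is used.
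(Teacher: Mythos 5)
The paper does not prove this lemma at all: it is quoted as a standard result with a citation to Kallenberg, so there is no in-paper argument to compare against. Your proof is correct and is the classical Portmanteau-based argument (essentially Billingsley's proof of the mapping theorem). The one topological inclusion $\overline{f^{-1}(F)} \subseteq f^{-1}(F) \cup D_f$ is exactly the right lever, your verification of it is sound, and the chain of inequalities correctly uses $f^{-1}(F) \subseteq \overline{f^{-1}(F)}$ on the $X_n$ side and the null-set hypothesis on the $X$ side. Your closing remark on measurability is also the right caveat: as stated the lemma implicitly assumes $f\circ X$ and $f\circ X_n$ are random variables; one can note that the set of continuity points of $f$ is a $G_\delta$ (so $D_f$, and hence any $A \supseteq D_f$ one works with, can be taken Borel), and that $f$ restricted to the continuity set is Borel, which is enough for the probabilities appearing in the argument to make sense. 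No gaps.
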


We generalise the definition of $Z_i$ and $S_n$ a little in this section. Let $\{Z_i\}_{i=1}^\infty$ be a i.i.d. random vectors on $\R^d$ and $S_n = \sum_{i=1}^n Z_i$.
For each $n \in \N$ and all $t \in [0,1]$, define
\[ X_n (t) :=    S_{\lfloor nt \rfloor} + (nt - \lfloor nt \rfloor ) \left( S_{\lfloor nt \rfloor +1} - S_{\lfloor nt \rfloor} \right)  = S_{\lfloor nt \rfloor} + (nt - \lfloor nt \rfloor ) Z_{\lfloor nt \rfloor +1} .\]
Let $b:=( b(s) )_{s \in [0,1]}$ \label{b} denote standard Brownian motion in $\R^d$, started at $b(0) = 0$.

\begin{lemma}[Donsker's Theorem]
\label{thm:donsker} Let $d \in \N$.
Suppose that $\Exp (\| Z_1 \|^2) <\infty$,
$\| \Exp Z_1 \| = 0$, and $\Exp [ Z_1  Z_1^\top ] = \Sigma$ . Then, as $n \to \infty$,
\[ n^{-1/2} X_n \Rightarrow \Sigma^{1/2}b, \]
in the sense of weak convergence on $(\cC_d^0 , \rho_\infty )$.
\end{lemma}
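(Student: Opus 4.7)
The plan is to apply the standard two-step characterisation of weak convergence on $(\cC_d^0, \rho_\infty)$: establish (i) convergence of finite-dimensional distributions of $n^{-1/2} X_n$ to those of $\Sigma^{1/2} b$, and (ii) tightness of the family $\{n^{-1/2} X_n\}_{n \geq 1}$ in $\cC_d^0$. Together these imply the claimed weak convergence, by the classical Prohorov argument.

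For (i), fix $0 = t_0 < t_1 < \cdots < t_k \leq 1$ and decompose
\[ n^{-1/2} X_n(t_j) = n^{-1/2} S_{\lfloor n t_j \rfloor} + n^{-1/2} (n t_j - \lfloor n t_j \rfloor) Z_{\lfloor n t_j \rfloor + 1} . \]
Since $\Exp \| Z_1 \|^2 < \infty$, Chebyshev's inequality shows the second term tends to $\0$ in probability, so by Slutsky's theorem (Lemma \ref{slutsky}) it may be discarded. The successive increments $n^{-1/2}(S_{\lfloor n t_j \rfloor} - S_{\lfloor n t_{j-1} \rfloor})$, $j = 1, \ldots, k$, are mutually independent by the i.i.d. assumption on $(Z_i)$, and the multivariate CLT (Lemma \ref{Mult CLT}) gives convergence of each to an independent centred Gaussian with covariance $(t_j - t_{j-1}) \Sigma$. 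Combined with Lemma \ref{linear transformation}, this matches the finite-dimensional law of $\Sigma^{1/2} b$ at the times $t_1, \ldots, t_k$, yielding (i).

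For (ii), I would invoke the classical characterisation that tightness in $\cC_d^0$ is equivalent to the vanishing modulus-of-continuity condition: for every $\eta, \varepsilon > 0$ there is $\delta > 0$ with
\[ \limsup_{n \to \infty} \Pr \Bigl( \sup_{\substack{s, t \in [0,1] \\ |s-t| \leq \delta}} \| n^{-1/2} X_n(s) - n^{-1/2} X_n(t) \| > \eta \Bigr) < \varepsilon . \]
Partitioning $[0,1]$ into $\lceil 1/\delta \rceil$ subintervals of length $\delta$ and using stationarity of the increments, the oscillation of the piecewise-linear process $n^{-1/2} X_n$ on each subinterval is stochastically dominated by $n^{-1/2} \max_{0 \leq j \leq \lceil \delta n \rceil} \| S_j \|$. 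Doob's maximal inequality (Lemma \ref{Doob}) applied to the centred $L^2$-martingale $(S_m)$ controls this maximum in $L^2$; a union bound over the $O(1/\delta)$ subintervals, sharpened if necessary via a L\'evy/Etemadi-type inequality to obtain the correct decay as $\delta \downarrow 0$, then yields the required estimate.

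The main obstacle is step (ii): while (i) is near-routine once the interpolation error is dispatched, tightness demands a genuine path-space control of oscillations under only a second-moment hypothesis. The subtlety is that a naive union bound coming from Doob is not small in $\delta$, so the maximal inequality must be applied in a way that exploits the stationarity of the increments across subintervals to produce a $\delta$-small final bound. Once both ingredients are in hand, the weak convergence $n^{-1/2} X_n \Rightarrow \Sigma^{1/2} b$ in $(\cC_d^0, \rho_\infty)$ follows from the Prohorov theorem.
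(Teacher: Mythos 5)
The paper does not prove this statement at all: it is quoted in Chapter 2 as a classical prerequisite (Donsker's invariance principle), so there is no in-paper argument to compare yours against. Your outline is the standard textbook route (finite-dimensional convergence plus tightness, then Prohorov), and part (i) is fine: the interpolation term is $O(n^{-1/2}\|Z_{\lfloor nt\rfloor+1}\|)$, which vanishes in probability under a second moment, and the independent-increments decomposition together with the multivariate CLT identifies the finite-dimensional laws of $\Sigma^{1/2}b$.

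The gap is in step (ii), and you have half-diagnosed it yourself. The per-block oscillation probability must be $o(\delta)$, not merely $O(\delta)$, for the union bound over $\lceil 1/\delta\rceil$ blocks to vanish as $\delta\downarrow 0$. Doob's $L^2$ maximal inequality gives $\Pr\bigl(n^{-1/2}\max_{j\leq \delta n}\|S_j\|>\eta\bigr)\leq C\delta\sigma^2/\eta^2$, which is only $O(\delta)$; and an Etemadi or Ottaviani inequality followed by Chebyshev gives the same order, so the tools you name do not by themselves ``produce a $\delta$-small final bound'' no matter how the stationarity is exploited. The missing ingredient is that the maximal inequality must be combined with the central limit theorem itself: Ottaviani reduces $\Pr(\max_{j\leq m}\|S_j\|\geq\lambda)$ to a constant times $\max_{j \leq m}\Pr(\|S_j\|\geq\lambda/2)$, and with $m=\lceil\delta n\rceil$, $\lambda=\eta\sqrt{n}$ one then lets $n\to\infty$ \emph{first} and uses the CLT to replace the Chebyshev tail by a Gaussian tail $\Pr(\|N\|\geq c\eta/\sqrt{\delta})$, which is $o(\delta^k)$ for every $k$ and so survives division by $\delta$. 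This interchange of limits ($n\to\infty$ before $\delta\to 0$, inside a $\limsup_n$) is the actual content of the tightness proof under a bare second-moment hypothesis (Billingsley's argument), and as written your proposal does not supply it. If you intend to prove the lemma rather than cite it, that estimate needs to be made explicit; otherwise, the honest course --- and the one the thesis takes --- is simply to invoke Donsker's theorem as known.
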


\begin{remark}
Donsker's theorem generalizes the multidimensional central limit theorem (Lemma \ref{Mult CLT})
to a  \emph{functional} central limit theorem,
because weak convergence of paths implies convergence in distribution of the endpoints.
Indeed, taking $t=1$ in Donsker's Theorem, the marginal convergence gives
$$n^{-1/2} X_n(1) = n^{-1/2} S_n \tod \Sigma^{1/2} b(1) .$$
Here by Lemma \ref{linear transformation}, $\Sigma^{1/2} b(1) \sim \cN(0, \Sigma)$ since $b(1) \sim \cN(0,I)$. 
Then we have $n^{-1/2} S_n \tod \cN(0, \Sigma)$, which is Lemma \ref{Mult CLT}. 
\end{remark}

\section{Cauchy formula}

For this section we take $d=2$.
We consider the $\cA: \cK_2 \to \RP$ \label{cA}
and $\cL : \cK_2 \to \RP$ \label{cL} given by the area and the perimeter length of convex compact sets in the plane. Formally,
 we may define 
 \begin{equation}
 \label{eq:L-def}
 \cA (A) := \lambda_2 (A) , ~~\text{and} ~~ \cL (A) := \lim_{r \downarrow 0} \left( \frac{\lambda_2 ( \pi_r (A)) - \lambda_2 (A)}{r} \right),
 \text{ for } A \in \cK_2 .\end{equation}
 The limit in \eqref{eq:L-def} exists by the \emph{Steiner formula} of integral geometry (see e.g.~\cite{schneider-weil}),
 which expresses $\lambda_2 ( \pi_r(A))$ as a quadratic polynomial in $r$ whose coefficients
 are given in terms of the \emph{intrinsic volumes} of $A$:
 \begin{equation}
 \label{eq:steiner}
\lambda_2 ( \pi_r(A)) = \lambda_2 (A) + r \cL (A) + \pi r^2 \1 \{ A \neq \emptyset \} .\end{equation}
 %
 In particular,
 \[ \cL (A) = \begin{cases}\phantom{2} \cH_{1} ( \partial A ) & \text{if } \Int (A) \neq \emptyset , \\
2 \cH_{1} ( \partial A ) & \text{if } \Int (A)  = \emptyset , 
\end{cases} \]
 where $\cH_{d}$ is $d$-dimensional Hausdorff measure on Borel sets. 
 We observe the  translation-invariance and scaling properties
 \[ \cL ( x + \alpha A ) = \alpha \cL (A) , ~~\text{and}~~ \cA ( x+ \alpha A) = \alpha^2 \cA (A) ,\]
 where for $A \in \cK_2$, $x + \alpha A = \{x + \alpha y : y \in A \} \in \cK_2$.

For $A \in \cK_2$, Cauchy obtained the following formula:
\begin{equation}
\label{cauchy0}
 \cL (A) = \int_0^\pi \left( \sup_{\by \in A} ( \by \cdot \be_\theta ) - \inf_{\by \in A} ( \by \cdot \be_\theta ) \right) \ud \theta .
\end{equation}
We will need the following consequence of (\ref{cauchy0}).

\begin{proposition} \label{cauchyhull}
Let $K = \{ \bz_0, \ldots, \bz_n \}$ be a finite point set in $\R^2$, and let $\CC = \hull (K)$.
Then
\begin{equation}
\label{cauchy1}
\cL ( \CC ) = \int_0^\pi \left( \max_{0 \leq i \leq n} ( \bz_i \cdot \be_\theta ) - \min_{0 \leq i \leq n } ( \bz_i \cdot \be_\theta ) \right) \ud \theta.
\end{equation}
\end{proposition}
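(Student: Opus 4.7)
The plan is to derive \eqref{cauchy1} from Cauchy's formula \eqref{cauchy0} applied to $A = \CC$, by showing that the supremum and infimum of the linear functional $\by \mapsto \by \cdot \be_\theta$ over the convex hull $\CC = \hull(K)$ are attained at points of the finite generating set $K$. This is the standard fact that a linear functional on a convex polytope attains its extrema at vertices, and it is the only substantive observation required.

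First I would invoke \eqref{cauchy0} to write
\[ \cL(\CC) = \int_0^\pi \left( \sup_{\by \in \CC} ( \by \cdot \be_\theta ) - \inf_{\by \in \CC} ( \by \cdot \be_\theta ) \right) \ud \theta, \]
and then reduce the integrand to the desired form pointwise in $\theta$. Fix $\theta \in [0,\pi]$. Since $K \subseteq \CC$, one has the trivial bound
\[ \sup_{\by \in \CC} ( \by \cdot \be_\theta ) \geq \max_{0 \leq i \leq n} ( \bz_i \cdot \be_\theta ). \]
For the reverse inequality, any $\by \in \CC = \hull(K)$ admits a representation $\by = \sum_{i=0}^n \lambda_i \bz_i$ with $\lambda_i \geq 0$ and $\sum_i \lambda_i = 1$, whence by linearity of the inner product
\[ \by \cdot \be_\theta = \sum_{i=0}^n \lambda_i ( \bz_i \cdot \be_\theta ) \leq \max_{0 \leq i \leq n} ( \bz_i \cdot \be_\theta ). \]
Taking the supremum over $\by \in \CC$ gives the matching upper bound, so $\sup_{\by \in \CC} ( \by \cdot \be_\theta ) = \max_{0 \leq i \leq n} ( \bz_i \cdot \be_\theta )$. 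The identical argument applied with $-\be_\theta$ in place of $\be_\theta$ (or directly with the reverse inequalities) yields $\inf_{\by \in \CC} ( \by \cdot \be_\theta ) = \min_{0 \leq i \leq n} ( \bz_i \cdot \be_\theta )$.

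Substituting these two identities into Cauchy's formula integrand for each $\theta$ gives exactly \eqref{cauchy1}. There is no real obstacle here; the only thing to be careful about is that Cauchy's formula \eqref{cauchy0} applies to any $A \in \cK_2$, and $\CC = \hull(K)$ is indeed a convex compact set (it is the convex hull of a finite, hence compact, set in $\R^2$), so \eqref{cauchy0} may be invoked directly. The argument also tacitly uses that $\max$ and $\min$ over a finite set are attained, which is immediate.
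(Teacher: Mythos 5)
Your argument is correct, and it is in fact exactly the reduction the paper itself sketches in the discussion immediately preceding its proof: by convexity, a linear functional over $\hull(K)$ attains its extrema on $K$, so \eqref{cauchy0} implies \eqref{cauchy1}. However, the paper then deliberately declines to rest on this, remarking that it wants to keep the presentation self-contained, and instead gives a direct proof of \eqref{cauchy1} that never invokes the general Cauchy formula \eqref{cauchy0}: it reduces to the case where all the $\bz_i$ are extreme points with $\0 \in \CC$, orders them in polar coordinates, partitions $\int_0^{2\pi} \max_i (\bz_i \cdot \be_\theta)\,\ud\theta$ into angular sectors determined by the feet $\hat{\bz}_k$ of perpendiculars from the origin to the edges, and evaluates each sector integral explicitly as the length of the corresponding pair of boundary segments, summing to $\cL(\CC)$. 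The trade-off is clear: your route is shorter and uses only the convex-combination representation of the hull, but it takes \eqref{cauchy0} as given --- a formula the paper states without proof for general $A \in \cK_2$ --- whereas the paper's computation establishes the finite-point-set case from scratch. Both are valid proofs of the proposition; yours transfers the burden to \eqref{cauchy0}, the paper's discharges it directly.
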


In particular, for the case of our random walk, (\ref{cauchy1}) says
\begin{equation}
\label{cauchy_} 
L_n = \cL ( \hull(S_0, \dots, S_n) ) = \int_0^\pi  \left( \max_{0 \leq i \leq n} ( S_i \cdot \be_\theta ) - \min_{0 \leq i \leq n } ( S_i \cdot \be_\theta ) \right) \ud \theta.
\end{equation}
An immediate but useful consequence of (\ref{cauchy_}) is that
\begin{equation}
\label{L_monotone}
L_{n+1} \geq L_n, \as
\end{equation}

In the case where $K$ is a finite point set, $\hull ( K)$
is a convex polygon, the boundary of which contains vertices $\V \subseteq K$
(extreme points of the convex hull) and the line-segment edges connecting them;
note that $\hull (K) = \hull (\V)$.

Now, by convexity,
\[ \sup_{\by \in \CC} ( \by \cdot \be_\theta ) = \max_{0 \leq i \leq n} ( \bz_i \cdot \be_\theta) = \sup_{\by \in \V} ( \by \cdot \be_\theta ) ,\]
and similarly for the infimum. So (\ref{cauchy0}) does indeed imply (\ref{cauchy1}). However, to keep this presentation as self-contained as possible, we give a direct proof of (\ref{cauchy1}) without appealing to the more general result (\ref{cauchy0}).

\begin{proof}[Proof of Proposition \ref{cauchyhull}]
The above discussion shows that it suffices to consider the case where $\V =K$ in which all of the $\bz_i$ are on the boundary of the convex hull. Without loss of generality, suppose that $\0 \in \CC$. Then we may rewrite (\ref{cauchy1}) as
$$\cL(\CC)= \int_0^{2\pi} \max_{0 \leq i \leq n} (\bz_i \cdot \be_{\theta}) \,\ud \theta .$$
Suppose also that $\bz_i = \|\bz_i\|\be_{\theta_i}$ in polar coordinates, labelled so that $0 \leq \theta_0 < \theta_1 < \dots < \theta_n < 2\pi$. Thus starting from the rightmost point of $\partial \CC$ on the horizontal axis and traversing the boundary anticlockwise, one visits the vertices $\bz_0,\bz_1,\dots,\bz_n$ in order.

\begin{figure}[h]
  \centering
	\includegraphics[width=0.85\textwidth]{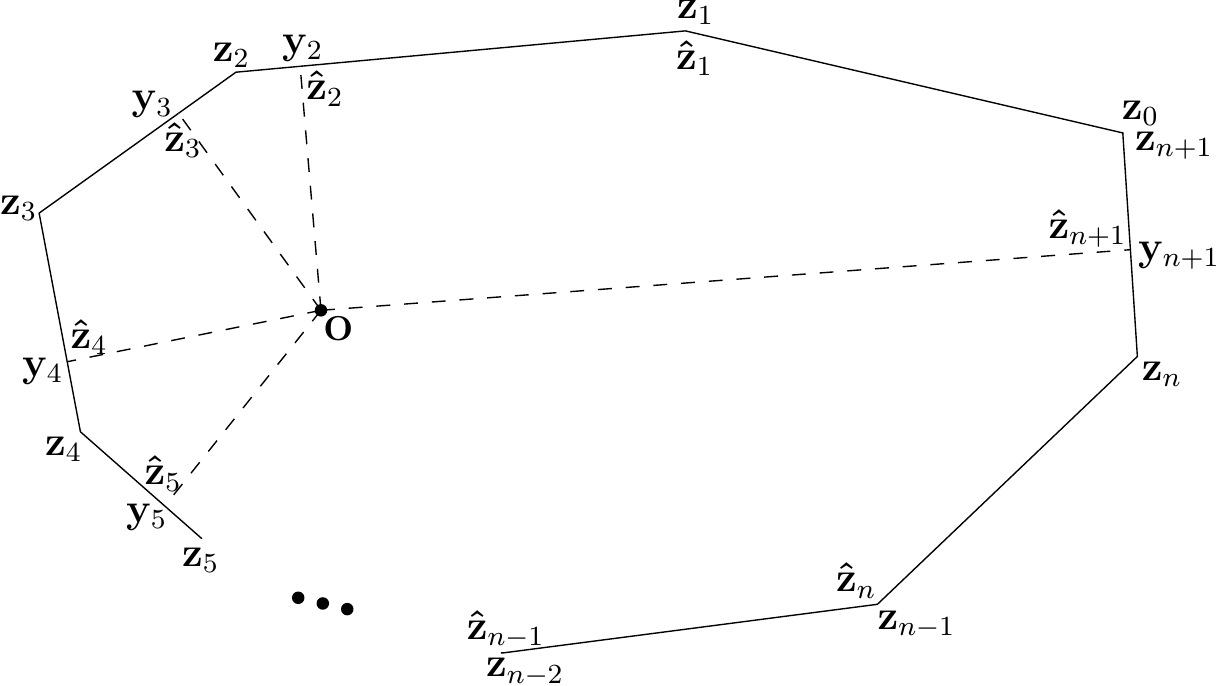}
  \caption{Proof of Proposition \ref{cauchyhull}}
  \label{ipe 1}
\end{figure}

Let $\bz_{n+1}:=\bz_0$. Draw the perpendicular line of $\bz_k - \bz_{k-1}$ passing through point $\0$ and denote the foot as $\by_k$. For $1 \leq k \leq n+1$, let 
\begin{displaymath}
   \hat{\bz}_k := \left\{
     \begin{array}{ll}
       \by_k, & \mbox{if}\ \by_k \in\ \mbox{line segment}\ \overline{\bz_{k-1}\bz_k} \\
       \bz_k, & \mbox{if}\ \by_k \in\ \mbox{extended line of}\ \overrightarrow{\bz_{k-1}\bz_k} \\
       \bz_{k-1}, & \mbox{if}\ \by_k \in\ \mbox{extended line of}\ \overrightarrow{\bz_{k}\bz_{k-1}}
     \end{array}
   \right.
\end{displaymath}
and let $\hat{\bz}_0 := \hat{\bz}_{n+1}$.
Notice that $\hat{\bz}_1, \dots, \hat{\bz}_{n+1}$ are ordered in the same way as $\bz_0,\dots,\bz_n$ (see Figure \ref{ipe 1}). Therefore, 
$$\partial \CC = \bigcup_{k=0}^{n}\left[(\hat{\bz}_{k+1}-\bz_k) \cup (\bz_k - \hat{\bz}_k )\right] .$$

Write $\hat{\bz}_{i} = \|\hat{\bz}_i\|\be_{\hat{\theta}_i}$ for $0 \leq i \leq n+1$ in the polar coordinates, we have 
$$ \int_0^{2\pi} \max_{0 \leq i \leq n} (\bz_i \cdot \be_{\theta}) \,\ud \theta = \sum_{k=0}^{n} \int_{\hat{\theta}_k}^{\hat{\theta}_{k+1}} \bz_k \cdot \be_{\theta} \,\ud \theta .$$
Consider $\int_{\hat{\theta}_k}^{\hat{\theta}_{k+1}} \bz_k \cdot \be_{\theta} \,\ud \theta$. 
Let $\bz_k := (\alpha_1, \beta_1)$, $\bz_{k+1} := (\alpha_2,\beta_2)$ and $\bz_{k-1} := (\alpha_0,\beta_0)$. Without loss of generality, we can set $\beta_1 = 0$ and $\alpha_1 >0$. Then we have $\beta_2 \geq 0$, $\beta_0 \leq 0$, $0 \leq \hat{\theta}_{k+1} \leq \pi/2$ and $-\pi/2 \leq \hat{\theta}_{k} \leq 0$. 
So,
\begin{align*}
\int_{\hat{\theta}_k}^{\hat{\theta}_{k+1}} \bz_k \cdot \be_{\theta} \,\ud \theta  
= & \int_{\hat{\theta}_k}^{\hat{\theta}_{k+1}}(\alpha_1,0)\cdot(\cos \theta,\sin \theta) \,\ud\theta \\
= & \alpha_1 (\sin \hat{\theta}_{k+1} - \sin \hat{\theta}_k) \\
= & \alpha_1 \left(\frac{\| \hat{\bz}_{k+1}-\bz_k \|}{\alpha_1} - \frac{-\| \bz_k - \hat{\bz}_k \|}{\alpha_1} \right) \\
= & \| \hat{\bz}_{k+1}-\bz_k \| + \| \bz_k - \hat{\bz}_k \| .
\end{align*}
Hence,
$$ \int_0^{2\pi} \max_{0 \leq i \leq n} (\bz_i \cdot \be_{\theta}) \,\ud \theta = \sum_{k=0}^{n} \int_{\hat{\theta}_k}^{\hat{\theta}_{k+1}} \bz_k \cdot \be_{\theta} \,\ud \theta 
= \sum_{k=0}^{n} \left( \| \hat{\bz}_{k+1}-\bz_k \| + \| \bz_k - \hat{\bz}_k \| \right) = L(\CC) . \qedhere$$
\end{proof}

\pagestyle{myheadings} \markright{\sc Chapter 3}

\chapter{Scaling limits for convex hulls}
\label{chapter3}

\section{Overview}
\label{sec:outline}

For some of the results that follow, scaling limit ideas are useful.
Recall that $S_n = \sum_{k=1}^n Z_k$ is the location of our random walk in $\R^2$ after $n$ steps. Write $\cS_n := \{ S_0, S_1, \ldots, S_n \}$. \label{cS_n}
Our strategy to study properties of the random convex set $\hull \cS_n$ (such as $L_n$ or $A_n$)
is to seek a weak limit for 
a suitable scaling of $\hull \cS_n$, which
we must hope to be 
the convex hull of some scaling limit representing the walk $\cS_n$.

In the case of  zero drift ($\mu = 0$) a candidate scaling limit for the walk is readily identified
 in terms  
 of planar Brownian motion. For the case $\mu \neq 0$,  the `usual' approach of
centering and then scaling the walk (to again obtain planar Brownian motion) is not 
 useful in our context, as this transformation
does not act on the convex hull in any sensible way. A better idea is to scale space differently in the direction of $\mu$ and in the
orthogonal direction.

In other words, in either case we consider
$\phi_n (\cS_n)$ for some \emph{affine} continuous scaling function
$\phi_n : \R^2 \to \R^2$. 
The convex hull is preserved under affine transformations, so 
\[ \phi_n ( \hull \cS_n ) = \hull  \phi_n ( \cS_n )  ,\]
the convex hull of a random set 
which will have a weak limit. We will then be able to deduce
scaling limits for quantities $L_n$ and $A_n$ provided, first, that we work in suitable spaces
on which our functionals of interest 
enjoy continuity, so that we can appeal to the continuous mapping theorem for weak limits,
and, second, that $\phi_n$ acts on length and area by simple scaling. The usual $n^{-1/2}$ scaling
when $\mu =0$ is fine; for $\mu \neq 0$ we scale space in one coordinate by $n^{-1}$ and in the other by $n^{-1/2}$,
which acts nicely on area, but \emph{not}  length. Thus these methods work exactly in the three cases
corresponding to \eqref{eq:three_vars}.

In view of the scaling limits that we expect, it is natural to work not with point
sets like $\cS_n$, but with continuous \emph{paths}; instead of $\cS_n$
we consider the interpolating path constructed as follows.
For each $n \in \N$ and all $t \in [0,1]$, define
\[ X_n (t) :=    S_{\lfloor nt \rfloor} + (nt - \lfloor nt \rfloor ) \left( S_{\lfloor nt \rfloor +1} - S_{\lfloor nt \rfloor} \right)  = S_{\lfloor nt \rfloor} + (nt - \lfloor nt \rfloor ) Z_{\lfloor nt \rfloor +1} .\]
Note that $X_n (0) =  S_0$ and $X_n (1) =   S_n$. 
Given $n$, we are interested in the convex hull of the image in $\R^2$ of the interval
$[0,1]$ under 
the continuous function 
$X_n$. Our scaling limits will be of the same form.

\section{Convex hulls of paths}

In this section we study some basic properties of the map from a continuous path to its convex hull.
Let $f \in \cC ([0,T] , \R^d)$. For any $t \in [0,T]$,  $f[0,t]$ is compact, and so
Carath\'eodory's theorem for convex hulls (see Corollary 3.1 of \cite[p.\ 44]{gruber})
shows that $\hull ( f [0,t] )$ is compact. So $\hull ( f [0,t] ) \in \cK_d$ is convex, bounded, and closed; in particular, it is a Borel set.

For reasons that we shall see, it mostly suffices to work with paths parametrized over the interval $[0,1]$.
For $f \in \cC_d$, define
\[ H (f) := \hull \left( f [ 0,1 ] \right) .\] \label{H()}

First we prove continuity of the map $f \mapsto H (f)$.

\begin{lemma}
\label{lem:path-hull}
For any $f, g \in \cC^0_d$, we have
\begin{equation}
\label{eq:H-comparison}
  \rho_H ( H(f) , H(g) ) \leq \rho_\infty ( f,g).\end{equation}
Hence the function $H : ( \cC^0_d , \rho_\infty ) \to ( \cK^0_d , \rho_H )$ is continuous.
\end{lemma}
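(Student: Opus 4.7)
The plan is to exploit one of the equivalent descriptions of the Hausdorff metric recorded in equations \eqref{eq:hausdorff_minkowski} and \eqref{eq:hausdorff_support}. Either approach reduces the inequality \eqref{eq:H-comparison} to a one-line estimate once convexity of $H(f)$ and $H(g)$ is brought in at the right moment; the second conclusion of the lemma then follows immediately, because \eqref{eq:H-comparison} says that $H$ is Lipschitz-continuous with constant $1$, which in particular makes it continuous.

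My preferred route uses the parallel-body characterisation \eqref{eq:hausdorff_minkowski}. Set $r := \rho_\infty (f,g)$, so that $\| f(t) - g(t) \| \leq r$ for every $t \in [0,1]$. First I would note that for each $t$, $f(t) \in \pi_r ( \{ g(t) \} ) \subseteq \pi_r ( g[0,1] ) \subseteq \pi_r ( H(g) )$, and similarly $g(t) \in \pi_r ( H(f) )$. Thus $f[0,1] \subseteq \pi_r ( H(g) )$ and $g[0,1] \subseteq \pi_r ( H(f) )$. The crucial step is then to observe that $\pi_r ( A )$ is convex whenever $A$ is convex (this is immediate from the triangle inequality applied to $\rho(\,\blob\,,A)$), so $\pi_r ( H(g) )$ is a convex set containing $f[0,1]$ and hence contains $\hull ( f[0,1] ) = H(f)$; by symmetry $H(g) \subseteq \pi_r ( H(f) )$. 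Now \eqref{eq:hausdorff_minkowski} gives $\rho_H ( H(f) , H(g) ) \leq r$, which is exactly \eqref{eq:H-comparison}.

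An alternative, arguably cleaner, route goes via the support-function description \eqref{eq:hausdorff_support}. For any $\be \in \Sp_{d-1}$ one has $h_{H(f)} (\be) = \sup_{t \in [0,1]} ( f(t) \cdot \be )$, since the supremum of a linear functional over a set equals the supremum over its convex hull; likewise for $g$. Then by Cauchy--Schwarz,
\[ \bigl| h_{H(f)}(\be) - h_{H(g)}(\be) \bigr| \leq \sup_{t \in [0,1]} | (f(t) - g(t)) \cdot \be | \leq \sup_{t \in [0,1]} \| f(t) - g(t) \| = \rho_\infty (f,g), \]
and taking the supremum over $\be \in \Sp_{d-1}$ and applying \eqref{eq:hausdorff_support} yields \eqref{eq:H-comparison}.

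There is no real obstacle; the only point requiring care is the use of convexity of the parallel body in the first approach (or, equivalently, the identification of $h_{H(f)}$ with $\sup_{t} f(t) \cdot \be$ in the second). The continuity assertion is then immediate: given any sequence $f_n \to f$ in $( \cC^0_d , \rho_\infty )$, the bound \eqref{eq:H-comparison} yields $\rho_H ( H(f_n), H(f) ) \to 0$.
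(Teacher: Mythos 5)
Both of your routes are correct. Your first route is essentially the paper's argument: the paper also reduces \eqref{eq:H-comparison} to the mutual containments $H(f) \subseteq \pi_r(H(g))$ and $H(g) \subseteq \pi_r(H(f))$ with $r = \rho_\infty(f,g)$ and then invokes \eqref{eq:hausdorff_minkowski}. The only difference is the mechanism for the containment: the paper writes an arbitrary $\bx \in H(f)$ as a convex combination $\sum_i \lambda_i f(t_i)$ and pairs it with $\by = \sum_i \lambda_i g(t_i) \in H(g)$, getting $\rho(\bx,\by) \leq \sum_i \lambda_i \rho(f(t_i),g(t_i)) \leq r$ by the triangle inequality; you instead observe that $f[0,1] \subseteq \pi_r(H(g))$ pointwise and then use convexity of the parallel body of a convex set to absorb the hull. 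These are two packagings of the same convexity fact, and both are valid (your claim that $\pi_r(A)$ is convex for convex $A$ is correct, as the sublevel set of the convex function $\bx \mapsto \rho(\bx,A)$). Your second route, via the support-function description \eqref{eq:hausdorff_support} together with the identity $h_{H(f)}(\be) = \sup_{t} (f(t)\cdot\be)$ (which is a special case of Lemma \ref{lem:hull-max}) and the elementary bound $|\sup_t a_t - \sup_t b_t| \leq \sup_t |a_t - b_t|$, is a genuinely different and arguably slicker argument; it has the additional advantage of making the $1$-Lipschitz constant transparent without any appeal to parallel bodies, at the cost of invoking the duality description of $\rho_H$. Either version is a complete proof, and the continuity conclusion follows from the Lipschitz bound exactly as you say.
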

 \begin{proof}
 Let $f, g \in \cC^0_d$. Then $H(f)$ and $H(g)$ are non-empty, as they both contain $f(0) = g(0)=\0$.
Consider  $\bx \in H(f)$. Since the convex hull of a set is the set of all convex combinations of points of the set (see Lemma 3.1 of \cite[p.\ 42]{gruber}), 
there exist a finite positive integer $n$, weights $\lambda_1, \dots ,\lambda_n \geq 0$ with $\sum_{i=1}^n \lambda_i =1$, and $t_1, \dots, t_n \in [0,1]$ for which 
$\bx = \sum_{i=1}^n \lambda_i f(t_i)$.
 Then, taking $\by = \sum_{i=1}^n \lambda_i g(t_i)$, we have that $\by \in H(g)$ and, by the triangle inequality,
 \[ \rho (\bx,\by) = \sum_{i=1}^n \lambda_i \rho( f(t_i) , g(t_i) )
              \leq \rho_\infty (f,g) .\]
             Thus, writing $r = \rho_\infty (f,g)$,
              every $\bx \in H(f)$ has $\bx \in \pi_r ( H (g) )$,
             so $H(f) \subseteq \pi_r ( H (g) )$. The symmetric argument gives
             $H(g) \subseteq \pi_r ( H (f) )$. Thus, by \eqref{eq:hausdorff_minkowski},
              we obtain \eqref{eq:H-comparison}.
 \end{proof}

 Given $f \in \cC_d$, let $E (f) := \ext ( H(f) )$, the extreme points of the convex hull (see \cite[p.\ 75]{gruber}). 
 The set $E(f)$ is the smallest set (by inclusion) that generates $H(f)$ as its convex hull, i.e., for any 
 $A$ for which $\hull (A) = H(f)$, we have $E(f) \subseteq A$; see Theorem 5.5 of \cite[p.\ 75]{gruber}. 
 In particular, $E(f) \subseteq f [0,1]$.
 
 \begin{lemma}
 \label{lem:hull-max}
 Let $f \in \cC_d$.
Let $q : \R^d \to \R$ be continuous and convex. Then $q$ attains its supremum over $H(f)$ at a point of $f$, i.e.,
\[ \sup_{\bx \in H(f)} q (\bx) = \max_{t \in [0,1]} q (f(t)) .\]
 \end{lemma}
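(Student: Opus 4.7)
The plan is to prove the two inequalities separately. The direction $\max_{t\in[0,1]} q(f(t)) \leq \sup_{\bx \in H(f)} q(\bx)$ is immediate because $f(t) \in H(f)$ for every $t \in [0,1]$, and the maximum on the left is attained since $q \circ f$ is continuous on the compact interval $[0,1]$.

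For the reverse inequality, I would avoid invoking extreme points (Bauer's maximum principle) and instead proceed directly from Carath\'eodory's theorem. Let $\bx \in H(f)$. By Carath\'eodory's theorem (as cited in the proof of Lemma~\ref{lem:path-hull}), there exist a finite integer $n$, weights $\lambda_1, \dots, \lambda_n \geq 0$ with $\sum_{i=1}^n \lambda_i = 1$, and times $t_1, \dots, t_n \in [0,1]$ such that $\bx = \sum_{i=1}^n \lambda_i f(t_i)$. Applying convexity of $q$ gives
\[ q(\bx) = q\Bigl(\sum_{i=1}^n \lambda_i f(t_i)\Bigr) \leq \sum_{i=1}^n \lambda_i q(f(t_i)) \leq \max_{1 \leq i \leq n} q(f(t_i)) \leq \max_{t \in [0,1]} q(f(t)). \]
Taking the supremum over $\bx \in H(f)$ yields $\sup_{\bx \in H(f)} q(\bx) \leq \max_{t \in [0,1]} q(f(t))$, completing the argument.

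Combined with the trivial direction, this gives equality; moreover, since $H(f)$ is compact (as noted at the start of the section) and $q$ is continuous, the supremum on the left is in fact attained, and by the chain of inequalities it is attained at some $f(t^*)$, confirming the statement that $q$ attains its supremum over $H(f)$ at a point of $f$. There is no substantive obstacle here: the only ingredients needed are Carath\'eodory's theorem (already quoted), the definition of convexity of $q$, and compactness of $[0,1]$ together with continuity of $q \circ f$.
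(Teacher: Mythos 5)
Your proof is correct, but it takes a different route from the paper. The paper's proof invokes Bauer's maximum principle (Theorem 5.6 of Gruber): a continuous convex function on a compact convex set attains its maximum at an extreme point, and the extreme points $E(f)$ of $H(f)$ satisfy $E(f) \subseteq f[0,1]$; this gives $\sup_{H(f)} q \leq \sup_{f[0,1]} q$ directly. You instead bypass extreme points entirely: you write an arbitrary $\bx \in H(f)$ as a finite convex combination $\sum_i \lambda_i f(t_i)$ and apply finite Jensen's inequality to $q$, giving $q(\bx) \leq \max_i q(f(t_i)) \leq \max_t q(f(t))$. Both are valid. Your argument is more elementary --- it needs only the convex-combination description of the hull (which the paper already quotes in the proof of Lemma \ref{lem:path-hull}) and the definition of convexity, with no appeal to the theory of extreme points; the paper's argument is shorter given that $E(f)$ has already been introduced for other purposes. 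One small attribution quibble: the fact you use is the representation of $\hull(A)$ as the set of all finite convex combinations of points of $A$ (Lemma 3.1 of Gruber, the result actually cited in the proof of Lemma \ref{lem:path-hull}), rather than Carath\'eodory's theorem proper, which additionally bounds the number of points needed by $d+1$; either statement suffices here, but the label you attach to it is slightly off. The closing observation about attainment --- compactness of $[0,1]$ and continuity of $q \circ f$ --- matches the paper's.
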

 \begin{proof}
 Theorem 5.6 of \cite[p.\ 76]{gruber} shows that any continuous convex function on $H(f)$ attains its maximum
 at a point of $E(f)$. Hence, since $E(f) \subseteq f [0,1]$,
 \[ \sup_{\bx \in H(f)} q (\bx) = \sup_{\bx \in E(f)} q (\bx) \leq \sup_{\bx \in f[0,1]} q(\bx) . \]
 On the other hand, $f[0,1] \subseteq H(f)$, so
 $\sup_{\bx \in f[0,1]} q(\bx) \leq \sup_{\bx \in H(f)} q (\bx)$. Hence  
 \[  \sup_{\bx \in H(f)} q (\bx) = \sup_{\bx \in f[0,1]} q(\bx) = \sup_{t \in [0,1]} q(f(t)) .\]
 Since $q \circ f$ is the composition of two continuous functions, it is itself continuous, and so the supremum is attained in the compact set $[0,1]$.
 \end{proof}
 
 For $A \in \cK^0_d$, the \emph{support function} of $A$ is $h_A : \R^d \to \RP$ defined by
 \[ h_A ( \bx) := \sup_{\by \in A} (\bx \cdot \by ) .\]
  For $A \in \cK_2^0$,
 \emph{Cauchy's formula} \eqref{cauchy0} states
\[ \cL (A) = \int_{\SS_1} h_A (\bu) \ud \bu = \int_0^{2\pi} h_A ( \be_\theta ) \ud \theta .\]

 We end this section by showing that the map $t \mapsto \hull ( f[0,t] )$ on $[0,T]$ is continuous if $f$ is continuous on $[0,T]$,
 so that the continuous trajectory $t \mapsto f(t)$ is accompanied by a continuous `trajectory' of its convex hulls. This observation was made
 by El Bachir \cite[pp.~16--17]{elbachir}; we take a different route based on the path space result Lemma \ref{lem:path-hull}. First we need a lemma.

 \begin{lemma}
 \label{lem:path-stretch}
 Let $T >0$ and $f \in \cC ( [0,T] ; \R^d)$. Then the map defined for $t \in [0,T]$ by 
$t \mapsto  g_t$, where $g_t : [0,1] \to \R^d$ is given by $g_t (s) = f (t s)$, $s \in [0,1]$, is
a continuous function from $([0,T], \rho)$ to $( \cC_d, \rho_\infty )$.
 \end{lemma}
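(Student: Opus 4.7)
The plan is to exploit the uniform continuity of $f$ on the compact interval $[0,T]$. Since $f : ([0,T], \rho) \to (\R^d, \rho)$ is continuous on a compact domain, the preliminaries recalled earlier guarantee that $f$ is uniformly continuous and admits a monotone modulus of continuity $\mu_f : \RP \to \RP$ with $\mu_f(\delta) \downarrow 0$ as $\delta \downarrow 0$, such that $\|f(u_1) - f(u_2)\| \leq \mu_f(|u_1 - u_2|)$ for all $u_1, u_2 \in [0,T]$.

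Given $t_1, t_2 \in [0,T]$, I would estimate $\rho_\infty(g_{t_1}, g_{t_2})$ directly. For any $s \in [0,1]$ we have $t_1 s, t_2 s \in [0,T]$ and $|t_1 s - t_2 s| = s |t_1 - t_2| \leq |t_1 - t_2|$, so
\[ \| g_{t_1}(s) - g_{t_2}(s) \| = \| f(t_1 s) - f(t_2 s) \| \leq \mu_f ( s |t_1 - t_2| ) \leq \mu_f(|t_1 - t_2|), \]
using monotonicity of $\mu_f$ in the last step. Taking the supremum over $s \in [0,1]$ yields
\[ \rho_\infty ( g_{t_1}, g_{t_2} ) \leq \mu_f ( |t_1 - t_2 | ). \]
Since $\mu_f(\delta) \to 0$ as $\delta \to 0$, the map $t \mapsto g_t$ is continuous (in fact uniformly continuous) as a function from $([0,T], \rho)$ to $(\cC_d, \rho_\infty)$.

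There is no real obstacle here; the only subtlety worth checking is that each $g_t$ indeed lies in $\cC_d$, i.e.\ is a continuous map from $[0,1]$ to $\R^d$, which follows because $s \mapsto ts$ is continuous from $[0,1]$ into $[0,T]$ and $f$ is continuous. The use of the uniform modulus of continuity is essential: a merely pointwise continuity argument would not yield uniform control in $s$.
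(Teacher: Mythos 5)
Your proof is correct and follows essentially the same route as the paper: both arguments rest on the uniform continuity of $f$ on the compact interval $[0,T]$ and the monotone modulus of continuity $\mu_f$, yielding the identical key bound $\rho_\infty(g_{t_1}, g_{t_2}) \leq \mu_f(|t_1 - t_2|)$. The only (immaterial) difference is that you justify $g_t \in \cC_d$ by composition of continuous maps, whereas the paper re-uses the modulus of continuity for that step as well.
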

 \begin{proof}
 First we fix $t \in [0,T]$ and show that $s \mapsto g_t (s)$ is continuous, so that $g_t \in \cC_d$ as claimed.
 Since $f$ is continuous on the compact interval $[0,T]$, it is uniformly continuous,
 and admits  
 a monotone modulus of continuity $\mu_f$. Hence
 \[ \rho ( g_t (s_1) , g_t (s_2) ) = \rho ( f(ts_1) , f(ts_2) ) \leq \mu_f ( \rho (ts_1 , ts_2)) = \mu_f ( t \rho (s_1, s_2 ) ) ,\]
 which tends to $0$ as $\rho(s_1, s_2) \to 0$. 
 Hence $g_t \in \cC_d$.

It remains to show that $t \mapsto g_t$ is continuous. But on $\cC_d$,
\begin{align*} \rho_\infty ( g_{t_1}, g_{t_2} ) & = \sup_{s \in [0,1]} \rho ( f(t_1 s) , f(t_2 s) ) \\
& \leq \sup_{s \in [0,1]} \mu_f ( \rho (t_1 s, t_2 s) ) \\
& \leq \mu_f ( \rho ( t_1, t_2 ) ) ,\end{align*}
which tends to $0$ as $\rho (t_1, t_2) \to 0$, again using the uniform continuity of $f$.
\end{proof}

Here is the path continuity result for convex hulls of continuous paths; cf \cite[p.~16--17]{elbachir}.

\begin{corollary}
\label{cor:point-hull}
Let $T >0$ and $f \in \cC^0 ( [0,T] ; \R^d)$ with $f(0) = \0$. Then the map defined for $t \in [0,T]$ by 
$t \mapsto  \hull ( f[0,t] )$ is
a continuous function from $([0,T], \rho)$ to $( \cK^0_d, \rho_H )$.
\end{corollary}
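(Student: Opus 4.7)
The plan is to factor the map $t \mapsto \hull(f[0,t])$ through the path space $\cC_d^0$, so that it becomes the composition of two maps whose continuity has already been established: the path-stretching map from Lemma \ref{lem:path-stretch}, and the convex-hull map $H$ from Lemma \ref{lem:path-hull}.

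First, for each $t \in [0,T]$ define $g_t : [0,1] \to \R^d$ by $g_t(s) = f(ts)$, as in Lemma \ref{lem:path-stretch}. The key observation is the set-theoretic identity $g_t[0,1] = f[0,t]$, which holds because the affine map $s \mapsto ts$ is a continuous surjection of $[0,1]$ onto $[0,t]$. Therefore $\hull(f[0,t]) = \hull(g_t[0,1]) = H(g_t)$. Moreover $g_t(0) = f(0) = \0$, so $g_t \in \cC_d^0$ and $H(g_t) \in \cK_d^0$.

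Second, Lemma \ref{lem:path-stretch} gives that $t \mapsto g_t$ is continuous from $([0,T], \rho)$ to $(\cC_d, \rho_\infty)$; since its image lies in $\cC_d^0$, this map is also continuous as a map into $(\cC_d^0, \rho_\infty)$. Composing with the map $H : (\cC_d^0, \rho_\infty) \to (\cK_d^0, \rho_H)$, which is continuous by Lemma \ref{lem:path-hull}, yields continuity of $t \mapsto H(g_t) = \hull(f[0,t])$, as required.

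I do not anticipate any real obstacle here: all the analytic work sits in Lemmas \ref{lem:path-hull} and \ref{lem:path-stretch}, and the only thing left to verify is the elementary identity $g_t[0,1] = f[0,t]$ together with the observation $\0 \in \hull(f[0,t])$. If one preferred a direct proof avoiding Lemma \ref{lem:path-stretch}, the same estimate could be recovered in one line from the uniform continuity of $f$ on $[0,T]$: for $0 \le t_1 \le t_2 \le T$, every point of $f[0,t_2]$ is within distance $\mu_f(t_2 - t_1)$ of some point of $f[0,t_1]$, so by \eqref{eq:hausdorff_minkowski} the Hausdorff distance between the two hulls is at most $\mu_f(\rho(t_1, t_2))$, which tends to $0$ with $\rho(t_1, t_2)$.
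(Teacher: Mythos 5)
Your proof is correct and follows essentially the same route as the paper: identify $\hull(f[0,t])$ with $H(g_t)$ via the identity $g_t[0,1]=f[0,t]$, then compose the continuity of $t\mapsto g_t$ (Lemma \ref{lem:path-stretch}) with that of $H$ (Lemma \ref{lem:path-hull}). No gaps.
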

\begin{proof}
By Lemma \ref{lem:path-stretch}, $t \mapsto g_t$ is continuous, where $g_t(s) = f(ts)$, $s \in [0,1]$. Note that,
since $f(0)=\0$, $g_t \in \cC_d^0$.
But the sets $f [0,t]$ and $g_t [0,1]$ coincide, so $\hull ( f [0,t] )  = H (g_t)$, and, by Lemma \ref{lem:path-hull}, $g_t \mapsto H(g_t)$ is continuous.
Thus $t \mapsto H(g_t)$ is the composition of two continuous functions, hence itself a continuous function:
\[ \begin{array}{ccccc}
[0,T] & \longrightarrow & \cC^0_d & \longrightarrow & \cK^0_d \\
t & \mapsto & g_t & \mapsto & H(g_t) 
 \end{array} \qedhere \]
\end{proof}

Recall definitions of the functionals for perimeter length $\cL$ and area $\cA$ in \eqref{eq:L-def}. 
We give the following inequalities in the metric spaces.
\begin{lemma}
\label{lem:functional-continuity}
Suppose that $A, B \in \cK^0_2$. Then
 \begin{align}
\label{eq:L-comparison}
  \rho ( \cL(A) , \cL(B) ) & \leq 2 \pi \rho_H (A,B) ;\\
  \label{eq:A-comparison}
  \rho ( \cA(A) , \cA(B) ) & \leq \pi   \rho_H (A,B)^2 +  ( \cL(A) \vee \cL(B) ) \rho_H (A,B)  .
  \end{align}
Hence, the functions $\cL$ and $\cA$ are both continuous from $(\cK^0_2 , \rho_H )$ to $( \RP , \rho)$.
\end{lemma}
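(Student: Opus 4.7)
The plan is to prove the two inequalities separately, each time reducing the bound on the given functional to the Hausdorff distance via a formula from integral geometry that has been recalled earlier in the chapter.

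For the perimeter bound \eqref{eq:L-comparison}, I would invoke Cauchy's formula \eqref{cauchy0}, which expresses $\cL(A)$ as the integral of the support function $h_A(\be_\theta)$ over $\theta \in [0,2\pi]$. Subtracting the analogous expression for $\cL(B)$ and taking absolute values inside the integral gives
\[ \bigl| \cL(A) - \cL(B) \bigr| \leq \int_0^{2\pi} \bigl| h_A(\be_\theta) - h_B(\be_\theta) \bigr| \ud \theta , \]
which is at most $2\pi \sup_{\bu \in \Sp_1} |h_A(\bu) - h_B(\bu)|$. The support-function characterisation \eqref{eq:hausdorff_support} identifies this supremum with $\rho_H(A,B)$, and the claim follows.

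For the area bound \eqref{eq:A-comparison}, I would use the Minkowski-style characterisation \eqref{eq:hausdorff_minkowski} together with the Steiner formula \eqref{eq:steiner}. Writing $r = \rho_H(A,B)$, we have $A \subseteq \pi_r(B)$ and $B \subseteq \pi_r(A)$, hence by monotonicity of Lebesgue measure and \eqref{eq:steiner},
\[ \cA(A) \leq \lambda_2(\pi_r(B)) = \cA(B) + r \cL(B) + \pi r^2 , \]
and symmetrically with the roles of $A$ and $B$ swapped (both sets are non-empty, as they contain $\0$). Combining the two inequalities yields
\[ \bigl| \cA(A) - \cA(B) \bigr| \leq \pi r^2 + r \bigl( \cL(A) \vee \cL(B) \bigr) ,\]
which is \eqref{eq:A-comparison}.

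Continuity of $\cL$ is immediate from \eqref{eq:L-comparison}. For $\cA$, continuity at a fixed $A \in \cK_2^0$ follows from \eqref{eq:A-comparison} together with the fact that $\cL(B)$ is bounded in a Hausdorff neighbourhood of $A$, which is itself a consequence of \eqref{eq:L-comparison}. There is no real obstacle here: the statement is essentially a packaging of Cauchy's formula, the two equivalent descriptions of the Hausdorff metric, and the Steiner expansion, all of which have already been set up in the chapter. The only point to watch is to invoke \eqref{eq:steiner} correctly (in particular that both sets are non-empty so that the $\pi r^2$ term is present), and to use the maximum rather than one specific perimeter on the right-hand side of \eqref{eq:A-comparison}.
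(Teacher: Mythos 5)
Your proof is correct and follows essentially the same route as the paper: Cauchy's formula plus the support-function description \eqref{eq:hausdorff_support} for the perimeter bound, and the Minkowski description \eqref{eq:hausdorff_minkowski} combined with the Steiner formula \eqref{eq:steiner} (applied symmetrically) for the area bound. The remarks on continuity, including the local boundedness of $\cL$ needed for $\cA$, are also consistent with the paper's argument.
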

\begin{proof}
First consider $\cL$. By Cauchy's formula,
\begin{align*}
\left| \cL (A) - \cL(B) \right|   & = \left| \int_{\SS_1} \left( h_{A} ( \bu ) - h_{B} ( \bu ) \right) \ud \bu \right| \\
& \leq \int_{\SS_1} \sup_{\bu \in \SS_1} \left| h_{A} ( \bu ) - h_{B} ( \bu ) \right| \ud \bu   = 2 \pi \rho_H ( A , B ) ,\end{align*}
by the triangle inequality and then \eqref{eq:hausdorff_support}. This gives \eqref{eq:L-comparison}.

Now consider $\cA$. Set $r = \rho_H (A,B)$. Then, by \eqref{eq:hausdorff_minkowski}, $A \subseteq \pi_r (B)$.
Hence
\[ \cA (A) \leq \cA ( \pi_r (B) ) \leq \cA(B) + r \cL (B) + \pi r^2 ,\]
by \eqref{eq:steiner}. With the analogous argument starting from $B \subseteq \pi_r (A)$, we get \eqref{eq:A-comparison}.
\end{proof}

\section{Brownian convex hulls as scaling limits}
\label{sec:Brownian-hulls}

Now we return to considering the random walk $S_n = \sum_{k=1}^n Z_k$ in $\R^2$.
The two different scalings outlined in Section \ref{sec:outline}, for the cases $\mu =0$ and $\mu \neq 0$,
lead to different scaling limits for the random walk. Both are associated with Brownian motion.

In the case $\mu =0$, the scaling limit is the usual planar Brownian motion, at least when $\Sigma = I$, the identity matrix.
Let $b:=( b(s) )_{s \in [0,1]}$ denote standard Brownian motion in $\R^2$, started at $b(0) = 0$.  
For convenience we may assume $b \in \cC_2^0$ (we can work on a probability space for which continuity holds for all sample points, rather than merely almost all).
For $t \in [0,1]$, let 
\begin{equation} \label{h_t}
h_t := \hull   b[0,t]   \in \cK_2^0
\end{equation}
denote the convex hull of the Brownian path up to time $t$.
By Corollary \ref{cor:point-hull}, $t \mapsto h_t$ is continuous. Much is known about the properties of $h_t$: see e.g.\ 
\cite{chm,elbachir,evans,klm}.
We also set
\begin{equation} \label{eqn:def of Lt At for BM}
 \ell_t := \cL ( h_t ) , ~~\text{and}~~ a_t := \cA ( h_t ) ,
\end{equation} 
the perimeter length and area of the standard Brownian convex hull.
By Lemma \ref{lem:functional-continuity},
the processes $t \mapsto \ell_t$ and $t \mapsto a_t$ also have continuous sample paths.

We also need to work with the case of general covariances $\Sigma$;
to do so we introduce more notation and recall some facts about multivariate Gaussian random vectors.
For definiteness, we view vectors as Cartesian column vectors when required.
Since $\Sigma$ is positive semidefinite and symmetric, 
there is a (unique) positive semidefinite symmetric matrix square-root $\Sigma^{1/2}$ \label{Sigma^1/2}
for which $\Sigma = (\Sigma^{1/2} )^2$.
The   map $x \mapsto \Sigma^{1/2} x$ associated with $\Sigma^{1/2}$ is a linear transformation on $\R^2$
with Jacobian $\det \Sigma^{1/2} = \sqrt{ \det \Sigma}$; 
hence  $\leb ( \Sigma^{1/2} A ) =  \leb (A)  \sqrt{ \det \Sigma }$
for any measurable $A \subseteq \R^2$. 

If $W \sim \cN (0, I)$, then  by Lemma \ref{linear transformation}, $\Sigma^{1/2} W \sim \cN (0, \Sigma)$,
a bivariate normal distribution with mean $0$
and covariance $\Sigma$; the notation permits $\Sigma =0$,
in which case $\cN(0,0)$ stands for the degenerate
normal distribution with point mass at $0$. Similarly, given $b$ a standard Brownian motion on $\R^2$, the diffusion $\Sigma^{1/2} b$
is \emph{correlated} planar Brownian motion with covariance matrix $\Sigma$. 
Recall that `$\Rightarrow$' (see Section \ref{sec:CMT and Donsker}) indicates weak convergence.

\begin{theorem}
\label{thm:limit-zero} 
 Suppose that $\Exp ( \| Z_1\|^2 ) < \infty$ and  $\mu =0$.
Then, as $n \to \infty$,
   \[ n^{-1/2} \hull \{ S_0, S_1, \ldots, S_n \} \Rightarrow \Sigma^{1/2} h_1 , \]
   in the sense of weak convergence on $(\cK_2^0 , \rho_H )$.
  \end{theorem}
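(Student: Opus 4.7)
The plan is to deduce the statement from Donsker's invariance principle (Lemma \ref{thm:donsker}) combined with the continuous mapping theorem (Lemma \ref{continuous mapping}) applied to the path-to-hull functional $H$ whose continuity was already established in Lemma \ref{lem:path-hull}. Concretely, I would introduce the rescaled interpolating path $Y_n := n^{-1/2} X_n$, which, since $\mu = \0$ and $\Exp[\|Z_1\|^2] < \infty$, satisfies
\[ Y_n \Rightarrow \Sigma^{1/2} b \]
on $(\cC_2^0 , \rho_\infty)$ by Lemma \ref{thm:donsker}.

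Next I would identify $H(Y_n)$ with $n^{-1/2} \hull\{S_0,\ldots,S_n\}$. This requires two elementary observations: first, since $X_n$ is the piecewise-linear interpolation through the points $S_0, S_1,\ldots,S_n$, each line segment $\{\alpha S_k + (1-\alpha)S_{k+1} : \alpha \in [0,1]\}$ lies inside $\hull\{S_0,\ldots,S_n\}$, and conversely the vertices $S_k$ all lie on the image $X_n[0,1]$; hence $\hull(X_n[0,1]) = \hull\{S_0,\ldots,S_n\}$. Second, the map $\bx \mapsto n^{-1/2}\bx$ is linear and therefore commutes with $\hull$, so
\[ H(Y_n) = \hull\bigl(n^{-1/2} X_n[0,1]\bigr) = n^{-1/2} \hull\{S_0,\ldots,S_n\}.\]

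With this identification in hand, I would apply the continuous mapping theorem. Lemma \ref{lem:path-hull} tells us that $H : (\cC_2^0,\rho_\infty) \to (\cK_2^0,\rho_H)$ is continuous (in fact $1$-Lipschitz). Therefore
\[ n^{-1/2} \hull\{S_0,\ldots,S_n\} = H(Y_n) \Rightarrow H\bigl(\Sigma^{1/2} b\bigr) \]
in $(\cK_2^0,\rho_H)$. Finally, since $\Sigma^{1/2}$ is linear,
\[ H(\Sigma^{1/2} b) = \hull\bigl(\Sigma^{1/2} b[0,1]\bigr) = \Sigma^{1/2}\hull(b[0,1]) = \Sigma^{1/2} h_1, \]
which completes the proof.

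I do not expect any serious obstacle here: the only small subtleties are the two routine verifications that $\hull$ of a polygonal path coincides with $\hull$ of its vertices and that $\hull$ commutes with the linear scaling $\bx\mapsto n^{-1/2}\bx$ (and, for the identification of the limit, with $\Sigma^{1/2}$). All the heavy lifting — the invariance principle and the continuity of $H$ with respect to $\rho_\infty$ and $\rho_H$ — has already been done in Lemmas \ref{thm:donsker} and \ref{lem:path-hull}.
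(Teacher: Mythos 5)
Your proposal is correct and follows essentially the same route as the paper's own proof: Donsker's theorem gives $n^{-1/2}X_n \Rightarrow \Sigma^{1/2}b$, the identification $H(n^{-1/2}X_n) = n^{-1/2}\hull\{S_0,\ldots,S_n\}$ follows since the hull of the polygonal path equals the hull of its vertices and commutes with affine maps, and the continuous mapping theorem with Lemma \ref{lem:path-hull} completes the argument. No gaps.
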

\begin{proof}
Donsker's theorem (see Lemma \ref{thm:donsker})
implies that $n^{-1/2} X_n \Rightarrow \Sigma^{1/2} b$
on $(\cC_2^0 , \rho_\infty )$. 
Now, the point set $X_n [0,1]$ is the union of the line segments
$\{  S_{k} +\theta (S_{k+1} - S_k)  : \theta \in [0,1] \}$
over $k=0,1,\ldots, n-1$. Since the convex hull is preserved under  affine transformations,   
\[ 
H ( n^{-1/2} X_n ) =
 n^{-1/2} H (X_n) = n^{-1/2} \hull \{ S_0, S_1, \ldots, S_n \}  .\]
By Lemma \ref{lem:path-hull}, $H$ is continuous, and so the continuous mapping theorem 
(see Lemma \ref{continuous mapping}) implies that 
$$n^{-1/2} \hull \{ S_0, S_1, \ldots, S_n \} \Rightarrow H ( \Sigma^{1/2} b ) \text{ on } (\cK_2^0 , \rho_H ) .$$
Finally, invariance of the convex hull under affine transformations shows $H (\Sigma^{1/2} b ) = \Sigma^{1/2} H (b) = \Sigma^{1/2} h_1$.
\end{proof}

Theorem \ref{thm:limit-zero} together with the continuous mapping theorem and Lemma \ref{lem:functional-continuity}
implies the following distributional limit results in the case $\mu =0$. Recall that `$\tod$' (see Section \ref{sec:convergence of random variables}) denotes
convergence in distribution for $\R$-valued random variables.

\begin{corollary}
\label{cor:zero-limits}
 Suppose that $\Exp ( \| Z_1\|^2 ) < \infty$ and 
$\mu =0$. 
Then, as $n \to \infty$,
   \[ n^{-1/2} L_n \tod \cL ( \Sigma^{1/2} h_1 ) , ~~\text{and} ~~ n^{-1 } A_n \tod  \cA ( \Sigma^{1/2} h_1  ) = a_1 \sqrt{\det \Sigma} .\]
\end{corollary}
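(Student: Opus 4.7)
The plan is to combine the three ingredients already in hand: the weak convergence of the scaled hulls from Theorem \ref{thm:limit-zero}, the continuity of the functionals $\cL$ and $\cA$ from Lemma \ref{lem:functional-continuity}, and the continuous mapping theorem (Lemma \ref{continuous mapping}). The only extra work is bookkeeping the scaling so that $n^{-1/2}$ and $n^{-1}$ appear in the right places on the random walk side.

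First, I would apply Theorem \ref{thm:limit-zero}, which gives $n^{-1/2} \hull \cS_n \Rightarrow \Sigma^{1/2} h_1$ on $(\cK_2^0, \rho_H)$. Then, since Lemma \ref{lem:functional-continuity} shows that both $\cL : (\cK_2^0, \rho_H) \to (\RP, \rho)$ and $\cA : (\cK_2^0, \rho_H) \to (\RP, \rho)$ are continuous, the continuous mapping theorem gives
\[ \cL ( n^{-1/2} \hull \cS_n ) \tod \cL ( \Sigma^{1/2} h_1 ) , \quad \cA ( n^{-1/2} \hull \cS_n ) \tod \cA ( \Sigma^{1/2} h_1 ) . \]
Next I would invoke the homogeneity of $\cL$ and $\cA$ under dilations recorded in Section 2.9, namely $\cL ( \alpha A ) = \alpha \cL (A)$ and $\cA ( \alpha A ) = \alpha^2 \cA (A)$ for $\alpha > 0$. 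Taking $\alpha = n^{-1/2}$ and $A = \hull \cS_n$ converts the left-hand sides above into $n^{-1/2} L_n$ and $n^{-1} A_n$, respectively, which yields the two convergence statements.

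Finally, for the identity $\cA ( \Sigma^{1/2} h_1 ) = a_1 \sqrt{ \det \Sigma }$, I would use that $\cA = \lambda_2$ on convex sets together with the change-of-variables formula: the linear map $\bx \mapsto \Sigma^{1/2} \bx$ has Jacobian determinant $\det \Sigma^{1/2} = \sqrt{\det \Sigma}$, so $\lambda_2 ( \Sigma^{1/2} A ) = \sqrt{\det \Sigma}\, \lambda_2 (A)$ for any measurable $A \subseteq \R^2$. Applied to $A = h_1$ this gives the stated equality.

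I do not anticipate any real obstacle here: the entire argument is an assembly of previously proven facts. The only step that requires any attention is making sure that the scaling factors come out right and that the area identity is justified via the Jacobian formula, rather than, say, through a further appeal to the continuous mapping theorem applied separately to the linear map $\Sigma^{1/2}$.
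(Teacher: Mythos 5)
Your argument is correct and is exactly the route the paper takes: the corollary is stated as an immediate consequence of Theorem \ref{thm:limit-zero}, the continuous mapping theorem, and Lemma \ref{lem:functional-continuity}, with the scaling homogeneity of $\cL$ and $\cA$ and the Jacobian identity $\leb(\Sigma^{1/2}A)=\sqrt{\det\Sigma}\,\leb(A)$ (recorded in Section 3.3) supplying the remaining bookkeeping. Your write-up simply makes explicit the steps the paper leaves implicit.
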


\begin{remark}
Recall that $a_1 = \cA(h_1)$ is the area of the standard 2-dimensional Brownian convex hull run for unit time.
The distributional limits for $ n^{-1/2} L_n$ and $ n^{-1 } A_n$
in Corollary \ref{cor:zero-limits} are supported on $\RP$ and, as we will show in  Proposition \ref{prop:var_bounds u0} and Proposition \ref{prop:var_bounds v0 v+} below, are non-degenerate if $\Sigma$ is positive definite;
hence they are \emph{non-Gaussian} excluding trivial cases.
\end{remark}

In the case $\mu \neq 0$,  the scaling limit 
can be viewed as a space-time trajectory of one-dimensional Brownian motion. Let $w:=( w(s) )_{s \in [0,1]}$ \label{w} denote standard Brownian motion in $\R$, started at $w(0) = 0$;
similarly to above, we may take $w \in \cC_1^0$.
Define $\tilde b   \in \cC_2^0$ in Cartesian coordinates via
\[ \tilde b (s) =  (  s , w(s) ) , ~ \text{for } s \in [0,1]; \] \label{tilde b}
thus $\tilde b [0,1]$ is the space-time diagram of one-dimensional Brownian motion run for unit time.
For $t \in [0,1]$, let $\tilde h_t := \hull \tilde b [0,t] \in \cK_2^0$, \label{tilde h_t} 
and define $\tilde a_t := \cA ( \tilde h_t )$. \label{tilde a_t}
(Closely related to $\tilde h_t$ is the greatest \emph{convex minorant} of $w$ over $[0,t]$, which is
of interest in its own right, see e.g.~\cite{pitman-ross} and references therein.)
  
Suppose $\mu \neq 0$ and $\sperp \in (0,\infty)$. 
Given $\mu \in \R^2 \setminus \{ 0 \}$,
let $\hat \mu_\perp$ be the unit vector perpendicular   to  $\mu$ obtained by rotating $\hat \mu$ by $\pi/2$ anticlockwise.
For $n \in \N$, define $\psi^\mu_n : \R^2 \to \R^2$ by the image of $x \in \R^2$ in Cartesian components:
\[ \psi^\mu_n ( x ) = \left( \frac{x \cdot \hat \mu}{n \| \mu \|  } , \frac{x \cdot \hat \mu_\perp}{ \sqrt { n \sperp }  } \right) .\]
In words, $\psi^\mu_n$ rotates $\R^2$, mapping $\hat \mu$ to the unit vector in the horizontal direction,
and then scales space with a horizontal shrinking factor $\| \mu \| n$ and a vertical factor $ \sqrt { n \sperp } $; 
see Figure \ref{fig rotate} for an illustration.

\begin{figure}[t]
\center
\includegraphics[width=0.9\textwidth]{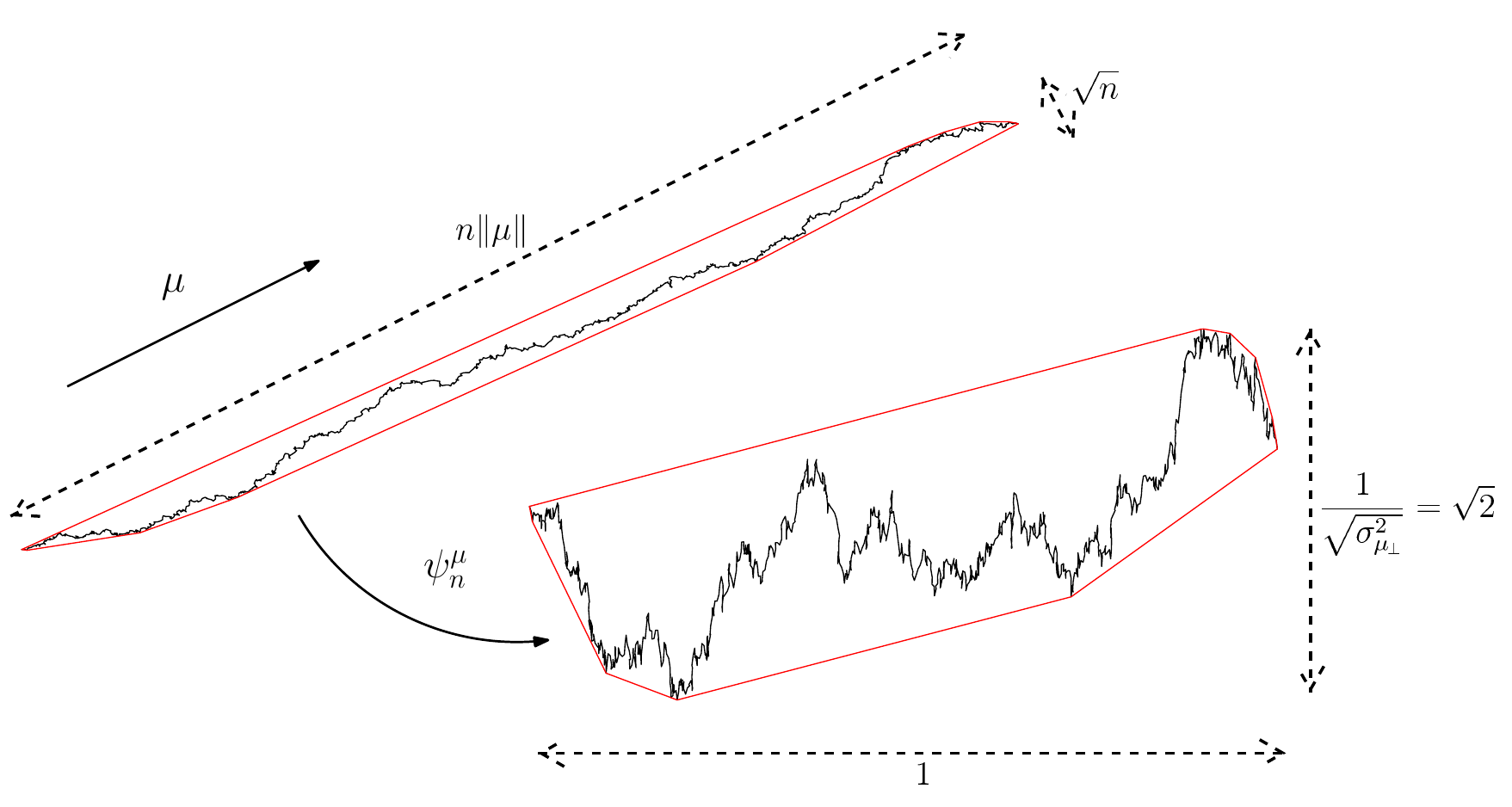}
\caption{Simulated path of $n=1000$ steps a random walk with drift $\mu = ( \frac{1}{2}, \frac{1}{4} )$ and its convex hull
(\emph{top left}) and (not to the same scale) the image under $\psi_n^\mu$ (\emph{bottom right}).}
\label{fig rotate}
\end{figure}

 \begin{theorem}
  \label{thm:limit-drift}  
  Suppose that $\Exp ( \| Z_1\|^2 ) < \infty$, $\mu \neq 0$, and $\sperp >0$.
 Then, as $n \to \infty$,
   \[ \psi^\mu_n (  \hull \{ S_0, S_1, \ldots, S_n \} ) \Rightarrow \tilde h_1, \]
   in the sense of weak convergence on $(\cK_2^0 , \rho_H )$.
  \end{theorem}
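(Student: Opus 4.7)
The plan is to mirror the proof of Theorem \ref{thm:limit-zero}, replacing the scaling $n^{-1/2} \mathrm{I}$ by $\psi^\mu_n$ and Donsker's theorem by a two-component limit in which the direction $\hat\mu$ is treated by a (law-of-large-numbers) fluid limit while the direction $\hat\mu_\per$ is treated by Donsker.

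First I would exploit the linearity of $\psi^\mu_n$: since convex hulls are preserved under affine maps, and $X_n[0,1]$ is the union of line segments through $S_0, S_1, \ldots, S_n$, we have
\[ \psi^\mu_n \bigl( \hull \{ S_0, \ldots, S_n \} \bigr) \,=\, \psi^\mu_n ( H ( X_n ) ) \,=\, H ( \psi^\mu_n \circ X_n ) . \]
Write $Y_n := \psi^\mu_n \circ X_n \in \cC_2^0$ with components $Y_n = ( Y_n^\para , Y_n^\per )$ given by
\[ Y_n^\para ( t ) = \frac{ X_n (t) \cdot \hat\mu }{ n \| \mu \| }, \qquad Y_n^\per (t) = \frac{ X_n (t) \cdot \hat\mu_\per }{ \sqrt{ n \sperp } } . \]
By Lemma \ref{lem:path-hull} and the continuous mapping theorem (Lemma \ref{continuous mapping}), it suffices to show $Y_n \Rightarrow \tilde b$ in $( \cC_2^0 , \rho_\infty )$, for then $H(Y_n) \Rightarrow H(\tilde b) = \tilde h_1$.

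Next I would analyse the two components separately. For $Y_n^\para$, the increments $Z_i \cdot \hat\mu$ are i.i.d.\ with mean $\mu \cdot \hat\mu = \| \mu \|$ and finite second moment, so by the strong law of large numbers applied to the random walk $\sum_{i=1}^k Z_i \cdot \hat\mu$ together with a standard uniformisation (writing $Y_n^\para (t) - t = \frac{1}{n\|\mu\|}( S_{\lfloor nt \rfloor} \cdot \hat\mu - \lfloor nt \rfloor \|\mu\|) + O(n^{-1} \| Z_{\lfloor nt\rfloor +1} \|)$ and controlling the maximum of the centred walk over $k \leq n$), one obtains
\[ \sup_{t \in [0,1]} \bigl| Y_n^\para ( t ) - t \bigr| \longrightarrow 0 \quad \text{in probability.} \]
For $Y_n^\per$, the increments $Z_i \cdot \hat\mu_\per$ are i.i.d.\ with mean $\mu \cdot \hat\mu_\per = 0$ and variance $\Exp[((Z_1-\mu) \cdot \hat\mu_\per)^2] = \sperp \in (0,\infty)$, so Donsker's theorem (Lemma \ref{thm:donsker}) in one dimension gives $Y_n^\per \Rightarrow w$ in $(\cC_1^0, \rho_\infty)$.

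The main obstacle is then to combine these into joint weak convergence $Y_n \Rightarrow ( \mathrm{id} , w ) = \tilde b$ in $(\cC_2^0, \rho_\infty )$. Since the $\hat\mu$-component has a \emph{deterministic} limit, this is a path-space Slutsky argument: writing $Y_n = ( \mathrm{id}, Y_n^\per ) + ( Y_n^\para - \mathrm{id}, 0 )$, the first summand converges in distribution to $\tilde b$ in $(\cC_2^0 , \rho_\infty)$ (by continuous mapping applied to the continuous embedding $f \mapsto ( \mathrm{id}, f)$ on $\cC_1^0 \to \cC_2^0$), while the second summand has $\rho_\infty$-norm converging to $0$ in probability; Slutsky's theorem (Lemma \ref{slutsky}), which extends to the metric-space setting, then gives the joint convergence. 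Combining with the continuous mapping theorem and Lemma \ref{lem:path-hull} completes the proof.
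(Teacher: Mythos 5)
Your proposal is correct and follows essentially the same route as the paper's proof: affine invariance to reduce to $H(\psi^\mu_n \circ X_n)$, a functional strong law for the $\hat\mu$-component, Donsker for the $\hat\mu_\per$-component, a Slutsky-type combination (which the paper leaves implicit in its ``it follows that'' step, and which you usefully make explicit), and then Lemma \ref{lem:path-hull} with the continuous mapping theorem. No gaps.
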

\begin{proof}
Observe that $\hat \mu \cdot S_n$ is a random walk on $\R$ with one-step mean drift $\hat \mu \cdot \mu = \| \mu \| \in (0,\infty)$,
while $\hat \mu_\perp \cdot S_n$ is a walk with mean drift $\hat \mu_\perp \cdot \mu = 0$
and increment variance
\begin{align*}
\Exp \left[ ( \hat \mu_\perp \cdot Z  )^2   \right]
& = \Exp \left[ ( \hat \mu_\perp \cdot ( Z - \mu)  )^2   \right] \\
& = \Exp [ \| Z - \mu \|^2 ] - \Exp [ (\hat \mu \cdot (Z - \mu) )^2 ] = \sigma^2 - \spara \\
& = \sperp .
\end{align*}
According to the strong law of large numbers, for any $\eps>0$ there exists $N_\eps \in \N$ a.s.\ such that
$| m^{-1} \hat \mu \cdot S_m - \| \mu \| | < \eps$ for $m \geq N_\eps$.
Now we have that
\begin{align*} \sup_{N_\eps/n \leq t \leq 1 } \left| \frac{ \hat \mu \cdot S_{\lfloor nt \rfloor}}{n} - t \| \mu \| \right|
& \leq \sup_{N_\eps/n \leq t \leq 1 } \left( \frac{\lfloor nt \rfloor}{n} \right)
\left| \frac{ \hat \mu \cdot S_{\lfloor nt \rfloor}}{\lfloor nt \rfloor} - \| \mu \| \right| \\
& \quad + \| \mu \| \sup_{0 \leq t \leq 1 } \left| \frac{\lfloor nt \rfloor}{n} - t \right| \\
& \leq  \sup_{N_\eps/n \leq t \leq 1}
\left| \frac{ \hat \mu \cdot  S_{\lfloor nt \rfloor}}{\lfloor nt \rfloor} - \| \mu \| \right| + \frac{ \| \mu \|}{n} \\
& \leq \eps +  \frac{ \| \mu \|}{n}.
\end{align*}
On the other hand,
\[ \sup_{0 \leq t \leq N_\eps/n } \left| \frac{ \hat \mu \cdot S_{\lfloor nt \rfloor}}{n} - t \| \mu \| \right|
\leq \frac{1}{n} \max \{  \hat \mu \cdot  S_0,   \ldots,  \hat \mu \cdot S_{N_\eps} \}  + \frac{N_\eps \| \mu \|}{n} \to 0, \as ,\]
since $N_\eps < \infty$ a.s. Combining these last two displays and using the fact that $\eps>0$ was arbitrary, 
we see that 
$$\sup_{0 \leq t \leq 1}  \left| n^{-1}   \hat \mu \cdot S_{\lfloor nt \rfloor} - t \| \mu \| \right| \to 0, \text{ a.s. (the
functional version of the strong law)}. $$
Similarly, 
$$\sup_{0 \leq t \leq 1}  \left| n^{-1}   \hat \mu \cdot S_{\lfloor nt \rfloor +1} - t \| \mu \| \right| \to 0, \text{ a.s. as well}.$$
Since $X_n(t)$ interpolates $S_{\lfloor nt \rfloor}$ and  $S_{\lfloor nt \rfloor +1}$, it follows that
$$\sup_{0 \leq t \leq 1}  \left| n^{-1}   \hat \mu \cdot X_n(t)  - t \| \mu \| \right| \to 0, \as $$ 
In other words,
$(n \|\mu \|)^{-1} X_n \cdot \hat \mu$ converges a.s.\ to the identity function $t \mapsto t$ on $[0,1]$.

For the other component,  Donsker's theorem (Lemma \ref{thm:donsker}) gives $( n \sperp)^{-1/2} X_n \cdot \hat \mu_\perp \Rightarrow w$ on $(\cC_1^0, \rho_\infty)$.
It follows that,   as $n \to \infty$,
  $\psi^\mu_n (  X_n ) \Rightarrow \tilde b$, 
   on $(\cC_2^0 , \rho_\infty )$. Hence by Lemma \ref{lem:path-hull} and since $\psi_n^\mu$ acts as an affine transformation on $\R^2$,
\[ \psi_n^\mu ( H ( X_n ) ) = H  ( \psi_n^\mu ( X_n  ) ) \Rightarrow H ( \tilde b ) ,\]
on $(\cK_2^0 , \rho_H )$, and the result follows.
\end{proof}

Theorem \ref{thm:limit-drift}  with the continuous mapping theorem (Lemma \ref{continuous mapping}), Lemma \ref{lem:functional-continuity},
and the fact that $\cA ( \psi_n^\mu ( A )) = n^{-3/2} \| \mu \|^{-1} ( \sperp )^{-1/2} \cA ( A )$
for measurable $A \subseteq \R^2$,
 implies
 the following distributional limit for $A_n$ in the case $\mu \neq 0$.

\begin{corollary}
\label{cor:A-limit-drift}
 Suppose that $\Exp ( \| Z_1\|^2 ) < \infty$, $\mu \neq 0$, and $\sperp >0$.
Then 
   \[ n^{-3/2} A_n \tod \| \mu \| ( \sperp )^{1/2} \tilde a_1 , \text{ as }  n \to \infty .  \]
\end{corollary}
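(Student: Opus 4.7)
The proof plan follows the template already set by the authors' remarks preceding the corollary: chain Theorem \ref{thm:limit-drift} with the continuous mapping theorem (Lemma \ref{continuous mapping}) applied to the area functional $\cA$, and exploit the explicit action of the affine map $\psi^\mu_n$ on Lebesgue measure. First I would rewrite $\psi^\mu_n$ as the composition of the rotation sending $\hat \mu \mapsto \be_1$ (which preserves area) with the diagonal scaling $\mathrm{diag}( (n\| \mu \|)^{-1} , (n \sperp)^{-1/2})$; the determinant of this composition is $n^{-3/2} \| \mu \|^{-1} (\sperp)^{-1/2}$, so for every Borel $A \subseteq \R^2$ one has $\cA ( \psi_n^\mu ( A )) = n^{-3/2} \| \mu \|^{-1} ( \sperp )^{-1/2} \cA ( A )$. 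Applied to $A = \hull \cS_n \in \cK_2^0$, this yields
\[ \cA \bigl( \psi_n^\mu ( \hull \cS_n ) \bigr) = \frac{A_n}{n^{3/2} \| \mu \| \sqrt{\sperp}} . \]

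Next I would invoke Theorem \ref{thm:limit-drift}, which provides $\psi^\mu_n ( \hull \cS_n ) \Rightarrow \tilde h_1$ on $( \cK_2^0 , \rho_H )$. Because $\cA : (\cK_2^0, \rho_H) \to (\RP, \rho)$ is continuous (Lemma \ref{lem:functional-continuity}), the continuous mapping theorem gives
\[ \cA \bigl( \psi_n^\mu ( \hull \cS_n ) \bigr) \tod \cA ( \tilde h_1 ) = \tilde a_1 . \]
Combining this with the displayed identity and rescaling by the deterministic constant $\| \mu \| \sqrt{\sperp}$ (via Slutsky's theorem, Lemma \ref{slutsky}) then delivers
\[ n^{-3/2} A_n \tod \| \mu \| \sqrt{\sperp}\, \tilde a_1 , \]
which is the stated conclusion.

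There is no real obstacle: every ingredient is already available. The one step worth being careful about is the derivation of the Jacobian for $\psi^\mu_n$, since the map is presented componentwise with respect to the (rotated) $\hat \mu$, $\hat \mu_\perp$ basis rather than the standard basis; the rotation factor contributes determinant $\pm 1$, so only the two diagonal scalings contribute to the overall Jacobian, giving the advertised factor $n^{-3/2} \| \mu \|^{-1} (\sperp)^{-1/2}$. Everything else is a routine application of previously stated results.
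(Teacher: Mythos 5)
Your proposal is correct and follows exactly the route the paper indicates: Theorem \ref{thm:limit-drift} plus the continuous mapping theorem applied to the area functional $\cA$ (continuous by Lemma \ref{lem:functional-continuity}), combined with the Jacobian identity $\cA ( \psi_n^\mu ( A )) = n^{-3/2} \| \mu \|^{-1} ( \sperp )^{-1/2} \cA ( A )$, which you derive correctly by factoring $\psi^\mu_n$ into an area-preserving rotation and a diagonal scaling. The paper states this argument only as a one-sentence sketch before the corollary; your write-up supplies the same steps in full with no gaps.
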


 \begin{remarks}
(i) Only the $\sperp >0$ case is non-trivial, since
$\sperp =0$ if and only if $Z$ is parallel to $\pm \mu$ a.s., in which case all the points
$S_0, \ldots, S_n$ are collinear and $A_n = 0$ a.s.\ for all $n$.\\
(ii)
The limit in Corollary \ref{cor:A-limit-drift} is non-negative and non-degenerate (see Proposition \ref{prop:var_bounds v0 v+}
below) and hence non-Gaussian.
\end{remarks}

The framework of this chapter shows that whenever a discrete-time
process in $\R^d$ converges weakly to a limit on the space of continuous paths, the corresponding convex hulls
converge. It would be of interest to extend the framework to admit discontinuous limit processes,
such as L\'evy processes with jumps \cite{klm} that arise as scaling limits of random walks whose increments
have infinite variance.

\pagestyle{myheadings} \markright{\sc Chapter 4}

\chapter{Spitzer--Widom formula for the expected perimeter length and its consequences}
\label{chapter4}

\section{Overview}

Our contribution in this Chapter is giving a new proof of the Spitzer--Widom formula in Section \ref{sec: proof of SW} and giving the asymptotics for the expected perimeter length in Section \ref{sec: asy for per length} by using that formula.
Firstly, we show how to deduce the Spitzer--Widom formula from the Cauchy formula.

The following theorem is Theorem 2 in \cite{sw}.
\begin{theorem}[Spitzer--Widom formula]
Suppose that $\Exp \|Z_1\| < \infty$. Then $$ \Exp L_n = 2 \sum_{k=1}^n \frac{1}{k} \Exp \| S_k \|. $$
\end{theorem}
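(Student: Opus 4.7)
The plan is to deduce this from the Cauchy formula \eqref{cauchy_} already established in Proposition~\ref{cauchyhull}, combined with the classical one-dimensional Spitzer identity for the maximum of a random walk. Reading the Cauchy formula, the two-dimensional perimeter length is expressed as an integral over directions $\theta \in [0,\pi]$ of the range of the projected one-dimensional walk $T_k^{(\theta)} := S_k \cdot \be_\theta$. This decouples the problem by direction.

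Concretely, I would first take expectations in \eqref{cauchy_} and apply Fubini (justified since $\Exp | S_k \cdot \be_\theta | \leq \Exp \| S_k \| \leq k \Exp \| Z_1 \| < \infty$ under the hypothesis \eqref{ass:moments} with $p=1$) to obtain
\[ \Exp L_n = \int_0^\pi \Exp\left[ \max_{0 \leq i \leq n} T_i^{(\theta)} - \min_{0 \leq i \leq n} T_i^{(\theta)} \right] \ud \theta . \]
Next, for each fixed $\theta$, the process $T_k^{(\theta)}$ is a one-dimensional random walk with integrable i.i.d.\ increments $Z_i \cdot \be_\theta$, and the classical Spitzer identity (a standard corollary of which gives $\Exp \max_{0 \leq k \leq n} T_k^{(\theta)} = \sum_{k=1}^n \frac{1}{k} \Exp [ (T_k^{(\theta)})^+ ]$, with the analogous statement for the negative of the walk applied to the minimum) yields
\[ \Exp\left[ \max_{0 \leq i \leq n} T_i^{(\theta)} - \min_{0 \leq i \leq n} T_i^{(\theta)} \right] = \sum_{k=1}^n \frac{1}{k} \Exp \left[ (T_k^{(\theta)})^+ + (T_k^{(\theta)})^- \right] = \sum_{k=1}^n \frac{1}{k} \Exp | S_k \cdot \be_\theta | . \]

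Finally, substituting back and exchanging the (finite) sum with the integral, it remains to evaluate $\int_0^\pi \Exp | S_k \cdot \be_\theta | \ud \theta$. Writing a generic non-zero $\bx \in \R^2$ in polar form $\bx = \| \bx \| \be_\phi$, one has $\int_0^\pi | \bx \cdot \be_\theta | \ud \theta = \| \bx \| \int_0^\pi | \cos ( \theta - \phi ) | \ud \theta = 2 \| \bx \|$ by $\pi$-periodicity of $| \cos |$ and the elementary integral $\int_0^\pi | \cos \psi | \ud \psi = 2$. Pulling the expectation through this integral (Fubini once more) gives $\int_0^\pi \Exp | S_k \cdot \be_\theta | \ud \theta = 2 \Exp \| S_k \|$, and substituting yields the claimed identity $\Exp L_n = 2 \sum_{k=1}^n \frac{1}{k} \Exp \| S_k \|$.

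The main obstacle is the Spitzer identity itself, which is non-trivial to prove from scratch (requiring a Wiener--Hopf factorisation or a combinatorial argument using cyclic permutations), but is a well-known classical result that I would cite rather than reprove; the remaining steps are routine Fubini applications together with the elementary directional integral. A minor point is to ensure the Spitzer identity is applied in a form valid merely under the first moment hypothesis $\Exp | T_1^{(\theta)} | < \infty$, which is indeed the usual setting.
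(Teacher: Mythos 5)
Your proposal is correct and follows essentially the same route as the paper: expectation of Cauchy's formula via Fubini, the one-dimensional identity $\Exp M_n = \sum_{k=1}^n k^{-1}\Exp[T_k^+]$ applied to each projected walk $S_k \cdot \be_\theta$ (and to its negation for the minimum), and the directional integral $\int_0^\pi |\bx \cdot \be_\theta|\,\ud\theta = 2\|\bx\|$. The only difference is that you cite the one-dimensional Spitzer-type identity as classical, whereas the paper proves it self-containedly (Lemma~\ref{kac2} via the combinatorial Lemma~\ref{kac}); both are valid under the first-moment hypothesis alone.
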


The basis for our derivation of the Spitzer--Widdom formula is an analogous result for
\emph{one-dimensional} random walk, stated in Lemma \ref{kac2} below, which is itself
a consequence of the combinatorial result given in Lemma \ref{kac}. Lemma \ref{kac} was stated by Kac \cite[pp.\ 502--503 and Theorem 4.2 on p.\ 508]{kac}
and attributed to Hunt; the proof given is due to Dyson. Lemma \ref{kac2} is variously attributed to Chung, Hunt, Dyson and Kac; it is also related
to results of Sparre Andersen \cite{andersen} and is a special case of what has become known as the Spitzer or Spitzer--Baxter identity \cite[Ch.\ 9]{kallenberg} for random walks,
which is a  more sophisticated result usually deduced from Wiener--Hopf Theory.

\section{Derivation of Spitzer--Widom formula}
\label{sec: proof of SW}

Let $X_1, X_2, \dots$ be i.i.d. random variables. Let $T_n=\sum_{i=1}^n X_i$ and $M_n=\max\{0,T_1,\dots,T_n\}$. 
Let $\sigma: (1,2,\dots,n)\mapsto (\sigma_1,\sigma_2,\dots,\sigma_n) \in \Z_+^n$ be a permutation on $\{1,\dots,n\}$. 
Then $(\pi_n; \circ)$ is a group consisting of $\sigma$ under the composition operation. For $\sigma \in \pi_n$, let $T_n^{\sigma} = \sum_{i=1}^n X_{\sigma_i}$ 
and $M_n^{\sigma}=\max\{0,T_1^{\sigma},\dots,T_n^{\sigma} \}$.

\begin{lemma} \label{kac}
$$\sum_{\sigma\in \pi_n} M_n^{\sigma} = \sum_{\sigma \in \pi_n} X_{\sigma_1} \sum_{k=1}^n \1\{T_k^\sigma > 0\}.$$
\end{lemma}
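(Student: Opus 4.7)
The plan is to prove this identity by a double-counting argument that re-expresses both sides as sums indexed by subsets $S \subseteq \{1, \ldots, n\}$, and then reconciles the two expressions using a cyclic-rotation argument.

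First I would simplify the right-hand side. For each $k$ and each subset $S$ of size $k$, observe that $T_k^\sigma = T_S := \sum_{i \in S} X_i$ whenever $\{\sigma_1, \ldots, \sigma_k\} = S$, so the indicator $\1\{T_k^\sigma > 0\}$ depends only on $S$. Among the $k!(n-k)!$ permutations with this prefix set, each $j \in S$ appears as $\sigma_1$ exactly $(k-1)!(n-k)!$ times, so
\[
\sum_{\sigma \in \pi_n} X_{\sigma_1} \1\{T_k^\sigma > 0\} = (k-1)!(n-k)! \sum_{|S|=k} (T_S)^+,
\]
yielding a closed form for the right-hand side as $\sum_{k=1}^n (k-1)!(n-k)! \sum_{|S|=k} (T_S)^+$.

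Second, I would re-express the left-hand side by writing $M_n^\sigma = T_{K(\sigma)}^\sigma$ with $K(\sigma) := \min\{k \geq 0 : T_k^\sigma = M_n^\sigma\}$ (and $T_0^\sigma := 0$), and then grouping permutations by their \emph{maximizing prefix} $S := \{\sigma_1, \ldots, \sigma_{K(\sigma)}\}$. For each non-empty $S$ with $T_S > 0$, the number of permutations with this maximizing prefix factors as $P_+(S) \cdot Q(S^c)$, where $P_+(S)$ counts orderings of $S$ whose partial sums are all strictly less than $T_S$ (equivalently, by reversal, orderings with all partial sums strictly positive) and $Q(S^c)$ counts orderings of $S^c$ whose partial sums are all non-positive. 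This gives
\[
\sum_{\sigma \in \pi_n} M_n^\sigma = \sum_{S : T_S > 0} T_S \, P_+(S) \, Q(S^c).
\]

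The crux of the proof is to equate these two subset sums, i.e., to show that for each $k$,
\[
\sum_{|S|=k} T_S \, P_+(S) \, Q(S^c) = (k-1)!(n-k)! \sum_{|S|=k} (T_S)^+.
\]
This identity is \emph{not} valid subset-by-subset; it holds only after summation, with substantial cancellation across different $S$ of the same cardinality (as one can confirm by direct computation in small cases). The engine of cancellation is the Dvoretzky--Motzkin cycle lemma for real-valued sequences, which controls how many cyclic rotations of a given ordering have all partial sums positive. The natural implementation is to partition $\pi_n$ into the $(n-1)!$ cyclic orbits of size $n$ under the shift $c: (\sigma_1, \ldots, \sigma_n) \mapsto (\sigma_2, \ldots, \sigma_n, \sigma_1)$, and verify the identity orbit-by-orbit by working with a doubly-infinite periodic extension $V_k$ (with $V_0 = 0$, $V_k = X_{\sigma_1} + \ldots + X_{\sigma_k}$ for $1 \leq k \leq n$, and $V_{k+n} = V_k + V_n$) in which the partial sums of the $j$-th cyclic shift become $V_{j+l} - V_j$. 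This orbit-level reduction, followed by a telescoping/cycle-lemma argument on the $V_k$, is the main obstacle; once that is in place, summing over orbits completes the proof.
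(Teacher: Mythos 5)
Your reductions of the two sides are individually sound: the right-hand side does equal $\sum_{k=1}^n (k-1)!\,(n-k)!\sum_{|S|=k}(T_S)^+$, and grouping permutations by their first maximizing prefix does give $\sum_{\sigma\in \pi_n} M_n^{\sigma}=\sum_{S:\,T_S>0} T_S\, P_+(S)\, Q(S^c)$. The step you call the crux, however, is false as stated: the fixed-$k$ identity $\sum_{|S|=k} T_S\, P_+(S)\, Q(S^c) = (k-1)!\,(n-k)!\sum_{|S|=k}(T_S)^+$ fails already for $n=2$ with $X_1=X_2=1$. There the $k=1$ term on the left is $0$ (each singleton $S$ has $Q(S^c)=\1\{X_j\le 0\}=0$), while on the right it is $X_1^++X_2^+=2$; the totals agree only because the $k=2$ terms are $4$ and $2$ respectively. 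So the cancellation you need runs \emph{across cardinalities}, not merely across subsets of a fixed cardinality, and the intermediate identity you propose to prove cannot be proved. Compounding this, the mechanism you offer — partitioning $\pi_n$ into cyclic orbits and applying a cycle-lemma/telescoping argument to the periodic extension $V_k$ — does not mesh with the subset decomposition at all (a cyclic orbit of permutations does not respect prefix sets of a given size $k$), and the orbit-level statement that \emph{is} true, namely $\sum_{\sigma\in O} M_n^{\sigma}=\sum_{\sigma\in O} X_{\sigma_1}\sum_{k=1}^n \1\{T_k^\sigma>0\}$ for each cyclic orbit $O$ (Spitzer's combinatorial lemma), is precisely the nontrivial content of the lemma and is left entirely unproved in your sketch.

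For contrast, the paper's proof avoids subsets and cyclic orbits of the whole permutation altogether. It telescopes $M_n^\sigma=\sum_{k=1}^n(M_k^\sigma-M_{k-1}^\sigma)$, writes $M_k^\sigma-M_{k-1}^\sigma=\1\{T_k^\sigma>0\}\left[X_{\sigma_1}+f(\sigma_2,\dots,\sigma_k,\sigma_1)-f(\sigma_1,\dots,\sigma_k)\right]$, and then sums over the class of permutations with a fixed suffix $(\sigma_{k+1},\dots,\sigma_n)$: since $\1\{T_k^\sigma>0\}$ depends only on the prefix \emph{set}, it is constant on that class, and the two $f$-sums coincide under the relabelling $(\sigma_1,\dots,\sigma_k)\mapsto(\sigma_2,\dots,\sigma_k,\sigma_1)$ of the first $k$ positions, leaving exactly $\sum_{\sigma}X_{\sigma_1}\1\{T_k^\sigma>0\}$ for each $k$. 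If you want to retain a rotation-based argument, the object to rotate is the first $k$ coordinates within a fixed-suffix class, not the whole permutation; as written, your proof has a genuine gap at its central step.
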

\begin{proof}
Note that if $T_k^\sigma \leq 0$, then $M_k^\sigma - M_{k-1}^\sigma = 0$. If $T_k^\sigma > 0$, then 
$$ M_k^\sigma = \max(T_1^{\sigma},T_2^{\sigma},\dots,T_k^{\sigma}) = X_{\sigma_1} + \max(0,X_{\sigma_2},X_{\sigma_2} + X_{\sigma_3},\dots,\sum_{l=2}^k X_{\sigma_l}) .$$
Combining these two cases, we get
\begin{align*}
M_k^\sigma - M_{k-1}^\sigma = 
& \1\{T_k^\sigma>0\} \Bigg[ X_{\sigma_1} + \max\Big(0,X_{\sigma_2},X_{\sigma_2} + X_{\sigma_3},\dots,\sum_{l=2}^k X_{\sigma_l}\Big) \\
& - \max\Big(0,X_{\sigma_1},X_{\sigma_1}+X_{\sigma_2},\dots,\sum_{j=1}^{k-1} X_{\sigma_j}\Big) \Bigg].
\end{align*}

Fix $k \in \{1,\dots,n \}$. Let $G(\omega_{k+1},\dots,\omega_n)$ be the subset of $\pi_n$ consisting of permutations whose last $(n-k)$ indices are $\omega_{k+1},\dots,\omega_n$, where $1 \leq \omega_i \leq n$.
Then $\pi_n$ is decomposed into $\frac{n!}{k!}$ disjoint subsets $G(\omega_{k+1},\dots,\omega_n)$ of size $k!$.

Denote $$f(\sigma_1,\dots,\sigma_{k-1},\sigma_k)
:= \max\Big(0,X_{\sigma_1}, X_{\sigma_1}+X_{\sigma_2}, \dots,\sum_{j=1}^{k-1} X_{\sigma_j}\Big) .$$
Then,
$$ M_k^\sigma - M_{k-1}^\sigma = \1\{T_k^\sigma>0\} \left[ X_{\sigma_1} + f(\sigma_2, \dots, \sigma_k, \sigma_1)- f(\sigma_1,\dots,\sigma_{k-1},\sigma_k) \right]. $$

Summing both sides of the equation over $\{\sigma \in \pi_n \}$, since 
$$\sum_{\sigma \in \pi_n} =  \sum_{1 \leq \sigma_{k+1},\dots,\sigma_n \leq n} \sum_{\sigma \in G(\sigma_{k+1},\dots,\sigma_n)} ,$$
and
$$\sum_{\sigma \in G(\sigma_{k+1},\dots,\sigma_n)} f(\sigma_2, \dots, \sigma_k, \sigma_1) = \sum_{\sigma \in G(\sigma_{k+1},\dots,\sigma_n)} f(\sigma_1,\dots,\sigma_{k-1},\sigma_k) ,$$
we get 
\begin{equation} \label{M_k}
\sum_{\sigma\in \pi_n} \left( M_k^{\sigma}-M_{k-1}^{\sigma} \right) = \sum_{\sigma \in \pi_n} X_{\sigma_1} \1\{T_k^\sigma > 0\} .
\end{equation}

The result is implied by summing both sides of the equation (\ref{M_k}) from $k=1$ to $n$. Note that $M_0^\sigma = \max(0) = 0$.
\end{proof}

Here we use the notation $x^+ := x \1\{ x>0 \}$ \label{x^+}
 and $x^- := -x \1\{ x<0 \}$ \label{x^-} for $x \in \R$. So $x=x^+ - x^-$ and $|x|= x^+ + x^-$.

The following result on the expected maximum of 1-dimensional random walk is variously attributed to Chung, Hunt, Dyson and Kac.
A combinatorial proof similar to the one given here can be found on page 301-302 of \cite{chung}.
\begin{lemma} \label{kac2}
Suppose that $\Exp |X_k| < \infty$. Then, $$\Exp M_n = \sum_{k=1}^n \frac{\Exp(T_k^+)}{k}.$$
\end{lemma}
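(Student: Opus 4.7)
The plan is to take expectations of both sides of the identity established in Lemma \ref{kac} and exploit the exchangeability afforded by the i.i.d.\ hypothesis. The finiteness of $\Exp|X_1|$ guarantees that $\Exp M_n$ and $\Exp T_k^+$ are well defined and that all sums and expectations commute.

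First I would dispatch the left-hand side. For each $\sigma \in \pi_n$, since $X_1, \ldots, X_n$ are i.i.d., the random vector $(X_{\sigma_1}, \ldots, X_{\sigma_n})$ has the same joint distribution as $(X_1, \ldots, X_n)$, and hence $M_n^\sigma \stackrel{d}{=} M_n$. Taking expectations and summing gives
\[
\Exp \sum_{\sigma \in \pi_n} M_n^{\sigma} \;=\; n! \, \Exp M_n .
\]

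Next I would handle the right-hand side one $k$ at a time. Fix $k \in \{1,\ldots,n\}$. By the same distributional equivalence, for each $\sigma \in \pi_n$ we have
\[
\Exp\!\left[ X_{\sigma_1} \1\{ T_k^\sigma > 0 \} \right] \;=\; \Exp\!\left[ X_1 \1\{ T_k > 0 \} \right].
\]
Now comes the key observation: since $X_1, \ldots, X_k$ are exchangeable and $T_k = X_1 + \cdots + X_k$ is symmetric in them, $\Exp[X_i \1\{T_k > 0\}]$ does not depend on $i \in \{1,\ldots,k\}$. Summing over $i$,
\[
k \,\Exp\!\left[ X_1 \1\{T_k > 0\} \right] \;=\; \Exp\!\left[ T_k \1\{T_k > 0\} \right] \;=\; \Exp T_k^+,
\]
so $\Exp[X_1 \1\{T_k > 0\}] = k^{-1} \Exp T_k^+$. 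Multiplying by $|\pi_n| = n!$ and summing over $k$ gives
\[
\Exp \sum_{\sigma \in \pi_n} X_{\sigma_1} \sum_{k=1}^n \1\{T_k^\sigma > 0\} \;=\; n! \sum_{k=1}^n \frac{\Exp T_k^+}{k}.
\]

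Equating the two sides and dividing by $n!$ yields the stated identity. The argument is largely bookkeeping; the only mildly clever ingredient is the exchangeability step that converts $\Exp[X_1 \1\{T_k > 0\}]$ into $k^{-1}\Exp T_k^+$, and that is where I expect a reader unfamiliar with the trick might pause. Integrability of $|X_1|$ is used implicitly to justify interchanging expectation with the finite sums and to guarantee $\Exp T_k^+ \leq \sum_{i=1}^k \Exp|X_i| < \infty$.
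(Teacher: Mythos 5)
Your proof is correct and follows essentially the same route as the paper's: average the identity of Lemma \ref{kac} over all permutations, use $M_n^\sigma \stackrel{d}{=} M_n$ on the left, and on the right use exchangeability of $(X_1,\ldots,X_k)$ to convert $\Exp[X_1 \1\{T_k>0\}]$ into $k^{-1}\Exp[T_k\1\{T_k>0\}] = k^{-1}\Exp T_k^+$. No substantive differences.
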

\begin{proof}
By Lemma \ref{kac}, we have
\begin{align*}
\Exp M_n = \Exp M_n^{\sigma} 
& = \frac{1}{n!}\sum_{\sigma \in \pi_n} \Exp M_n^{\sigma} \\
& = \frac{1}{n!} \sum_{\sigma\in \pi_n} \Exp \big[X_{\sigma_1} \sum_{k=1}^n \1\{T_k^{\sigma} > 0 \} \big] \\
& = \Exp \big[ X_1 \sum_{k=1}^n \1\{T_k > 0\} \big] ,
\end{align*}
since the $X_i$ are i.i.d., $\Exp ( X_1 \1\{T_k>0 \}) = \Exp( X_i \1\{T_k>0 \})$ for any $1 \leq i \leq k$. Also, $\Exp(X_1 \1\{ T_k>0\}) = k^{-1} \Exp(T_k \1\{T_k>0\})$. Then, 
\begin{align*}
\Exp \big[ X_1 \sum_{k=1}^n \1\{T_k > 0\} \big] 
& = \sum_{k=1}^n \Exp \big[ X_1 \1\{T_k > 0\} \big] \\
& = \sum_{k=1}^n \Exp \big[ \frac{T_k}{k} \1\{T_k>0\} \big] \\
& = \sum_{k=1}^n \frac{\Exp(T_k^+)}{k} . \qedhere 
\end{align*}
\end{proof}
\begin{remark}
\emph{Fluctuation theory} for one-dimensional random walks concerns a series of important identities involving the distributions
of $M_n$, $T_n$, and other quantities associated with the random walk path. A cornerstone of the theory is the celebrated double generating-function identity of
Spitzer which states that
\[ \sum_{n=0}^\infty t^n \Exp [ \re^{ i u M_n } ] = \exp \left\{ \sum_{k=1}^\infty \frac{t^k}{k} \Exp [ \re^{iu T_k^+} ] \right\} \]
for $|t| <1$. Lemma~\ref{lem:path-stretch} is a corollary to Spitzer's identity, obtained on differentiating with respect to $u$ and setting $u=0$. The proof of
 Spitzer's identity may be approached from an analytic perspective, using the Wiener--Hopf factorization (see e.g.\ Resnick \cite[Ch.~7]{resnick}),
or from a combinatorial one (see e.g.\ Karlin and Taylor \cite[Ch.~17]{kt2}). These references discuss many other aspects of fluctuation theory, as do
Chung \cite[\S \S 8.4 \& 8.5]{chung}, Feller \cite{feller2}, Asmussen \cite[Ch.~VIII]{asmussen}, and Tak\'acs \cite{takacs2}. In particular, Chung \cite[pp.~301--302]{chung} gives a direct
proof of Lemma 4.3 closely related to the one presented here; essentially the same proof is in \cite[p.~232]{asmussen}.
\end{remark}

\begin{proof}[Proof of the Spitzer--Widom formula] $\ $\\ \vspace{0cm}
\quad Denote $M_n(\theta):= \max_{0 \leq i \leq n}(S_i \cdot \be_\theta)$ and $m_n(\theta):= \min_{0 \leq i \leq n}(S_i \cdot \be_\theta)$. Note that $M_n(\theta) \geq 0$ and $m_n(\theta) \leq 0$ since $\0 \in \HH_n$.

Applying Fubini's theorem (see Lemma \ref{fubini}) in Cauchy formula (\ref{cauchy_}), we get 
$$\Exp L_n = \int_0^\pi\left( \Exp M_n(\theta) - \Exp m_n(\theta)\right) \ud \theta .$$

Observe that $S_n \cdot \be_{\theta}$ is a one-dimensional random walk on $\R$. Take $T_k = S_k \cdot \be_{\theta}$ in Lemma \ref{kac2}. Then,
$$ \Exp M_n(\theta)= \sum_{k=1}^n \frac{\Exp \left[(S_k \cdot \be_{\theta})^+\right]}{k} \quad \hbox{and} \quad 
 \Exp m_n(\theta)= -\sum_{k=1}^n \frac{\Exp \left[(-S_k \cdot \be_{\theta})^+\right]}{k} ,$$
since $m_n(\theta) = -\max_{0 \leq i \leq n}(- S_i \cdot \be_\theta)$. So, since $x^- = (-x)^+$, 
\begin{align*}
\Exp L_n 
= & \int_0^\pi \sum_{k=1}^n \frac{1}{k} \Exp \left[(S_k \cdot \be_{\theta})^+ + (S_k \cdot \be_{\theta})^- \right] \ud\theta \\
= & \int_0^\pi \sum_{k=1}^n \frac{\Exp \left|S_k \cdot \be_{\theta}\right|}{k} \ud \theta .
\end{align*}
Then, by Fubini's theorem,
\begin{align*}
\Exp L_n 
= & \sum_{k=1}^n \frac{1}{k} \int_0^\pi \Exp \left|S_k \cdot \be_{\theta}\right| \ud \theta \\
= & \sum_{k=1}^n \frac{1}{k} \Exp \int_0^\pi \left|S_k \cdot \be_{\theta}\right| \ud \theta \\
= & 2 \sum_{k=1}^n \frac{\Exp \| S_k \|}{k} . \qedhere
\end{align*} 
\end{proof}

\section{Asymptotics for the expected perimeter length}
\label{sec: asy for per length}

To investigate the first-order properties of $\Exp L_n$, we suggested by the Spitzer-Widom formula (\ref{SW formula}) that the first-order properties of $\Exp \| S_n \|$ need to be studied first. 

\begin{lemma}
\label{ES with drift}
If $\Exp \| Z_1 \| < \infty$, then $ n^{-1}\Exp \| S_n \| \to \|\mu\| $ as $ n \to \infty$.
\end{lemma}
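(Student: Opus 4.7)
The plan is to establish convergence in expectation by combining the strong law of large numbers with a uniform-integrability-style argument. The pointwise convergence is immediate from Kolmogorov's SLLN, so the real content is upgrading almost sure convergence to convergence of the means, and the natural tool already introduced in the excerpt (Lemma \ref{pratt's lemma}, Pratt's lemma) is tailor-made for this.

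First, I would set $X_n := n^{-1}\|S_n\|$ and $X := \|\mu\|$. Kolmogorov's SLLN (valid since $\Exp\|Z_1\| < \infty$ guarantees $\Exp|Z_1\cdot \be| < \infty$ componentwise) gives $n^{-1} S_n \to \mu$ a.s. Continuity of the Euclidean norm then yields $X_n \to X$ a.s. This handles the almost sure convergence requirement in Pratt's lemma.

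Next I need a dominating sequence. The triangle inequality gives the pointwise bound
\[
\|S_n\| = \Big\| \sum_{i=1}^n Z_i \Big\| \leq \sum_{i=1}^n \|Z_i\|,
\]
so setting $Y_n := n^{-1} \sum_{i=1}^n \|Z_i\|$, we have $|X_n| \leq Y_n$ for every $n$. Since $\|Z_1\|, \|Z_2\|, \ldots$ are i.i.d.\ non-negative random variables with finite mean $\Exp\|Z_1\|$, a further application of the SLLN yields $Y_n \to Y := \Exp\|Z_1\|$ a.s. Moreover, by linearity, $\Exp Y_n = \Exp \|Z_1\| = \Exp Y$ for every $n$, so trivially $\Exp Y_n \to \Exp Y$.

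All hypotheses of Pratt's lemma are now in place, so it gives $\Exp X_n \to \Exp X$, i.e.\ $n^{-1}\Exp\|S_n\| \to \|\mu\|$, as required. There is no genuine obstacle here; the only subtlety is that one cannot invoke dominated convergence directly because there is no single integrable dominating random variable (the obvious candidate $\sum_{i=1}^\infty \|Z_i\|/i$ is awkward), which is precisely why Pratt's lemma with the varying dominator $Y_n$ is the right tool.
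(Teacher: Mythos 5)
Your argument is correct and is essentially identical to the paper's own proof: both obtain $n^{-1}\|S_n\| \to \|\mu\|$ a.s.\ from the strong law of large numbers, dominate by $Y_n = n^{-1}\sum_{i=1}^n \|Z_i\|$, and conclude via Pratt's lemma using $\Exp Y_n = \Exp\|Z_1\|$ for all $n$. No discrepancies to report.
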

\begin{proof}
The strong law of large numbers for $S_n$ says $\| S_n /n - \Exp Z_1 \| \to 0 \as$ as $n \to \infty$. Then by the triangle inequality,
$$ \| S_n/n \| = \| S_n/n-\Exp Z_1 + \Exp Z_1 \| \leq \| S_n/n-\Exp Z_1 \|+ \| \Exp Z_1\| $$
and
$$ \| \Exp Z_1\| \leq \| \Exp Z_1 - S_n/n \| + \| S_n/n \|. $$
So, $\| S_n \|/n \to \| \Exp Z_1 \| \as$ as $n \to \infty$.

Similarly, let $Y_n = \sum_{i=1}^n \| Z_i \|$, then $Y_n/n \to \Exp \|Z_1 \| \as$ as $n \to \infty$. Also we simply have $\Exp[Y_n/n] = \Exp \| Z_1 \|$ and $0 \leq \| S_n \|/n \leq Y_n/n$. Hence, the result is proved by Pratt's Lemma (see Lemma \ref{pratt's lemma}).
\end{proof}

The following asymptotic result for $\Exp L_n$ was obtained as equation (2.16) by Snyder \& Steele \cite{ss} under
the stronger condition $\Exp ( \| Z_1 \|^2 ) < \infty$; as Lemma \ref{ES with drift} shows, a finite first moment is sufficient.
\begin{proposition}
\label{EL with drift}
Suppose $\Exp \| Z_1 \| < \infty$, then $n^{-1}\Exp L_n \to 2\|\mu\|$, as $n \to \infty$.
\end{proposition}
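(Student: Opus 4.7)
The plan is to combine the Spitzer--Widom formula with Lemma~\ref{ES with drift} and a Cesàro averaging argument. By the Spitzer--Widom formula,
\[ \frac{\Exp L_n}{n} = \frac{2}{n} \sum_{k=1}^n \frac{\Exp \| S_k \|}{k} . \]
Setting $y_k := k^{-1} \Exp \| S_k \|$, Lemma~\ref{ES with drift} tells us that $y_k \to \| \mu \|$ as $k \to \infty$. Then the right-hand side is exactly $2 \cdot n^{-1} \sum_{k=1}^n y_k$, the Cesàro mean of a convergent sequence.

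The conclusion then follows immediately from Lemma~\ref{convergence of Cesaro mean}, which gives $n^{-1} \sum_{k=1}^n y_k \to \| \mu \|$, so that $n^{-1} \Exp L_n \to 2 \| \mu \|$.

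There is no real obstacle here: once one has the Spitzer--Widom identity (proved in the preceding section) and the first-order asymptotic for $\Exp \| S_n \|$ (Lemma~\ref{ES with drift}), the proof is a one-line reduction to the elementary Cesàro convergence result stated as Lemma~\ref{convergence of Cesaro mean}. The only subtle point worth noting is that only the $p=1$ moment assumption is used, which is why the result improves on the hypothesis in~\cite{ss}; this improvement is inherited directly from Lemma~\ref{ES with drift}, whose proof rests on the strong law and Pratt's lemma rather than any second-moment bound.
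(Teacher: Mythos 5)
Your proof is correct and is essentially identical to the one in the paper: both rewrite $n^{-1}\Exp L_n$ via the Spitzer--Widom formula as twice the Ces\`aro mean of $y_k = k^{-1}\Exp\|S_k\|$ and then invoke Lemma~\ref{ES with drift} together with Lemma~\ref{convergence of Cesaro mean}. Your remark that only the $p=1$ moment assumption is needed matches the paper's own observation following the proposition.
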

\begin{proof}
The result is implied by the Spitzer--Widom formula (\ref{SW formula}) and Lemma \ref{convergence of Cesaro mean} with $y_n=n^{-1}\Exp \| S_n\|$, since $y_n \to \|\mu\|$ by Lemma \ref{ES with drift}.
\end{proof}

\begin{remarks}
\label{remarks of EL with drift}
\begin{enumerate}[(i)]
\item
Proposition \ref{EL with drift} says that if $\mu \neq 0$ then $\Exp L_n$ is of order $n$. If $\mu=0$, it says $\Exp L_n = o(n)$. We will show later in Proposition \ref{limit of E L_n} that under mild extra conditions in the $\mu=0$ case, $n^{-1/2} \Exp L_n$ has a limit.
\item
Snyder and Steele\cite[p.\ 1168]{ss} showed that if $\Exp(\| Z_1 \|^2)< \infty$ and $\mu \neq 0$, then in fact $n^{-1}L_n \to 2\|\mu\| \as$ as $n \to \infty$. We give a proof of this in Proposition \ref{LLN for L_n} below.
\end{enumerate}
\end{remarks}

For the zero drift case $\mu=0$, we have the following.
\begin{lemma}
\label{ES 0 drift}
Suppose $\Exp(\| Z_1 \|^2) < \infty$ and $\mu=0$, then $\Exp (\| S_n \|^2)= O(n)$ and $\Exp \| S_n \|= O(n^{1/2})$.
\end{lemma}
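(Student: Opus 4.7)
The plan is to prove both statements by a direct second-moment computation, exploiting independence and the zero-drift assumption, and then deducing the bound on $\Exp \|S_n\|$ by Cauchy--Schwarz.

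First, for the $L^2$ bound, I would expand
\[ \|S_n\|^2 = S_n \cdot S_n = \Big( \sum_{i=1}^n Z_i \Big) \cdot \Big( \sum_{j=1}^n Z_j \Big) = \sum_{i=1}^n \|Z_i\|^2 + 2 \sum_{1 \leq i < j \leq n} Z_i \cdot Z_j . \]
Taking expectations, the cross terms vanish by independence together with $\Exp Z_i = \mu = 0$: for $i < j$, $\Exp [Z_i \cdot Z_j] = \Exp Z_i \cdot \Exp Z_j = 0$. The diagonal terms each contribute $\Exp \|Z_1\|^2 = \sigma^2$ (which is finite by hypothesis and equals $\trace \Sigma$ since $\mu=0$), giving
\[ \Exp \|S_n\|^2 = n \sigma^2 = O(n) . \]

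Second, the bound on $\Exp \|S_n\|$ follows immediately from the first part by the Cauchy--Schwarz inequality (Lemma \ref{Cauchy-Schwarz ineq.}) applied to $\|S_n\| = \|S_n\| \cdot 1$, or equivalently by Jensen's inequality applied to the concave function $x \mapsto \sqrt{x}$:
\[ \Exp \|S_n\| \leq \bigl( \Exp \|S_n\|^2 \bigr)^{1/2} = \sigma \sqrt{n} = O(n^{1/2}) . \]

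There is no real obstacle here: the argument is essentially a one-line computation relying on pairwise uncorrelatedness of the zero-mean increments, which is the coordinate-wise version of the familiar one-dimensional identity $\Var(T_n) = n \Var(X_1)$ for i.i.d.\ sums. The only thing worth flagging is that both bounds require only the $p=2$ case of \eqref{ass:moments} to ensure $\sigma^2 < \infty$; no higher moments or structural hypotheses on $\Sigma$ are needed.
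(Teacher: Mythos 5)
Your proof is correct and is essentially the same as the paper's: the paper obtains $\Exp\|S_{n+1}\|^2 - \Exp\|S_n\|^2 = \Exp\|Z_1\|^2$ by a one-step expansion and telescopes, which is just the incremental form of your direct double-sum expansion with vanishing cross terms, and both conclude with Jensen's inequality. No issues.
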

\begin{proof}
Consider $\|S_n\|^2$,
\begin{equation} \label{S_n+1}
\| S_{n+1} \|^2 = \| S_n+Z_{n+1} \|^2 = \| S_n \|^2 + 2S_n \cdot Z_{n+1}+ \|Z_{n+1}\|^2 .
\end{equation}
So, $$\Exp(\|S_{n+1}\|^2) - \Exp(\|S_{n}\|^2) = \Exp(\|Z_{1}\|^2) ,$$
since $S_n$ and $Z_{n+1}$ are independent and $Z_{n+1}$ has mean $0$, so $\Exp(S_n \cdot Z_{n+1})=\Exp S_n \cdot \Exp Z_{n+1} = 0$.
Then sum from $n=0$ to $m-1$ to get 
$$\Exp(\|S_m\|^2) - \Exp(\|S_0\|^2)=m \Exp(\|Z_1\|^2).$$
Hence, $\Exp(\|S_n\|^2) =  O(n)$. The last result is given by Jensen's inequality, $\Exp \|S_n\| \leq (\Exp[\|S_n\|^2])^{1/2}$.
\end{proof}
\begin{remark}
Lemma \ref{ES 0 drift} only gives the upper bound for the order of $\Exp \| S_n \|$. Under the mild assumption $\Pr (\|Z_1\| =0) < 1$, $n^{-1/2}\Exp\|S_n\|$ in fact has a positive limit, as we will see in the proof of Proposition \ref{limit of E L_n} below. This extra condition is of course necessary for the positive limit, 
since if $Z_1 \equiv 0$ then $\Exp \| S_n \| \equiv 0$.
\end{remark}

\begin{proposition} \label{upper bound of E L_n}
Suppose $\Exp(\|Z_1\|^2) < \infty$ and $\mu=0$, then $\Exp L_n = O(n^{1/2})$.
\end{proposition}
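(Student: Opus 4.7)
The plan is to combine the Spitzer--Widom formula with the second-moment bound on $\|S_n\|$ already established in Lemma \ref{ES 0 drift}. By that lemma, under the hypothesis $\Exp(\|Z_1\|^2) < \infty$ and $\mu = 0$, we have $\Exp \|S_k\| \leq C k^{1/2}$ for some constant $C < \infty$ depending only on the distribution of $Z_1$ (explicitly, one may take $C = (\Exp \|Z_1\|^2)^{1/2}$, via Jensen).

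Next I would feed this directly into the Spitzer--Widom formula \eqref{SW formula}, which gives
\[
\Exp L_n = 2 \sum_{k=1}^n \frac{1}{k} \Exp \|S_k\| \leq 2C \sum_{k=1}^n k^{-1/2}.
\]
The final step is the elementary estimate
\[
\sum_{k=1}^n k^{-1/2} \leq 1 + \int_1^n x^{-1/2}\,\ud x = 2 n^{1/2} - 1 \leq 2 n^{1/2},
\]
so $\Exp L_n \leq 4 C n^{1/2}$, which is precisely the $O(n^{1/2})$ bound claimed.

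There is no significant obstacle here: the result is an immediate corollary of the Spitzer--Widom formula (already proved in this chapter) and the moments bound from Lemma \ref{ES 0 drift}. The only small point worth being careful about is making sure to quote the correct hypothesis (finite second moment, which is stronger than what was needed for Proposition \ref{EL with drift}); this is exactly what is required so that Lemma \ref{ES 0 drift} applies and gives the $k^{1/2}$ control on $\Exp \|S_k\|$ that is crucial to turn the harmonic-like sum in \eqref{SW formula} into a convergent integral of order $n^{1/2}$.
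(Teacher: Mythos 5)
Your proof is correct and follows exactly the paper's own argument: apply Lemma \ref{ES 0 drift} to bound $\Exp\|S_k\|$ by $Ck^{1/2}$, substitute into the Spitzer--Widom formula \eqref{SW formula}, and estimate $\sum_{k=1}^n k^{-1/2} = O(n^{1/2})$. The only difference is that you spell out the integral comparison and an explicit constant, which the paper leaves implicit.
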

\begin{proof}
By Lemma \ref{ES 0 drift} and Spitzer--Widom formula (\ref{SW formula}), for some constant C,
$$\Exp L_n \leq 2 \sum_{i=1}^n \frac{C\sqrt{i}}{i} = 2C \sum_{i=1}^n i^{-1/2} = O(n^{1/2}). \qedhere $$
\end{proof}

\begin{lemma} 
\label{lem:walk_moments}
Let $p > 1$. Suppose that $\Exp[ \|Z_1\|^p ] < \infty$.
\begin{itemize}
\item[(i)] For any $e \in \Sp_1$ such that $e \cdot \mu = 0$, $\Exp  [ \max_{0\leq m\leq n} |S_m \cdot e|^p ]  = O(n^{1 \vee (p/2)})$. 
\item [(ii)] Moreover, if $\mu = 0$, then
$\Exp [  \max_{0\leq m\leq n} \| S_m  \|^p ]  = O(n^{1 \vee (p/2)})$.
\item[(iii)] On the other hand, if $\mu \neq 0$, then
$\Exp  [\max_{0\leq m\leq n} |S_m \cdot \hat \mu|^p ]= O ( n^p )$.  
\end{itemize}
\end{lemma}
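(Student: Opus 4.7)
The plan is to reduce all three bounds to the same template: decompose the relevant projection of $S_m$ into a zero-mean martingale part plus a deterministic drift term, then combine a maximal inequality with a standard moment bound for random-walk sums. Part~(i) carries the analytic work; parts~(ii) and~(iii) will follow by simple manipulation.

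For (i), fix $e \in \Sp_1$ with $e \cdot \mu = 0$. Then $(S_m \cdot e)_{m \geq 0}$ is a real-valued martingale with respect to $\cF_m := \sigma(Z_1,\dots,Z_m)$, since $\Exp[Z_{m+1} \cdot e \mid \cF_m] = \mu \cdot e = 0$. Doob's $L^p$ inequality (Lemma~\ref{Doob}), valid because $p > 1$, gives
\[ \Exp\Bigl[\max_{0 \leq m \leq n} |S_m \cdot e|^p\Bigr] \leq \Bigl(\frac{p}{p-1}\Bigr)^{\!p} \Exp\bigl[\,|S_n \cdot e|^p\,\bigr]. \]
Cauchy--Schwarz yields $|Z_1 \cdot e|^p \leq \|Z_1\|^p$, so $\Exp[|Z_1 \cdot e|^p] < \infty$. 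For $p \geq 2$, applying Marcinkiewicz--Zygmund (Lemma~\ref{marcinkiewicz}) to the i.i.d.\ mean-zero summands $Z_i \cdot e$ gives $\Exp|S_n \cdot e|^p = O(n^{p/2})$; for $1 < p < 2$ one instead uses Jensen's inequality together with the orthogonality identity $\Exp|S_n \cdot e|^2 = n\,\Exp(Z_1 \cdot e)^2$ to obtain $\Exp|S_n \cdot e|^p \leq (\Exp|S_n \cdot e|^2)^{p/2} = O(n^{p/2}) = O(n)$. Either way, $\Exp|S_n \cdot e|^p = O(n^{1 \vee (p/2)})$, proving~(i).

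For (ii), fix an orthonormal basis $\{e_1,e_2\}$ of $\R^2$; when $\mu = 0$, both $e_i \cdot \mu = 0$. The elementary inequality
\[ \|S_m\|^p = \bigl((S_m \cdot e_1)^2 + (S_m \cdot e_2)^2\bigr)^{p/2} \leq 2^{p/2-1}\bigl(|S_m \cdot e_1|^p + |S_m \cdot e_2|^p\bigr) \]
lets the maximum pass inside, so $\Exp[\max_{0\le m\le n}\|S_m\|^p]$ is bounded by two applications of (i). For (iii), decompose $S_m \cdot \hat\mu = m\|\mu\| + (S_m - m\mu) \cdot \hat\mu$, where the second term is the evaluation at $\hat\mu$ of a centred walk with i.i.d.\ mean-zero increments $(Z_i - \mu) \cdot \hat\mu$, to which the argument of (i) applies. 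Using the triangle inequality and $(a+b)^p \leq 2^{p-1}(a^p + b^p)$,
\[ \Exp\Bigl[\max_{0 \leq m \leq n}|S_m \cdot \hat\mu|^p\Bigr] \leq 2^{p-1}\Bigl( n^p \|\mu\|^p + O(n^{1 \vee (p/2)}) \Bigr) = O(n^p), \]
since $p > 1$ forces $p \geq 1 \vee (p/2)$, so the deterministic term dominates.

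There is no substantial obstacle: the heart of the argument is the observation that zero-drift projections of $S_m$ are honest martingales, after which Doob plus Marcinkiewicz--Zygmund does all the heavy lifting. The only mild nuisance is the Marcinkiewicz--Zygmund estimate in the range $1 < p < 2$, which is handled most cleanly by a Jensen step from the $L^2$ bound rather than by citing the precise statement of Lemma~\ref{marcinkiewicz}.
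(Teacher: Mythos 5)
Your overall architecture for part (i) --- the martingale observation, Doob's $L^p$ inequality, then a moment bound on $\Exp|S_n\cdot e|^p$ --- is exactly the paper's, and your routes for (ii) and (iii) would work. But one step fails under the stated hypotheses: in the range $1<p<2$ you bound $\Exp|S_n\cdot e|^p \leq (\Exp|S_n\cdot e|^2)^{p/2} = (n\,\Exp[(Z_1\cdot e)^2])^{p/2}$ by Jensen. This requires $\Exp[(Z_1\cdot e)^2]<\infty$, i.e.\ square-integrable increments, whereas the lemma assumes only $\Exp[\|Z_1\|^p]<\infty$ for the given $p$; for $1<p<2$ this does not imply a finite second moment (take $Z_1\cdot e$ with a symmetric density decaying like $|x|^{-1-q}$ for some $q$ with $p<q<2$: the $p$-th moment is finite but the variance is infinite, so the quantity you take the $p/2$ power of is not even defined). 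The fix is precisely the step you chose to sidestep: the Marcinkiewicz--Zygmund inequality in its $1\leq p\leq 2$ form gives $\Exp|S_n\cdot e|^p \leq B_p\, n\, \Exp|Z_1\cdot e|^p = O(n)$ using only the assumed $p$-th moment, which is what the paper does, and $1\vee(p/2)=1$ in this range so $O(n)$ is the right target. The same repair is needed where your part (iii) invokes the argument of (i) for the centred increments $(Z_i-\mu)\cdot\hat\mu$.

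A smaller quibble: in (ii) the inequality $(a+b)^{p/2}\leq 2^{p/2-1}(a^{p/2}+b^{p/2})$ is valid only for $p\geq 2$; for $1<p<2$ the exponent $p/2$ is below $1$ and the correct bound is the subadditivity $(a+b)^{p/2}\leq a^{p/2}+b^{p/2}$. This costs only a constant. Otherwise your (ii) and (iii) are legitimate alternatives to the paper's: the paper proves (ii) via the triangle inequality $\max_m\|S_m\|\leq\max_m|S_m\cdot e_1|+\max_m|S_m\cdot e_2|$ followed by Minkowski's inequality, and proves (iii) more crudely via $\max_m|S_m\cdot\hat\mu|\leq\sum_k\|Z_k\|$ and Rosenthal's inequality; your drift decomposition in (iii) actually isolates the fluctuation term at the sharper order $O(n^{1\vee(p/2)})$, though both yield the stated $O(n^p)$.
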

\begin{proof}
Given that $\mu \cdot e =0$, $S_n \cdot e$ is a martingale, and hence, by convexity,
$| S_n \cdot e|$ is a non-negative submartingale. Then, for $p > 1$,   
\[ \Exp \left[ \max_{0\leq m \leq n} |S_m \cdot e|^p \right] \leq \left( \frac{p}{p-1} \right)^p \Exp \left[ | S_n \cdot e |^p \right]
= O ( n^{ 1 \vee (p/2) } ) ,\]
where the first inequality is Doob's $L^p$ inequality (see Lemma \ref{Doob})
and the second is the Marcinkiewicz--Zygmund inequality (see Lemma \ref{marcinkiewicz}). This gives part (i).
 
Part (ii) follows from part (i): take $\{ e_1, e_2\}$ an orthonormal basis of $\R^2$ and apply (i) with each basis vector.
Then by the triangle inequality 
$$\max_{0\leq m\leq n} \| S_m  \|   \leq \max_{0\leq m\leq n} |S_m \cdot e_1 | + \max_{0\leq m\leq n} |S_m \cdot e_2 |$$
together
with Minkowski's inequality (see Lemma \ref{minkowski}), we have
\begin{align*}
\Exp \left[ \max_{0\leq m\leq n} \| S_m  \|^p \right]
&\leq \Exp \left[\left( \max_{0\leq m\leq n}|S_m \cdot e_1 | + \max_{0\leq m\leq n}|S_m \cdot e_2 | \right)^p \right] \\
&= \left\| \max_{0\leq m\leq n}|S_m \cdot e_1 | + \max_{0\leq m\leq n}|S_m \cdot e_2 | \right\|_p^p \\
&\leq \left( \left\|\max_{0\leq m\leq n}|S_m \cdot e_1 | \right\|_p + \left\|\max_{0\leq m\leq n}|S_m \cdot e_2 | \right\|_p \right)^p \\
&= O(n^{1 \vee (p/2)}).
\end{align*}

Part (iii) follows from the fact that 
$$\max_{0 \leq m \leq n} | S_m \cdot \hat \mu | \leq \sum_{k=1}^n | Z_k \cdot \hat \mu | \leq \sum_{k=1}^n \| Z_k \|$$
and an application of Rosenthal's inequality (see Lemma \ref{rosenthal}) to the latter sum gives
\begin{align*}
\Exp \left[ \max_{0\leq m\leq n} \| S_m \cdot \hat \mu \|^p \right]
&\leq \Exp \left[\left( \sum_{k=1}^n \|Z_k\| \right)^p\, \right] \\
&\leq \max \left\{ 2^p \sum_{k=1}^n \Exp \|Z_k\|^p ,\, 2^{p^2} \left( \sum_{k=1}^n \Exp \|Z_k\| \right)^p \right\} \\
&\leq \max \left\{ O(n) , O(n^p) \right\} \\
&\leq O(n^p). \qedhere
\end{align*} 
\end{proof}

Proposition \ref{upper bound of E L_n} gives the order of $\Exp L_n$. Now we can have the exact limit by the following result,
the statement of which is similar to an example on p.~508 of \cite{sw}.

\begin{proposition} \label{limit of E L_n}
Suppose $\Exp(\|Z_1\|^2) < \infty$ and $\mu=0$. Then, for $Y \sim \NN(\0, \Sigma)$,
 $$\lim_{n \to \infty} n^{-1/2}\Exp L_n = \Exp \cL (\Sigma^{1/2} h_1) = 4 \Exp \|Y\| .$$ 
\end{proposition}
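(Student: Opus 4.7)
My plan is to derive the limit from the Spitzer--Widom formula together with the multivariate CLT, and then verify the closed form via Cauchy's formula. The argument breaks into three parts.

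\textbf{Step 1: asymptotics of $k^{-1/2}\Exp\|S_k\|$.}
First I would apply the multivariate CLT (Lemma \ref{Mult CLT}) to get $k^{-1/2} S_k \tod Y \sim \cN(\0,\Sigma)$, and hence, by continuous mapping, $k^{-1/2}\|S_k\| \tod \|Y\|$. To upgrade this to convergence of expectations, I would invoke Lemma \ref{convergence of means}: uniform integrability holds because
\[ \sup_{k \geq 1} \Exp \bigl[ (k^{-1/2} \|S_k\|)^2 \bigr] = \sup_{k\geq 1} k^{-1} \Exp \|S_k\|^2 = \Exp \|Z_1\|^2 < \infty \]
by (the proof of) Lemma \ref{ES 0 drift}, and second-moment boundedness implies uniform integrability. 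Therefore $a_k := k^{-1/2}\Exp\|S_k\| \to a := \Exp\|Y\|$.

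\textbf{Step 2: weighted Ces\`aro and the Spitzer--Widom formula.}
By the Spitzer--Widom formula \eqref{SW formula},
\[ n^{-1/2} \Exp L_n = \frac{2}{\sqrt{n}} \sum_{k=1}^n \frac{\Exp\|S_k\|}{k} = \frac{2}{\sqrt{n}} \sum_{k=1}^n \frac{a_k}{\sqrt{k}} . \]
An integral comparison gives $\sum_{k=1}^n k^{-1/2} = 2\sqrt{n} + O(1)$, so I need the weighted Ces\`aro statement: if $a_k \to a$ and $w_k>0$ with $W_n := \sum_{k=1}^n w_k \to \infty$, then $W_n^{-1} \sum_{k=1}^n w_k a_k \to a$. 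The proof mimics Lemma \ref{convergence of Cesaro mean}: fix $\eps>0$, choose $n_0$ with $|a_k-a|<\eps$ for $k\geq n_0$, and estimate
\[ \Bigl| W_n^{-1}\sum_{k=1}^n w_k(a_k-a) \Bigr| \leq W_n^{-1} \sum_{k=1}^{n_0} w_k |a_k-a| + \eps , \]
where the first term vanishes as $n \to \infty$. Applied with $w_k = k^{-1/2}$, this gives $\sum_{k=1}^n a_k/\sqrt{k} \sim 2a\sqrt{n}$, and hence $n^{-1/2}\Exp L_n \to 4a = 4\Exp\|Y\|$.

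\textbf{Step 3: identifying $\Exp\cL(\Sigma^{1/2} h_1)$.}
Applying Cauchy's formula \eqref{cauchy0} and Fubini to the random convex set $\Sigma^{1/2} h_1$, and writing $W_\theta(s) := b(s)\cdot \Sigma^{1/2}\be_\theta$, which is a one-dimensional Brownian motion with variance $\sigma_\theta^2 := \be_\theta^{\tra}\Sigma \be_\theta$, I obtain
\[ \Exp \cL(\Sigma^{1/2} h_1) = \int_0^\pi \Exp \Bigl[ \sup_{s\in[0,1]} W_\theta(s) - \inf_{s\in[0,1]} W_\theta(s) \Bigr] \,\ud\theta . \]
By the reflection principle (Corollary \ref{reflection}) and symmetry, $\Exp[\sup - \inf] = 2\sigma_\theta\sqrt{2/\pi}$. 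On the other hand, for any $y\in\R^2$ the polar identity $\|y\| = \tfrac{1}{2}\int_0^\pi |y\cdot\be_\theta|\,\ud\theta$ (verified by evaluating at $y=(1,0)$ and rotational invariance) combined with $Y\cdot\be_\theta \sim \cN(0,\sigma_\theta^2)$ gives
\[ 4 \Exp \|Y\| = 2 \int_0^\pi \Exp |Y\cdot\be_\theta|\, \ud\theta = 2\sqrt{2/\pi}\int_0^\pi \sigma_\theta\, \ud\theta , \]
which matches the previous display and completes the chain of equalities.

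The step I expect to be technically the most delicate is the uniform integrability argument for $k^{-1/2}\|S_k\|$ underpinning Step 1; everything else reduces to standard Ces\`aro and integral-geometry manipulations.
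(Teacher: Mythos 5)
Your argument is correct, and Steps 1--2 coincide with the paper's own derivation of $\lim_n n^{-1/2}\Exp L_n = 4\Exp\|Y\|$ (the paper writes out the $\eps$-estimate explicitly rather than isolating a weighted Ces\`aro lemma, but the content is identical, including the uniform integrability of $k^{-1/2}\|S_k\|$ via the second-moment bound). Where you genuinely diverge is in how the middle quantity $\Exp\cL(\Sigma^{1/2}h_1)$ enters. The paper obtains the first equality $\lim_n n^{-1/2}\Exp L_n = \Exp\cL(\Sigma^{1/2}h_1)$ from the scaling limit machinery: Corollary \ref{cor:zero-limits} gives $n^{-1/2}L_n \tod \cL(\Sigma^{1/2}h_1)$, and the maximal inequality of Lemma \ref{lem:walk_moments}(ii) together with the bound $L_n \leq 2\pi\max_k\|S_k\|$ gives uniform integrability of $n^{-1/2}L_n$, so expectations converge; the identification with $4\Exp\|Y\|$ then follows because both equal the same limit. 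You instead bypass the weak convergence of $L_n$ entirely and verify $\Exp\cL(\Sigma^{1/2}h_1) = 4\Exp\|Y\|$ by a direct Cauchy-formula computation on the Brownian hull (the paper explicitly flags this as a viable alternative it chose not to pursue). Your route is more self-contained and elementary --- it needs only the reflection principle and the polar identity $\|y\| = \tfrac12\int_0^\pi|y\cdot\be_\theta|\,\ud\theta$, and the interchange of supremum over the hull with supremum over the path is licensed by Lemma \ref{lem:hull-max}. What the paper's route buys is reusability: the same uniform-integrability-plus-weak-convergence template immediately yields the variance limit in Proposition \ref{prop:var-limit-zero u0} and the area results, where no closed-form Gaussian computation is available. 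Both are complete proofs; just note that in Step 3 you should say a word about why $\sup_{\bx\in\Sigma^{1/2}h_1}(\bx\cdot\be_\theta) = \sup_{s}\,b(s)\cdot\Sigma^{1/2}\be_\theta$ (symmetry of $\Sigma^{1/2}$ plus Lemma \ref{lem:hull-max}) before applying Fubini.
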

\begin{proof}
The finite point-set case of Cauchy's formula gives
\begin{equation}
\label{eq:walk-cauchy}
L_n = \int_{\Sp_{1}} \max_{0 \leq k \leq n} ( S_k \cdot e)\ud e \leq 2 \pi \max_{0 \leq k \leq n} \| S_k \|.\end{equation}
Then by Lemma \ref{lem:walk_moments}(ii)  we have $\sup_n \Exp [ ( n^{-1/2} L_n )^{2} ] < \infty$.
Hence $n^{-1/2} L_n$ is uniformly integrable, so that Theorem \ref{thm:limit-zero} yields
$\lim_{n \to \infty} n^{-1/2}\Exp L_n = \Exp \cL ( \Sigma^{1/2} h_1 )$.

It remains to show that $\lim_{n \to \infty} n^{-1/2}\Exp L_n = 4 \Exp \| Y \|$. One can use Cauchy's formula to compute
$\Exp \cL ( \Sigma^{1/2} h_1 )$; instead we give a direct random walk argument, following \cite{sw}.
The central limit theorem for $S_n$ implies that  
 $n^{-1/2} \|S_n\| \to \|Y\|$ in distribution. Under the given conditions, $\Exp [ \|S_{n+1}\|^2 ] = \Exp [ \|S_{n}\|^2 ] + \Exp [ \|Z_{n+1} \|^2 ]$,
so that $\Exp [ \| S_n \|^2 ] = O(n)$. It follows that $n^{-1/2} \|S_n\|$ is uniformly integrable,
and hence  
$$\lim_{n \to \infty} n^{-1/2} \Exp \|S_n\| = \Exp \|Y\|.$$

So for any $\eps > 0$, there is some $n_0 \in \N$ such that $\left| k^{-1/2}\Exp \|S_k\| - \Exp\|Y\| \right| < \eps $ for all $k \geq n_0$.
Then by the S--W formula \eqref{SW formula}, we have
\begin{align*}
& \left| \frac{\Exp L_n}{\sqrt{n}} -2\Exp\|Y\| \frac{1}{\sqrt{n}} \sum_{k=1}^n k^{-1/2} \right| \\
&= \frac{2}{\sqrt{n}} \left| \sum_{k=1}^n \left( \frac{\Exp \|S_k\|}{k} - \Exp\|Y\| k^{-1/2} \right) \right| \\
&\leq \frac{2}{\sqrt{n}} \sum_{k=1}^n \left| \frac{\Exp \|S_k\|}{\sqrt{k}} - \Exp \|Y\| \right| k^{-1/2} \\
&= \frac{2}{\sqrt{n}} \left( \sum_{k=1}^{n_0} + \sum_{i = n_0 +1 }^n \right) \left| \frac{\Exp \|S_k\|}{\sqrt{k}} - \Exp\|Y\| \right| k^{-1/2} \\
&\leq \frac{D}{\sqrt{n}} + \frac{2}{\sqrt{n}} \sum_{k = n_0 +1}^n \left| \frac{\Exp \|S_k\|}{\sqrt{k}} - \Exp\|Y\| \right| k^{-1/2} \\
&\leq \frac{D}{\sqrt{n}} + \frac{2 \eps}{\sqrt{n}} \sum_{k = n_0 +1}^n k^{-1/2},
\end{align*}
for some constant $D$ and the $n_0$ mentioned above.

Also notice the fact that $\lim_{n \to \infty} n^{-1/2} \sum_{k=1}^n k^{-1/2} =2$. This can be proved by the monotonicity,
$$2\left[ (n+1)^{1/2}-1 \right] = \int_1^{n+1} x^{-1/2}\,\ud x \leq \sum_{k=1}^n k^{-1/2} \leq \int_0^n x^{-1/2}\,\ud x = 2n^{1/2} .$$

Taking $n \to \infty$ in the displayed inequality gives
$$\limsup_{n \to \infty} \left| \frac{\Exp L_n}{\sqrt{n}} -2\Exp\|Y\| \frac{1}{\sqrt{n}} \sum_{k=1}^n k^{-1/2} \right| \leq 4\eps .$$
Since $\eps > 0$ was arbitrary, it follows that
$$\lim_{n \to \infty} \left| \frac{\Exp L_n}{\sqrt{n}} -2\Exp\|Y\| \frac{1}{\sqrt{n}} \sum_{k=1}^n k^{-1/2} \right| =0 .$$
Therefore,
$$\lim_{n \to \infty} \frac{\Exp L_n}{\sqrt{n}} =  \lim_{n \to \infty}2\Exp\|Y\| \frac{1}{\sqrt{n}} \sum_{k=1}^n k^{-1/2} = 4\Exp\|Y\|. \qedhere$$
\end{proof}

Cauchy's formula applied to the line segment from $0$ to $Y$ with Fubini's theorem implies $2 \Exp \| Y \| = \int_{\Sp_1} \Exp [ ( Y \cdot e )^+ ] \ud e$.
Here $Y \cdot e = e^\tra Y$ is univariate normal
with mean $0$ and variance $e^\tra \Sigma e = \|\Sigma^{1/2} e\|^2$,  so that $\Exp[ ( Y \cdot e)^+ ]$ is  
$\|\Sigma^{1/2} e\|$ times one half of the mean of the square-root of a $\chi_1^2$ random variable. Hence
$$\Exp \| Y \| = ( 8 \pi)^{-1/2} \int_{\Sp_1} \|\Sigma^{1/2} e\|\, \ud e ,$$
which in general may be expressed via a complete elliptic integral of the second kind
in terms of the ratio of the eigenvalues of $\Sigma$.
In the particular case $\Sigma = I$, $\Exp \| Y \| = \sqrt{\pi / 2}$ so
then Proposition \ref{limit of E L_n} implies that
\[
\lim_{n \to \infty} n^{-1/2}\Exp L_n = \sqrt{8 \pi}, \]
 matching the formula 
$\Exp \ell_1 = \sqrt{8 \pi}$ of Letac and Tak\'acs \cite{letac2,takacs} (see Lemma \ref{lem:letac} below). 
We also note the bounds
\begin{equation}
\label{EL-bounds}
 \pi^{-1/2} \sqrt{ \trace \Sigma } \leq \Exp \| Y \| \leq \sqrt{ \trace \Sigma } ;\end{equation}
the upper bound here is from Jensen's inequality and the fact that $\Exp [ \| Y\|^2 ] = \trace \Sigma$.
The lower bound in \eqref{EL-bounds} follows from the inequality  
\[ \Exp \| Y \| \geq \sup_{e \in \Sp_1} \Exp | Y \cdot e |
=  \sqrt{ 2/\pi } \sup_{e \in\Sp_1}  ( \Var [ Y \cdot e ] )^{1/2} \]
together with the fact that 
\[ \sup_{e \in \Sp_1} \Var [ Y \cdot e ] 
=  \sup_{e \in \Sp_1} \|\Sigma^{1/2} e\|^2 
=   \| \Sigma^{1/2} \|^2_{\rm op} = \| \Sigma \|_{\rm op} = \lambda_\Sigma  \geq \frac{1}{2} \trace \Sigma  , 
\]
where $\| \blob  \|_{\rm op}$ is the matrix operator norm and $\lambda_\Sigma $ is the largest
eigenvalue  of $\Sigma$;
in statistical terminology, $\lambda_\Sigma$ is the variance of the first principal component associated with $Y$.

We give a proof of the formula of Letac and Tak\'acs \cite{letac2,takacs}.

\begin{lemma}
\label{lem:letac}
Let $\ell_1 = \cL(h_1)$ (see equation \eqref{eqn:def of Lt At for BM}) 
be the perimeter length of convex hull of a standard Brownian motion on $[0,1]$ in $\R^2$. Then, $\Exp \ell_1 = \sqrt{8 \pi}$.
\end{lemma}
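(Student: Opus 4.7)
The plan is to apply Cauchy's formula (Proposition \ref{cauchyhull}, or rather its extension \eqref{cauchy0} recalled in Section 3.2) to the convex hull $h_1$ of the Brownian path, interchange expectation and integration via Fubini, and exploit rotational symmetry together with the reflection principle to evaluate the resulting scalar integrals.

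More concretely, since $h_1 = H(b) \in \cK_2^0$, Cauchy's formula gives
\[ \ell_1 = \cL(h_1) = \int_0^{\pi} \left( \sup_{t \in [0,1]} (b(t) \cdot \be_\theta) - \inf_{t \in [0,1]} (b(t) \cdot \be_\theta) \right) \ud \theta . \]
By Lemma \ref{lem:hull-max}, the sup and inf of a linear functional over $h_1$ are attained on the path $b[0,1]$, so the integrand is non-negative and measurable; together with the pathwise bound $\ell_1 \leq 2\pi \max_{t \in [0,1]} \|b(t)\|$ (cf.\ \eqref{eq:walk-cauchy}), whose expectation is finite by standard Brownian maximal estimates, this justifies using Fubini's theorem (Lemma \ref{fubini}) to write
\[ \Exp \ell_1 = \int_0^{\pi} \Exp \left[ \sup_{t \in [0,1]} (b(t) \cdot \be_\theta) - \inf_{t \in [0,1]} (b(t) \cdot \be_\theta) \right] \ud \theta . \]

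Next I would use rotational invariance of standard planar Brownian motion: for each fixed unit vector $\be_\theta$, the process $w_\theta(t) := b(t) \cdot \be_\theta$ is a standard one-dimensional Brownian motion on $[0,1]$, so its law does not depend on $\theta$. Hence the integrand is a constant in $\theta$, equal to $\Exp [ \sup_{[0,1]} w - \inf_{[0,1]} w ]$ for a standard $1$d Brownian motion $w$. By the reflection principle (Corollary \ref{reflection}), $\sup_{0 \leq s \leq 1} w(s) \stackrel{d}{=} |w(1)|$, and by symmetry $-\inf_{0 \leq s \leq 1} w(s) \stackrel{d}{=} |w(1)|$ as well. Since $w(1) \sim \NN(0,1)$, a direct computation yields $\Exp |w(1)| = \sqrt{2/\pi}$.

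Assembling the pieces,
\[ \Exp \ell_1 = \int_0^{\pi} 2 \, \Exp|w(1)| \, \ud \theta = 2\pi \cdot \sqrt{2/\pi} = \sqrt{8\pi}, \]
as required. I do not expect a serious obstacle here: the main care point is the Fubini justification, which requires only the crude integrability bound above and is immediate given the finiteness of moments of the running maximum of Brownian motion; everything else reduces to one-dimensional identities already available in the excerpt.
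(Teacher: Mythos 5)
Your proposal is correct and follows essentially the same route as the paper's proof: Cauchy's formula, Fubini, rotational invariance reducing to a single one-dimensional Brownian motion, and the reflection principle to evaluate $\Exp\sup_{[0,1]}w=\sqrt{2/\pi}$. The only cosmetic differences are that the paper integrates the support function over $[0,2\pi]$ rather than the range over $[0,\pi]$, and computes $\Exp\sup_{[0,1]}w$ by integrating the tail probability directly rather than via the identity $\sup_{[0,1]}w\stackrel{d}{=}|w(1)|$.
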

\begin{proof}
Applying Fubini’s theorem (Lemma \ref{fubini}) in Cauchy formula \eqref{cauchy0} for $\ell_1$,
$$\ell_1 = \int_0^{2 \pi} \sup_{t \in [0,1]}(b(t) \cdot \be_{\theta}) \, d\theta ,$$ 
we have
\begin{align*}
\Exp \ell_1 &= \int_0^{2 \pi} \Exp \sup_{t \in [0,1]}(b(t) \cdot \be_{\theta}) \, d\theta \\
&= 2\pi \Exp \sup_{t \in [0,1]} (b(t) \cdot \be_{\theta}), ~\text{where $b(t) \cdot \be_{\theta}$ is a 1 dimensional Brownian motion,} \\
&= 2\pi \Exp \sup_{t \in [0,1]} w(t). 
\end{align*}
Here $w(t)$ is defined as a standard 1-dimensional Brownian motion, which is the same as in Corollary \ref{reflection}. Then we have
\begin{align*}
\Exp \sup_{t \in [0,1]} w(t)
&= \int_0^{\infty} \Pr \left( \sup_{t \in [0,1]} w(t) >r \right)\ud r \\
&= 2 \int_0^{\infty} \Pr \left( w(1) > r \right) \ud r, \text{ by Reflection principle (Corollary \ref{reflection}),} \\
&= 2 \int_0^{\infty} \frac{\ud r}{\sqrt{2 \pi}} \int_r^{\infty} e^{-y^2 /2} \,\ud y \\
&= \sqrt{\frac{2}{\pi}} \int_0^\infty \ud y \int_0^y e^{-y^2 /2}\, \ud r, \text{ by changing orders of integrals,} \\
&= \sqrt{\frac{2}{\pi}}
\end{align*}
Hence, the result follows.
\end{proof}

\pagestyle{myheadings} \markright{\sc Chapter 5}

\chapter{Asymptotics for perimeter length of the convex hull}
\label{chapter5}

\section{Overview}
\label{sec:5.1}

To start this chapter we discuss some simulations. We considered a specific form of random walk with increments 
$Z_i - \Exp [Z_i] = (\cos \Theta_i, \sin \Theta_i)$, where $\Theta_i$ was uniformly distributed on $[0, 2\pi)$,
corresponding to a uniform distribution on a unit circle centred at $\Exp [Z_i] = \mu$. 
We took one example with $\mu = \0$, and two examples with $\mu \neq \0$ of different magnitudes.

For the expected perimeter length, the simulations (see Figure \ref{fig:expper}) are consistent with the 
Spitzer--Widdom--Baxter result (see the argument below \eqref{SW formula}), Proposition \ref{limit of E L_n} and Proposition \ref{LLN for L_n}. 
In the case of $\mu = \0$, the result in Proposition \ref{limit of E L_n} take the form: 
$\lim_{n \to \infty} n^{-1/2}\Exp L_n  = 4 \Exp \|Y\| = 4$. 
In the case of $\mu \neq \0$, the result in Proposition \ref{LLN for L_n} take the form: 
$n^{-1} L_n \toas 2 \|\mu\|=0.4 \text{ or } 0.72 $.
\begin{figure}[h!]
  \centering
	\includegraphics[width=0.31\textwidth]{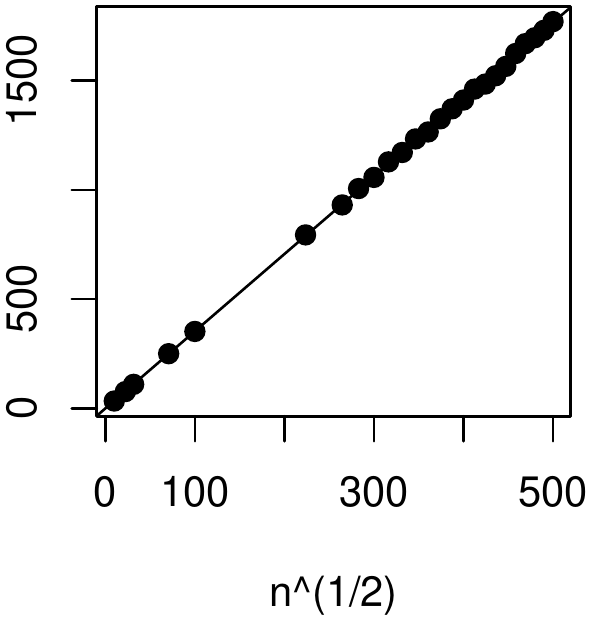} \,
	\includegraphics[width=0.31\textwidth]{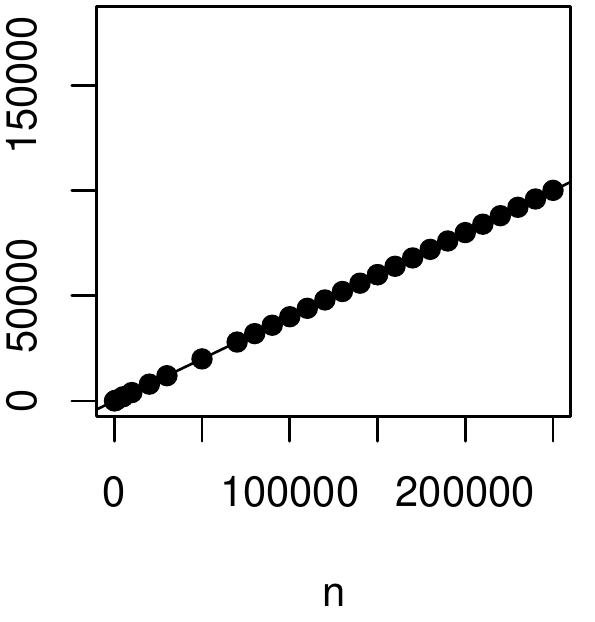} \,
	\includegraphics[width=0.31\textwidth]{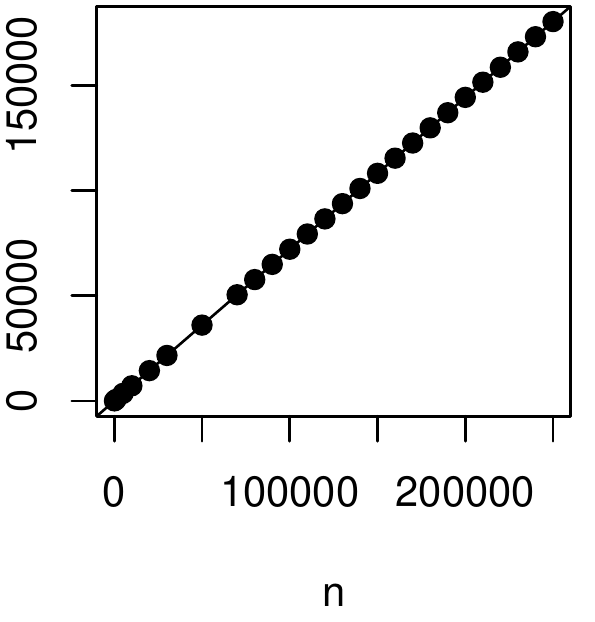} 
  \caption{Plots of $y = \Exp[L_n]$ estimates against  $x =$ (left to right) $n^{1/2}$, $n$, $n$ for
about $25$ values of $n$ in the range $10^2$ to $2.5 \times 10^5$ for 3 examples
with $\|\mu\| =$ (left to right) $0$, $0.2$, $0.36$. Each point is
estimated from $10^3$ repeated simulations. 
Also plotted are straight lines $y = 3.532 x$ (leftmost plot),
$y=0.40x$ (middle plot) and $y= 0.721x$ (rightmost plot).}
\label{fig:expper}
\end{figure}

For the variance of perimeter length with drift, the result in Theorem \ref{thm1} take the form: 
$\lim_{n \to \infty} \Var[L_n] = 4 \Exp[\cos^2 \Theta_1]=2$ and in Theorem \ref{thm2}, 
$(2n)^{-1/2}(L_n - \Exp[L_n])$ converges in distribution to a standard normal distribution. 
The corresponding pictures in Figures \ref{fig:varper} and \ref{fig:normal} show an agreement between the simulations and the theory. 
In the zero drift case, the simulations (the leftmost plot in Figure \ref{fig:varper}) suggest that 
$\lim_{n \to \infty} n^{-1} \Var[L_n]$ exists but Figure \ref{fig:normal} does not appear to be consistent with a normal distribution as a limiting distribution. 

\begin{figure}[h!]
  \centering
	\includegraphics[width=0.31\textwidth]{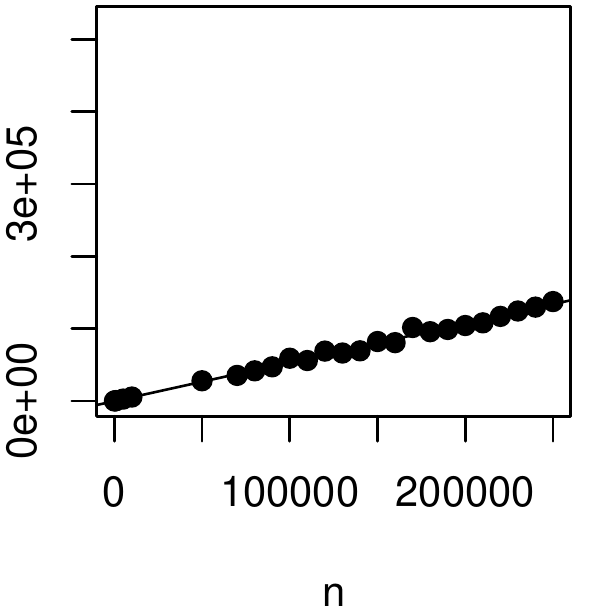} \,
	\includegraphics[width=0.31\textwidth]{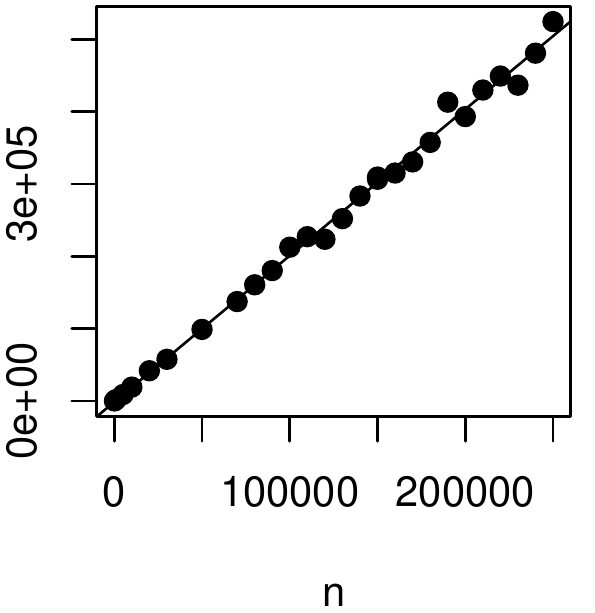} \,
	\includegraphics[width=0.31\textwidth]{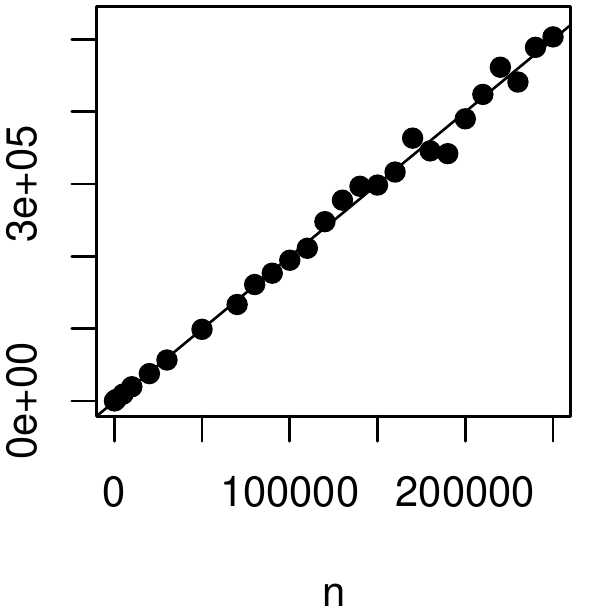}
  \caption{Plots of $y = \Var[L_n]$ estimates against  $x = n$ for 
the three examples described in Figure \ref{fig:expper}. 
Also plotted are straight lines $y = 0.536 x$ (leftmost plot)
and $y=2x$ (other two plots).}
\label{fig:varper}
\end{figure}

\begin{figure}[h!]
\centering
	\includegraphics[width=\textwidth]{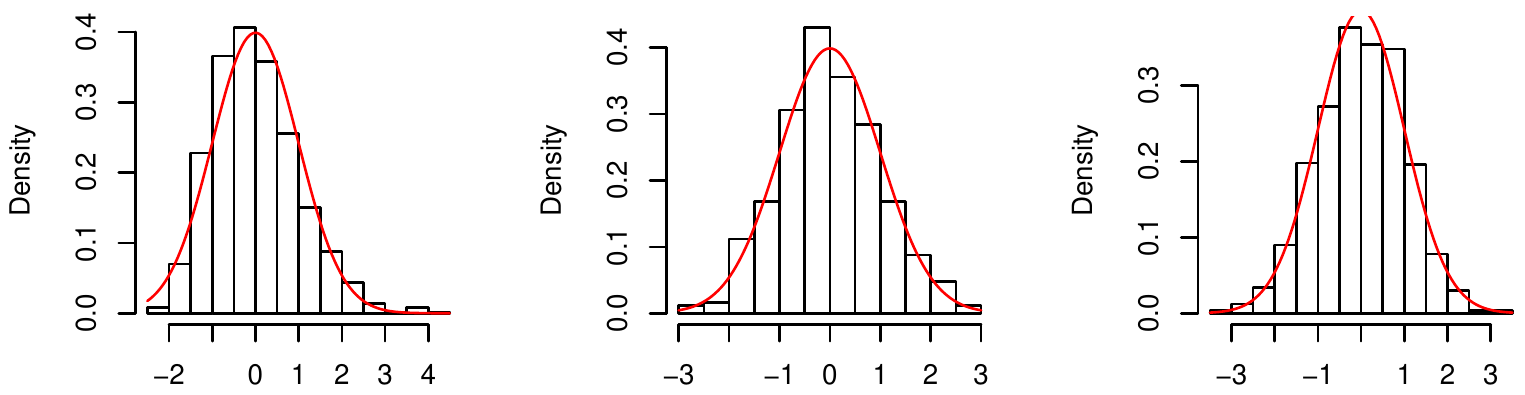}
  \caption{Simulated histogram estimates for the distribution of
  $\tfrac{L_n-\Exp [L_n]}{\sqrt{\Var[L_n]}}$
with  $n=5 \times 10^3$
in the three examples described in Figure \ref{fig:expper}. Each histogram is compiled from $10^3$ samples.}
\label{fig:normal}
\end{figure}

We will show in
Proposition \ref{prop:var-limit-zero u0} that 
$$\text{if } \mu = 0: ~~  \lim_{n \to \infty} n^{-1} \Var L_n = u_0 ( \Sigma ),$$
where $u_0( \blob )$ is finite and positive provided $\sigma^2 < \infty$.
For the constant $u_0(I)$ ($I$ being the identity matrix), Table \ref{table x1} gives numerical evaluation of rigorous bound that we prove in 
Proposition \ref{prop:var_bounds u0} below, plus estimate from simulations. See also Section~\ref{sec:exact evaluation of limiting variances} for an explicit integral expression for $u_0(I)$. 

\begin{table}[!h]
\center
\def\arraystretch{1.4}
\begin{tabular}{c|ccc}
        &   lower bound   & simulation estimate  & upper bound \\
\hline
  $u_0 ( I)$ &  $2.65 \times 10^{-3}$  &  1.08   &  9.87   \\
	\end{tabular}
\caption{The simulation estimate is
 based on $10^5$ instances of a walk of length $n = 10^5$. The final decimal digit in the numerical upper (lower)
bounds has been rounded up (down).}
\label{table x1}
\end{table}

\section{Upper bound for the variance}

Assuming that $\Exp [ \| Z_1 \|^2 ] < \infty$, Snyder and Steele \cite{ss} obtained an upper bound for $\Var [ L_n]$
 using Cauchy's formula together with a version of the Efron--Stein inequality.
Snyder and Steele's result  (Theorem 2.3 of  \cite{ss}) can be expressed as
\begin{equation}
\label{ssup}   n^{-1} \Var [L_n] \leq \frac{\pi^2}{2}
\left( \Exp [ \| Z_1 \|^2 ] - \| \Exp [ Z_1 ] \|^2 \right)
,  ~~~ (n \in \N := \{1,2,\ldots \} ) . \end{equation}

As far as we are aware, there are no  lower bounds  for $\Var [ L_n]$ in the literature.
According to the discussion in \cite[\S 5]{ss}, Snyder and Steele had ``no compelling reason to expect that $O(n)$ is the correct order of magnitude'' in their upper bound for $\Var [L_n]$,
and they speculated that perhaps $\Var [ L_n] = o(n)$ (maybe with a distinction between the cases of zero and non-zero drift).
Our first main result settles this question under minimal conditions, confirming
that (\ref{ssup}) is indeed of the correct order, apart from
in certain degenerate cases, while demonstrating that the constant on the right-hand side of (\ref{ssup}) is not, in general, sharp.

The first step in looking for the variance upper bound is a
 martingale difference argument, based on resampling members of the sequence
$Z_1, \ldots, Z_n$, to get an expression for $\Var [L_n]$ amenable to analysis: see Section \ref{sec:martingales}.
Let $\FF_0$ denote the trivial $\sigma$-algebra, and for $n \in \N$ set $\FF_n := \sigma (Z_1, \ldots, Z_n)$, the $\sigma$-algebra generated
by the first $n$ steps of the random walk. Then $S_n$ is $\FF_n$-measurable, and for $n \in \N$ we can write
$L_n = \Lambda_n ( Z_1, \ldots, Z_n )$ for $\Lambda_n : \R^{2n} \to [0,\infty)$
a measurable function.

Let $Z_1', Z_2',\ldots$ be an independent copy of the sequence $Z_1, Z_2, \ldots$.
Fix $n \in \N$. For $i \in \{1,\ldots, n\}$, we `resample' the $i$th increment, replacing $Z_i$ with $Z_i'$, as follows.
Set
\begin{equation}
\label{resample}
 S_j^{(i)} := \begin{cases} S_j & \textrm{ if } j < i \\
S_j - Z_i + Z_i' & \textrm{ if } j \geq i ;\end{cases} \end{equation}
then $(S_j^{(i)} ; 0 \leq j \leq n)$ is a modification of the random walk $(S_j ; 0 \leq j \leq n)$ that keeps all the components apart from
the $i$th step which is independently resampled. We let $L_n^{(i)}$ denote
the perimeter length of the corresponding convex hull for this modified walk, namely
$\hull ( S_0^{(i)}, \ldots, S_n^{(i)} )$,
 i.e.,
\[ L_n^{(i)} := \Lambda_n (Z_1, \ldots, Z_{i-1}, Z'_i, Z_{i+1}, \ldots, Z_n ) .\]
For $i \in \{1,\ldots, n\}$, define
\begin{equation}
\label{dni}
 D_{n, i} :=  \Exp [ L_n - L_n^{(i)}\mid \FF_{i} ] ;\end{equation}
in other words, $-D_{n,i}$ is the expected change in the perimeter length of the convex hull,
given $\FF_i$, on replacing $Z_i$ by $Z_i'$.
The point of this construction is the following result.

\begin{lemma}
\label{lem1}
Let $n \in \N$. Then (i) $L_n - \Exp [ L_n] = \sum_{i=1}^n D_{n,i}$; and (ii)
$\Var [ L_n ] = \sum_{i=1}^n \Exp [ D_{n,i}^2 ]$, whenever the latter sum is finite.
\end{lemma}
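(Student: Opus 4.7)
The plan is to recognise Lemma \ref{lem1} as a direct specialization of the general resampling result, Lemma \ref{resampling}, applied with $Y_i = Z_i$, the functional $f = \Lambda_n$, $W_n = L_n = \Lambda_n(Z_1, \ldots, Z_n)$, $W_n^{(i)} = L_n^{(i)} = \Lambda_n(Z_1, \ldots, Z'_i, \ldots, Z_n)$, and $D_{n,i}$ as defined in \eqref{dni}. Both parts then follow immediately from parts (i) and (ii) of Lemma \ref{resampling}.

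For self-containedness, I would reproduce the short argument. The first step is to establish the martingale representation
\[ D_{n,i} = \Exp[L_n \mid \FF_i] - \Exp[L_n \mid \FF_{i-1}]. \]
This rests on two observations. Since $Z'_i$ is independent of $(Z_1, \ldots, Z_n)$ and has the same law as $Z_i$, the random variable $L_n^{(i)}$ is a function of $\FF_{i-1}$-measurable data together with $Z'_i, Z_{i+1}, \ldots, Z_n$, all of which are independent of $Z_i$; hence $\Exp[L_n^{(i)} \mid \FF_i] = \Exp[L_n^{(i)} \mid \FF_{i-1}]$. Moreover, by the symmetry between $Z_i$ and $Z'_i$ (both i.i.d.\ with the common increment law) conditional on $\FF_{i-1}$, one has $\Exp[L_n^{(i)} \mid \FF_{i-1}] = \Exp[L_n \mid \FF_{i-1}]$. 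Subtracting gives the claimed representation, and summing telescopes to
\[ \sum_{i=1}^n D_{n,i} = \Exp[L_n \mid \FF_n] - \Exp[L_n \mid \FF_0] = L_n - \Exp L_n, \]
using that $L_n$ is $\FF_n$-measurable and $\FF_0$ is trivial. This proves (i), and also shows that $(D_{n,i})_{i=1}^n$ is a martingale difference sequence relative to $(\FF_i)$.

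For (ii), orthogonality of martingale differences (Lemma \ref{martingale diff. orth.}) yields
\[ \Var[L_n] = \Var\Bigl[\sum_{i=1}^n D_{n,i}\Bigr] = \sum_{i=1}^n \Var[D_{n,i}] = \sum_{i=1}^n \Exp[D_{n,i}^2], \]
where the last equality uses $\Exp[D_{n,i}] = 0$, valid whenever the sum is finite. There is essentially no obstacle in this proof; the identity is a near-tautology once the resampling machinery is in place. The substantive work of the chapter will lie in subsequently controlling $\Exp[D_{n,i}^2]$ via geometric estimates on how perturbing a single increment $Z_i$ alters the perimeter of the convex hull, which is where the Cauchy formula \eqref{cauchy_} and the chapter's random walk moment bounds come in.
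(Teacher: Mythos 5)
Your proof is correct and follows essentially the same route as the paper, which simply invokes Lemma \ref{resampling} with $W_n = L_n$; your reproduction of the underlying martingale-difference argument (the telescoping identity $D_{n,i} = \Exp[L_n \mid \FF_i] - \Exp[L_n \mid \FF_{i-1}]$ and orthogonality) matches the proof of that lemma given in Chapter 2. Nothing is missing.
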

\begin{proof}
Take $W_n = L_n$ in Lemma \ref{resampling}. Then the results follow.
\end{proof}

\begin{remark} \label{remark: upper bound for var Ln}
Lemma \ref{lem1} with the conditional Jensen's inequality gives the bound
$$ \Var[L_n] \leq \sum_{i=1}^n \Exp\left[\left(L_n^{(i)} - L_n \right)^2\right] ,$$ 
which is a factor of $2$ larger than the upper bound obtained from the Efron--Stein inequality: 
$\Var[L_n] \leq 2^{-1} \sum_{i=1}^n \Exp\left[ (L_n^{(i)}-L_n)^2 \right]$ (see equation (2.3) in \cite{ss}).
\end{remark}

Let $\be_\theta = (\cos \theta, \sin \theta)$ be the unit vector in direction  $\theta \in (-\pi, \pi]$.
For $\theta \in [0,\pi]$, define
\[ M_n (\theta) := \max_{0 \leq j \leq n} ( S_j \cdot \be_\theta ) , \textrm{ and }  m_n (\theta) := \min_{0 \leq j \leq n} ( S_j \cdot \be_\theta ) .\]
Note that since $S_0 = \0$, we have $M_n (\theta) \geq 0$ and $m_n (\theta) \leq 0$, a.s.
In the present setting (see equation (\ref{cauchy_})), Cauchy's formula for convex sets yields
\[  L_n =  \int_0^\pi \left(  M_n (\theta) - m_n (\theta) \right) \ud \theta = \int_0^\pi R_n (\theta)  \ud \theta ,\]
where  $R_n (\theta) :=  M_n (\theta) - m_n (\theta) \geq 0$ is the {\em parametrized range function}. Similarly, when the $i$th increment
is resampled,
\[ L_n^{(i)} = \int_0^\pi \left(  M^{(i)}_n (\theta) - m^{(i)}_n (\theta) \right) \ud \theta = \int_0^\pi R^{(i)}_n (\theta)  \ud \theta ,\]
where $R_n^{(i)} (\theta ) =  M^{(i)}_n (\theta) - m^{(i)}_n (\theta)$, defining
\[ M^{(i)}_n (\theta) := \max_{0 \leq j \leq n} ( S^{(i)}_j \cdot \be_\theta ) , \textrm{ and }  m^{(i)}_n (\theta) := \min_{0 \leq j \leq n} ( S^{(i)}_j \cdot \be_\theta ) .\]
Thus to study $D_{n,i} = \Exp [ L_n - L_n^{(i)} \mid \FF_i]$ we will consider
\begin{equation}
\label{cauchy}
 L_n -  L_n^{(i)} = \int_0^\pi \left(  R_n (\theta) -R^{(i)}_n (\theta)  \right) \ud \theta
 = \int_0^\pi \Delta^{(i)}_{n} (\theta) \ud \theta,\end{equation}
where $\Delta^{(i)}_{n} (\theta) :=  R_n (\theta) - R^{(i)}_n (\theta)$.
For $\theta \in [0,\pi]$, let
\[ \ubar J_{n} (\theta) := \argmin_{0 \leq j \leq n } ( S_j \cdot \be_\theta ) , \textrm{ and } \bar J_{n} (\theta) := \argmax_{0 \leq j \leq n } ( S_j \cdot \be_\theta ) ,\]
so $m_n (\theta) = S_{\ubar J_n (\theta)} \cdot \be_\theta$ and
$M_n (\theta) = S_{\bar J_n (\theta)} \cdot \be_\theta$. 
 Similarly, recalling (\ref{resample}), define
\[ \ubar J^{(i)}_{n} (\theta) := \argmin_{0 \leq j \leq n } ( S^{(i)}_j \cdot \be_\theta ) , \textrm{ and } \bar J^{(i)}_{n} (\theta) := \argmax_{0 \leq j \leq n } ( S^{(i)}_j \cdot \be_\theta ) .\]
(Apply the following conventions in the event of ties: $\argmin$ takes the maximum argument
among tied values, and $\argmax$ the minimum.)

We will use the following simple bound repeatedly in the arguments that follow. 
This upper bound for $| \Delta_n ^{(i)} (\theta ) |$ is also given in Lemma 2.1 of \cite{ss}. But we have a different way to prove here.

\begin{lemma}
Almost surely, for any $\theta \in [0,\pi]$ and any $i \in \{ 1,2 ,\ldots , n\}$,
\begin{equation}
\label{deltabound}
 | \Delta_n ^{(i)} (\theta ) | \leq | (Z_i - Z_i') \cdot \be_{\theta}| \leq \| Z_i\| + \| Z_i ' \| . \end{equation}
\end{lemma}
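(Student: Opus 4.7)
The plan is to express the parametrised range $R_n(\theta) = M_n(\theta) - m_n(\theta)$ as a single double maximum, which allows the changes at the max and the min to be coupled rather than bounded separately. Specifically, for any real numbers $p_0, \ldots, p_n$ one has
\[
\max_{0 \leq j \leq n} p_j - \min_{0 \leq k \leq n} p_k \;=\; \max_{0 \leq j,k \leq n} ( p_j - p_k ) ,
\]
so taking $p_j = S_j \cdot \be_\theta$ gives $R_n(\theta) = \max_{0 \leq j,k \leq n} \bigl( (S_j - S_k) \cdot \be_\theta \bigr)$, and the analogous identity holds for $R_n^{(i)}(\theta)$ in terms of the resampled walk $S^{(i)}$.

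Next I would invoke the explicit form \eqref{resample}, which says that $S_j^{(i)} - S_j$ equals $\0$ for $j < i$ and $Z_i' - Z_i$ for $j \geq i$. Writing $\alpha := (Z_i' - Z_i) \cdot \be_\theta$, a direct calculation then gives
\[
\bigl( (S_j^{(i)} - S_k^{(i)}) - (S_j - S_k) \bigr) \cdot \be_\theta \;=\; \alpha \bigl( \1\{ j \geq i \} - \1 \{ k \geq i \} \bigr) \;\in\; \{ -\alpha, 0, \alpha \} ,
\]
for every $j, k \in \{0, 1, \ldots, n\}$. Since two bounded functions on a common index set that differ pointwise by at most $|\alpha|$ have maxima differing by at most $|\alpha|$, this yields $|\Delta_n^{(i)}(\theta)| = | R_n(\theta) - R_n^{(i)}(\theta) | \leq |\alpha| = |(Z_i - Z_i') \cdot \be_\theta|$, which is the first inequality in \eqref{deltabound}. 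The second inequality is just Cauchy--Schwarz (Lemma \ref{Cauchy-Schwarz ineq.}) combined with $\| \be_\theta \| = 1$ and the triangle inequality, giving $|(Z_i - Z_i') \cdot \be_\theta| \leq \| Z_i - Z_i' \| \leq \|Z_i\| + \|Z_i'\|$.

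The only point requiring any thought is recognising that bounding $|M_n - M_n^{(i)}|$ and $|m_n - m_n^{(i)}|$ separately (using $1$-Lipschitz continuity of $x \mapsto \max(c,x)$ and $x \mapsto \min(c,x)$) only delivers $|\Delta_n^{(i)}(\theta)| \leq 2|\alpha|$, which is a factor of $2$ too weak. Rewriting $R_n$ as a double maximum couples the two changes and saves this factor, so I do not anticipate any further obstacle.
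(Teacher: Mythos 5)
Your proof is correct, and it rests on the same basic observation as the paper's: by \eqref{resample}, resampling $Z_i$ shifts each projection $S_j \cdot \be_\theta$ by either $0$ (for $j < i$) or by the fixed amount $(Z_i' - Z_i)\cdot\be_\theta$ (for $j \geq i$). Where you differ is in how the max and min are combined. The paper keeps $M_n(\theta)$ and $m_n(\theta)$ separate but uses \emph{one-sided} bounds, $M_n(\theta) \leq M_n^{(i)}(\theta) + (\beta \vee 0)$ and $m_n(\theta) \geq m_n^{(i)}(\theta) + (\beta \wedge 0)$ with $\beta = (Z_i - Z_i')\cdot\be_\theta$, so that subtracting gives $(\beta\vee 0) - (\beta\wedge 0) = |\beta|$; it then repeats the argument with the roles of $S$ and $S^{(i)}$ swapped to get the reverse inequality. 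So your parenthetical claim that treating the max and the min separately ``only delivers $2|\alpha|$'' is true of the naive two-sided Lipschitz bound, but not of the paper's signed version, which also saves the factor of $2$. Your reformulation $R_n(\theta) = \max_{0\leq j,k\leq n}\left((S_j - S_k)\cdot\be_\theta\right)$ buys both directions at once from the single pointwise estimate $\left|\alpha\left(\1\{j\geq i\} - \1\{k\geq i\}\right)\right| \leq |\alpha|$, which is cleaner bookkeeping. One trivial correction: the second inequality in \eqref{deltabound} uses the Cauchy--Schwarz inequality for the Euclidean inner product on $\R^2$ (together with the triangle inequality, which is all the paper invokes), not the probabilistic Lemma \ref{Cauchy-Schwarz ineq.} about expectations that you cite.
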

 \begin{proof}
Consider the effect on $S_k \cdot \be_{\theta}$ when $Z_i$ is replaced by $Z_i'$. If $i > k$, then $S_k \cdot \be_{\theta} = S_k^{(i)} \cdot \be_{\theta}$.
If $i \leq k$, then $S_k \cdot \be_{\theta} = S_k^{(i)} \cdot \be_{\theta} + (Z_i - Z_i') \cdot \be_{\theta}$. Hence, for all $i$,
$$ S_k \cdot \be_{\theta} \leq S_k^{(i)} \cdot \be_{\theta} + ((Z_i - Z_i') \cdot \be_{\theta} \vee 0 ) .$$
Therefore,
$$ \max_{1 \leq k \leq n} S_k \cdot \be_{\theta} \leq \max_{1 \leq k \leq n} S_k^{(i)} \cdot \be_{\theta} + ((Z_i - Z_i') \cdot \be_{\theta} \vee 0 ) .$$
Similarly, we have
$$ \min_{1 \leq k \leq n} S_k \cdot \be_{\theta} \geq \min_{1 \leq k \leq n} S_k^{(i)} \cdot \be_{\theta} + ((Z_i - Z_i') \cdot \be_{\theta} \wedge 0 ) .$$
Combining these two inequalities with maximum and minimum, we get
\begin{align*}
R_n(\theta) - R_n^{(i)}(\theta)
& \leq ((Z_i - Z_i') \cdot \be_{\theta} \vee 0) - ((Z_i - Z_i') \cdot \be_{\theta} \wedge 0 ) \\
& = |(Z_i - Z_i') \cdot \be_{\theta}| .
\end{align*}
Also similarly, we can get $R_n^{(i)}(\theta) - R_n(\theta) \leq |(Z_i' - Z_i) \cdot \be_{\theta}|$. Thus, the result follows from the triangle inequality.
\end{proof}

The following is Lemma 2.2 in \cite{ss}.
\begin{lemma}
For all $1 \leq i \leq n$, $$\Exp\left[\left( \int_0^{\pi} \left|(Z_i- Z_i') \cdot \be_{\theta}\right| \ud \theta \right)^2 \right] 
\leq \pi^2 \left(\Exp\|Z_1\|^2 - \|\mu\|^2 \right)= \pi^2 \sigma^2 .$$
\end{lemma}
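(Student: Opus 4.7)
The strategy is to move the square inside the integral via a Cauchy--Schwarz step and then exploit the rotational averaging identity $\int_0^\pi \be_\theta \be_\theta^\tra \ud \theta = (\pi/2) I$ on $\R^2$. Writing $V := Z_i - Z_i'$ for brevity, the first step is Jensen's inequality (equivalently, Cauchy--Schwarz) applied to the integral over $[0,\pi]$:
\[
\left( \int_0^\pi | V \cdot \be_\theta | \ud \theta \right)^2
\; \leq \; \pi \int_0^\pi ( V \cdot \be_\theta )^2 \ud \theta .
\]

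Next I would compute the inner integral deterministically. For any $v \in \R^2$, writing $(v \cdot \be_\theta)^2 = v^\tra (\be_\theta \be_\theta^\tra ) v$ and using $\int_0^\pi \cos^2 \theta \ud \theta = \int_0^\pi \sin^2 \theta \ud \theta = \pi/2$ and $\int_0^\pi \sin\theta \cos\theta \ud \theta = 0$, one finds $\int_0^\pi \be_\theta \be_\theta^\tra \ud \theta = (\pi/2) I$, hence
\[
\int_0^\pi ( V \cdot \be_\theta )^2 \ud \theta = \tfrac{\pi}{2} \| V \|^2 .
\]

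Taking expectations and combining the two displays yields
\[
\Exp \left[ \left( \int_0^\pi | V \cdot \be_\theta | \ud \theta \right)^2 \right]
\; \leq \; \tfrac{\pi^2}{2} \Exp \| Z_i - Z_i' \|^2 .
\]
Since $Z_i$ and $Z_i'$ are i.i.d.\ with mean $\mu$ and covariance $\Sigma$, their difference has mean $\0$ and covariance $2 \Sigma$, so $\Exp \| Z_i - Z_i' \|^2 = 2 \trace \Sigma = 2 ( \Exp \| Z_1 \|^2 - \| \mu \|^2 ) = 2 \sigma^2$. Substituting gives the claimed bound $\pi^2 \sigma^2$.

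There is no real obstacle here: the argument is two lines of calculus plus an elementary moment computation. The only mildly subtle point worth flagging is that the Cauchy--Schwarz step is what produces the constant $\pi^2/2$, which then exactly matches the factor of $2$ coming from the variance of $Z_i - Z_i'$; a cruder bound using $|V \cdot \be_\theta| \leq \|V\|$ would give a worse constant. One could alternatively obtain the sharper identity $\int_0^\pi |V \cdot \be_\theta | \ud \theta = 2 \| V \|$ (Cauchy's formula for the degenerate convex body $[\0, V]$) and then take expectation of $4 \|V\|^2$; this gives $8 \sigma^2$, which is slightly better than $\pi^2 \sigma^2$ but is not needed for the stated inequality.
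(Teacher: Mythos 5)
Your proof is correct and follows essentially the same route as the paper: Cauchy--Schwarz over $[0,\pi]$ to produce the factor $\pi$, followed by computation of the second moment of the projected difference, which integrates to $\pi\sigma^2$. The only (cosmetic) difference is that you evaluate $\int_0^\pi (V\cdot\be_\theta)^2\,\ud\theta = \tfrac{\pi}{2}\|V\|^2$ deterministically via $\int_0^\pi \be_\theta\be_\theta^\tra\,\ud\theta = \tfrac{\pi}{2}I$ and then take expectations, whereas the paper takes expectations first and expands $\Var[Z_1\cdot\be_\theta]$ in the $(\hat\mu,\hat\mu_\perp)$ basis with a cross-covariance term; your version is slightly cleaner, and your closing remark that Cauchy's formula for the segment gives the sharper constant $8$ in place of $\pi^2$ is a nice, correct observation.
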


\begin{proof}
By Cauchy-Schwarz Inequality, we have
$$\Exp\left[\left( \int_0^{\pi} \left|(Z_i- Z_i') \cdot \be_{\theta}\right| \ud \theta \right)^2 \right] \leq \pi \Exp\left(\int_0^{\pi}\left|(Z_i- Z_i') \cdot \be_{\theta}\right|^2 \ud \theta \right) .$$
Then, since $Z_i$, $Z_i'$ are identically and independently distributed,
\begin{align*}
 \Exp \left[ | Z_i \cdot \be_{\theta} - Z_i' \cdot \be_{\theta} |^2 \right]
& = \Exp \left[ (Z_i \cdot \be_{\theta})^2 \right] + \Exp \left[ (Z_i' \cdot \be_{\theta})^2 \right] - 2\Exp \left[(Z_i \cdot \be_{\theta})(Z_i' \cdot \be_{\theta}) \right] \\
& = 2 \Var [Z_1 \cdot \be_{\theta}] \\
& = 2 \left(\spara \cos^2\theta + \sperp \cos^2\theta + 2 \cos \theta \sin \theta \rho_{\mu \mu_\per} \sigma_{\mu} \sigma_{\mu_\per} \right) ,
\end{align*}
where $\rho_{\mu \mu_\per}$ is the covariance of $(Z_1-\mu) \cdot \hat\mu$ and $(Z_1-\mu) \cdot \hat\mu_\per$.
So,
\begin{align*}
\Exp \int_0^{\pi} \left|(Z_i- Z_i') \cdot \be_{\theta}\right|^2 \ud \theta & = 2 \left( \spara \int_0^\pi \cos^2 \theta\,\ud \theta + \sperp \int_0^\pi \sin^2 \theta \,\ud \theta \right) \\
&\quad + 4 \rho_{\mu \mu_\per} \sigma_\mu \sigma_{\mu_\per} \int_0^\pi \cos \theta \sin \theta \,\ud \theta \\
& = \pi(\spara + \sperp) .
\end{align*}
This proves the lemma.
\end{proof}

The next result is a version of Theorem 2.3 in \cite{ss}. But they get better right-hand side by using Efron--Stein inequality
\begin{proposition} \label{upper bound for Var L_n}
Suppose $\Exp(\|Z_1\|^2) < \infty$. Then 
\begin{equation} \label{eq:ss}
\Var(L_n) \leq \frac{\pi^2 \sigma^2}{2} n.
\end{equation}
\end{proposition}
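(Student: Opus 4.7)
The plan is to combine three ingredients that have already been assembled in the previous pages: the Efron--Stein inequality in the form of Lemma \ref{lem:efron-stein}, the Cauchy-formula decomposition \eqref{cauchy} of the discrepancy $L_n - L_n^{(i)}$ obtained from resampling the $i$th increment, and the pointwise bound \eqref{deltabound} on $|\Delta_n^{(i)}(\theta)|$.

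First I would take $W_n = L_n = \Lambda_n(Z_1,\ldots,Z_n)$ and $W_n^{(i)} = L_n^{(i)} = \Lambda_n(Z_1,\ldots,Z_{i-1},Z'_i,Z_{i+1},\ldots,Z_n)$ in the resampling setup from Section~\ref{sec:martingales}. By Lemma \ref{lem:efron-stein},
\[
\Var(L_n) \;\leq\; \tfrac{1}{2} \sum_{i=1}^n \Exp\!\left[(L_n^{(i)} - L_n)^2\right].
\]
This is the source of the factor $1/2$ in the stated bound, which is exactly the improvement over the martingale-differences upper bound noted in Remark \ref{remark: upper bound for var Ln}.

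Next I would use Cauchy's formula in the form \eqref{cauchy} to represent $L_n - L_n^{(i)} = \int_0^\pi \Delta_n^{(i)}(\theta)\, \ud\theta$, and then apply the pointwise bound \eqref{deltabound}, namely $|\Delta_n^{(i)}(\theta)| \leq |(Z_i - Z_i')\cdot \be_\theta|$, to obtain
\[
|L_n - L_n^{(i)}| \;\leq\; \int_0^\pi \left|(Z_i - Z_i')\cdot \be_\theta\right| \ud\theta, \quad \as
\]
Squaring and taking expectations, the immediately preceding lemma gives
\[
\Exp\!\left[(L_n^{(i)} - L_n)^2\right] \;\leq\; \Exp\!\left[\Big(\int_0^\pi |(Z_i-Z_i')\cdot \be_\theta|\, \ud\theta\Big)^{\!2}\right] \leq \pi^2 \sigma^2.
\]
This bound is uniform in $i$, so substituting back and summing from $i=1$ to $n$ yields $\Var(L_n) \leq \tfrac{1}{2} \cdot n \cdot \pi^2 \sigma^2$, which is exactly \eqref{eq:ss}.

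There is no real obstacle: every ingredient has been prepared, and the proof is a one-line concatenation. The only choice to make is to feed $L_n$ into the Efron--Stein form rather than the conditional-Jensen form of the resampling bound, which is why the constant here is a factor $2$ smaller than one would get directly from Lemma \ref{lem1}(ii) combined with conditional Jensen.
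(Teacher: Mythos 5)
Your proposal is correct and follows exactly the paper's own proof: Lemma \ref{lem:efron-stein} applied to $L_n$, the Cauchy representation \eqref{cauchy} with the pointwise bound \eqref{deltabound}, and the $\pi^2\sigma^2$ bound from the preceding lemma, summed over $i$. There is nothing to add or correct.
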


\begin{proof}
By Lemma \ref{lem:efron-stein}, equation (\ref{cauchy}) and (\ref{deltabound}),
\begin{align*}
\Var[L_n]
& \leq \frac{1}{2} \sum_{i=1}^n \Exp\left[\left(\int_0^\pi \Delta^{(i)}_{n} (\theta) \ud \theta\right)^2 \right] \\
& \leq \frac{1}{2} \sum_{i=1}^n \Exp\left[\left(\int_0^\pi \left|(Z_i- Z_i') \cdot \be_{\theta}\right| \ud \theta\right)^2 \right] \\
& \leq \frac{1}{2} \sum_{i=1}^n \pi^2 \sigma^2 \\
& = \frac{n \pi^2 \sigma^2}{2} ,
\end{align*}
since $Z_i$ are independent identically distributed.
\end{proof}

\section{Law of large numbers}

As we mentioned earlier in Remarks \ref{remarks of EL with drift}, Snyder and Steele \cite{ss} has shown the asymptotic behaviour of $L_n /n$. 
They state their law of large numbers only for $\mu \neq 0$ but the case with $\mu = 0$ works equally well. 
Here we give a different proof of the law of large numbers by using the variance bound.

\begin{proposition} \label{LLN for L_n}
If $\Exp (\|Z_1\|^2) < \infty$, then $n^{-1} L_n \to 2 \|\mu\| \as$ as $n \to \infty$.
\end{proposition}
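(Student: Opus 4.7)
The goal is to show $n^{-1} L_n \to 2\|\mu\|$ a.s., and the plan combines the variance upper bound of Proposition \ref{upper bound for Var L_n} with the mean asymptotics of Proposition \ref{EL with drift} and a subsequence trick exploiting the monotonicity property \eqref{L_monotone}.

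First, Proposition \ref{EL with drift} already gives $n^{-1} \Exp L_n \to 2\|\mu\|$ (this covers $\mu = 0$ trivially, since then $2\|\mu\|=0$), so it suffices to establish $n^{-1} (L_n - \Exp L_n) \to 0$ a.s. Applying Chebyshev's inequality together with Proposition \ref{upper bound for Var L_n}, for each $\eps > 0$,
$$\Pr\left( | L_n - \Exp L_n | > n \eps \right) \leq \frac{\Var L_n}{n^2 \eps^2} \leq \frac{\pi^2 \sigma^2}{2 n \eps^2}.$$
This tail bound is not summable in $n$, so the Borel--Cantelli lemma cannot be applied directly along the full sequence.

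To get around this, I would pass to the subsequence $n_k := k^2$. The bound above along this subsequence is $O(k^{-2})$, which is summable, so Lemma \ref{borel-cantelli} (together with the fact that $\eps>0$ is arbitrary) yields $n_k^{-1}(L_{n_k} - \Exp L_{n_k}) \to 0$ a.s., and hence $n_k^{-1} L_{n_k} \to 2\|\mu\|$ a.s.

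Finally, I would use the a.s.\ monotonicity $L_{n+1} \geq L_n$ from \eqref{L_monotone} to fill the gaps between consecutive squares. For any $n$ with $n_k \leq n < n_{k+1}$,
$$\frac{n_k}{n_{k+1}} \cdot \frac{L_{n_k}}{n_k} \;\leq\; \frac{L_n}{n} \;\leq\; \frac{n_{k+1}}{n_k} \cdot \frac{L_{n_{k+1}}}{n_{k+1}}.$$
Since $n_{k+1}/n_k = (1+1/k)^2 \to 1$, both outer expressions converge a.s.\ to $2\|\mu\|$, and squeezing gives the full convergence $n^{-1} L_n \to 2\|\mu\|$ a.s. The main obstacle is the non-summability of the Chebyshev bound; the subsequence-plus-monotonicity trick is the standard workaround, and the monotonicity of $L_n$ is precisely what makes this self-contained without requiring a sharper variance estimate.
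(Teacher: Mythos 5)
Your proposal is correct and follows essentially the same route as the paper's own proof: Chebyshev's inequality with the variance bound of Proposition \ref{upper bound for Var L_n}, Borel--Cantelli along the subsequence $n_k = k^2$, and the monotonicity \eqref{L_monotone} to squeeze the intermediate values. No gaps.
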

\begin{proof}
We have $n^{-1} \Exp L_n \to 2\|\mu\| $ by Proposition \ref{EL with drift} and
the variance bound $\Var L_n \leq C n$ by Proposition \ref{upper bound for Var L_n}. Chebyshev's inequality says, for any $\eps>0$,
$$\Pr\left(\left| \frac{L_n}{n} - \frac{\Exp L_n}{n} \right|> \eps \right) \leq \frac{\Var (n^{-1} L_n)}{\eps^2} \leq \frac{C}{\eps^2 n}.$$
Take $n = n_k = k^2$, then
$$\sum_{k=1}^{\infty} \Pr \left(\left| \frac{L_{n_k}}{n_k} - \frac{\Exp L_{n_k}}{n_k} \right|> \eps \right) \leq \frac{C}{\eps^2} \sum_{k=1}^{\infty} \frac{1}{k^2} < \infty.$$
So the Borel--Cantelli lemma (see Lemma \ref{borel-cantelli}) implies that $|n_k^{-1} L_{n_k} - n_k^{-1}\Exp L_{n_k} | \to 0 \as$ as $k \to \infty$.
Hence
$$ \left| \frac{L_{n_k}}{n_k} - 2\|\mu\| \right| \leq \left| \frac{L_{n_k}}{n_k} - \frac{\Exp L_{n_k}}{n_k} \right| + \left|\frac{\Exp L_{n_k}}{n_k} -2\|\mu\| \right| \to 0 \as \As k \to 0. $$

For any $n$, let $k=\lfloor \sqrt{n} \rfloor$. Then $n_k \leq n < n_{k+1}$.
Since $L_n$ is non-decreasing in $n$ by (\ref{L_monotone}), we have
$$\frac{L_n}{n} \leq \frac{L_{n_{k+1}}}{n} \leq \frac{L_{n_{k+1}}}{n_{k+1}} \cdot \frac{n_{k+1}}{n} \leq \frac{L_{n_{k+1}}}{n_{k+1}} \cdot \frac{n_{k+1}}{n_k} ,$$
and also
$$\frac{L_n}{n} \geq \frac{L_{n_{k}}}{n} \geq \frac{L_{n_k}}{n_k} \cdot \frac{n_k}{n} \geq \frac{L_{n_k}}{n_k} \cdot \frac{n_k}{n_{k+1}} .$$
Then as $n \to \infty$, $k \to \infty$ so 
$$\frac{L_{n_k}}{n_k} \overset{a.s.}{\to} 2\|\mu\| \quad \mbox{and} \quad \frac{n_k}{n_{k+1}} = \frac{(\lfloor \sqrt{n} \rfloor)^2}{(\lfloor \sqrt{n} \rfloor+1)^2} \to 1 .$$
Therefore $n^{-1} L_n \to 2 \|\mu\| $ a.s.
\end{proof}

Proposition \ref{LLN for L_n} says that if $\Exp[ \| Z_1 \|^2 ] < \infty$ and $\mu =0$, then $n^{-1} L_n \to 0$ a.s. But Proposition
\ref{upper bound of E L_n} says that $\Exp L_n = O ( n^{1/2} )$, so we might expect to be able to improve on this `law of large numbers'. Indeed,
we have the following.

\begin{proposition} \label{2LLN for L_n}
Suppose $\Exp[ \| Z_1 \|^2 ] < \infty$. 
\begin{enumerate}[(i)]
\item
For any $\alpha > 1/2$, as $n \to \infty$,
\[ \frac{L_n - \Exp L_n}{n^{\alpha}  }  \to 0, \text{ in probability}. \]
\item  If, in addition, 
 $\mu =0$, then for any $\alpha > 1/2$, $n^{-\alpha} L_n \to 0$ a.s.\ as $n \to \infty$.
 \end{enumerate}
\end{proposition}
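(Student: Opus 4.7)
The plan for part~(i) is to apply Chebyshev's inequality directly to the variance bound $\Var L_n \leq (\pi^2 \sigma^2 /2)\, n$ from Proposition~\ref{upper bound for Var L_n}. For any $\eps > 0$ and $\alpha > 1/2$,
\[ \Pr\!\left( \frac{|L_n - \Exp L_n|}{n^\alpha} > \eps \right) \leq \frac{\Var L_n}{\eps^2 n^{2\alpha}} \leq \frac{\pi^2 \sigma^2}{2\,\eps^2\, n^{2\alpha - 1}}, \]
which tends to $0$ as $n \to \infty$ because $2\alpha - 1 > 0$. That already delivers convergence in probability.

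For part~(ii), the strategy is the familiar one: prove the result along a sparse polynomial subsequence via Borel--Cantelli (Lemma~\ref{borel-cantelli}), then interpolate using the monotonicity $L_{n+1} \geq L_n$ from \eqref{L_monotone}. Fix $\alpha > 1/2$ and choose an integer $r$ with $r(2\alpha - 1) > 1$; set $n_k := k^r$. Chebyshev's inequality together with the variance bound gives
\[ \Pr\!\left( \frac{|L_{n_k} - \Exp L_{n_k}|}{n_k^\alpha} > \eps \right) \leq \frac{C}{\eps^2\, k^{r(2\alpha - 1)}}, \]
which is summable in $k$, so the Borel--Cantelli lemma yields $n_k^{-\alpha}(L_{n_k} - \Exp L_{n_k}) \to 0$ almost surely. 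In the zero-drift case Proposition~\ref{upper bound of E L_n} supplies $\Exp L_n = O(n^{1/2})$, hence $n_k^{-\alpha} \Exp L_{n_k} = O(n_k^{1/2 - \alpha}) \to 0$, and combining the two gives $n_k^{-\alpha} L_{n_k} \to 0$ a.s.

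To upgrade from the subsequence to the full sequence, for each $n \in \N$ choose $k$ with $n_k \leq n < n_{k+1}$. Since $L_n$ is non-decreasing in $n$,
\[ 0 \leq \frac{L_n}{n^\alpha} \leq \frac{L_{n_{k+1}}}{n_k^\alpha} = \left( \frac{n_{k+1}}{n_k} \right)^{\alpha} \cdot \frac{L_{n_{k+1}}}{n_{k+1}^\alpha}. \]
Because $(n_{k+1}/n_k)^\alpha = ((k+1)/k)^{r\alpha} \to 1$ and the subsequence limit $L_{n_{k+1}}/n_{k+1}^\alpha \to 0$ a.s., the right-hand side vanishes almost surely, giving $n^{-\alpha} L_n \to 0$ a.s.

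I anticipate no serious obstacle: the existing variance bound supplies everything needed on the probabilistic side, and the only point requiring care is choosing the subsequence exponent $r$ large enough to make the Chebyshev tails summable while leaving $n_{k+1}/n_k \to 1$, so that the monotone-interpolation step does not cost anything.
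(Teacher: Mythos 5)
Your proof is correct and follows essentially the same route as the paper: Chebyshev's inequality with the variance bound of Proposition~\ref{upper bound for Var L_n} for part~(i), and for part~(ii) a Borel--Cantelli argument along a sparse subsequence combined with $\Exp L_n = O(n^{1/2})$ and the monotonicity \eqref{L_monotone}. The only difference is that you take a polynomial subsequence $n_k = k^r$ (so you need to choose $r$ with $r(2\alpha-1)>1$ and then note $(n_{k+1}/n_k)^\alpha \to 1$), whereas the paper takes $n_k = 2^k$, for which summability holds for every $\alpha>1/2$ automatically and the bounded ratio $2^\alpha$ suffices since the limit is zero; both choices work equally well here.
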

\begin{proof}
Similarly to the proof of Proposition \ref{LLN for L_n}, Chebyshev's inequality gives, for $\eps >0$,
\begin{equation}
\label{eqc1}
 \Pr \left( \frac{| L_n - \Exp L_n |}{n^\alpha} > \eps \right) \leq \frac{C}{\eps^2} n^{1-2\alpha} .\end{equation}
 The right-hand side here tends to $0$ as $n \to \infty$ provided $\alpha > 1/2$, giving (i).

For part (ii), take $n = n_k = 2^k$ in (\ref{eqc1}). Then
\[ \sum_{k=1}^\infty \Pr \left( \frac{| L_{n_k} - \Exp L_{n_k} |}{n_k^\alpha} > \eps \right) < \infty ,\]
provided $\alpha > 1/2$. So 
\[ \lim_{k \to \infty} \frac{| L_{n_k} - \Exp L_{n_k} |}{n_k^\alpha} = 0, \as \]
But 
\[ \lim_{k \to \infty}  \frac{  \Exp L_{n_k}  }{n_k^\alpha} = \lim_{n \to \infty} \frac{  \Exp L_{n}  }{n^\alpha} = 0 ,\]
by Proposition \ref{upper bound of E L_n}, and hence
\[ \lim_{k \to \infty}   \frac{L_{n_k}}{n_k^\alpha} = 0, \as \]
For every positive integer $n$, there exists $k(n) \in \ZP$ for which $2^{k(n)} \leq n < 2^{k(n)+1}$
and $k(n) \to \infty$ as $n \to \infty$. Hence, by (\ref{L_monotone}),
\[ \frac{L_n}{n^\alpha} \leq \frac{L_{2^{k(n)+1}}}{(2^{k(n)})^\alpha} = 2^\alpha \frac{L_{2^{k(n)+1}}}{(2^{k(n)+1})^\alpha} ,\]
which tends to $0$ a.s.\ as $n \to \infty$.
\end{proof}

Moreover, $(L_n - \Exp L_n)n^{-\alpha}$ in Proposition \ref{2LLN for L_n}(i) is also convergent to $0$ almost surely, if we assume $\|Z_1\|$ is upper bounded by some constant. To show this, we need to use Azuma--Hoeffding inequality (see Lemma \ref{azuma-hoeffding}). 

\begin{lemma} \label{p(L_n-EL_n)}
Assume $\|Z_1\| \leq B$ a.s. for some constant $B$. Then, for any $t>0$,
$$\Pr\left(| L_n - \Exp L_n | > t \right) \leq 2 \exp\left( - \frac{t^2}{8 \pi^2 B^2 n}\right). $$
\end{lemma}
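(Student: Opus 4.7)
The plan is to apply the Azuma--Hoeffding inequality (Lemma~\ref{azuma-hoeffding}) to the martingale difference representation of $L_n - \Exp L_n$ furnished by Lemma~\ref{lem1}(i). Recall that $L_n - \Exp L_n = \sum_{i=1}^n D_{n,i}$ where $D_{n,i} = \Exp[L_n - L_n^{(i)} \mid \FF_i]$ is a martingale difference sequence adapted to $\{\FF_i\}$. The key ingredient needed to invoke Azuma--Hoeffding is a uniform almost-sure bound $|D_{n,i}| \leq d_\infty$.

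First I would establish such a bound. Starting from the Cauchy-formula representation in \eqref{cauchy} together with the pointwise bound \eqref{deltabound}, we obtain
\[
|L_n - L_n^{(i)}| \leq \int_0^\pi |\Delta_n^{(i)}(\theta)|\,\ud\theta \leq \int_0^\pi \bigl(\|Z_i\| + \|Z_i'\|\bigr)\,\ud\theta = \pi(\|Z_i\| + \|Z_i'\|).
\]
Under the boundedness hypothesis $\|Z_1\| \leq B$ a.s., and hence also $\|Z_i'\| \leq B$ a.s., this gives $|L_n - L_n^{(i)}| \leq 2\pi B$ almost surely. By the conditional Jensen inequality,
\[
|D_{n,i}| = \bigl|\Exp[L_n - L_n^{(i)} \mid \FF_i]\bigr| \leq \Exp\bigl[|L_n - L_n^{(i)}| \,\big|\, \FF_i\bigr] \leq 2\pi B \quad \text{a.s.}
\]
So we may take $d_\infty = 2\pi B$.

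Applying Lemma~\ref{azuma-hoeffding} with this choice of $d_\infty$ yields, for every $t > 0$,
\[
\Pr\bigl(|L_n - \Exp L_n| > t\bigr) = \Pr\Bigl(\Bigl|\sum_{i=1}^n D_{n,i}\Bigr| > t\Bigr) \leq 2\exp\Bigl(-\frac{t^2}{2n(2\pi B)^2}\Bigr) = 2\exp\Bigl(-\frac{t^2}{8\pi^2 B^2 n}\Bigr),
\]
which is exactly the claimed inequality. There is no real obstacle here; the only substantive point is verifying that $D_{n,i}$ is a bona fide martingale difference sequence (already noted in the discussion following Lemma~\ref{resampling}) and that the resampling construction combined with Cauchy's formula yields the clean almost-sure bound $|L_n - L_n^{(i)}| \leq 2\pi B$. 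Everything else is an immediate application of the machinery assembled in Sections~\ref{sec:martingales} and the earlier part of this chapter.
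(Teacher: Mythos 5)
Your proof is correct and follows essentially the same route as the paper: both establish that $D_{n,i}$ is a martingale difference sequence, derive the uniform bound $|D_{n,i}| \leq 2\pi B$ from the Cauchy-formula representation \eqref{cauchy} together with \eqref{deltabound} and the hypothesis $\|Z_1\| \leq B$, and then apply the Azuma--Hoeffding inequality with $d_\infty = 2\pi B$.
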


\begin{proof}
Let $D_{n,i}=\Exp[L_n - L_n^{(i)} | \FF_i]$, where $\FF_0$ denote the trivial $\sigma$-algebra, and for $i \in \N$, $\FF_i = \sigma(Z_1,\dots,Z_i)$ is the $\sigma$-algebra generated by the first $n$ steps of the random walk. So $D_{n,i}$ is $\FF_i$-measurable. 
Since $L_n^{(i)}$ is independent of $Z_i$, $$\Exp [L_n^{(i)} | \FF_i] = \Exp [L_n^{(i)} | \FF_{i-1}] = \Exp [L_n | \FF_{i-1}],$$ so that
$D_{n,i}=\Exp [L_n | \FF_{i}] - \Exp [L_n | \FF_{i-1}]$. Hence, $\Exp [D_{n,i} | \FF_{i-1}]=0$.

By using equation (\ref{deltabound}) and our assumption that $\|Z_1\| \leq B$ a.s., we can deduce an upper bound for $|D_{n,i}|$ as follows.
$$ |D_{n,i}| \leq \Exp \left[ \int_0^{\pi} |\Delta_n^{(i)}(\theta)| \ud \theta \Big| \FF_{i}\right] \leq \pi (\|Z_i\| + \|Z_i'\|) \leq 2\pi B .$$
Hence, the result follows Lemma \ref{azuma-hoeffding} with $d_{\infty}=2 \pi B$.
\end{proof}

\begin{proposition}
Suppose $\| Z_1 \| \leq B$ for some constant $B$. Then for any $\alpha > 1/2$,
$$\frac{L_n - \Exp L_n}{n^{\alpha}} \to 0 \as $$
\end{proposition}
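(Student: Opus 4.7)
The plan is to upgrade Proposition 5.5(i) — which gave convergence in probability at rate $n^{-\alpha}$ via Chebyshev — to almost-sure convergence, using the exponential concentration afforded by Lemma \ref{p(L_n-EL_n)}. The polynomial Chebyshev bound in the proof of Proposition \ref{2LLN for L_n} was too weak to be summable in $n$ directly (hence the subsequence argument with $n_k = 2^k$ and subsequent sandwiching by monotonicity), but Lemma \ref{p(L_n-EL_n)} gives sub-Gaussian tails, which should obviate the need for a subsequence altogether.

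Concretely, I would fix $\eps > 0$ and apply Lemma \ref{p(L_n-EL_n)} with $t = \eps n^{\alpha}$ to obtain
\[ \Pr \left( \frac{| L_n - \Exp L_n|}{n^\alpha} > \eps \right) \leq 2 \exp\left( - \frac{\eps^2 n^{2\alpha - 1}}{8 \pi^2 B^2} \right) . \]
Since $\alpha > 1/2$, we have $2\alpha - 1 > 0$, so the right-hand side decays faster than any polynomial in $n$, and in particular
\[ \sum_{n=1}^\infty \Pr \left( \frac{| L_n - \Exp L_n|}{n^\alpha} > \eps \right) < \infty . \]
The first Borel--Cantelli lemma (Lemma \ref{borel-cantelli}) then yields $\Pr ( | L_n - \Exp L_n | n^{-\alpha} > \eps \text{ i.o.}) = 0$, and since $\eps > 0$ was arbitrary, this gives $n^{-\alpha} ( L_n - \Exp L_n) \to 0$ a.s., as required.

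No real obstacle is anticipated, since Lemma \ref{p(L_n-EL_n)} is precisely tailored for this conclusion; the only point to be careful about is that the exponent $2\alpha - 1$ is strictly positive (so that the Gaussian-type tail beats $\log n$ comfortably, making the series summable without any subsequence extraction or recourse to monotonicity of $L_n$). Note also that the boundedness hypothesis $\|Z_1\| \leq B$ is used only via Lemma \ref{p(L_n-EL_n)}; no further appeal to it is needed in this short argument.
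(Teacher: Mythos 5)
Your proof is correct and is essentially identical to the paper's (the paper's own proof is a one-line citation of Lemma \ref{p(L_n-EL_n)} followed by Borel--Cantelli, which is exactly the argument you have written out). The computation with $t = \eps n^{\alpha}$ and the observation that $2\alpha - 1 > 0$ makes the series summable without any subsequence extraction are both accurate.
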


\begin{proof}
The result follows Lemma \ref{p(L_n-EL_n)} by using Borel--Cantelli Lemma (see Lemma \ref{borel-cantelli}).
\end{proof}

\section{Central limit theorem for the non-zero drift case}
\label{sec:CLT for drift}

\subsection{Control of extrema}
\label{sec:extrema}

For the remainder of this section, without loss of generality, we 
suppose that $\Exp [ Z_1 ] = \mu \be_{\pi/2}$
with $\mu \in (0,\infty)$.
Observe that $( S_j \cdot \be_\theta ; 0 \leq j \leq n)$ is a one-dimensional random walk: indeed,
$S_j \cdot \be_\theta = \sum_{k=1}^j Z_k \cdot \be_\theta$. The mean drift of this one-dimensional random walk is
\begin{equation}
 \label{mutheta}
 \Exp [ Z_1 \cdot \be_\theta ] = \Exp [ Z_1] \cdot \be_\theta = \mu \sin \theta   .
\end{equation}

Note that the drift $\mu \sin \theta$ is positive if $\theta \in (0, \pi)$.
This crucial fact gives us control over the behaviour of the extrema such as $M_n  (\theta)$ and $m_n (\theta)$
that contribute to (\ref{cauchy}), and this will allow us to estimate the conditional expectation of the final term in (\ref{cauchy})
(see Lemma \ref{estimate} below).

For  $\gamma \in (0,1/2)$ and  $\delta \in (0,\pi/2)$
(two constants that will be chosen to be suitably small later in our arguments), we denote by $E_{n,i} (\delta, \gamma)$ the event that the following   occur:
\begin{itemize}
\item for all $\theta \in [\delta, \pi - \delta ]$,
$\ubar J_{n} (\theta) < \gamma  n$ and $\bar J_{n} (\theta) > (1 - \gamma) n$;
\item for all $\theta \in [\delta, \pi - \delta ]$,
$\ubar J^{(i)}_{n} (\theta) < \gamma n$ and $\bar J^{(i)}_{n} (\theta) > (1 - \gamma) n$.
\end{itemize}
We write $E^\rc_{n,i} (\delta, \gamma)$ for the complement of $E_{n,i} (\delta, \gamma)$. The  idea is that $E_{n,i} (\delta, \gamma)$ will occur with high probability, and on this event
we have good control over $\Delta^{(i)}_{n} (\theta)$.
The next result  formalizes  these assertions. For $\gamma \in (0,1/2)$, define $I_{n, \gamma} := \{1,\ldots, n\} \cap [ \gamma n , (1-\gamma) n ]$.

\begin{lemma}
\label{En}
For any $\gamma \in (0,1/2)$ and any $\delta \in (0,\pi/2)$, the following hold.
\begin{itemize}
\item[(i)]
If $i \in I_{n,\gamma}$, then, a.s., for any $\theta \in [\delta, \pi - \delta ]$,
\begin{equation}
\label{change}
  \Delta_n^{(i)} (\theta) \1 ( E_{n,i}(\delta,\gamma) )
=   ( Z_i - Z'_i ) \cdot \be_\theta  \1 ( E_{n,i}(\delta,\gamma) )  .
\end{equation}
\item[(ii)] If  $\Exp \| Z_1 \| < \infty$ and $\| \Exp [ Z_1]\| \neq 0$, then
  $\min_{1 \leq i \leq n} \Pr [ E_{n,i} (\delta, \gamma) ] \to 1$ as $n \to \infty$.
\end{itemize}
\end{lemma}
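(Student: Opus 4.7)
The plan is to handle the two parts separately. Part~(i) is a deterministic unpacking of what the event $E_{n,i}(\delta,\gamma)$ says about where the minima and maxima of $S_j \cdot \be_\theta$ sit relative to $i$; part~(ii) reduces, via a union bound with the resampled walk, to an application of the two-dimensional strong law of large numbers, using the assumption $\mu \neq 0$.

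For part~(i), I would fix $\theta \in [\delta, \pi - \delta]$ and $i \in I_{n,\gamma}$ and work on the event $E_{n,i}(\delta,\gamma)$. Because $\gamma n \leq i \leq (1-\gamma)n$, the defining conditions force $\ubar J_n(\theta), \ubar J_n^{(i)}(\theta) < i$ and $\bar J_n(\theta), \bar J_n^{(i)}(\theta) > i$ (integrality promotes the non-strict inequalities to strict ones). Since $S_j = S_j^{(i)}$ for $j < i$, both walks realize their minima on a set where they agree, and a simple two-sided comparison yields $m_n(\theta) = m_n^{(i)}(\theta)$. Dually, $S_j^{(i)} = S_j + (Z_i' - Z_i)$ for $j > i$, so the analogous two-sided comparison for the maxima gives $M_n^{(i)}(\theta) = M_n(\theta) + (Z_i' - Z_i) \cdot \be_\theta$. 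Subtracting and recalling $\Delta_n^{(i)}(\theta) = R_n(\theta) - R_n^{(i)}(\theta) = (M_n - m_n) - (M_n^{(i)} - m_n^{(i)})$ yields \eqref{change}.

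For part~(ii), I would observe that $E_{n,i}(\delta,\gamma) = A_n(\delta,\gamma) \cap A_n^{(i)}(\delta,\gamma)$, where
\[ A_n (\delta, \gamma) := \{ \ubar J_n(\theta) < \gamma n \text{ and } \bar J_n(\theta) > (1-\gamma) n \text{ for all } \theta \in [\delta, \pi - \delta] \} \]
and $A_n^{(i)}(\delta,\gamma)$ is the same event formed from the resampled walk. Because $(S_j^{(i)})_{0 \leq j \leq n}$ is again a random walk with the same step distribution, $\Pr [ A_n^{(i)} ] = \Pr [ A_n ]$, and a union bound gives $\Pr [ E_{n,i}(\delta,\gamma) ] \geq 1 - 2 \Pr [ A_n^\rc(\delta,\gamma) ]$ uniformly in $i$. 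It remains to prove $\Pr [ A_n(\delta,\gamma) ] \to 1$. The tool is the two-dimensional SLLN: for any $\eps > 0$ there is an a.s.\ finite $N_\eps$ with $| S_j \cdot \be_\theta - j \mu \sin \theta | \leq \eps j$ for every $j \geq N_\eps$ and every $\theta$ simultaneously. Using the uniform positive lower bound $\mu \sin \theta \geq \mu \sin \delta$ on the projected drift, for $\eps$ sufficiently small I then obtain (a) $S_j \cdot \be_\theta > 0 = S_0 \cdot \be_\theta$ for every $j$ with $\gamma n \leq j \leq n$, forcing $\ubar J_n(\theta) < \gamma n$; and (b) $S_n \cdot \be_\theta \geq n(\mu \sin \theta - \eps)$ strictly dominates $(1-\gamma)n(\mu \sin \theta + \eps) + O(1) \geq \max_{j \leq (1-\gamma)n} S_j \cdot \be_\theta$ for $n$ large, forcing $\bar J_n(\theta) > (1-\gamma)n$. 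The main obstacle to watch is uniformity in $\theta$, but Euclidean $\eps$-closeness of $S_j/j$ to $\mu \be_{\pi/2}$ controls every projection at once, so this uniformity is delivered for free by the $\R^2$-SLLN.
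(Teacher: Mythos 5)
Your proposal is correct and follows essentially the same route as the paper: part (i) localizes the arg-min below $i$ and the arg-max above $i$ and uses the fact that resampling shifts only the post-$i$ segment, and part (ii) rests on the functional consequence of the two-dimensional SLLN, the uniform drift lower bound $\mu\sin\delta$ on $[\delta,\pi-\delta]$, separate control of the a.s.\ finite initial segment, and the observation that the resampled trajectory has the same law. Your explicit union bound $\Pr[E_{n,i}(\delta,\gamma)]\geq 1-2\Pr[A_n^{\mathrm{c}}(\delta,\gamma)]$ is just a slightly cleaner packaging of the paper's closing remark that the same estimate holds for $\ubar J_n^{(i)}$ and $\bar J_n^{(i)}$ uniformly in $i$.
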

\begin{proof}
First we prove part (i).
Suppose that $i \in I_{n,\gamma}$, so $\gamma n \leq i \leq (1-\gamma) n$.
Suppose that $\theta \in [\delta, \pi-\delta ]$.
Then on $E_{n,i} (\delta,\gamma)$, we have $\ubar J_n (\theta) < i < \bar J_n (\theta)$ and
$\ubar J_n^{(i)} (\theta) < i < \bar J_n^{(i)} (\theta)$.
Then from (\ref{resample}) it follows that in fact
$\ubar J_n (\theta) =\ubar J_n^{(i)} (\theta)$
and $\bar J_n (\theta) =\bar J_n^{(i)} (\theta)$. Hence
$m_n (\theta) = m_n^{(i)} (\theta)$ and
 $$M_n^{(i)} (\theta) = S^{(i)}_{\bar J_n (\theta)} \cdot \be_\theta = M_n (\theta) + (Z_i'-Z_i) \cdot \be_\theta, \text{ by (\ref{resample}).}$$
Equation (\ref{change}) follows.

Next we prove part (ii). Suppose that $\mu = \| \Exp [ Z_1 ] \| >0$.
Since $\Exp   \| Z_1 \| < \infty$, the strong law of large
numbers implies that $\| n^{-1} S_n - \Exp [ Z_1 ] \| \to 0$, a.s.,
as $n \to \infty$. In other words, for any $\eps_1 >0$, there
exists $N:= N (\eps_1)$ such that $\Pr [ N < \infty ] =1$
and
$ \| n^{-1} S_n - \Exp [Z_1] \| < \eps_1$ for all $n \geq N$.
In particular, for $n \geq N$, by (\ref{mutheta}),
\begin{equation}
 \label{project}
\left| n^{-1} S_n \cdot \be_\theta - \mu \sin \theta \right|
=
\left| n^{-1} S_n \cdot \be_\theta - \Exp [ Z_1] \cdot \be_\theta \right|
\leq \left\|  n^{-1} S_n - \Exp [Z_1] \right\| < \eps_1 ,\end{equation}
for all $\theta \in [0, 2\pi)$.

Take $\eps_1 < \mu \sin \delta$.
If $n \geq N$, then, by (\ref{project}),
\[ S_n \cdot \be_\theta > ( \mu \sin \theta - \eps_1 ) n \geq ( \mu \sin \delta - \eps_1 ) n,\]
provided $\theta \in [\delta, \pi-\delta]$. By choice
of $\eps_1$, the last term in the previous display is strictly
positive. Hence, for $n \geq N$, for any $\theta \in [\delta, \pi-\delta]$,
$S_n \cdot \be_\theta >0$. But, $S_0 \cdot \be_\theta =0$. So
$\ubar J_n (\theta) < N$ for all $\theta \in [\delta, \pi-\delta]$, and
\[ \Pr \left[  \cap_{\theta \in [\delta, \pi-\delta]}
\{ \ubar J_n (\theta) < \gamma n \} \right]
\geq \Pr [ N < \gamma n ] \to 1 ,\]
as $n \to \infty$, since $N < \infty$ a.s.

Now,
\begin{equation}
 \label{max}
 \max_{0 \leq j \leq (1-\gamma) n } S_j \cdot \be_\theta \leq
  \max \left\{ \max_{0 \leq j \leq N} S_j \cdot \be_\theta , \max_{N \leq j \leq (1-\gamma) n }
S_j \cdot \be_\theta \right\} .\end{equation}
For the final term on the right-hand side of (\ref{max}), (\ref{project}) implies that
\[ \max_{N \leq j \leq (1-\gamma) n }
S_j \cdot \be_\theta
\leq \max_{0 \leq j \leq (1-\gamma) n } ( \mu \sin \theta + \eps_1 ) j
\leq ( \mu \sin \theta + \eps_1 ) (1-\gamma ) n .\]
 On the other hand, if $n \geq N$, then (\ref{project}) implies that
$S_n \cdot \be_\theta \geq ( \mu \sin \theta - \eps_1 ) n$.
Here 
$$\mu \sin \theta - \eps_1 \geq ( \mu \sin \theta + \eps_1 )(1-\gamma)
\text{ if } \eps_1 < \frac{\gamma \mu \sin \theta}{2-\gamma}.$$
Now we choose $\eps_1 < \frac{\gamma \mu \sin \delta}{2}$.
Then, for any $\theta \in [\delta, \pi-\delta]$, we have that,
for $n \geq N$,
\[ S_n \cdot \be_\theta >  \max_{N \leq j \leq (1-\gamma) n }
S_j \cdot \be_\theta  .\]
Hence, by (\ref{max}),
\begin{align*}
 \Pr \left[  \cap_{\theta \in [\delta, \pi-\delta]}
\{ \bar J_n (\theta) > (1-\gamma) n \} \right]
  \geq \Pr \left[
 \cap_{\theta \in [\delta, \pi-\delta]} \left\{
S_n \cdot \be_\theta >  \max_{0 \leq j \leq (1-\gamma) n } S_j \cdot \be_\theta
\right\} \right] \\
  \geq
\Pr \left[ N \leq n ,  \, \cap_{\theta \in [\delta, \pi-\delta]} \left\{
S_n \cdot \be_\theta >  \max_{0 \leq j \leq N } S_j \cdot \be_\theta
\right\}   \right] . \end{align*}
 Also, for $n \geq N$, $S_n \cdot \be_\theta > (1- \frac{\gamma}{2}) \mu n \sin \delta$,
 so we obtain
\begin{align*} \Pr \left[  \cap_{\theta \in [\delta, \pi-\delta]}
\{ \bar J_n (\theta) > (1-\gamma) n \} \right]
  \geq
\Pr \left[ N \leq n ,  \, \max_{0 \leq j \leq N} \|  S_j \| \leq \left(1- \frac{\gamma}{2}\right) \mu n \sin \delta   \right] ,
\end{align*}
using the fact that $\max_{0 \leq j \leq N}  S_j \cdot \be_\theta
\leq \max_{0 \leq j \leq N} \|  S_j \|$ for all $\theta$.

Now, as $n \to \infty$,
 $\Pr [ N > n ] \to 0$, and
\[ \Pr  \left[ \max_{0 \leq j \leq N} \| S_j \|  > \left(1- \frac{\gamma}{2}\right) \mu n \sin \delta   \right] \to 0,\]
since $N < \infty$ a.s.
So we conclude that
\[  \Pr \left[  \cap_{\theta \in [\delta, \pi-\delta]}
\{ \ubar J_n (\theta) < \gamma n , \, \bar J_n (\theta) > (1-\gamma) n \} \right] \to 1,\]
as $n \to \infty$, and the same result holds for $\ubar J_n^{(i)} (\theta)$
and $\bar J_n^{(i)} (\theta)$, uniformly in $i \in \{1,\ldots,n\}$,
 since resampling $Z_i$ does not change the distribution of the trajectory.
\end{proof}

\subsection{Approximation for the martingale differences}

The following result is a key component to our proof. Recall that $D_{n,i} = \Exp [ L_n - L_n^{(i)} \mid \FF_i ]$.

\begin{lemma}
\label{estimate}
Suppose that $\Exp \| Z_1 \| < \infty$, $\gamma \in (0,1/2)$, and  $\delta \in (0, \pi/2)$.
For any $i \in I_{n,\gamma}$,
\begin{align}
\label{eq2}
  \left| D_{n,i} - \frac{2 ( Z_i - \Exp [ Z_1] ) \cdot \Exp [ Z_1]}{\| \Exp [ Z_1] \|}
\right|
& \leq 
4 \delta \| Z_i \| + 4 \delta \Exp  \| Z_1\| + 3 \pi \| Z_i\| \Pr [ E_{n,i}^\rc(\delta, \gamma) \mid \FF_i ] \nonumber \\
&{} \quad {} + 3 \pi \Exp [ \| Z_i'\| \1 (  E_{n,i}^\rc(\delta, \gamma) ) \mid \FF_i  ]
, \as
\end{align}
\end{lemma}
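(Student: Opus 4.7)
The plan is to start from Cauchy's formula \eqref{cauchy} and Fubini to express
\[ D_{n,i} = \int_0^\pi \Exp[\Delta_n^{(i)}(\theta) \mid \FF_i] \, \ud\theta, \]
and then identify the target in an analogous integral form. Under the standing normalisation $\Exp Z_1 = \mu\be_{\pi/2}$ with $\mu > 0$, the elementary computation $\int_0^\pi \be_\theta \, \ud \theta = (0,2) = 2\be_{\pi/2}$ yields
\[ \frac{2(Z_i - \Exp Z_1) \cdot \Exp Z_1}{\|\Exp Z_1\|} = 2(Z_i - \Exp Z_1)\cdot \be_{\pi/2} = \int_0^\pi (Z_i - \Exp Z_1)\cdot \be_\theta \, \ud\theta. \]
Hence it suffices to bound the integrated discrepancy between $\Exp[\Delta_n^{(i)}(\theta) \mid \FF_i]$ and $(Z_i - \Exp Z_1) \cdot \be_\theta$ in $L^1([0,\pi])$.

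The next step is to split $[0,\pi]$ into the middle range $[\delta, \pi - \delta]$ and its two tails. On the middle range, Lemma~\ref{En}(i) applies since $i \in I_{n,\gamma}$, giving the key identity $\Delta_n^{(i)}(\theta) \1(E_{n,i}) = (Z_i - Z_i') \cdot \be_\theta \1(E_{n,i})$. Writing $1 = \1(E_{n,i}) + \1(E^\rc_{n,i})$ and rearranging,
\begin{align*}
\Exp[\Delta_n^{(i)}(\theta) \mid \FF_i] &= \Exp[(Z_i - Z_i')\cdot \be_\theta \mid \FF_i] \\
& \quad {} + \Exp\bigl[ \bigl(\Delta_n^{(i)}(\theta) - (Z_i - Z_i')\cdot \be_\theta\bigr) \1(E^\rc_{n,i}) \mid \FF_i \bigr].
\end{align*}
Since $Z_i$ is $\FF_i$-measurable and $Z_i'$ is independent of $\FF_i$ with mean $\Exp Z_1$, the first term on the right is exactly the target integrand $(Z_i - \Exp Z_1) \cdot \be_\theta$. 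The second is an error term, controlled pointwise via \eqref{deltabound} by $2\|Z_i\| \1(E^\rc_{n,i}) + 2\|Z_i'\|\1(E^\rc_{n,i})$; integrating over the middle range of length at most $\pi$ and taking conditional expectation yields a contribution of order $\pi \|Z_i\|\Pr[E^\rc_{n,i} \mid \FF_i] + \pi \Exp[\|Z_i'\|\1(E^\rc_{n,i}) \mid \FF_i]$ to the overall bound.

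For the tail ranges $[0,\delta] \cup [\pi - \delta, \pi]$ (total length $2\delta$), Lemma~\ref{En}(i) is unavailable, so I would use the crude bounds $|\Exp[\Delta_n^{(i)}(\theta) \mid \FF_i]| \leq \|Z_i\| + \Exp\|Z_1\|$ (from \eqref{deltabound}) and $|(Z_i - \Exp Z_1) \cdot \be_\theta| \leq \|Z_i\| + \|\Exp Z_1\| \leq \|Z_i\| + \Exp\|Z_1\|$ (by Jensen). These combine to give a tail contribution of order $\delta(\|Z_i\| + \Exp\|Z_1\|)$. Assembling the middle and tail contributions produces the claimed bound \eqref{eq2}.

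The main obstacle is the bookkeeping of conditioning: the event $E_{n,i}$ depends on the resampled variable $Z_i'$ and on the future increments $Z_{i+1}, \ldots, Z_n$, so $\1(E_{n,i})$ is neither $\FF_i$-measurable nor independent of $\FF_i$, and therefore cannot be pulled outside $\Exp[\cdot \mid \FF_i]$. The decomposition above sidesteps this by extracting first the clean linear functional $(Z_i - Z_i')\cdot \be_\theta$, whose $\FF_i$-conditional expectation is immediate, and confining the unavoidable dependence on $E_{n,i}$ to a residual term that Lemma~\ref{En}(ii) will later make small via the estimate $\Pr[E^\rc_{n,i} \mid \FF_i] \to 0$.
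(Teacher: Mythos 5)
Your proposal is correct and follows essentially the same route as the paper's proof: both start from the conditional expectation of the Cauchy-formula representation \eqref{cauchy}, split $[0,\pi]$ into the middle range $[\delta,\pi-\delta]$ (where Lemma~\ref{En}(i) converts $\Delta_n^{(i)}(\theta)\1(E_{n,i})$ into the linear functional $(Z_i-Z_i')\cdot\be_\theta$, with the non-$\FF_i$-measurable indicator confined to a residual controlled by \eqref{deltabound}) and the two $\delta$-tails (bounded crudely), and both identify the target via $\int_0^\pi (Z_i-\Exp Z_1)\cdot\be_\theta\,\ud\theta = 2(Z_i-\Exp Z_1)\cdot\hat\mu\,$ under the normalisation $\Exp Z_1 = \mu\be_{\pi/2}$. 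The only difference is cosmetic --- you split the interval before peeling off the $\1(E^{\rc}_{n,i})$ term whereas the paper peels it off first --- and your constants land within the stated $4\delta$ and $3\pi$ coefficients.
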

\begin{proof}
Taking (conditional) expectations
in (\ref{cauchy}), we obtain
\begin{equation}
 \label{eq4}
 D_{n,i}
 = \int_0^\pi \Exp [ \Delta_n^{(i)} (\theta) \1 (E_{n,i} (\delta, \gamma)) \mid \FF_i ] \ud \theta + \int_0^\pi \Exp [ \Delta_n^{(i)} (\theta) \1 (E_{n,i} ^\rc (\delta,\gamma)) \mid \FF_i ] \ud \theta .\end{equation}
For the second term on the right-hand side of (\ref{eq4}), we have
\begin{align}
\label{eq5}
 \left| \int_0^\pi \Exp [ \Delta_n^{(i)} (\theta) \1 (E_{n,i} ^\rc (\delta,\gamma)) \mid \FF_i ] \ud \theta \right| &
\leq \int_0^\pi \Exp [ | \Delta_n^{(i)} (\theta) | \1 (E_{n,i}^\rc (\delta,\gamma) ) \mid \FF_i ] \ud \theta . \end{align}
 Applying the bound (\ref{deltabound}), we obtain
\begin{align}
\label{eq6}
\int_0^\pi \Exp [ | \Delta_n^{(i)} (\theta) | \1 (E_{n,i}^\rc (\delta,\gamma) ) \mid \FF_i ] \ud \theta
  \leq
\pi \Exp [ ( \| Z_i\| + \| Z_i'\|  )  \1 (E_{n,i}^\rc (\delta,\gamma) ) \mid \FF_i ] \nonumber\\
  = \pi   \| Z_i \| \Pr [ E_{n,i}^\rc (\delta,\gamma)  \mid \FF_i ]
   +  \pi \Exp [ \| Z_i' \| \1 ( E_{n,i}^\rc (\delta,\gamma) ) \mid \FF_i ] ,\end{align}
since $Z_i$ is $\FF_i$-measurable with $\Exp   \| Z_i \|   < \infty$.

We decompose the   first integral on the right-hand side of (\ref{eq4}) as $I_1 + I_2 + I_3$,
where
\begin{align*} I_1 & := \int_0^{\delta}  \Exp [ \Delta_n^{(i)} (\theta) \1 (E_{n,i} (\delta,\gamma)) \mid \FF_i ] \ud \theta , \\
I_2 & := \int_{\delta}^{\pi-\delta}  \Exp [ \Delta_n^{(i)} (\theta) \1 (E_{n,i} (\delta,\gamma)) \mid \FF_i ] \ud \theta , \\
I_3 & := \int_{\pi-\delta}^{\pi}  \Exp [ \Delta_n^{(i)} (\theta) \1 (E_{n,i} (\delta,\gamma)) \mid \FF_i ] \ud \theta .\end{align*}
First we deal with $I_1$ and $I_3$.
We have
\begin{align*} | I_1 |    \leq \int_0^{\delta} \Exp  [ | \Delta^{(i)}_{n} (\theta) | \mid \FF_i ] \ud \theta   \leq 
 \delta \Exp [  \| Z_i \| + \| Z_i'\|  \mid \FF_i ] ,\as,   \end{align*}
by another application of (\ref{deltabound}).
Here $\Exp [ \| Z_i \| \mid \FF_i ] = \| Z_i \|$, since $Z_i$ is $\FF_i$-measurable,
and, since $Z_i'$ is independent of $\FF_i$,
 $\Exp [ \| Z_i'\| \mid \FF_i ] = \Exp \| Z_i'\| = \Exp \| Z_1 \|$.
A similar argument applies to $I_3$, so that
\begin{equation}
\label{eq7}
|I_1+I_3| \leq 2 \delta \| Z_i \| + 2 \delta \Exp \| Z_1 \| , \as \end{equation}

We now consider $I_2$.
From (\ref{change}), since $i \in I_{n,\gamma}$,
 we have
\begin{align*} I_2 & = \int_{\delta}^{\pi - \delta} \Exp [ (Z_i - Z'_i) \cdot \be_\theta \1 (E_{n,i} (\delta,\gamma)) \mid \FF_i ] \ud \theta
  \\
& = \int_{\delta}^{\pi - \delta} \Exp [ (Z_i - Z'_i) \cdot \be_\theta \mid \FF_i ] \ud \theta
- \int_{\delta}^{\pi - \delta} \Exp [ (Z_i - Z'_i) \cdot \be_\theta \1 (E^{\rm c}_{n,i} (\delta,\gamma)) \mid \FF_i ] \ud \theta
.\end{align*}
 Here, by the triangle inequality,
\begin{align}
&  {} \phantom{=} {}  \left| \int_{\delta}^{\pi - \delta} \Exp [ (Z_i - Z'_i) \cdot \be_\theta \1 (E^{\rm c}_{n,i} (\delta,\gamma)) \mid \FF_i ] \ud \theta \right| \nonumber\\
 & \leq \int_{0}^{\pi} \Exp [ ( \| Z_i\| + \| Z'_i\|) \1 (E^{\rm c}_{n,i} (\delta,\gamma)) \mid \FF_i ] \ud \theta \nonumber\\
\label{eq8}
 &  =
\pi   \| Z_i \| \Pr [ E_{n,i}^\rc (\delta,\gamma)  \mid \FF_i ]
   +  \pi \Exp [ \| Z_i' \| \1 ( E_{n,i}^\rc (\delta,\gamma) ) \mid \FF_i ]
,  \end{align}
similarly to (\ref{eq6}).
Finally, similarly to (\ref{eq7}),
\begin{align}     \left| \int_{\delta}^{\pi - \delta} \Exp [ (Z_i - Z'_i) \cdot \be_\theta  \mid \FF_i ] \ud \theta
- \int_{0}^{\pi} \Exp [ (Z_i - Z'_i) \cdot \be_\theta  \mid \FF_i ] \ud \theta  \right| \nonumber\\
  \leq 2 \delta \Exp [ \| Z_i\|  + \| Z'_i \| \mid \FF_i ] 
\label{eq15}
   = 2 \delta \left( \| Z_i \| + \Exp \| Z_1 \| \right) .     \end{align}

We combine (\ref{eq4}) with (\ref{eq5}) and the bounds in (\ref{eq6})--(\ref{eq15})
to give
\begin{align}
\label{eq29}
  \left| D_{n,i} - \int_{0}^{\pi} \Exp [ (Z_i - Z'_i) \cdot \be_\theta  \mid \FF_i ] \ud \theta \right|
& \leq 
4 \delta \| Z_i \| + 4 \delta \Exp  \| Z_1\| + 3 \pi \| Z_i\| \Pr [ E_{n,i}^\rc(\delta, \gamma) \mid \FF_i ] \nonumber \\
&{} \quad {} + 3 \pi \Exp [ \| Z_i'\| \1 (  E_{n,i}^\rc(\delta, \gamma) ) \mid \FF_i  ]
, \as
\end{align}
To complete the proof of the lemma, we compute the integral on the left-hand side
of (\ref{eq29}). 
First note that
$\Exp [ (Z_i - Z'_i) \cdot \be_\theta  \mid \FF_i ] = (  Z_i - \Exp [ Z'_i]  ) \cdot \be_\theta$, since
$Z_i$ is $\FF_i$-measurable and $Z_i'$ is independent of $\FF_i$,
so that
\[ \int_{0}^{\pi} \Exp [ (Z_i - Z'_i) \cdot \be_\theta  \mid \FF_i ] \ud \theta = \int_{0}^{\pi} (  Z_i -\Exp [ Z_i] ) \cdot \be_\theta \ud \theta.\]
To evaluate the last integral, it is convenient to introduce the notation $Z_i - \Exp [ Z_i] = R_i \be_{\Theta_i}$
where $R_i = \| Z_i - \Exp [ Z_i ] \| \geq 0$ and $\Theta_i \in [0, 2\pi )$.
Then
\begin{align*} \int_{0}^{\pi} (  Z_i -\Exp [ Z_i] ) \cdot \be_\theta \ud \theta &
= \int_0^\pi   R_i \be_{\Theta_i} \cdot \be_\theta   \ud \theta
= R_i \int_0^\pi \cos (\theta - \Theta_i) \ud \theta  \\
& =  2 R_i \sin \Theta_i = 2 R_i \be_{\Theta_i} \cdot \be_{\pi/2}.\end{align*}
Now (\ref{eq2}) follows from (\ref{eq29}), and the proof is complete.
\end{proof}

\subsection{Proofs for the central limit theorem}

For ease of notation, we write $Y_i := 2 \| \Exp [ Z_1] \|^{-1} ( Z_i - \Exp [ Z_1] ) \cdot \Exp [ Z_1]$, and
define
\[ W_{n,i} :=   D_{n,i} - Y_i .\]
The upper bound for $|W_{n,i}|$ in
Lemma \ref{estimate} together with Lemma \ref{En}(ii)  will enable us to prove the following result, which will be the basis of our proof of Theorem \ref{thm0}.

\begin{lemma}
\label{wni}
Suppose that $\Exp [ \| Z_1\|^2] <\infty$ and $\| \Exp [Z_1]\| \neq 0$. Then
\[ \lim_{n \to \infty} n^{-1} \sum_{i=1}^n \Exp [ W_{n,i}^2 ] = 0 . \]
\end{lemma}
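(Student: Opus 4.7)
The plan is to split the sum according to whether the index $i$ lies in the ``bulk'' $I_{n,\gamma}$ (where the fine bound from Lemma \ref{estimate} applies) or is a ``boundary'' index (where only a crude bound is available, but the number of such indices is $O(\gamma n)$). The parameters $\gamma \in (0,1/2)$ and $\delta \in (0,\pi/2)$ will be sent to zero at the end of the argument, after first sending $n \to \infty$.

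For the boundary indices $i \notin I_{n,\gamma}$, of which there are at most $2 \gamma n + 2$, it suffices to bound $\Exp[W_{n,i}^2]$ by a constant independent of $n$ and $i$. The definition of $D_{n,i}$, together with \eqref{cauchy} and \eqref{deltabound}, yields
\[ |D_{n,i}| \leq \Exp \!\left[ \int_0^\pi |\Delta^{(i)}_n(\theta)|\, \ud \theta \,\Big|\, \FF_i \right] \leq \pi \bigl( \|Z_i\| + \Exp\|Z_1\| \bigr) , \]
while $|Y_i| \leq 2 \|Z_i - \Exp [Z_1]\|$ directly from Cauchy--Schwarz. Squaring, taking expectations, and using $\Exp \|Z_1\|^2 < \infty$, yields $\Exp[W_{n,i}^2] \leq C$ uniformly in $i,n$, so these indices contribute at most $3 \gamma C$ to $n^{-1} \sum_i \Exp[W_{n,i}^2]$ for large $n$.

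For $i \in I_{n,\gamma}$ I will apply Lemma \ref{estimate} and then $(a+b+c+d)^2 \leq 4(a^2+b^2+c^2+d^2)$ to the four-term bound. The first two terms contribute at most $C_1 \delta^2$ after taking expectations, where $C_1$ depends only on $\Exp \|Z_1\|^2$. For the third term, the estimate $\Pr[E_{n,i}^\rc \mid \FF_i]^2 \leq \Pr[E_{n,i}^\rc \mid \FF_i]$ and $\FF_i$-measurability of $\|Z_i\|^2$ give
\[ \Exp\bigl[ \|Z_i\|^2 \Pr[E_{n,i}^\rc \mid \FF_i]^2 \bigr] \leq \Exp\bigl[ \|Z_i\|^2 \1( E_{n,i}^\rc ) \bigr] ; \]
conditional Jensen reduces the fourth term to $\Exp[\|Z_i'\|^2 \1(E_{n,i}^\rc)]$.

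The main obstacle is to show that these two expectations vanish uniformly in $i$ as $n \to \infty$; this is a uniform integrability argument. Since the $\|Z_i\|^2$ are identically distributed and integrable, given $\eps > 0$ I may fix $M = M(\eps)$ with $\Exp[\|Z_1\|^2 \1(\|Z_1\|^2 > M)] < \eps$, and then for every $i$,
\[ \Exp[\|Z_i\|^2 \1(E_{n,i}^\rc)] \leq \Exp[\|Z_1\|^2 \1(\|Z_1\|^2 > M)] + M \Pr[E_{n,i}^\rc] \leq \eps + M \Pr[E_{n,i}^\rc] , \]
where the right-hand side tends to $\eps$ as $n \to \infty$ uniformly in $i$ by Lemma \ref{En}(ii); the same bound applies with $Z_i$ replaced by $Z_i'$. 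Combining these ingredients yields
\[ \limsup_{n \to \infty} \frac{1}{n} \sum_{i=1}^n \Exp[W_{n,i}^2] \leq 3 \gamma C + C_1 \delta^2 , \]
and since $\gamma$ and $\delta$ were arbitrary, the conclusion follows on letting them tend to $0$.
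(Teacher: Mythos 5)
Your proof is correct and follows essentially the same route as the paper's: the identical bulk/boundary decomposition over $I_{n,\gamma}$, the same crude uniform bound $\Exp[W_{n,i}^2] \leq C$ for the $O(\gamma n)$ boundary indices, and for the bulk the bound of Lemma \ref{estimate} combined with truncation and Lemma \ref{En}(ii); your organization (squaring the four-term bound directly, then using $\Pr[\,\cdot\mid\FF_i]^2 \leq \Pr[\,\cdot\mid\FF_i]$ and conditional Jensen to reduce both bad-event terms to $\Exp[\|\cdot\|^2 \1(E_{n,i}^\rc)]$ before a single truncation) is a little more streamlined than the paper's two-stage truncation but not a different argument. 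The only slip is in your final display, which should read $\limsup_n n^{-1}\sum_i \Exp[W_{n,i}^2] \leq 3\gamma C + C_1\delta^2 + C_2\eps$, since the uniform-integrability step only drives the bad-event terms down to $\eps$ (with $M = M(\eps)$ fixed before $n \to \infty$); this is harmless because $\eps$ is also arbitrary.
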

\begin{proof}
Fix $\eps >0$. We take $\gamma \in (0,1/2)$ and $\delta \in (0,\pi/2)$, to be specified later. We divide the sum of interest into two parts, namely $i \in I_{n, \gamma}$ and $i \notin I_{n,\gamma}$.
 Now from (\ref{cauchy}) with (\ref{deltabound}) we have
$| L_n^{(i)} - L_n | \leq \pi ( \| Z_i \| + \| Z_i' \| )$, a.s.,
so that
\[ | D_{n,i} | \leq \pi \Exp [ \| Z_i \| + \| Z_i' \| \mid \FF_i ]
=  \pi ( \| Z_i \| + \Exp \| Z_i \| ) .\]
It then follows from  the triangle inequality that
\[ | W_{n,i} | \leq | D_{n,i} | + 2 \| Z_i - \Exp [ Z_i ] \|
\leq ( \pi + 2) ( \| Z_i \| + \Exp \| Z_i \| ) .\]
So provided $\Exp [ \| Z_1 \|^2 ] < \infty$, we have
$\Exp [ W_{n,i}^2 ] \leq C_0$ for all $n$ and all $i$, for some
constant $C_0 < \infty$, depending only on the distribution of $Z_1$.
Hence
\[ \frac{1}{n} \sum_{i \notin I_{n,\gamma} } \Exp [ W_{n,i}^2 ]
\leq \frac{1}{n} 2 \gamma n C_0 = 2 \gamma C_0 ,\]
using the fact that there are at most $2\gamma n$ terms in the sum.
From now on, choose $\gamma >0$ small enough so that $2 \gamma C_0 < \eps$.

Now consider $i \in I_{n, \gamma}$.
For such $i$, (\ref{eq2}) shows that, for some constant $C_1 < \infty$,
\begin{align}
\label{eq30}
 | W_{n,i} | & \leq C_1 (1 + \| Z_i \| )   \delta + C_1 \| Z_i \| \Pr [ E_{n,i}^\rc (\delta,\gamma) \mid \FF_i ]   \nonumber\\
& \qquad \qquad {} + C_1 \Exp [ \| Z_i'\| \1 (E_{n,i}^\rc (\delta,\gamma)) \mid \FF_i ]  ,   \as \end{align}
Here, for any $B_1 \in (0,\infty)$, a.s.,
\begin{align*}
 \Exp [ \| Z_i'\| \1 (E_{n,i}^\rc (\delta,\gamma)) \mid \FF_i ]
& \leq \Exp [ \| Z_i'\| \1 \{ \| Z_i' \| > B_1 \} \mid \FF_i ]
+ B_1 \Pr [ E_{n,i}^\rc (\delta,\gamma) \mid \FF_i ] \\
& = \Exp [ \| Z_i'\| \1 \{ \| Z_i' \| > B_1 \} ] + B_1 \Pr [ E_{n,i}^\rc (\delta,\gamma) \mid \FF_i ] ,
\end{align*}
since $Z_i'$ is independent of $\FF_i$.
Here, since $\Exp \| Z_i'\| = \Exp \|Z_1 \| < \infty$,
the dominated convergence
theorem (see Lemma \ref{dominated convergence}) implies that $\Exp [ \| Z_i'\| \1 \{ \| Z_i' \| > B_1 \} ]  \to 0$
as $B_1 \to \infty$.
So we can choose $B_1 = B_1 (\delta)$ large enough so that 
\[  \Exp [ \| Z_i'\| \1 (E_{n,i}^\rc (\delta,\gamma)) \mid \FF_i ]
\leq \delta +  B_1 \Pr [ E_{n,i}^\rc (\delta,\gamma) \mid \FF_i ], \as \]
Combining this with (\ref{eq30}) we see that there is  a constant $C_2 < \infty$ for which
\[ | W_{n,i} | \leq C_2 (1 + \| Z_i \| )  \left(
 \delta + B_1 \Pr [ E_{n,i}^\rc (\delta,\gamma) \mid \FF_i ] \right)  , \as 
\]
Hence
\begin{align*}
W_{n,i}^2 & \leq  C_2^2 (1 + \| Z_i \| )^2 \left( \delta^2 + 2B_1 \delta \Pr [ E_{n,i}^\rc (\delta,\gamma) \mid \FF_i ] +
B_1^2 \Pr [ E_{n,i}^\rc (\delta,\gamma) \mid \FF_i ]^2 \right) \\
& \leq  C_3^2 (1 + \| Z_i \| )^2 \left(  \delta + B_1^2 \Pr [ E_{n,i}^\rc (\delta,\gamma) \mid \FF_i ] \right) ,\end{align*}
for some constant $C_3 < \infty$,
using the facts that $\delta < \pi/2 < 2$ and $\Pr [ E_{n,i}^\rc (\delta,\gamma) \mid \FF_i ] \leq 1$.
Taking expectations we get
\[ \Exp [ W_{n,i}^2 ] \leq C_3^2 \delta \Exp [  (1 + \| Z_i \| )^2 ] + C_3^2 B_1^2 \Exp
\left[ (1 + \| Z_i \| )^2 \Pr [ E_{n,i}^\rc (\delta,\gamma) \mid \FF_i ] \right] .\]
Provided $\Exp [ \|Z_1 \|^2 ] < \infty$, 
there is a constant $C_4 < \infty$ such that the first term
on the right-hand side of the last display is bounded
by $C_4 \delta$. Now fix $\delta >0$ small
enough so that $C_4 \delta < \eps$; this choice also
fixes $B_1$. Then 
\begin{equation}
\label{eq32}
 \Exp [ W_{n,i}^2 ] \leq \eps  + C_3^2 B_1^2 \Exp
\left[ (1 + \| Z_i \| )^2 \Pr [ E_{n,i}^\rc (\delta,\gamma) \mid \FF_i ] \right] .\end{equation}
For the final term in (\ref{eq32}), observe that, for any $B_2 \in (0,\infty)$, a.s.,
\begin{align}
\label{eq31}
(1 + \| Z_i \|)^2 \Pr [ E_{n,i}^\rc (\delta,\gamma) \mid \FF_i ]
& \leq (1+B_2)^2 \Pr [ E_{n,i}^\rc (\delta ,\gamma) \mid \FF_i ] \nonumber\\
& {} \quad {} + (1 + \| Z_i \|)^2 \1 \{ \| Z_i \| > B_2 \} .\end{align}
Here
$\Exp [ (1 + \| Z_i \|)^2 \1 \{ \| Z_i \| > B_2 \} ] \to 0$
as $B_2 \to \infty$, provided $\Exp [ \| Z_1 \|^2] <\infty$,
by the dominated convergence theorem.
Hence, since $\delta$ and $B_1$ are fixed,
we can choose $B_2 = B_2(\eps) \in (0,\infty)$ such that
$$C_3^2 B_1^2 \Exp \left[ (1 + \| Z_i \|)^2 \1 \{ \| Z_i \| > B_2 \} \right] < \eps .$$
Then taking expectations in (\ref{eq31}) we obtain from (\ref{eq32}) that
\[ \Exp [ W_{n,i}^2 ] \leq   2 \eps + C_3^2 B_1^2  (1+B_2)^2 \Pr [ E_{n,i}^{\rm c } (\delta,\gamma) ]
  .\]

Now choose $n_0$ such that
$C_3^2 B_1^2  (1+B_2)^2 \Pr [ E_{n,i}^{\rm c } (\delta,\gamma) ] < \eps$ for all $n \geq n_0$,
which we may do by Lemma \ref{En}(ii). So for the given $\eps>0$ and  $\gamma \in (0,1/2)$,
we can choose
$n_0$ such that for all $i \in I_{n,\gamma}$ and all $n \geq n_0$,
$ \Exp [ W_{n,i}^2 ] \leq 3 \eps$. Hence
\[ \frac{1}{n} \sum_{i \in I_{n,\gamma}}  \Exp [ W_{n,i}^2 ]  \leq 3 \eps ,\]
for all $n \geq n_0$.

Combining the estimates for $i \in I_{n,\gamma}$ and $i \notin I_{n,\gamma}$, we see that
\[ \frac{1}{n} \sum_{i=1}^n  \Exp [ W_{n,i}^2 ]
\leq 2 \gamma C_0 + 3 \eps \leq 4 \eps ,\]
for all $n \geq n_0$. Since $\eps>0$ was arbitrary, the result follows.
\end{proof}

Now we can claim and prove our main theorems.

\begin{theorem}
\label{thm0}
Suppose that $\Exp [  \| Z_1 \|^2]   < \infty$ and
$\| \Exp [ Z_1 ] \|  \neq 0$. Then,
as $n \to \infty$,
\[ n^{-1/2} \left| L_n - \Exp [ L_n] -   \sum_{i=1}^n \frac{ 2 (Z_i - \Exp [ Z_1] ) \cdot \Exp [ Z_1]}{\| \Exp [Z_1] \|} \right| \to 0 , \ \textrm{in} \ L^2 .\]
\end{theorem}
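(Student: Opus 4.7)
The plan is to rewrite the expression inside $|\,\cdot\,|$ as a sum of martingale differences, then exploit their orthogonality to reduce the $L^2$ statement to the second-moment estimate already supplied by Lemma~\ref{wni}. Writing $Y_i := 2 \| \Exp[Z_1] \|^{-1} ( Z_i - \Exp [ Z_1] ) \cdot \Exp [ Z_1]$ and $W_{n,i} := D_{n,i} - Y_i$ (as in the setup preceding Lemma~\ref{wni}), Lemma~\ref{lem1}(i) gives the clean algebraic identity
\[
L_n - \Exp[L_n] - \sum_{i=1}^n Y_i \; = \; \sum_{i=1}^n D_{n,i} - \sum_{i=1}^n Y_i \; = \; \sum_{i=1}^n W_{n,i}.
\]

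First I would check that $(W_{n,i})_{i=1}^n$ is an $\{\FF_i\}$-martingale difference sequence. The variable $D_{n,i}$ is $\FF_i$-measurable by definition \eqref{dni}, and the resampling construction, as in the proof of Lemma~\ref{resampling}, yields $\Exp[D_{n,i} \mid \FF_{i-1}]=0$. On the other hand $Y_i$ is a linear function of $Z_i$, hence $\FF_i$-measurable, and $\Exp[Y_i \mid \FF_{i-1}] = \Exp[Y_i] = 0$ because $Z_i$ is independent of $\FF_{i-1}$ with mean $\Exp[Z_1]$. So $W_{n,i}$ is $\FF_i$-measurable and $\Exp[W_{n,i} \mid \FF_{i-1}]=0$.

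Next, I would invoke orthogonality of martingale differences (Lemma~\ref{martingale diff. orth.}) to conclude
\[
\Exp\!\left[\,\Big(\sum_{i=1}^n W_{n,i}\Big)^{\!2}\,\right] \; = \; \sum_{i=1}^n \Exp[W_{n,i}^2],
\]
provided the right-hand side is finite; under $\Exp [\|Z_1\|^2] <\infty$ each summand is bounded by a constant uniformly in $n,i$, as shown in the first part of the proof of Lemma~\ref{wni}. Dividing by $n$ and applying Lemma~\ref{wni} then gives
\[
n^{-1}\,\Exp\!\left[\,\Big(L_n - \Exp[L_n] - \sum_{i=1}^n Y_i\Big)^{\!2}\,\right] \; = \; \frac{1}{n}\sum_{i=1}^n \Exp[W_{n,i}^2] \;\longrightarrow\; 0,
\]
which is precisely the claimed $L^2$ convergence.

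The real work has already been done in Lemma~\ref{wni}, whose proof combines the approximation identity of Lemma~\ref{estimate} with the extremum-control of Lemma~\ref{En}, so the main obstacle here is simply the bookkeeping observation that the centering by $Y_i$ preserves the martingale-difference structure. The one small subtlety is ensuring the orthogonality step is legitimate, i.e.\ that $\Exp[W_{n,i}^2]$ is finite for each $n,i$; but this follows immediately from $|D_{n,i}| \leq \pi(\|Z_i\| + \Exp\|Z_1\|)$ (from \eqref{deltabound}) and $|Y_i| \leq 2\|Z_i - \Exp[Z_1]\|$, together with the assumption $\Exp[\|Z_1\|^2] < \infty$.
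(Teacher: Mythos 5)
Your proposal is correct and follows essentially the same route as the paper's own proof: decompose via Lemma~\ref{lem1}(i) into $\sum_{i=1}^n W_{n,i}$, verify that the $W_{n,i}$ form a martingale difference sequence, apply orthogonality, and conclude from Lemma~\ref{wni}. The additional care you take over measurability and the finiteness of $\Exp[W_{n,i}^2]$ is sound and consistent with the bounds established in the proof of Lemma~\ref{wni}.
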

 
\begin{proof}
First note that
\[ \Exp [ W_{n,i} \mid \FF_{i-1} ] = \Exp [ D_{n,i} \mid \FF_{i-1} ] - \Exp [ Y_i \mid \FF_{i-1} ]
 = 0 - \Exp [ Y_i],
\]
since $D_{n,i}$ is a martingale difference sequence and $Y_i$ is independent of $\FF_{i-1}$.
Here, by definition, $\Exp [ Y_i] =0$, and so $W_{n,i}$ is also a martingale difference
sequence. Therefore, by orthogonality,
$$n^{-1}\Exp\left[ \left( \sum_{i=1}^{n}W_{n,i} \right)^2 \right] =
n^{-1} \sum_{i=1}^{n} \Exp\left[ W_{n,i}^2 \right]  \to 0 \As n \to \infty, \text{ by Lemma \ref{wni}.} $$
In other words, $n^{-1/2} \sum_{i=1}^{n}W_{n,i} \to 0$ in $L^2$,
which, with Lemma \ref{lem1}(i), implies the statement in the theorem.
\end{proof}

\begin{theorem}
\label{thm1}
Suppose that $\Exp [  \| Z_1 \|^2]   < \infty$ and
$\| \Exp [ Z_1 ] \|  \neq 0$. Then
\begin{equation}
\label{vlim}
\lim_{n \to \infty} n^{-1} \Var [ L_n ] = \frac{4 \Exp [ ( (Z_1 - \Exp [Z_1] ) \cdot \Exp [ Z_1] )^2 ]}{\| \Exp [ Z_1] \|^2} = 4 \spara. \end{equation}
\end{theorem}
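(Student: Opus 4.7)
The plan is to deduce the variance limit directly from Theorem~\ref{thm0}, which provides an $L^2$-approximation of $L_n - \Exp[L_n]$ by a sum of i.i.d. mean-zero random variables whose variance is easy to compute.

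First I would introduce the notation $Y_i := 2 \| \mu \|^{-1} ( Z_i - \mu ) \cdot \mu$ and $T_n := \sum_{i=1}^n Y_i$, matching the sum appearing in Theorem~\ref{thm0}. Since the $Z_i$ are i.i.d., the $Y_i$ are i.i.d. as well, with $\Exp Y_i = 0$, and a short calculation gives
\[ \Exp [ Y_i^2 ] = \frac{4 \, \Exp [ ( (Z_1 - \mu ) \cdot \mu )^2 ]}{\| \mu \|^2} = 4 \, \Exp \left[ ( (Z_1 - \mu ) \cdot \hat \mu )^2 \right] = 4 \spara, \]
by the definition of $\spara$. Independence then yields $\Var [ T_n ] = 4 n \spara$.

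Next I would combine this with Theorem~\ref{thm0}. Since $L_n \in L^2$ by Proposition~\ref{upper bound for Var L_n}, both $L_n - \Exp [ L_n]$ and $T_n$ are centred $L^2$ random variables, so Minkowski's inequality (Lemma~\ref{minkowski}) applied in $L^2$ gives
\[ \bigl| \, \| L_n - \Exp [ L_n ] \|_2 - \| T_n \|_2 \, \bigr| \leq \| L_n - \Exp [ L_n ] - T_n \|_2 . \]
Dividing through by $\sqrt{n}$, the right-hand side tends to $0$ by Theorem~\ref{thm0}, while $\| T_n \|_2 / \sqrt{n} = 2 \sqrt{\spara}$ from the variance computation above. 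Hence $n^{-1/2} \| L_n - \Exp [ L_n] \|_2 \to 2 \sqrt{\spara}$, and squaring yields \eqref{vlim}.

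There is no serious obstacle: all the hard work was done in establishing Theorem~\ref{thm0}, and the remaining step is essentially bookkeeping. The only point worth a moment's care is verifying that $L_n \in L^2$, so that $\| L_n - \Exp[L_n] \|_2$ makes sense and equals $\sqrt{\Var[L_n]}$; this is immediate from Proposition~\ref{upper bound for Var L_n}, which guarantees $\Var[L_n] < \infty$ under the assumption $\Exp [ \| Z_1 \|^2 ] < \infty$.
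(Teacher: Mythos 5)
Your proposal is correct and follows essentially the same route as the paper: both deduce the limit from the $L^2$-approximation in Theorem~\ref{thm0} together with the computation $\Exp[Y_i^2]=4\spara$. The only cosmetic difference is that you pass to the limit via the reverse triangle inequality for the $L^2$-norm, whereas the paper expands $\Exp[\xi_n^2]$ and controls the cross term with the Cauchy--Schwarz inequality; the two arguments are interchangeable.
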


\begin{remarks} \label{rmk:degenerate}
\begin{enumerate}[(i)] 
\item The assumptions  $\Exp [  \| Z_1 \|^2]   < \infty$ and
$\| \Exp [ Z_1 ] \|  \neq 0$ ensure $4 \spara < \infty$.
\item To compare the limit result (\ref{vlim}) with Snyder and Steele's upper bound (\ref{ssup}), observe that
\[ 4 \spara = 4 \left( \frac{\Exp [ (Z_1 \cdot \Exp [ Z_1] )^2 ] - \| \Exp[ Z_1 ] \|^4 }{\| \Exp [ Z_1] \|^2 } \right)
\leq 4 \left(  \Exp[ \| Z_1 \|^2 ] - \| \Exp [ Z_1 ] \|^2 \right) .\]
\item The limit $4 \spara$ is zero if and only if $(Z_1 - \Exp [ Z_1] ) \cdot \Exp [ Z_1] =0$
with probability 1, i.e., if $Z_1 - \Exp [ Z_1]$ is always orthogonal to $\Exp[Z_1]$.
In such a degenerate case,
(\ref{vlim}) says that $\Var[L_n]=o(n)$. This is the case, for example, if $Z_1$ takes values $(1,1)$ and $(1,-1)$ each with probability $1/2$. Note that
the Snyder--Steele bound (\ref{ssup}) applied in this example says
only that $\Var [L_n] \leq (\pi^2/2) n$, which is not the correct order. Here, the two-dimensional trajectory can be viewed as a space-time trajectory of a \emph{one-dimensional} simple symmetric random walk. We conjecture that in fact $\Var [L_n] = O (\log n)$. Steele \cite{steele}
obtains variance results for the \emph{number of faces} of the convex hull
of one-dimensional simple random walk, and comments that such results for $L_n$ seem ``far out of reach'' \cite[p.\ 242]{steele}.
\end{enumerate}
\end{remarks}

\begin{proof}
Write
\begin{equation}
 \label{xizeta}
\xi_n = \frac{L_n - \Exp[L_n]}{\sqrt {  n} }; ~~\textrm{and} ~~
\zeta_n = \frac {1}{\sqrt{  n }} \sum_{i=1}^n Y_i,
~\textrm{where} ~ Y_i = \frac{2 (Z_i - \Exp [ Z_1 ] ) \cdot \Exp [ Z_1]}{\| \Exp [ Z_1 ] \|}.
\end{equation}
Then Theorem \ref{thm0} shows that $| \xi_n - \zeta_n | \to 0$ in $L^2$ as $n \to \infty$. Also,
with $4 \spara$ as given by (\ref{vlim}), $\Exp [ \zeta_n^2 ] = 4 \spara$. Then a computation shows that
\[    n^{-1} \Var [ L_n ] = \Exp [ \xi_n^2 ] = \Exp [ (\xi_n - \zeta_n)^2] + \Exp [ \zeta_n^2 ] +2 \Exp [ (\xi_n -\zeta_n) \zeta_n ] .\]
Here, by the $L^2$ convergence, $\Exp [ (\xi_n - \zeta_n)^2] \to 0$ and, by the Cauchy--Schwarz inequality (see Lemma \ref{Cauchy-Schwarz ineq.}),
$$\left| \Exp [ (\xi_n -\zeta_n) \zeta_n ] \right| \leq \left( \Exp [ (\xi_n - \zeta_n)^2] \Exp [ \zeta_n^2] \right)^{1/2} \to 0 \text{ as well.} $$
So $\Exp [ \xi_n^2] \to 4 \spara$ as $n \to \infty$.
\end{proof}

In the case where $\Exp [  \| Z_1 \|^2]   < \infty$ and
$\| \Exp  [ Z_1 ] \| = \mu > 0$, Snyder and Steele deduce from their bound (\ref{ssup}) a strong law of large numbers for $L_n$,
namely
$\lim_{n \to \infty} n^{-1} L_n = 2 \mu$, a.s.\ (see \cite[p.\ 1168]{ss}).
Given this and the variance asymptotics of Theorem \ref{thm1},
it is natural to ask whether there is an accompanying central limit theorem.
Our next result gives a positive answer in the non-degenerate case, again with essentially minimal assumptions.

In the proof of Theorem \ref{thm2} we will use two facts about convergence in distribution that we now
recall (see Lemma \ref{slutsky}).
First, if sequences of random variables
$\xi_n$ and $\zeta_n$ are such that $\zeta_n \to \zeta$ in distribution for some random variable $\zeta$ and $|\xi_n -\zeta_n| \to 0$ in probability,
then $\xi_n \to \zeta$ in distribution (this is \emph{Slutsky's theorem}). Second,
if $\zeta_n \to \zeta$ in distribution and $\alpha_n \to \alpha$ in probability, then $\alpha_n \zeta_n \to \alpha \zeta$ in distribution.

\begin{theorem}
\label{thm2}
Suppose that $\Exp [  \| Z_1 \|^2]   < \infty$, $\| \Exp [ Z_1 ] \|  \neq 0$ and $\spara > 0$. Then for any $x \in \R$,
\begin{equation}
\label{clt}
 \lim_{n \to \infty} \Pr \bigg[ \frac{L_n - \Exp [ L_n ] }{\sqrt{\Var [ L_n ]} } \leq x \bigg]
=
\lim_{n \to \infty} \Pr \bigg[ \frac{L_n - \Exp [ L_n ] }{\sqrt{4 \spara n} } \leq x \bigg]
 = \Phi (x) ,
  \end{equation}
  where $\Phi$ is the standard normal distribution function.
\end{theorem}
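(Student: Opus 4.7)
The plan is to reduce Theorem \ref{thm2} to the classical central limit theorem for sums of i.i.d.\ random variables, exploiting the $L^2$ approximation established in Theorem \ref{thm0}. Set $Y_i := 2 \| \mu \|^{-1} ( Z_i - \mu ) \cdot \mu$, $\xi_n := n^{-1/2}(L_n - \Exp[L_n])$, and $\zeta_n := n^{-1/2}\sum_{i=1}^n Y_i$. Theorem \ref{thm0} says precisely that $\Exp [ (\xi_n - \zeta_n)^2 ] \to 0$, so in particular $\xi_n - \zeta_n \to 0$ in probability.

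The variables $Y_1, Y_2, \ldots$ are i.i.d.\ with $\Exp Y_1 = 0$ and $\Exp [Y_1^2] = 4 \spara$, finite by the hypothesis $\Exp [ \| Z_1\|^2 ] < \infty$ and strictly positive by the assumption $\spara > 0$. The classical central limit theorem therefore yields $\zeta_n \tod \NN(0, 4 \spara)$, and Slutsky's theorem (Lemma \ref{slutsky}) combines this with the convergence $\xi_n - \zeta_n \to 0$ in probability to give $\xi_n \tod \NN(0, 4\spara)$; dividing by the positive constant $\sqrt{4 \spara}$ yields the second equality in \eqref{clt}. For the first equality, I would invoke Theorem \ref{thm1}, which says $n^{-1} \Var [L_n] \to 4\spara > 0$, so the deterministic factor $\sqrt{4 \spara n/\Var [L_n]}$ tends to $1$; a final application of Slutsky's theorem then allows the replacement of $\sqrt{4\spara n}$ by $\sqrt{\Var[L_n]}$ in the normalizing denominator.

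The main difficulty in the overall argument is not in the present theorem but in Theorem \ref{thm0}: once the $L^2$ approximation of $L_n - \Exp [L_n]$ by a sum of i.i.d.\ random variables has been established, and the variance limit of Theorem \ref{thm1} is available, Theorem \ref{thm2} follows by essentially mechanical applications of the classical CLT and Slutsky's theorem, with no further geometric analysis of the convex hull required. The non-degeneracy assumption $\spara > 0$ is used twice, to ensure that the limiting normal law is non-degenerate so that division by $\sqrt{4\spara}$ makes sense, and to guarantee that $\Var[L_n]$ grows linearly in $n$ so that the alternative normalization is asymptotically equivalent.
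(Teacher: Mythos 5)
Your proposal is correct and follows essentially the same route as the paper: the same definitions of $\xi_n$ and $\zeta_n$, the $L^2$ approximation from Theorem \ref{thm0}, the classical CLT applied to the i.i.d.\ sum $\zeta_n$, Slutsky's theorem to transfer the limit to $\xi_n$, and the variance asymptotics of Theorem \ref{thm1} to justify replacing $\sqrt{4\spara n}$ by $\sqrt{\Var[L_n]}$. No gaps.
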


\begin{proof}
Use the notation for $\xi_n$ and $\zeta_n$ as given by (\ref{xizeta}).
Then, by Theorem \ref{thm0}, $| \xi_n - \zeta_n | \to 0$ in $L^2$,
and hence
in probability.

In the sum $\zeta_n$, the $Y_i$ are i.i.d.\ random variables
with mean $0$ and variance $\Exp [Y_i^2 ] = 4 \spara$. Hence the classical
central limit theorem (see e.g.\ \cite[p.\ 93]{durrett})
shows that $\zeta_n$ converges in distribution to a   normal
random variable with mean $0$ and variance $4 \spara$.
  Slutsky's theorem then implies that $\xi_n$ has the same distributional limit. Hence, for any $x \in \R$,
\[ \lim_{n\to \infty} \Pr \left[ \frac{ \xi_n}{\sqrt{ 4 \spara }}  \leq x \right] = \lim_{n\to \infty}  \Pr \left[ \frac{L_n - \Exp [ L_n]}{\sqrt{ 4 \spara n}} \leq x \right] = \Phi (x) ,\]
where $\Phi$ is the standard normal distribution function.
Moreover,
\[ \Pr \left[ \frac{L_n - \Exp [ L_n]}{\sqrt{ \Var [ L_n ]}} \leq x \right]
 = \Pr \left[ \frac{\xi_n \alpha_n}{\sqrt { 4 \spara }} \leq x \right] ,
\]
where $\alpha _n = \sqrt {\frac{4 \spara n}{\Var [L_n] }} \to 1$
by Theorem \ref{thm1}. Thus we verify the limit statements in (\ref{clt}).
\end{proof}

\section{Asymptotics for the zero drift case}

Recall that $h_1$ is defined in \eqref{h_t} and $\Sigma$ is a covariance matrix (see Section \ref{sec:Brownian-hulls}), which is positive semidefinite and symmetric.
Let
\begin{equation}
\label{eq:var_constants}
 u_0 ( \Sigma ) := \Var \cL ( \Sigma^{1/2} h_1 ) ,
\end{equation}
we have the following results.

\begin{proposition}
\label{prop:var-limit-zero u0}
 Suppose that \eqref{ass:moments} holds for some $p > 2$, 
and $\mu =0$. Then
\[ \lim_{n \to \infty} n^{-1} \Var L_n = u_0 ( \Sigma ).\]
\end{proposition}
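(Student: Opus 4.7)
The plan is to deduce the variance limit from the weak convergence of convex hulls established in Theorem \ref{thm:limit-zero}, by upgrading the distributional convergence of $n^{-1/2} L_n$ to convergence of its first and second moments via uniform integrability.

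First, Corollary \ref{cor:zero-limits} (applied to the zero-drift case under \eqref{ass:moments} with $p \geq 2$) already gives $n^{-1/2} L_n \tod \cL(\Sigma^{1/2} h_1)$ as $n \to \infty$. To promote this to convergence of the first two moments, it suffices by Lemma \ref{convergence of means} to verify that the families $\{ n^{-1/2} L_n \}_{n \geq 1}$ and $\{ n^{-1} L_n^2 \}_{n \geq 1}$ are both uniformly integrable.

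The key step is a uniform higher-moment bound. Cauchy's formula as recorded in \eqref{eq:walk-cauchy} gives $L_n \leq 2\pi \max_{0 \leq k \leq n} \|S_k\|$, so for the exponent $p > 2$ supplied by the hypothesis \eqref{ass:moments},
\[ \Exp\bigl[ L_n^p \bigr] \leq (2\pi)^p \Exp\Bigl[ \max_{0 \leq k \leq n} \|S_k\|^p \Bigr] = O(n^{p/2}), \]
by Lemma \ref{lem:walk_moments}(ii) (using $\mu = 0$ and $p > 2$, so $1 \vee (p/2) = p/2$). Rewriting, $\sup_n \Exp \bigl[ ( n^{-1/2} L_n )^p \bigr] < \infty$ for some $p > 2$. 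Since $p/2 > 1$, this uniform bound on a moment strictly greater than $2$ yields uniform integrability of both $n^{-1/2} L_n$ and $(n^{-1/2} L_n)^2 = n^{-1} L_n^2$.

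With uniform integrability in hand, Lemma \ref{convergence of means} combined with the distributional limit gives
\[ \lim_{n \to \infty} \Exp\bigl[ n^{-1/2} L_n \bigr] = \Exp \cL(\Sigma^{1/2} h_1), \qquad \lim_{n \to \infty} \Exp\bigl[ n^{-1} L_n^2 \bigr] = \Exp\bigl[ \cL(\Sigma^{1/2} h_1)^2 \bigr], \]
and subtracting the square of the first from the second yields
\[ \lim_{n \to \infty} n^{-1} \Var L_n = \Var \cL(\Sigma^{1/2} h_1) = u_0(\Sigma), \]
as required. The only substantive step is the moment bound, which is essentially handed to us by Lemma \ref{lem:walk_moments}; the hypothesis $p > 2$ (rather than $p = 2$) is used precisely to secure a margin above the second moment needed for uniform integrability of $L_n^2 / n$.
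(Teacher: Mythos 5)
Your proof is correct and follows essentially the same route as the paper: the paper's own argument also combines the Cauchy-formula bound \eqref{eq:walk-cauchy} with Lemma \ref{lem:walk_moments}(ii) to get $\sup_n \Exp [ ( n^{-1} L_n^2 )^{p/2} ] < \infty$ for the given $p>2$, deduces uniform integrability of $n^{-1} L_n^2$, and then passes to the limit via Corollary \ref{cor:zero-limits}. You have merely spelled out the moment-convergence bookkeeping in more detail.
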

\begin{proof}
From \eqref{eq:walk-cauchy} and Lemma \ref{lem:walk_moments}(ii), for $p > 2$ we have $\sup_n \Exp [ ( n^{-1} L_n ^2 )^{p/2} ] < \infty$.
Hence $n^{-1} L_n^2$ is uniformly integrable, and we deduce convergence of $n^{-1} \Var L_n$ in Corollary \ref{cor:zero-limits}.
\end{proof}

The next result gives bounds on $u_0(\Sigma)$ defined in \eqref{eq:var_constants}.

\begin{proposition}
\label{prop:var_bounds u0}
\begin{equation}
  \frac{263}{1080} \pi^{-3/2}  \re^{-144/25} 
\trace \Sigma
 \leq  u_0 ( \Sigma)   \leq \frac{\pi^2}{2} \trace \Sigma . \label{eq:u-bounds} 
\end{equation}
In addition, if $\Sigma = I$ we have the following sharper form of the lower bound:
\[ \Var \ell_1 = u_0 (  I ) \geq \frac{2}{5} \left(1-\frac{8}{25\pi} \right) \re^{-25 \pi /16 } > 0 .\]
\end{proposition}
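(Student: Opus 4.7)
The upper bound is immediate: Proposition~\ref{upper bound for Var L_n} gives $\Var L_n \leq (\pi^2 \sigma^2/2) n$ with $\sigma^2 = \trace \Sigma$, so dividing by $n$ and invoking Proposition~\ref{prop:var-limit-zero u0} to pass to the limit yields $u_0(\Sigma) \leq \pi^2 \trace(\Sigma)/2$.

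For the lower bounds my plan is to apply the following elementary device: for any real random variable $X$ with an independent copy $X'$, since $\Var X = \tfrac{1}{2}\Exp[(X-X')^2]$, restricting the expectation to the product event $\{X \geq A,\,X' \leq B\}$ gives
\[
\Var X \;\geq\; \tfrac{1}{2}(A-B)^2 \Pr(X \geq A)\Pr(X \leq B), \qquad A > B.
\]
I will apply this with $X = \cL(\Sigma^{1/2} h_1)$, exploiting two geometric inequalities for the planar convex hull $K = \Sigma^{1/2} h_1$: since both $\0$ and $\Sigma^{1/2} b(1)$ lie in $K$, the perimeter-at-least-twice-the-diameter bound gives $X \geq 2\|\Sigma^{1/2} b(1)\|$; and since $K$ is contained in the centred disk of radius $\sup_{t \leq 1}\|\Sigma^{1/2} b(t)\|$, monotonicity of perimeter under inclusion (for convex bodies) gives $X \leq 2\pi \sup_{t \leq 1}\|\Sigma^{1/2} b(t)\|$.

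For the general $\Sigma$ bound I would pick $e$ a unit eigenvector of $\Sigma$ for its largest eigenvalue $\lambda$, which satisfies $\lambda \geq \trace(\Sigma)/2$; the two inequalities then sharpen to $X \geq 2\sqrt{\lambda}|b(1)\cdot e|$ and $X \leq 2\pi\sqrt{\lambda}\sup_{t \leq 1}\|b(t)\|$. A Mills'-ratio lower bound on the standard Gaussian tail of $b(1) \cdot e$ controls $\Pr(X \geq A)$ with an exponential factor of the form $\re^{-a^2/2}$ when $A = 2\sqrt{\lambda}a$; and Doob's $L^2$ inequality for the submartingale $\|b(t)\|^2$ (giving $\Exp \sup_{t \leq 1}\|b(t)\|^2 \leq 4 \Exp\|b(1)\|^2 = 8$) followed by Markov gives $\Pr(X \leq B) \geq 1 - 8/r^2$ when $B = 2\pi\sqrt{\lambda} r$. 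Plugging these in and using $\lambda \geq \trace(\Sigma)/2$ yields
\[
u_0(\Sigma) \;\geq\; \trace(\Sigma)\cdot(a - \pi r)^2 \cdot 2(1-\Phi(a))(1 - 8/r^2),
\]
valid for any $a > \pi r$; a near-optimal choice of the two thresholds, with the exponent $\tfrac{144}{25} = (12/5)^2$ emerging from the Gaussian factor, should produce the stated constant.

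For the sharper $\Sigma = I$ estimate I would work directly with $\ell_1$: use Markov applied to $\ell_1$ with the known mean $\Exp \ell_1 = \sqrt{8\pi}$ from Lemma~\ref{lem:letac} to control the upper-tail event, and the exact chi-squared tail $\Pr(\ell_1 \geq 2\rho) \geq \Pr(\|b(1)\| \geq \rho) = \re^{-\rho^2/2}$ (since $\|b(1)\|^2 \sim \chi^2_2$) for the lower-tail event. Selecting $\rho^2/2 = 25\pi/16$ matches the exponent in the target constant, and tuning $B$ so that the Markov bound yields $\Pr(\ell_1 \leq B) \geq 1 - \tfrac{8}{25\pi}$ should produce the claimed estimate after computing $(A-B)^2/2$. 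The hard part will be the numerical optimisation: the parameters must be chosen so that the gap $A - B$ is usefully large while both tail probabilities remain bounded away from zero, and the specific constants in the statement reflect such a balance rather than any sharp optimum.
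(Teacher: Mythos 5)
Your upper bound is exactly the paper's argument and is fine. The lower bounds, however, contain a genuine quantitative gap: the symmetrised device $\Var X \geq \tfrac12 (A-B)^2 \Pr(X \geq A)\Pr(X \leq B)$ forces you to control \emph{both} tails, and your crude control of the lower tail pushes the upper-tail threshold so far out that the stated constants cannot be recovered. Concretely, in the general case the Doob--Markov bound $\Pr(X \leq 2\pi\sqrt{\lambda}\,r) \geq 1 - 8/r^2$ is vacuous unless $r > 2\sqrt 2$, so $B > 4\sqrt2\,\pi\sqrt\lambda$, and the constraint $A>B$ then forces $a > 2\sqrt2\,\pi \approx 8.89$; the resulting Gaussian factor is of order $\re^{-a^2/2} \leq \re^{-39.4}$, not $\re^{-144/25} = \re^{-5.76}$. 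The exponent $144/25 = \tfrac12(12\sqrt2/5)^2$ corresponds to a threshold $a = 12\sqrt2/5 \approx 3.39$, which is inaccessible to your scheme. The $\Sigma = I$ case fails outright: to get $\Pr(\ell_1 \leq B) \geq 1 - \tfrac{8}{25\pi}$ from Markov with $\Exp \ell_1 = \sqrt{8\pi}$ you need $B \geq \tfrac{25\pi}{8}\sqrt{8\pi} \approx 49.2$, while your choice $\rho^2/2 = 25\pi/16$ gives $A = 2\rho \approx 6.27 < B$, so $A>B$ is violated and the bound is empty.

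The missing idea is a \emph{one-sided} deviation inequality anchored at the known mean. The paper uses $\Var X \geq \alpha^2(\Exp X)^2\,\Pr[X \geq (1+\alpha)\Exp X]$ (from $\Var X \geq \theta^2 \Pr(|X - \Exp X| \geq \theta)$ with $\theta = \alpha \Exp X$, keeping only the upper tail), so that no lower-tail estimate is needed at all. Combined with $\cL(\Sigma^{1/2}h_1) \geq 2\sup_{s} \left( e_\Sigma\cdot\Sigma^{1/2}b(s) \right) \stackrel{d}{=} 2\lambda_\Sigma^{1/2}\sup_{s} w(s)$, the bound $\Exp\cL(\Sigma^{1/2}h_1) = 4\Exp\|Y\| \leq 4\sqrt{\trace\Sigma}$ from \eqref{EL-bounds}, the inequality $\lambda_\Sigma \geq \tfrac12\trace\Sigma$, the reflection principle and the Mills-ratio bound \eqref{eq:gauss-bound}, the choice $\alpha = 1/5$ lands precisely on the threshold $2(1+\alpha)\sqrt2 = 12\sqrt2/5$ and yields the constant $\tfrac{263}{1080}\pi^{-3/2}\re^{-144/25}$; for $\Sigma = I$ the exact value $\Exp\ell_1 = \sqrt{8\pi}$ and $\alpha = 1/4$ give the sharper form. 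Your route would still prove $u_0(\Sigma) > 0$ whenever $\trace\Sigma > 0$, but it does not prove the proposition as stated.
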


For the proof of this result, we rely on a few facts about one-dimensional Brownian motion,
including the bound  (see e.g.\ equation (2.1) of  \cite{jp}), valid for all $r>0$,
\begin{equation}
\label{eq:brown_norm}
\Pr \left[ \sup_{ 0 \leq s \leq 1} | w (s) | \leq r \right] \geq 
 \frac{4}{\pi} \left( \re^{-\pi^2/(8r^2)} - \frac{1}{3} \re^{-9\pi^2/(8r^2)} \right) .
\end{equation}
We let $\Phi$ denote the distribution function of a standard normal random variable; we will also need
the standard Gaussian tail bound (see e.g.~\cite[p.~12]{durrett})
\begin{equation}
\label{eq:gauss-bound}
1 - \Phi( x) = \frac{1}{\sqrt{2 \pi}} \int_x^\infty \re^{-y^2/2} \ud y \geq \frac{1}{x \sqrt{2 \pi}}   \left(1 - \frac{1}{x^2} \right) \re^{-x^2/2} , ~~ \text{for } x > 0 .
\end{equation}
We also note  that for $e \in \Sp_1$ the diffusion
$e \cdot (\Sigma^{1/2} b)$ is one-dimensional
 Brownian motion with
variance parameter $e^\tra \Sigma e$.

The idea behind the variance lower bounds is elementary. For a  random variable $X$ with mean $\Exp X$,
we have, for any $\theta \geq 0$, 
$$\Var X = \Exp \left[ (X- \Exp X )^2 \right] \geq \theta^2 \Pr \left[ | X - \Exp X | \geq \theta \right].$$
If $\Exp X  \geq 0$,
taking $\theta = \alpha  \Exp X $ for $\alpha >0$, we obtain
\begin{equation}
\label{eq:var-bound}
 \Var X \geq \alpha^2 ( \Exp X )^2 \big( \Pr [ X \leq (1-\alpha)   \Exp X   ] + \Pr [ X \geq (1 + \alpha)   \Exp X   ] \big)
 ,\end{equation}
and  our lower bounds  use whichever of the
 latter two probabilities is most convenient.

\begin{proof}[Proof of Proposition \ref{prop:var_bounds u0}.]
We start with the upper bounds. Snyder and Steele's bound  \eqref{eq:ss}  with
the statement for $\Var L_n$ in
Proposition \ref{prop:var-limit-zero u0} gives the upper bound in \eqref{eq:u-bounds}.

We now move on to the lower bounds.
Let $e_\Sigma \in \Sp_1$ denote an eigenvector of $\Sigma$ corresponding to the principal eigenvalue $\lambda_\Sigma$.
Then since $\Sigma^{1/2} h_1$ contains the line segment from $0$ to any (other) point in $\Sigma^{1/2} h_1$, we have from monotonicity of $\cL$ that
\[ \cL ( \Sigma^{1/2} h_1 )  \geq 2 \sup_{0 \leq s \leq 1 } \| \Sigma^{1/2} b(s) \| \geq 2 \sup_{0 \leq s \leq 1 } \left( e_\Sigma \cdot ( \Sigma^{1/2} b (s) ) \right) .\]
Here $e_\Sigma \cdot ( \Sigma^{1/2} b )$ has the same distribution as $\lambda_\Sigma^{1/2} w$. Hence, for $\alpha > 0$,
\begin{align*} \Pr \left[ \cL ( \Sigma^{1/2} h_1) \geq (1+ \alpha) \Exp \cL  ( \Sigma^{1/2} h_1) \right] & 
\geq 
\Pr \left[ \sup_{0 \leq s \leq 1} w(s) \geq \frac{1+\alpha}{2} \lambda_\Sigma^{-1/2} \Exp \cL  ( \Sigma^{1/2} h_1) \right] \\
& \geq 
\Pr \left[ \sup_{0 \leq s \leq 1} w(s) \geq 2 (1+\alpha) \sqrt{2}    \right] ,\end{align*}
using the fact that $\lambda_\Sigma \geq \frac{1}{2} \trace \Sigma$ and the upper bound
in \eqref{EL-bounds}. 
Applying \eqref{eq:var-bound} to $X = \cL  ( \Sigma^{1/2} h_1) \geq 0$ gives, for $\alpha >0$,
\begin{align*}
\Var \cL (\Sigma^{1/2} h_1 ) & \geq   \alpha^2  ( \Exp \cL  ( \Sigma^{1/2} h_1) )^2 \Pr \left[ \sup_{0 \leq s \leq 1} w(s) \geq 2 (1+\alpha) \sqrt{2}    \right] \\
& \geq \frac{32}{\pi}
\alpha^2 \left( \trace \Sigma \right) \left( 1 - \Phi ( 2 (1+\alpha) \sqrt{2}  ) \right)
,\end{align*}
using the lower bound in \eqref{EL-bounds} and the fact that
$\Pr [ \sup_{0 \leq s \leq 1 } w(s) \geq r ] = 2 \Pr [ w(1) \geq r ] = 2 ( 1 -\Phi (r))$ for $r >0$, which is
a consequence of the reflection principle. Numerical curve sketching suggests that $\alpha = 1/5$ is close to optimal; this choice of $\alpha$ gives,
using \eqref{eq:gauss-bound},
\[ \Var \cL (\Sigma^{1/2} h_1 ) \geq \frac{32}{25 \pi} \left( \trace \Sigma \right) \left( 1 - \Phi (  12 \sqrt{2} /5 ) \right)
\geq  \frac{263}{1080} \pi^{-3/2} \left( \trace \Sigma \right) \exp \left\{ -\frac{144}{25} \right\} ,\]
which is the lower bound in \eqref{eq:u-bounds}.
We get a sharper result when $\Sigma = I$ and $\cL ( h_1 ) = \ell_1$, since we know $\Exp \ell_1 = \sqrt{ 8 \pi}$ explicitly.
Then, similarly to above, we get 
\[ \Var \ell_1 \geq 8 \pi \alpha^2 \Pr 
 \left[ \sup_{0 \leq s \leq 1} w(s) \geq (1+\alpha) \sqrt{2 \pi}  \right] , \text{ for } \alpha >0, \]
which at $\alpha = 1/4$ yields the stated lower bound.
\end{proof}

\pagestyle{myheadings} \markright{\sc Chapter 6}

\chapter{Results on area of the convex hull}
\label{chapter6}

\section{Overview}

The aims of the present chapter are to provide first and second-order information for $A_n$ in both the cases $\mu=0$ and $\mu \neq 0$.
We start by some simulations. We considered the same form of random walk as in Section \ref{sec:5.1}.

For the expected area, the simulations (see Figure \ref{fig:exparea}) are consistent with Theorem \ref{prop:EA-zero} and Theorem \ref{prop:EA-drift}. 
In the case of $\mu = \0$, Theorem \ref{prop:EA-zero} implies: 
$\lim_{n \to \infty} n^{-1} \Exp A_n = \frac{\pi}{2} \sqrt{\det \Sigma} = 0.785$. 
In the case of $\mu \neq 0$, Theorem \ref{prop:EA-drift} takes the form: 
$\lim_{n \to \infty} n^{-3/2} \Exp A_n = \frac{1}{3} \| \mu \|  \sqrt{2\pi \sperp} = 0.236$ or $0.425$. 

\begin{figure}[h!]
  \centering
	\includegraphics[width=0.32\textwidth]{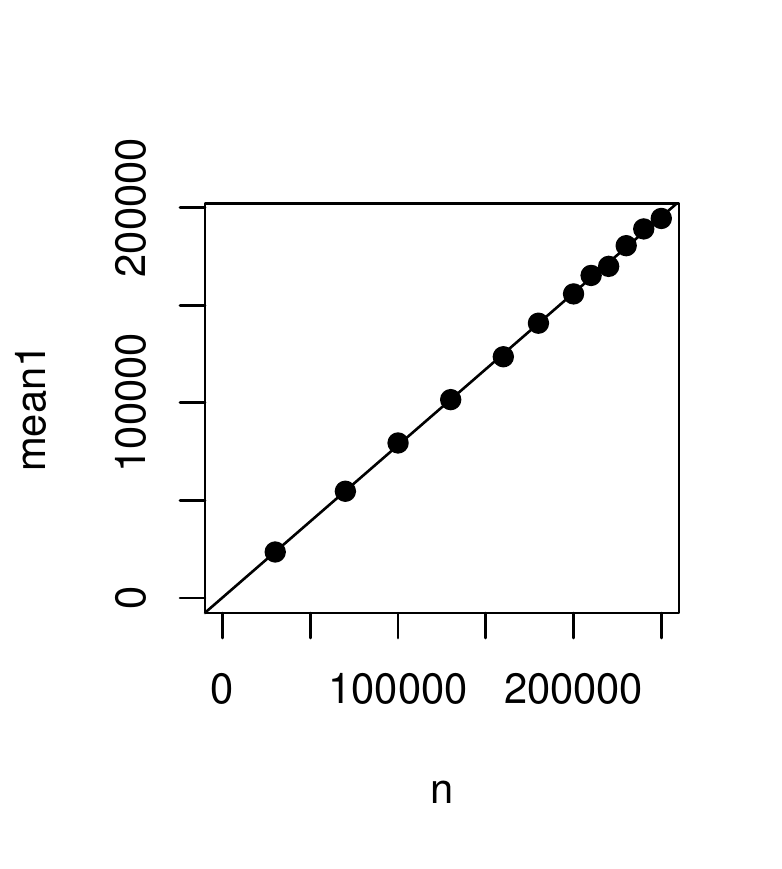} 
	\includegraphics[width=0.32\textwidth]{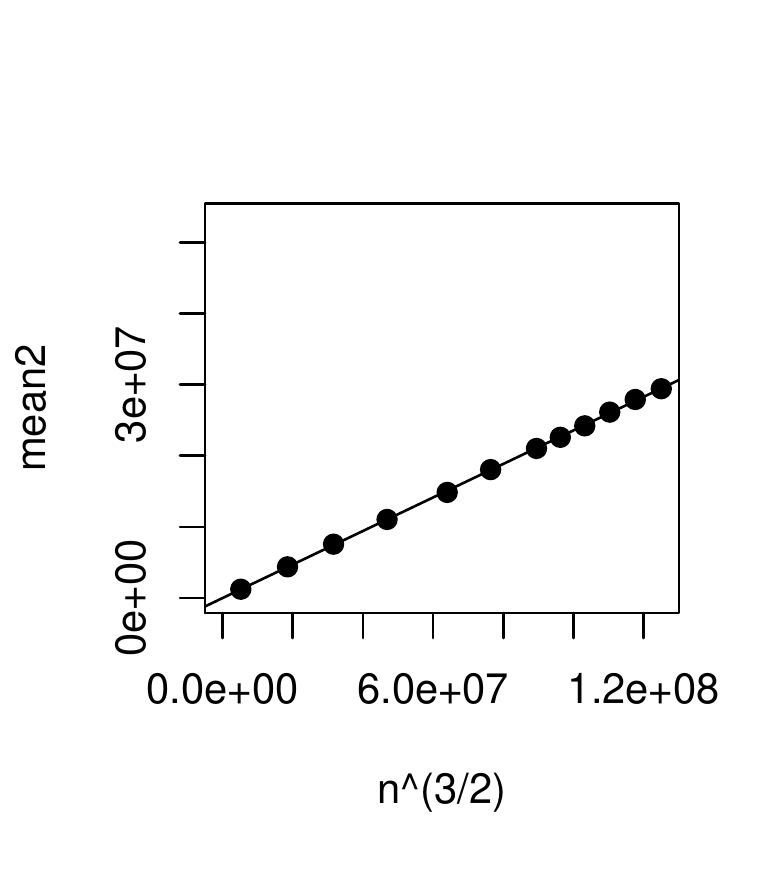} 
	\includegraphics[width=0.32\textwidth]{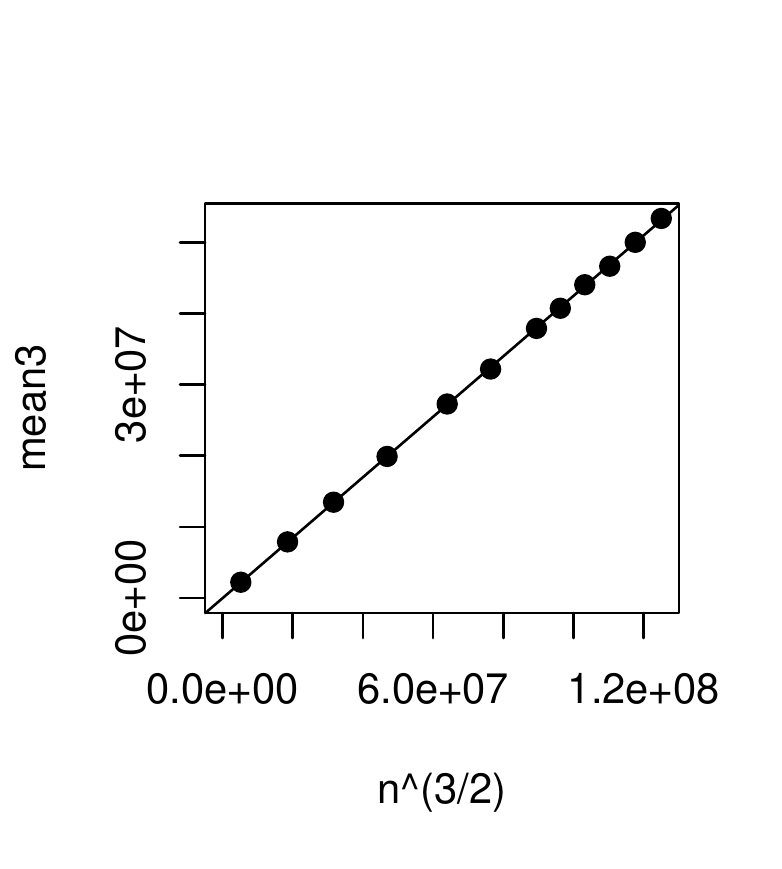}
  \caption{Plots of $y = \Exp[A_n]$ estimates against  $x =$ (left to right) $n$, $n^{3/2}$, $n^{3/2}$ for
about $25$ values of $n$ in the range $10^2$ to $2.5 \times 10^5$ for 3 examples
with $\|\mu\| =$ (left to right) $0$, $0.4$, $0.72$. Each point is
estimated from $10^3$ repeated simulations. 
Also plotted are straight lines $y = 0.781 x$ (leftmost plot),
$y=0.236x$ (middle plot) and $y= 0.425x$ (rightmost plot).} \label{fig:exparea}
\end{figure}

For the variance of area, Proposition \ref{prop:var-limit-zero} and \ref{prop:var-limit-drift v+} show that the 
limits for variance exist in both zero and non-zero drift cases. 
For example,  we will show that
\begin{alignat}{1}
\label{eq:two_vars10}
\text{if } \mu \neq 0: ~~ &   \lim_{n \to \infty} n^{-3} \Var A_n   = v_+ \| \mu \|^2 \sperp ; \nonumber\\
\text{if } \mu = 0: ~~  &    \lim_{n \to \infty} n^{-2} \Var A_n   = v_0 \det \Sigma  
 ,\end{alignat}
where $v_0$ and $v_+$ are finite and positive, and these quantities are in fact variances associated with convex hulls of Brownian scaling limits for the walk.
These scaling limits provide the basis of the analysis in this chapter; the methods are necessarily quite different
from those in \cite{wx}. 
For the constants  $v_0$ and $v_+$, Table~\ref{table2} gives 
numerical evaluations of rigorous bounds that we prove in Proposition~\ref{prop:var_bounds v0 v+} below, plus estimates from simulations. 
\begin{table}[!h]
\center
\def\arraystretch{1.4}
\begin{tabular}{c|ccc}
        &   lower bound   & simulation estimate  & upper bound \\
\hline
  $v_0$      &  $8.15  \times 10^{-7}$ &  0.30   &  5.22   \\
  $v_+$      &  $1.44  \times 10^{-6}$ &  0.019  &  2.08   
	\end{tabular}
\caption{Each of the simulation estimates is
 based on $10^5$ instances of a walk of length $n = 10^5$. The final decimal digit in each of the numerical upper (lower)
bounds has been rounded up (down).}
\label{table2}
\end{table}
The variance limits we deduced in the simulations 
(see Figure \ref{fig:vararea}) are indeed lie in the variance bounds given by Proposition \ref{prop:var_bounds v0 v+}.

\begin{figure}[h!]
  \centering
	\includegraphics[width=0.31\textwidth]{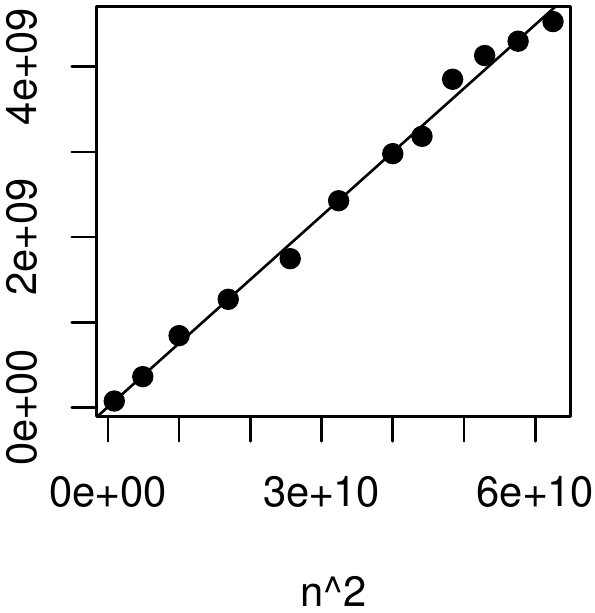} \,
	\includegraphics[width=0.31\textwidth]{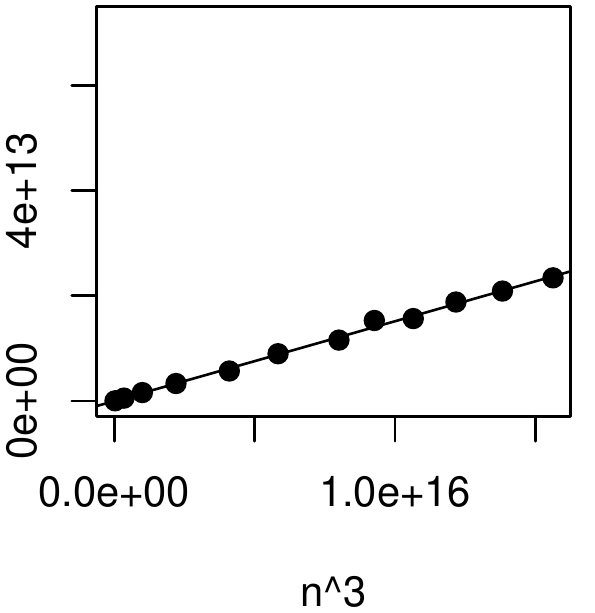} \,
	\includegraphics[width=0.31\textwidth]{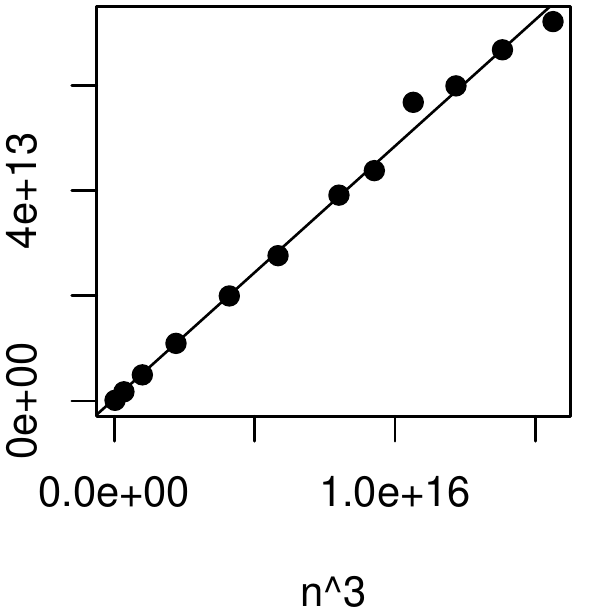}
  \caption{Plots of $y = \Var[A_n]$ estimates against $x =$ (left to right) $n^2$, $n^{3}$, $n^{3}$ for 
the three examples described in Figure \ref{fig:exparea}. 
Also plotted are straight lines $y = 0.0748 x$ (leftmost plot),
$y=0.00152x$ (middle plot) and $y= 0.00480x$ (rightmost plot).} \label{fig:vararea}
\end{figure}

\section{Upper bound for the expected value and variance for the area}


\begin{proposition} 
\label{lem:A_moments}
Let $p \geq 1$. Suppose that $\Exp [ \| Z_1 \|^{2p} ] < \infty$. 
\begin{itemize}
\item[(i)] We have $\Exp [ A_n^p ] = O ( n^{3p/2} )$. Suppose in addition $\Exp(\|Z_1 \|^{4p}) < \infty$, then $\Var (A_n^p)=O(n^{3p})$.
\item[(ii)] Moreover, if $\mu =0$ we have $\Exp [ A_n^p ] = O (n^{p} )$. Suppose in addition $\Exp(\|Z_1 \|^{4p}) < \infty$, then $\Var (A_n^p)=O(n^{2p})$.
\end{itemize}
\end{proposition}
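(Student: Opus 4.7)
The plan is to reduce both statements to bounds on one-dimensional extrema of the random walk, for which Lemma~\ref{lem:walk_moments} already provides exactly the required moment estimates. The key geometric observation is that the convex hull $\hull \cS_n$ sits inside simple bounding sets (a disk, or a rectangle with sides aligned to $\hat\mu$ and $\hat\mu_\perp$) whose areas factorise into products of one-dimensional extremal quantities.

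For the zero-drift expectation in part~(ii), I would use the crude bound that $\hull \cS_n$ is contained in the closed Euclidean disk of radius $R_n := \max_{0 \leq m \leq n} \|S_m\|$ centred at $\0$ (since $\0 = S_0 \in \hull \cS_n$), so that $A_n \leq \pi R_n^2$ and hence $A_n^p \leq \pi^p R_n^{2p}$. Lemma~\ref{lem:walk_moments}(ii) applied at exponent $2p$ (valid since $\Exp [ \| Z_1 \|^{2p} ] < \infty$) then yields $\Exp [ R_n^{2p} ] = O ( n^{1 \vee p} ) = O(n^p)$ for $p \geq 1$, hence $\Exp [ A_n^p ] = O(n^p)$.

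For the general case in part~(i) with $\mu \neq 0$, I would instead bound $\hull \cS_n$ by a rectangle with sides parallel to $\hat\mu$ and $\hat\mu_\perp$ of lengths at most $2 W_n$ and $2 H_n$, where
\[ W_n := \max_{0 \leq m \leq n} | S_m \cdot \hat\mu |, \qquad H_n := \max_{0 \leq m \leq n} | S_m \cdot \hat\mu_\perp |, \]
giving $A_n \leq 4 W_n H_n$ and hence, by the Cauchy--Schwarz inequality (Lemma~\ref{Cauchy-Schwarz ineq.}),
\[ \Exp [ A_n^p ] \leq 4^p \Exp [ W_n^p H_n^p ] \leq 4^p \sqrt{ \Exp [ W_n^{2p} ] \cdot \Exp [ H_n^{2p} ] } . \]
Lemma~\ref{lem:walk_moments}(iii) with exponent $2p$ gives $\Exp [ W_n^{2p} ] = O ( n^{2p} )$, while part~(i) of the same lemma applied to $e = \hat\mu_\perp$ (using $\hat\mu_\perp \cdot \mu = 0$ and $2p \geq 2$) gives $\Exp [ H_n^{2p} ] = O (n^{1 \vee p} ) = O(n^p)$. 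Combining yields $\Exp [ A_n^p ] = O ( n^{p} \cdot n^{p/2} ) = O ( n^{3p/2} )$. The case $\mu = 0$ is absorbed by part~(ii), since $n^p \leq n^{3p/2}$.

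The variance bounds follow at once from $\Var (A_n^p) \leq \Exp [ A_n^{2p} ]$: applying the two expectation bounds just established, but with $p$ replaced by $2p$, gives the claims, and the extra hypothesis $\Exp [ \| Z_1 \|^{4p} ] < \infty$ is exactly what is needed to invoke those bounds at exponent $2p$. There is no real obstacle in this argument; the only routine check is that the moment hypotheses of Lemma~\ref{lem:walk_moments} match up with those assumed here at each invocation, which they do.
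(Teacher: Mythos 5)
Your proposal is correct and follows essentially the same route as the paper's own proof: the disk bound $A_n \leq \pi R_n^2$ for the zero-drift case, the rectangle bound $A_n \leq 4 W_n H_n$ with Cauchy--Schwarz for the drift case, the moment estimates of Lemma~\ref{lem:walk_moments}(i)--(iii) at exponent $2p$, and the reduction of the variance claims to $\Exp[A_n^{2p}]$ via the same bounds with $p$ replaced by $2p$. The exponent bookkeeping ($O(n^{2p})^{1/2} \cdot O(n^{p})^{1/2} = O(n^{3p/2})$) matches the paper exactly.
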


\begin{proof}
For part (i),  it suffices to suppose $\mu \neq 0$. Then, bounding the convex hull by a rectangle,
\begin{align*}
A_n
& \leq 
\left(\max_{0\leq m \leq n} S_m \cdot \hat \mu - \min_{0\leq m \leq n} S_m \cdot \hat \mu \right) \left(\max_{0\leq m \leq n} S_m \cdot \hat \mu_\perp - \min_{0\leq m \leq n} S_m \cdot \hat \mu_\perp \right)\\
& \leq 
4 \left(\max_{0\leq m \leq n} |S_m \cdot \hat \mu| \right) \left(\max_{0\leq m \leq n} |S_m \cdot \hat \mu_\perp| \right) .
\end{align*}
Hence, by the Cauchy--Schwarz inequality, we have
$$ \Exp [ A_n^p ] \leq 4^p \left( \Exp \left[ \max_{0\leq m \leq n} |S_m \cdot \hat \mu|^{2p} \right] \right)^{1/2} 
\left(\Exp \left[ \max_{0\leq m \leq n} |S_m \cdot \hat \mu_\perp|^{2p} \right] \right)^{1/2} .$$
Now an application of Proposition \ref{lem:walk_moments}(i) and (iii) gives $\Exp [ A_n^p ] = O ( n^{3p/2} )$.

Suppose in addition $\Exp(\|Z_1 \|^{4p}) < \infty$. By the same process as above, we have
$$A_n^{2p} \leq 4^{2p} \left(\max_{0\leq m \leq n} |S_m \cdot \hat \mu|^{2p} \right) \left(\max_{0\leq m \leq n} |S_m \cdot \hat \mu_\perp|^{2p} \right) ,$$
and $ \Exp (A_n^{2p}) = O(n^{3p})$.
Hence, $\Var (A_n^p) = \Exp \left(A_n^{2p}\right) - \left(\Exp A_n^p \right)^{2} = O(n^{3p})$.

For part (ii), $\mu = 0$.
Since the convex $\text{hull}(S_0, \dots, S_n)$ is contained in the disk of radius $\max_{0 \leq m \leq n} \|S_m\|$ and centre $0$, $A_n^p \leq \pi^p (\max_{0 \leq m \leq n} \|S_m\|^{2p})$ a.s. 
Proposition \ref{lem:walk_moments}(ii) then yields $\Exp [ A_n^p ] = O (n^{p})$.

Suppose in addition $\Exp(\|Z_1 \|^{4p}) < \infty$. By the same process as above, we have $\Exp [ A_n^{2p} ] = O (n^{2p})$. Therefore, $\Var (A_n^p)=O(n^{2p})$.
\end{proof}

\begin{remark}
 We will show below in Theorem \ref{prop:EA-drift} $n^{-3/2}\Exp A_n$ has a limit in the non-zero drift case and, in Proposition \ref{prop:EA-zero}, $n^{-1} \Exp A_n$ has a limit in the zero drift case.
\end{remark}

\section{Asymptotics for the expected area}

Let $T(\bu,\bv)$ ($\bu,\bv \in \R^2$) be the area of a triangle with sides of $\bu,\bv$ and $\bu + \bv$. Then, 
$$T(\bu,\bv)= \frac{1}{2} \sqrt{\|\bu\|^2 \|\bv\|^2 - (\bu \cdot \bv)^2} .$$
For $\alpha, \beta >0$, $T(\alpha \bu, \beta \bv) = \alpha \beta T(\bu,\bv)$.

\begin{lemma} \label{CLT for ET}
Suppose $\Exp (\|Z_1\|^2)< \infty$, $\Exp Z_1 =\0$ and $\Exp (Z_1^{T} Z_1) = \Sigma$. Then as $m \to \infty$ and $(k-m) \to \infty$,
$$\frac{\Exp T(S_m, S_k-S_m)}{\sqrt{m(k-m)}} \to \Exp T(Y_1,Y_2) ,$$
where $Y_1$, $Y_2$ are iid. rvs. $Y_1, Y_2 \sim \NN(\0, \Sigma)$.
\end{lemma}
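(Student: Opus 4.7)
The plan is to combine the multivariate central limit theorem (Lemma \ref{Mult CLT}) with the continuous mapping theorem and a uniform integrability argument. First, observe the scaling identity $T(\alpha \bu, \beta \bv) = \alpha \beta\, T(\bu,\bv)$ for $\alpha,\beta > 0$ that was noted just before the statement, which rewrites the quantity of interest as
\[
\frac{T(S_m, S_k - S_m)}{\sqrt{m(k-m)}} = T\!\left(\frac{S_m}{\sqrt{m}},\, \frac{S_k-S_m}{\sqrt{k-m}}\right).
\]
Since $S_m$ and $S_k - S_m = \sum_{i=m+1}^{k} Z_i$ are independent sums of i.i.d.\ mean-zero square-integrable increments, the multivariate CLT (Lemma \ref{Mult CLT}) gives $m^{-1/2} S_m \tod Y_1$ and $(k-m)^{-1/2}(S_k - S_m) \tod Y_2$, and independence lifts this to joint convergence
\[
\bigl(m^{-1/2} S_m,\, (k-m)^{-1/2}(S_k - S_m)\bigr) \tod (Y_1, Y_2)
\]
in $\R^2 \times \R^2$, as $m \to \infty$ and $k - m \to \infty$.

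Next, I would invoke continuity of the functional $T : \R^2 \times \R^2 \to \RP$. The integrand $\|\bu\|^2 \|\bv\|^2 - (\bu \cdot \bv)^2$ is non-negative by Cauchy--Schwarz, and the square-root of a continuous non-negative function is continuous, so $T$ is continuous everywhere. The continuous mapping theorem (Lemma \ref{continuous mapping}) then yields
\[
T\!\left(\frac{S_m}{\sqrt{m}},\, \frac{S_k-S_m}{\sqrt{k-m}}\right) \tod T(Y_1, Y_2).
\]

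The main (though routine) obstacle is upgrading this distributional convergence to convergence of expectations. For this, I would use the elementary bound $T(\bu,\bv) \leq \tfrac{1}{2}\|\bu\|\|\bv\|$, giving
\[
\Exp\!\left[\left(\frac{T(S_m, S_k-S_m)}{\sqrt{m(k-m)}}\right)^{\!2}\right] \leq \tfrac{1}{4}\, \Exp\!\left[\frac{\|S_m\|^2}{m}\right] \Exp\!\left[\frac{\|S_k - S_m\|^2}{k-m}\right] = \tfrac{1}{4}\sigma^4,
\]
using independence of $S_m$ and $S_k - S_m$ together with $\Exp[\|S_m\|^2]=m\sigma^2$ from the computation in the proof of Lemma \ref{ES 0 drift}. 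This uniform $L^2$ bound implies uniform integrability of the family $\{T(S_m, S_k - S_m)/\sqrt{m(k-m)}\}$, so Lemma \ref{convergence of means} gives convergence of means and completes the proof.
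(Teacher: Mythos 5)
Your proposal is correct and follows essentially the same route as the paper: joint convergence of $\bigl(m^{-1/2}S_m,(k-m)^{-1/2}(S_k-S_m)\bigr)$ via the CLT and independence, the continuous mapping theorem applied to the continuous functional $T$, and a uniform second-moment bound (via $T(\bu,\bv)\leq\tfrac12\|\bu\|\|\bv\|$ and independence) to get uniform integrability and hence convergence of means. If anything, your write-up is slightly cleaner than the paper's, which states the joint weak limit with a small typographical slip.
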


\begin{proof}
By Central Limit Theorem in $\R^2$ (see \cite{durrett}), $n^{-1/2} S_n \overset{d.}{\to} \NN(\0, \Sigma)$.
Since $S_m$ and $S_k - S_m$ are independent, as $m$ and $k-m \to \infty$, 
$$\left( \frac{S_m}{\sqrt{m}}, \frac{S_k-S_m}{\sqrt{k-m}} \right) \overset{d.}{\to} T(Y_1,Y_2) .$$
Using the fact $T$ is continuous, 
$$\frac{T(S_m, S_k-S_m)}{\sqrt{m(k-m)}} = T\left( \frac{S_m}{\sqrt{m}}, \frac{S_k-S_m}{\sqrt{k-m}} \right) \overset{d.}{\to} T(Y_1,Y_2) .$$

Also, by Lemma \ref{ES 0 drift},
\begin{align*}
\Exp \left(\left[ \frac{\Exp T(S_m, S_k-S_m)}{\sqrt{m(k-m)}} \right]^2 \right) 
& \leq \frac{\Exp(\|S_m\|^2 \|S_k-S_m\|^2)}{m(k-m)} \\
& \leq \frac{\Exp \|S_m\|^2}{m} \cdot\frac{\Exp \|S_k-S_m\|^2}{k-m} < \infty.
\end{align*}
That means $m^{-1/2}(k-m)^{-1/2}T(S_m,S_k - S_m)$ is uniformly integrable over $(m, k)$ with $m \geq 1$, $k \geq m+1$. So the result follows.
\end{proof}

We state the following result without proof. It is a higher dimensional analogue of S--W formula \eqref{SW formula}. See Barndorff--Nielson and Baxter \cite{baxter} for the proof.

\begin{lemma}[Barndorff Nielsen \& Baxter]
\begin{equation} \label{E A_n formula}
\Exp(A_n)= \sum_{k=2}^n \sum_{m=1}^{k-1} \frac{\Exp\big[ T(S_m,S_k-S_m)\big]}{m(k-m)} .
\end{equation}
\end{lemma}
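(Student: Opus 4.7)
The plan is to mirror the three-step derivation of the Spitzer--Widom formula in Section~\ref{sec: proof of SW}, replacing the one-dimensional Cauchy integral by a planar shoelace identity and the single Kac--Hunt lemma by its pair-wise analogue.

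First I would represent $A_n$ via the shoelace formula for the polygon $\hull \cS_n$: listing its extreme vertices $S_{j_1}, \ldots, S_{j_N}$ in CCW order,
\[ 2 A_n \; = \; \sum_{i=1}^{N} S_{j_i} \times S_{j_{i+1}} \; = \; \sum_{m,k} \1\{(S_m,S_k)\ \text{is a CCW hull edge}\}\, (S_m \times S_k), \]
where $\times$ is the scalar cross product on $\R^2$. Since $S_0 = \0$, symmetrising orientation and using $T(S_m, S_k - S_m) = \tfrac{1}{2}|S_m \times S_k|$ gives
\[ A_n \; = \; \sum_{0 \leq m < k \leq n} \1\{\{S_m,S_k\}\ \text{is an edge of } \hull\cS_n\}\, T(S_m, S_k - S_m). \]

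The second step is to localise each edge indicator to the sub-walk up to time $k$ (since adding points after $S_k$ can only destroy, never create, the edge $\{S_m,S_k\}$), and then to decouple the first $m$ increments from the last $k-m$ increments. After conditioning on the two independent increment blocks $(Z_1,\ldots,Z_m)$ and $(Z_{m+1},\ldots,Z_k)$ and performing a Fubini interchange with the integration over the outer-normal direction $\theta$ of the chord $[S_m,S_k]$ (analogous to the use of Cauchy's formula~\eqref{cauchy_} in our derivation of the S--W formula), the edge event factorises into the product of two independent one-dimensional extremal events: that $S_m\cdot\be_\theta$ is the maximum of $S_0\cdot\be_\theta,\ldots,S_m\cdot\be_\theta$, and that $(S_k-S_m)\cdot\be_\theta$ is the maximum of $0,(S_{m+1}-S_m)\cdot\be_\theta,\ldots,(S_k-S_m)\cdot\be_\theta$.

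Third, apply Kac's cyclic-permutation lemma (Lemma~\ref{kac}) twice \emph{independently}, once to the first $m$ increments and once to the last $k-m$, producing the factors $1/m$ and $1/(k-m)$ respectively and whose product is the weight $\tfrac{1}{m(k-m)}$. Because $T(S_m, S_k - S_m)$ depends only on the sums $S_m$ and $S_k-S_m$ (not on the order of the increments in either block), it is invariant under these reorderings and emerges from the averaging as the unconditional expected triangle area $\Exp T(S_m, S_k - S_m)$. Summing over $1 \leq m < k \leq n$ then yields~\eqref{E A_n formula}. The main obstacle is the decoupling in the second step: the supporting direction $\theta$ is itself a random function of the whole configuration, so for fixed $\theta$ the two sub-walk extremal events are not independent. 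Rigorously achieving the required independence requires a careful Fubini-style reversal of order between the $\theta$-integral and the $(m,k)$-sum, or equivalently a two-block combinatorial identity generalising Lemma~\ref{kac} in which one averages over pairs of cyclic permutations acting separately on the two increment blocks; this is the technical heart of the Barndorff--Nielsen--Baxter argument~\cite{baxter}, which is why we elect to quote the result rather than reproduce it in full.
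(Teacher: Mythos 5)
The thesis does not actually prove this lemma: it is stated without proof, with the reader referred to Barndorff--Nielsen and Baxter (the text cites \cite{baxter}, though the relevant source is \cite{bnb}), so there is no in-paper argument to compare yours against. Your proposal, which sketches a strategy and then explicitly defers ``the technical heart'' to the same reference, ends up in the same place --- quoting the result --- and your overall picture of how the combinatorial proof goes is right in spirit: triangulate $\hull \cS_n$ from the origin into triangles of area $T(S_m, S_k-S_m)$, and extract the factors $1/m$ and $1/(k-m)$ by running a cyclic-permutation argument in the spirit of Lemma \ref{kac} separately on the two increment blocks, using the fact that $T(S_m,S_k-S_m)$ is invariant under within-block reorderings.

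One step of your sketch, however, cannot be correct as written, and it is not the step you deferred. You present $\tfrac{1}{m(k-m)}$ as arising from the (permutation-averaged) probability that $\{S_m,S_k\}$ is an edge of the hull of \emph{all} $n+1$ points, remarking that later points ``can only destroy, never create, the edge.'' But any such edge probability must depend on $n$ --- adding points makes it ever more likely the chord $[S_m,S_k]$ is swallowed --- whereas the weight $\tfrac{1}{m(k-m)}$ does not; and your two block-extremal events control only $S_0,\dots,S_k$, saying nothing about $S_{k+1},\dots,S_n$. Concretely, for $n=3$, $m=1$, $k=2$ the claimed weight is $1$, yet with $Z_1=(1,0)$, $Z_2=(0,1)$, $Z_3=(1,0)$ the segment $[S_1,S_2]$ is not an edge of the hull, and the three blocks are singletons so no permutation averaging is available to rescue the count. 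The identity \eqref{E A_n formula} holds only in expectation over the i.i.d.\ law, not edge-by-edge. A correct organisation, exactly parallel to the paper's derivation of \eqref{SW formula} from the telescoping identity for $M_k^\sigma - M_{k-1}^\sigma$ in Lemma \ref{kac}, is to telescope $\Exp A_n = \sum_{k=2}^n \Exp[A_k - A_{k-1}]$ and prove, for each fixed $k$, that $\Exp[A_k-A_{k-1}] = \sum_{m=1}^{k-1}\tfrac{1}{m(k-m)}\Exp T(S_m,S_k-S_m)$; the increment $A_k-A_{k-1}$ depends only on $Z_1,\dots,Z_k$, so the two-block permutation argument can be run on those increments alone and no later points interfere. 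With that repair your outline matches the Barndorff--Nielsen--Baxter argument; as it stands, the lemma is being quoted rather than proved, both in your write-up and in the thesis.
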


\begin{lemma} \label{single sum limit}
$$ \lim_{k \to \infty} \sum_{m=1}^{k-1} \frac{1}{m^{1/2}(k-m)^{1/2}} = \pi .$$
\end{lemma}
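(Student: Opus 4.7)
The plan is to recognize the sum as a Riemann sum for the integral
\[ \int_0^1 \frac{\ud x}{\sqrt{x(1-x)}} = \pi ,\]
where the value $\pi$ is immediate from the substitution $x = \sin^2 \phi$ (which gives $\ud x / \sqrt{x(1-x)} = 2 \,\ud \phi$ and the range $[0, \pi/2]$), or equivalently from $B(1/2,1/2) = \pi$. Indeed, factoring $1/k$,
\[ \sum_{m=1}^{k-1} \frac{1}{\sqrt{m(k-m)}} = \frac{1}{k} \sum_{m=1}^{k-1} f ( m / k ), \quad \text{where } f(x) := \frac{1}{\sqrt{x(1-x)}} . \]

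The main obstacle is that $f$ has integrable singularities at $0$ and $1$, so the standard Riemann-sum convergence result (which requires a continuous integrand on a compact interval) does not apply directly. I will handle this by a truncation argument: fix $\eps \in (0, 1/2)$ and split the sum at the indices $m$ with $m/k \in [\eps, 1-\eps]$ versus $m/k \in [0,\eps) \cup (1-\eps, 1]$. On the middle range, $f$ is continuous and bounded, so classical Riemann-sum convergence gives
\[ \frac{1}{k} \sum_{m : m/k \in [\eps, 1-\eps]} f(m/k) \longrightarrow \int_{\eps}^{1-\eps} f(x)\, \ud x, \text{ as } k \to \infty. \]

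For the two tail regions I will show the contributions are small uniformly in $k$. Using $(k-m) \geq k(1-\eps)$ when $m \leq \eps k$, together with the elementary estimate $\sum_{m=1}^{N} m^{-1/2} \leq 2\sqrt{N}$, one obtains
\[ \sum_{m=1}^{\lfloor \eps k \rfloor} \frac{1}{\sqrt{m(k-m)}} \leq \frac{1}{\sqrt{k(1-\eps)}} \sum_{m=1}^{\lfloor \eps k \rfloor} \frac{1}{\sqrt{m}} \leq \frac{2 \sqrt{\eps}}{\sqrt{1 - \eps}} ,\]
and a symmetric bound holds for the tail $m \geq k - \lfloor \eps k \rfloor$. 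Similarly, $\int_0^\eps f(x)\,\ud x + \int_{1-\eps}^1 f(x)\,\ud x \leq 4\sqrt{\eps}/\sqrt{1-\eps}$. Combining these with the middle-range convergence and the fact that $\int_0^1 f(x)\,\ud x = \pi$,
\[ \limsup_{k \to \infty} \left| \sum_{m=1}^{k-1} \frac{1}{\sqrt{m(k-m)}} - \pi \right| \leq C \sqrt{\eps} ,\]
for some absolute constant $C$. Letting $\eps \downarrow 0$ yields the claim.
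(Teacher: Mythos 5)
Your proof is correct, and it takes a genuinely different route from the one in the thesis. The thesis exploits the monotonicity of $m \mapsto [m(k-m)]^{-1/2}$ on each half of $\{1,\dots,k-1\}$ (decreasing up to $k/2$, increasing after) to sandwich the whole sum between two integrals of $[u(1-u)]^{-1/2}$ over intervals shrinking to $[0,1]$; the singular endpoints are handled automatically because the comparison integrals stop short of $0$ and $1$ by $O(1/k)$, and no auxiliary parameter is needed. You instead split off $\eps$-neighbourhoods of the singularities, apply the standard Riemann-sum convergence for the continuous integrand on $[\eps,1-\eps]$, and control both the discrete tails (via $(k-m)\geq k(1-\eps)$ and $\sum_{m\leq N} m^{-1/2}\leq 2\sqrt{N}$) and the integral tails by $O(\sqrt{\eps})$ uniformly in $k$, before letting $\eps\downarrow 0$. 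Your estimates all check out, and the $\limsup \leq C\sqrt{\eps}$ conclusion is valid. The trade-off is that the monotone-comparison argument is slightly shorter and gives a clean two-sided bound for every finite $k$, whereas your truncation argument is more robust: it does not rely on any monotonicity of the summand and would transfer directly to any Riemann sum whose integrand has integrable endpoint singularities with controllable tails. Both routes evaluate the limiting integral as $B(1/2,1/2)=\pi$; your $x=\sin^2\phi$ substitution is an equally good way to see this.
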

\begin{proof}
Let $f(m,k)=m^{-1/2}(k-m)^{-1/2}$. For any $\delta \in (0,1)$, we have $f(m,k) \leq f(m-\delta,k)$ if $m \leq k/2$ and $f(m,k) \geq f(m-\delta,k)$ if $m \geq k/2$.
Consider the sum as two parts, 
$$\sum_{m=1}^{k-1} f(m,k) = \left( \sum_{m=1}^{\lfloor k/2 \rfloor} + \sum_{m=\lfloor k/2 \rfloor +1}^{k-1} \right) f(m,k) .$$
Then,
\begin{align*}
 \sum_{m=1}^{k-1} f(m,k)
& \geq \int_{1}^{\lfloor k/2 \rfloor} f(m,k)\,\ud m + \int_{\lfloor k/2 \rfloor +1}^{k-1} f(m-1,k)\,\ud m, \\
& \qquad\qquad \text{ by letting } u=\frac{m}{k} \text{ and } v=\frac{m-1}{k}, \\
& = \int_{1/k}^{\lfloor \frac{k}{2} \rfloor /k} \frac{1}{\sqrt{u(1-u)}}\,\ud u + \int_{\lfloor \frac{k}{2} \rfloor /k}^{1-\frac{2}{k}} \frac{1}{\sqrt{v(1-v)}}\,\ud v \\
& = \int_{1/k}^{1-2/k} \frac{1}{\sqrt{u(1-u)}}\,\ud u .
\end{align*}
Also,
\begin{align*}
 \sum_{m=1}^{k-1} f(m,k)
& \leq \int_{1}^{\lfloor k/2 \rfloor} f(m-1,k)\,\ud m + \int_{\lfloor k/2 \rfloor +1}^{k-1} f(m,k)\,\ud m \\
& = \int_{0}^{\lfloor \frac{k}{2} \rfloor /k - 1/k} \frac{1}{\sqrt{u(1-u)}}\,\ud u + \int_{\lfloor \frac{k}{2} \rfloor /k +1/k}^{1-\frac{1}{k}} \frac{1}{\sqrt{v(1-v)}}\,\ud v \\
& \leq \int_{0}^{1-1/k} \frac{1}{\sqrt{u(1-u)}}\,\ud u .
\end{align*}
Therefore, 
\begin{equation}
\label{eq:pi-sum}
\lim_{k \to \infty}\sum_{m=1}^{k-1} f(m,k) = \int_0^1 [u(1-u)]^{-1/2}\,\ud u = B\left(\frac{1}{2},\frac{1}{2}\right) = \Gamma\left(\frac{1}{2}\right)^2 = \pi ,\end{equation}
where $B(\blob, \blob)$ is the Beta function and $\Gamma(\blob)$ is the Gamma function.
\end{proof}

\begin{lemma} \label{double sum limit}
$$\lim_{n \to \infty} \frac{1}{n} \sum_{k=2}^n \sum_{m=1}^{k-1} \frac{1}{m^{1/2}(k-m)^{1/2}} = \pi .$$
\end{lemma}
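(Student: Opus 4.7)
The plan is to recognize the double sum as a Cesàro average of the single sums handled by the previous lemma, and then quote Lemma \ref{convergence of Cesaro mean}. Specifically, set
\[ a_k := \sum_{m=1}^{k-1} \frac{1}{m^{1/2}(k-m)^{1/2}}, \quad k \geq 2, \]
and (by convention) $a_1 := 0$, so that
\[ \frac{1}{n} \sum_{k=2}^n \sum_{m=1}^{k-1} \frac{1}{m^{1/2}(k-m)^{1/2}} = \frac{1}{n} \sum_{k=1}^n a_k. \]

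First I would invoke Lemma \ref{single sum limit} to conclude that $a_k \to \pi$ as $k \to \infty$. Then the result would follow immediately from Lemma \ref{convergence of Cesaro mean} with $y_k = a_k$, which gives $n^{-1} \sum_{k=1}^n a_k \to \pi$ as $n \to \infty$. The negligible difference between summing from $k=1$ or $k=2$ contributes an error of order $1/n$ and is absorbed harmlessly.

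The main obstacle is essentially nothing: the entire proof is an invocation of two earlier lemmas, combined with the cosmetic observation that adding or removing a bounded number of initial terms does not affect the Cesàro limit. There is no new estimate to carry out; the substance was already in the evaluation of the Beta-integral $B(1/2,1/2) = \pi$ performed in Lemma \ref{single sum limit} (cf.\ equation \eqref{eq:pi-sum}).
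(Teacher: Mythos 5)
Your proposal is correct and coincides exactly with the paper's own proof, which likewise combines Lemma \ref{single sum limit} with the Cesàro-mean convergence of Lemma \ref{convergence of Cesaro mean}. The only addition is your explicit remark that the missing $k=1$ term is harmless, which the paper leaves implicit.
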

\begin{proof}
The result follows from Lemma \ref{convergence of Cesaro mean} and Lemma \ref{single sum limit}.
\end{proof}

\begin{proposition} \label{lim E A/n with 0 drift}
Suppose $\Exp(\|Z_1\|^2)<\infty$ and $\mu = 0$. Then, $$ \lim_{n \to \infty}\frac{\Exp A_n}{n} = \pi \Exp T(Y_1, Y_2) ,$$ where $Y_1$, $Y_2$ are iid. rvs. 
$Y_1, Y_2 \sim \NN(\0, \Sigma)$ and $\Sigma = \Exp (Z_1^T Z_1)$.
\end{proposition}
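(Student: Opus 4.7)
The plan is to start from the Barndorff--Nielsen/Baxter formula \eqref{E A_n formula}
\[
\Exp A_n = \sum_{k=2}^n \sum_{m=1}^{k-1} \frac{\Exp T(S_m, S_k-S_m)}{m(k-m)}
= \sum_{k=2}^n \sum_{m=1}^{k-1} \frac{g(m,k)}{\sqrt{m(k-m)}},
\]
where I set $g(m,k) := \Exp T(S_m, S_k-S_m)/\sqrt{m(k-m)}$. Lemma \ref{CLT for ET} identifies the pointwise limit: $g(m,k) \to L := \Exp T(Y_1, Y_2)$ as both $m \to \infty$ and $k-m \to \infty$. Lemma \ref{double sum limit} tells me the normalized sum of the weights $\frac{1}{\sqrt{m(k-m)}}$ converges to $\pi$. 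So the task is essentially a dominated/bounded argument to move the limit through the double sum.

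First I would establish a uniform bound on $g$. Since $T(\bu,\bv) \leq \tfrac{1}{2} \|\bu\| \|\bv\|$, the Cauchy--Schwarz inequality and Lemma \ref{ES 0 drift} give
\[
\Exp T(S_m, S_k - S_m) \leq \tfrac{1}{2} \bigl( \Exp \|S_m\|^2 \Exp \|S_k-S_m\|^2 \bigr)^{1/2} = \tfrac{1}{2} \sigma^2 \sqrt{m(k-m)},
\]
using independence of $S_m$ and $S_k - S_m$. Hence $g(m,k) \leq C := \tfrac{1}{2}\sigma^2$ for all $1 \leq m < k$.

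Now fix $\eps > 0$. By Lemma \ref{CLT for ET}, choose $M \in \N$ so that $|g(m,k) - L| < \eps$ whenever $m \geq M$ and $k-m \geq M$. Split the double sum at index $n$ into two parts: the \emph{bulk} $B_n := \{(m,k) : M \leq m, \, k-m \geq M, \, 2 \leq k \leq n\}$, and its complement $R_n$ inside the summation range. On $B_n$, the triangle inequality gives
\[
\left| \frac{1}{n}\sum_{(m,k) \in B_n} \frac{g(m,k)}{\sqrt{m(k-m)}} - \frac{L}{n}\sum_{(m,k) \in B_n} \frac{1}{\sqrt{m(k-m)}} \right|
\leq \frac{\eps}{n} \sum_{k=2}^n \sum_{m=1}^{k-1} \frac{1}{\sqrt{m(k-m)}},
\]
which is at most $2\pi\eps$ for $n$ large, by Lemma \ref{double sum limit}. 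By the same lemma, the normalized sum of weights over $B_n$ converges to $\pi$ (since the contribution of $R_n$ is negligible; see below). So the bulk contributes $\pi L + O(\eps)$ in the limit.

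For the remainder $R_n$, every pair has either $m < M$ or $k-m < M$. The contribution of pairs with $m < M$ is bounded using $\sum_{j=1}^{N} j^{-1/2} = O(\sqrt N)$:
\[
\frac{1}{n} \sum_{m=1}^{M-1} \sum_{k=m+1}^n \frac{C}{\sqrt{m(k-m)}} \leq \frac{C}{n} \sum_{m=1}^{M-1} \frac{1}{\sqrt m} \cdot O(\sqrt n) = O\!\left( \frac{M^{1/2}}{\sqrt n} \right),
\]
which tends to $0$ as $n \to \infty$ with $M$ fixed; the pairs with $k-m < M$ are handled identically by the symmetry $m \leftrightarrow k-m$. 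Combining the bulk estimate with the vanishing remainder, I obtain
\[
\limsup_{n\to\infty} \left| \frac{\Exp A_n}{n} - \pi L \right| \leq 2\pi\eps,
\]
and since $\eps > 0$ was arbitrary, the stated limit follows. The main delicate point is only the uniform control of the boundary terms $m < M$ or $k-m < M$, but once the Cauchy--Schwarz bound on $g$ is in hand, this is straightforward bookkeeping.
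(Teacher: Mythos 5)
Your proof is correct and follows essentially the same route as the paper: both start from the Barndorff--Nielsen--Baxter formula, invoke Lemma \ref{CLT for ET} for the pointwise limit $g(m,k)\to\Exp T(Y_1,Y_2)$, split the double sum into a bulk (where $m$ and $k-m$ are both large) and a boundary remainder, and use Lemma \ref{double sum limit} for the weight sum. The only differences are cosmetic and slightly in your favour: you make the uniform bound $g(m,k)\le\tfrac12\sigma^2$ explicit via Cauchy--Schwarz where the paper merely asserts boundedness, and you handle the upper and lower estimates in one symmetric step rather than as separate $\limsup$/$\liminf$ arguments.
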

\begin{proof}
In (\ref{E A_n formula}), denote $ g(k,m):= m^{-1/2}(k-m)^{-1/2} \Exp\big[ T(S_m,S_k-S_m)\big]$. Then, 
\begin{equation} 
\label{eq:bnb}
\Exp A_n= \sum_{k=2}^n \sum_{m=1}^{k-1} \frac{g(k, m)}{m^{1/2}(k-m)^{1/2}} . \end{equation}
and by Lemma \ref{CLT for ET},
\begin{equation}
\label{eq:triangle_mean}
\lim_{m \to \infty,\ k-m \to \infty} g(k, m)=\Exp T(Y_1,Y_2):= \lambda . \end{equation}
So, for every $\eps >0$, there exists $m_0 \in \Z_+$ such that for any $m \geq m_0$ and $k-m \geq m_0$ we have $|g(k,m)-\lambda|\leq \eps$.

For the upper bound of $ \Exp A_n$, Separate the inner sum as 
\begin{align*}
\Exp A_n 
& = \left(\sum_{k=2}^{m_0} + \sum_{k=m_0 +1}^n \right) \sum_{m=1}^{k-1}\ \frac{g(k, m)}{m^{1/2}(k-m)^{1/2}} \\
& = \sum_{k=m_0 +1}^n \sum_{m=1}^{k-1}\frac{g(k, m)}{m^{1/2}(k-m)^{1/2}} + O(1) \\
&= \sum_{k=m_0 +1}^n \left(\sum_{m=1}^{m_0} + \sum_{m=k-m_0}^{k-1} +\sum_{m=m_0+1}^{k-m_0 -1} \right) \frac{g(k, m)}{m^{1/2}(k-m)^{1/2}} + O(1) ,
\end{align*}
where
\begin{align} \label{star1}
& \sum_{k=m_0 +1}^n \left(\sum_{m=1}^{m_0} + \sum_{m=k-m_0}^{k-1} \right) \frac{g(k, m)}{m^{1/2}(k-m)^{1/2}} \notag \\ 
& \leq m_0 \sum_{k=m_0 +1}^n \frac{\max_{1 \leq m \leq m_0} g(k, m)}{(k-m_0)^{1/2}} + m_0 \sum_{k=m_0 +1}^n \frac{\max_{k-m_0 \leq m \leq k} g(k, m)}{(k-m_0)^{1/2}} \notag \\ 
& \leq \lambda' \sum_{k=m_0 +1}^n \frac{2 m_0}{(k-m_0)^{1/2}}, \quad \hbox{since}\ \max_{1 \leq k,m \leq n} g(k,m) < \infty, \notag \\ 
& \leq O(n^{1/2}) ,
\end{align}
where $\lambda'$ is some constant, 
and
$$ \sum_{k=m_0 +1}^n \sum_{m=m_0+1}^{k-m_0 -1} \frac{g(k, m)}{m^{1/2}(k-m)^{1/2}} 
\leq  (\lambda+\eps) \sum_{k=2}^n \sum_{m=1}^{k-1} \frac{1}{m^{1/2}(k-m)^{1/2}} .$$
By Lemma \ref{double sum limit},
$$\limsup_{n \to \infty} \frac{1}{n} \sum_{k=m_0 +1}^n \sum_{m=m_0+1}^{k-m_0 -1} \frac{g(k, m)}{m^{1/2}(k-m)^{1/2}} \leq (\lambda + \eps)\pi .$$
Hence, $\limsup_{n \to \infty} n^{-1} \Exp A_n \leq (\lambda + \eps) \pi$ by \eqref{star1}.
So $\limsup_{n \to \infty} n^{-1} \Exp A_n \leq \lambda \pi$, since $\eps >0$ was arbitrary.

For the lower bound
\begin{align*} 
\Exp A_n 
& \geq  \sum_{k=2}^n \sum_{m=m_0}^{k-m_0} \frac{g(k,m)}{m^{1/2}(k-m)^{1/2}} \\
& \geq (\lambda-\eps) \sum_{k=2}^n \sum_{m=m_0}^{k-m_0} \frac{1}{m^{1/2}(k-m)^{1/2}} \\
& \geq (\lambda-\eps) \sum_{k=2}^n \left(\sum_{m=1}^{k-1} - \sum_{m=1}^{m_0 -1} -\sum_{m=k-m_0+1}^{k-1} \right) \frac{1}{m^{1/2}(k-m)^{1/2}} \\
& \geq (\lambda-\eps) \sum_{k=2}^n \sum_{m=1}^{k-1} \frac{1}{m^{1/2}(k-m)^{1/2}} - (\lambda-\eps)\sum_{k=2}^n \frac{2(m_0 -1)}{(k-1)^{1/2}} .
\end{align*}
By Lemma \ref{double sum limit}, $\liminf_{n \to \infty} n^{-1} \Exp A_n \geq (\lambda - \eps)\pi$.
Therefore $\liminf_{n \to \infty} n^{-1} \Exp A_n \geq \lambda \pi$, since $\eps >0$ was arbitrary. Then the result follows.
\end{proof}

\begin{lemma} 
\label{lemma:EA-zero}
If  $Y_1$, $Y_2$ are iid. rvs. $Y_1, Y_2 \sim \NN(\0, \Sigma)$ and $\Sigma = \Exp (Z_1^T Z_1)$
Then, 
\[ \Exp T(Y_1,Y_2) = \frac{1}{2} \sqrt{ \det \Sigma } . \]
\end{lemma}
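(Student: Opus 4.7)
The plan is to recognize the determinantal structure hiding in $T$: the identity $\|\bu\|^2\|\bv\|^2 - (\bu\cdot\bv)^2 = \det(M^\tra M) = (\det M)^2$ for $M = [\bu \mid \bv]$ (the $2\times 2$ matrix with columns $\bu,\bv$) turns the formula for $T(\bu,\bv)$ into
\[ T(\bu,\bv) = \tfrac{1}{2} \left| \det [\bu \mid \bv] \right| . \]
So the task reduces to computing the expected absolute determinant of a random $2\times 2$ matrix whose columns are i.i.d.\ $\cN(\0,\Sigma)$.

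First I would use the Gaussian representation $Y_i = \Sigma^{1/2} X_i$ with $X_1, X_2$ i.i.d.\ $\cN(\0,I)$ (Lemma \ref{linear transformation}). Since $[Y_1 \mid Y_2] = \Sigma^{1/2} [X_1 \mid X_2]$ and the determinant is multiplicative,
\[ \left| \det [Y_1 \mid Y_2] \right| = \det(\Sigma^{1/2}) \left| \det[X_1 \mid X_2] \right| = \sqrt{\det \Sigma} \, \left| \det[X_1 \mid X_2] \right| , \]
using that $\Sigma^{1/2}$ is positive semi-definite so has non-negative determinant equal to $\sqrt{\det \Sigma}$. Taking expectations,
\[ \Exp T(Y_1, Y_2) = \tfrac{1}{2} \sqrt{\det \Sigma} \cdot \Exp \left| \det[X_1 \mid X_2] \right| , \]
so it remains to show $\Exp | \det[X_1 \mid X_2] | = 1$ in the isotropic case.

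For this, write $X_1 = (A,B)^\tra$ and $X_2 = (C,D)^\tra$ with $A,B,C,D$ i.i.d.\ standard normal, so that $\det[X_1\mid X_2] = AD-BC$. Conditionally on $(A,B)$, the quantity $AD - BC$ is a linear combination of the independent standard normals $D,C$, hence
\[ AD - BC \mid (A,B) \sim \cN\bigl(0, A^2 + B^2\bigr) . \]
Therefore $\Exp [ |AD-BC| \mid (A,B)] = \sqrt{2/\pi}\, \sqrt{A^2+B^2}$, and since $\sqrt{A^2+B^2}$ is Rayleigh with $\Exp \sqrt{A^2+B^2} = \sqrt{\pi/2}$, the tower property gives
\[ \Exp | AD - BC | = \sqrt{2/\pi} \cdot \sqrt{\pi/2} = 1 , \]
completing the proof. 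There isn't really a serious obstacle here; the only point requiring a moment's care is checking that the $T$ defined via the side-length formula really does equal $\tfrac{1}{2}|\det[\bu\mid\bv]|$, but this follows directly from expanding $\det(M^\tra M)$ as shown above.
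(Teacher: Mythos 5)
Your proof is correct, and while it shares the paper's first step---writing $(Y_1,Y_2)=(\Sigma^{1/2}X_1,\Sigma^{1/2}X_2)$ and extracting the factor $\sqrt{\det\Sigma}$ from the linear map $\Sigma^{1/2}$---it computes the remaining isotropic expectation $\Exp T(X_1,X_2)=\tfrac12$ by a genuinely different route. The paper writes $T(W_1,W_2)=\tfrac12\|W_1\|\|W_2\|\sin\Theta$ and invokes the rotation-invariance fact that the angle $\Theta$ between two independent standard Gaussian vectors is uniform on $[0,\pi]$ and independent of the pair of norms, then multiplies $\Exp\sin\Theta=2/\pi$ by $(\Exp\|W_1\|)^2=\pi/2$. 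You instead pass to the determinant representation $T(\bu,\bv)=\tfrac12|\det[\bu\mid\bv]|$ and compute $\Exp|AD-BC|$ by conditioning on one column, using only that a linear combination of independent standard normals is normal and the mean of a Rayleigh variable. Your version buys you something: it avoids the (true but slightly delicate) independence-and-uniformity claim for $\Theta$, replacing it with a one-line conditional-Gaussian argument, and the determinant identity $\|\bu\|^2\|\bv\|^2-(\bu\cdot\bv)^2=(\det[\bu\mid\bv])^2$ also makes the appearance of $\det(\Sigma^{1/2})=\sqrt{\det\Sigma}$ completely mechanical, whereas the paper appeals to the Jacobian of the transformation acting on areas. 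The paper's version is perhaps more geometrically transparent about why the answer scales with $\sqrt{\det\Sigma}$, but both are complete and correct.
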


\begin{proof}

With $\Sigma = (\Sigma^{1/2})^2$, we have that $(Y_1, Y_2)$ is equal in distribution to $(\Sigma^{1/2} W_1, \Sigma^{1/2} W_2)$
where $W_1$ and $W_2$ are independent $\cN (0, I)$ random vectors. Since $\Sigma^{1/2}$ acts as a linear transformation on $\R^2$
with Jacobian $\sqrt{ \det \Sigma}$,
\[ \Exp T(Y_1,Y_2) = \Exp T (\Sigma^{1/2} W_1, \Sigma^{1/2} W_2) = \sqrt{ \det \Sigma } \Exp T (W_1, W_2 ) .\]
 Here 
$$\Exp T (W_1, W_2 ) = \frac{1}{2} \Exp [ \| W_1 \| \| W_2 \| \sin \Theta ],$$
where the minimum angle $\Theta$ between $W_1$ and $W_2$ is uniform on $[0, \pi]$, and $(\| W_1\|, \|W_2\|, \Theta)$
are independent. Hence  
$$\Exp T (W_1, W_2 ) = \frac{1}{2} ( \Exp \| W_1 \| )^2 ( \Exp \sin \Theta ) = \frac12,$$
using the fact that $\Exp \sin \Theta = 2/\pi$ and $\| W_1 \|$ is the square-root of a $\chi_2^2$ random variable, so $\Exp \| W_1 \| = \sqrt{ \pi/2}$ and the result follows.
\end{proof}

\begin{theorem} 
\label{prop:EA-zero}
 Suppose that $\Exp \|Z_1\|^2 < \infty$ and $\mu=0$. 
Then, 
\[ \lim_{n \to \infty} n^{-1}\Exp A_n = \frac{\pi}{2} \sqrt{ \det \Sigma } . \]
\end{theorem}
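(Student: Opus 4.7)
The proof is essentially a combination of the two results immediately preceding the theorem. My plan is as follows.

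First, I would invoke Proposition \ref{lim E A/n with 0 drift}, which, under exactly the hypotheses we are given ($\Exp \| Z_1 \|^2 < \infty$ and $\mu = 0$), tells us that
\[
\lim_{n \to \infty} \frac{\Exp A_n}{n} = \pi \, \Exp T( Y_1, Y_2 ) ,
\]
where $Y_1, Y_2$ are i.i.d.\ $\cN(\0, \Sigma)$ random vectors. This already does the heavy lifting: it reduces the asymptotics of $\Exp A_n$ to the evaluation of a single Gaussian triangle-area expectation.

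Second, I would apply Lemma \ref{lemma:EA-zero}, which gives the explicit evaluation
\[
\Exp T(Y_1, Y_2) = \tfrac{1}{2} \sqrt{ \det \Sigma } .
\]
Multiplying by $\pi$ then yields the claimed limit $\tfrac{\pi}{2} \sqrt{ \det \Sigma }$. That is the entire argument.

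There is no real obstacle here since both ingredients are established in the excerpt; the theorem is simply the combination. As a sanity check I would note that the result is also consistent with the alternative scaling-limit route: Corollary \ref{cor:zero-limits} gives $n^{-1} A_n \tod a_1 \sqrt{\det \Sigma}$, and Proposition \ref{lem:A_moments}(ii) with $p=1$ provides a uniform bound on $\Exp[(n^{-1} A_n)^q]$ for some $q>1$ (taking $\Exp \| Z_1 \|^{4} < \infty$ if needed), yielding uniform integrability and hence convergence of means to $\sqrt{\det \Sigma}\,\Exp a_1$; matching with the direct argument recovers the known Brownian value $\Exp a_1 = \pi/2$. Since the more elementary derivation via Proposition \ref{lim E A/n with 0 drift} and Lemma \ref{lemma:EA-zero} requires only the second moment assumption and is already in hand, that is the route I would present.
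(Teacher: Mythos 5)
Your proposal is correct and is exactly the paper's own proof: the theorem is derived by combining Proposition \ref{lim E A/n with 0 drift} with Lemma \ref{lemma:EA-zero}. The extra remark about the scaling-limit route via Corollary \ref{cor:zero-limits} is a reasonable consistency check but is not needed.
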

\begin{proof}
The result follows from Proposition \ref{lim E A/n with 0 drift} combining with Lemma \ref{lemma:EA-zero}.
\end{proof}

\begin{theorem}
\label{prop:EA-drift}
 Suppose that \eqref{ass:moments} holds for some $p > 2$, $\mu \neq 0$,
and $\sperp >0$.
 Then 
\[ \lim_{n \to \infty} n^{-3/2} \Exp A_n = \| \mu \|  ( \sperp)^{1/2} \Exp \tilde a_1 = \frac{1}{3} \| \mu \|  \sqrt{2\pi \sperp} . \]
In particular,  $\Exp \tilde a_1 = \frac{1}{3} \sqrt{2 \pi}$.
\end{theorem}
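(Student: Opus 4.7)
The plan has two parts. For the first equality, $\lim_n n^{-3/2}\Exp A_n = \|\mu\|(\sperp)^{1/2}\Exp\tilde a_1$, I would combine Corollary~\ref{cor:A-limit-drift} (which gives $n^{-3/2}A_n\tod\|\mu\|(\sperp)^{1/2}\tilde a_1$) with uniform integrability of $n^{-3/2}A_n$. For the UI, set $p'=p/2>1$ (using the hypothesis $p>2$): Proposition~\ref{lem:A_moments}(i) applied at exponent $p'$ gives $\Exp[A_n^{p'}]=O(n^{3p'/2})$, so $\sup_n\Exp[(n^{-3/2}A_n)^{p'}]<\infty$ and Lemma~\ref{convergence of means} yields convergence of the means.

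For the second equality (and hence the value of $\Exp\tilde a_1$), I would compute the same limit directly via the Barndorff--Nielsen--Baxter identity~(\ref{E A_n formula}),
\[
\Exp A_n=\sum_{k=2}^n\sum_{m=1}^{k-1}\frac{\Exp T(S_m,S_k-S_m)}{m(k-m)}.
\]
The heart of the argument is the drift analogue of Lemma~\ref{CLT for ET}, namely
$\Exp T(S_m,S_k-S_m)/\sqrt{mk(k-m)}\to\|\mu\|\sqrt{\sperp/(2\pi)}$ as $m,k-m\to\infty$. To prove it, expand $\det[S_m,S_k-S_m]$ in the orthonormal basis $(\hat\mu,\hat\mu_\per)$; the dominant contribution is $\|\mu\|\bigl(m(\hat\mu_\per\cdot(S_k-S_m))-(k-m)(\hat\mu_\per\cdot S_m)\bigr)$, a centered sum of two independent pieces with total variance $\|\mu\|^2 mk(k-m)\sperp$, while the remaining cross-terms have standard deviation of order $\sqrt{m(k-m)\spara\sperp}$, a factor of $\sqrt{k}$ smaller. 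The CLT then gives distributional convergence of the rescaled determinant to $\|\mu\|\sqrt{\sperp}\,Z$ with $Z\sim\cN(0,1)$, and Rosenthal's inequality (Lemma~\ref{rosenthal}) applied coordinatewise under the hypothesis $p>2$ supplies $L^{p/2}$-boundedness of the rescaled determinant, so UI gives $\Exp|\det|/\sqrt{mk(k-m)}\to\|\mu\|\sqrt{\sperp}\Exp|Z|=\|\mu\|\sqrt{2\sperp/\pi}$; dividing by $2$ yields the claimed asymptotic for $\Exp T$.

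Substituting into the double sum (and absorbing the corner contributions from small $m$ or $k-m$ using the deterministic bound $\Exp T(S_m,S_k-S_m)\leq C\sqrt{mk(k-m)}$ that comes from the same decomposition via Cauchy--Schwarz, which is $O(n\sqrt{M})=o(n^{3/2})$ for fixed threshold $M$, analogously to the proof of Proposition~\ref{lim E A/n with 0 drift}) gives
\[
\Exp A_n\sim\frac{\|\mu\|\sqrt{\sperp}}{\sqrt{2\pi}}\sum_{k=2}^n\sqrt{k}\sum_{m=1}^{k-1}\frac{1}{\sqrt{m(k-m)}}.
\]
By Lemma~\ref{single sum limit} the inner sum tends to $\pi$, and combining with $\sum_{k=1}^n\sqrt{k}\sim\tfrac{2}{3}n^{3/2}$ via Lemma~\ref{convergence of Cesaro mean} applied to $k^{-1/2}\sum_{m=1}^{k-1}(m(k-m))^{-1/2}$ delivers $n^{-3/2}\Exp A_n\to\frac{2\pi}{3\sqrt{2\pi}}\|\mu\|\sqrt{\sperp}=\tfrac{1}{3}\|\mu\|\sqrt{2\pi\sperp}$. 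Matching this with the limit from the first paragraph pins down $\Exp\tilde a_1=\tfrac{1}{3}\sqrt{2\pi}$. The main obstacle is making the CLT-with-UI step in the middle paragraph uniform enough in $(m,k-m)$ to be summable; the hypothesis $p>2$ is used precisely here, both to upgrade the pointwise CLT to $L^1$-convergence of the rescaled $|\det|$ (via Rosenthal) and, at the outset, to secure UI of $n^{-3/2}A_n$.
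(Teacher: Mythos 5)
Your first step --- uniform integrability of $n^{-3/2}A_n$ from Proposition~\ref{lem:A_moments}(i) at exponent $p/2>1$, combined with Corollary~\ref{cor:A-limit-drift} --- is exactly the paper's argument for the first equality. For the evaluation of the constant, however, you take a genuinely different route. The paper uses a Kac-style trick (which it credits explicitly in the remark following the theorem): having established that the limit $\|\mu\|(\sperp)^{1/2}\Exp\tilde a_1$ exists for the whole class of walks, it evaluates it on the single most convenient member, $Z=(1,\xi)$ with $\xi\sim\cN(0,1)$, for which $T(S_m,S_k-S_m)$ is \emph{exactly} distributed as $\tfrac12|\xi|\sqrt{km(k-m)}$, so the Barndorff--Nielsen--Baxter sum collapses to an explicit formula with no asymptotic analysis in $(m,k)$ at all. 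You instead prove the drift analogue of Lemma~\ref{CLT for ET} for a \emph{general} walk and rerun the double-sum argument of Proposition~\ref{lim E A/n with 0 drift}. Your decomposition of the determinant in the basis $(\hat\mu,\hat\mu_\per)$ is correct: the term $\|\mu\|\bigl(m\,b_2-(k-m)\,b_1\bigr)$ (with $b_1=\hat\mu_\per\cdot S_m$, $b_2=\hat\mu_\per\cdot(S_k-S_m)$) has variance exactly $\|\mu\|^2mk(k-m)\sperp$ and, after normalisation, is a combination $\sqrt{m/k}\,\xi_2-\sqrt{(k-m)/k}\,\xi_1$ of asymptotically independent standard normals with coefficients summing in squares to $1$, while the cross-term is smaller by a factor $\sqrt k$ in $L^2$; the corner-sum bound $\Exp T(S_m,S_k-S_m)=O(\sqrt{mk(k-m)})$ and the final summation both check out. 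What each approach buys: the paper's is shorter and sidesteps all uniformity issues in $(m,k)$; yours is self-contained in that the BNB computation alone already proves $n^{-3/2}\Exp A_n\to\tfrac13\|\mu\|\sqrt{2\pi\sperp}$ for the general walk, with the scaling limit needed only to read off $\Exp\tilde a_1$. One small simplification to your middle paragraph: the uniform integrability of the rescaled determinant does not need Rosenthal or $p>2$ --- the rescaled determinant is $L^2$-bounded under second moments alone, since $\Exp[(a_1b_2)^2]=\Exp[a_1^2]\Exp[b_2^2]=O(m^2(k-m))=O(mk(k-m))$ and likewise for $a_2b_1$; the coordinatewise Rosenthal route does work, but beware that applying the paper's Lemma~\ref{rosenthal} directly to the weighted sum $m\,b_2-(k-m)\,b_1$ (rather than coordinatewise to the factors) gives a bound that is \emph{not} uniformly $O\bigl((mk(k-m))^{p/4}\bigr)$, so the direct variance computation is the safer phrasing.
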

\begin{proof}
Recall that $\tilde a_1 = \cA(\tilde h_1)$ is the convex hull area of the space-time diagram of one-dimensional Brownian motion run for unit time.

Given  $\Exp [ \|Z_1\|^p ] < \infty$ for some $p >2$, Proposition \ref{lem:A_moments}(i) shows that
$\Exp [ A_n^{p/2} ] = O ( n^{3p/4} )$, so that 
$\Exp [ ( n^{-3/2} A_n )^{p/2} ]$
is uniformly bounded. Hence $ n^{-3/2} A_n$ is uniformly integrable,
so Corollary \ref{cor:A-limit-drift} implies that 
\begin{equation}
\label{eq:EA-scaling-drift}
 \lim_{n \to \infty} n^{-3/2} \Exp A_n = \| \mu \|  ( \sperp)^{1/2} \Exp \tilde a_1.
\end{equation}

In light of \eqref{eq:EA-scaling-drift},    it 
remains to identify $\Exp \tilde a_1= \frac{1}{3} \sqrt{2 \pi}$. It does not seem straightforward to work directly with the Brownian limit;
it turns out again to be
simpler to work with a suitable random walk.
We choose a walk that is particularly convenient for
computations.

Let $\xi \sim \cN (0,1)$ be a standard normal random variable,
and take $Z$ to be distributed as $Z = ( 1, \xi )$ in Cartesian coordinates. Then $S_n = ( n , \sum_{k=1}^n \xi_k )$ is
the space-time diagram of the symmetric random walk on $\R$ generated by i.i.d.\ copies $\xi_1, \xi_2, \ldots$ of $\xi$.

For $Z = (1, \xi)$, $\mu = (1,0)$ and $\sigma^2 = \sperp = \Exp [ \xi^2 ] = 1$. Thus 
by \eqref{eq:EA-scaling-drift}, to complete the proof of Theorem \ref{prop:EA-drift}
it suffices to show  that for this walk
$\lim_{n\to \infty} n^{-3/2} \Exp A_n = \frac{1}{3} \sqrt{2 \pi}$.
If $u, v \in \R^2$ have Cartesian components $u = (u_1, u_2)$ and $v = (v_1, v_2)$, then we
may write $T ( u, v) = \frac{1}{2} | u_1 v_2 - v_1 u_2 |$. Hence
\begin{align*}
T ( S_m, S_k - S_m ) & = \frac{1}{2} \left| ( k-m) \sum_{j=1}^m \xi_j - m \sum_{j=m+1}^k \xi_j \right| .\end{align*}
By properties of the normal distribution, the right-hand side of the last display has the same distribution as $ \frac{1}{2} | \xi  \sqrt{ k m (k-m)}  |$.
Hence
\[ \frac{\Exp T ( S_m, S_k - S_m )}{\sqrt{ m (k-m) } } = \frac{1}{2} \Exp |  \xi  \sqrt{k} | = \frac{1}{2} \sqrt{ 2 k / \pi } ,\]
using the fact that $| \xi |$ is distributed as the square-root of a $\chi_1^2$ random variable, so $\Exp | \xi | = \sqrt{ 2 / \pi }$.
Hence, by \eqref{eq:bnb}, this random walk enjoys the exact formula
\begin{align*} \Exp A_n & = \frac{1}{\sqrt{2\pi}} \sum_{k=2}^n \sum_{m=1}^{k-1} \frac{\sqrt{k}}{\sqrt{ m (k-m) }} . \end{align*}
Then from \eqref{eq:pi-sum} we obtain
$\Exp A_n \sim   \sqrt{\pi /2} \sum_{k=2}^n k^{1/2}$, which gives the result.
\end{proof}
\begin{remark}
The idea used in the proof of Theorem~\ref{prop:EA-drift},
first establishing the existence of a limit for a class of models
and then choosing a particular model for which the limit can be conveniently
evaluated, goes back at least to Kac; see~\cite[p.\,293]{kac}.
\end{remark}

\section{Law of large numbers for the area}

\begin{proposition} \label{LLN for A_n with 0 drift}
Suppose $\Exp (\|Z_1\|^4) < \infty$ and $\|\Exp Z_1 \| =0$. Then for any $\alpha >1$, $n^{-\alpha} A_n \to 0$ a.s. as $n \to \infty$.
\end{proposition}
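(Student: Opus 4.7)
The plan is to mimic the argument used in Proposition \ref{2LLN for L_n}(ii) for $L_n$, combining the moment estimates for $A_n$ with a subsequence Borel--Cantelli argument and the monotonicity of $A_n$.

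First I would collect the basic ingredients. Since $\mu = 0$ and $\Exp[\|Z_1\|^4] < \infty$, Proposition \ref{lem:A_moments}(ii) (with $p = 1$) gives $\Var A_n = O(n^2)$, and Theorem \ref{prop:EA-zero} (which only needs $\Exp[\|Z_1\|^2] < \infty$) gives $\Exp A_n \sim \frac{\pi}{2}\sqrt{\det \Sigma}\, n$, in particular $\Exp A_n = O(n)$. So both $n^{-\alpha}\Exp A_n \to 0$ and Chebyshev's inequality yields, for any $\eps > 0$,
\[ \Pr\left( \frac{|A_n - \Exp A_n|}{n^\alpha} > \eps \right) \leq \frac{\Var A_n}{\eps^2 n^{2\alpha}} \leq \frac{C}{\eps^2\, n^{2\alpha-2}} . \]

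Next I would pass to a subsequence. For $\alpha > 1$ one has $2\alpha - 2 > 0$, but this is not enough for summability along the whole sequence, so I would take $n_k := 2^k$. Then $\sum_{k=1}^\infty n_k^{-(2\alpha-2)} = \sum_k 2^{-k(2\alpha-2)} < \infty$, so the Borel--Cantelli lemma (Lemma \ref{borel-cantelli}) gives $n_k^{-\alpha}(A_{n_k} - \Exp A_{n_k}) \to 0$ a.s. Combined with $n_k^{-\alpha}\Exp A_{n_k} \to 0$ this yields $n_k^{-\alpha} A_{n_k} \to 0$ almost surely.

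Finally I would fill in the gaps using monotonicity. Because $\hull(S_0,\ldots,S_n) \subseteq \hull(S_0,\ldots,S_{n+1})$, the sequence $A_n$ is a.s.\ non-decreasing in $n$. Given $n$, choose $k = k(n)$ with $2^k \leq n < 2^{k+1}$; then
\[ \frac{A_n}{n^\alpha} \leq \frac{A_{2^{k+1}}}{(2^k)^\alpha} = 2^\alpha \cdot \frac{A_{n_{k+1}}}{n_{k+1}^{\alpha}} \longrightarrow 0, \as, \]
as $n \to \infty$ (so $k(n) \to \infty$), which gives the claim.

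The main obstacle is really just the gap between $\Var A_n = O(n^2)$ and the desired scaling: naive summability along the whole of $\mathbb{N}$ would require $\alpha > 3/2$, so the geometric subsequence plus a.s.\ monotonicity of $A_n$ are both needed. Neither is difficult, but one must verify the monotonicity (immediate from set inclusion of the hulls) and check that the $p = 1$ case of Proposition \ref{lem:A_moments}(ii) indeed uses exactly the hypothesis $\Exp[\|Z_1\|^4] < \infty$ that we are given.
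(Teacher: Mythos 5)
Your proposal is correct and follows essentially the same route as the paper: Chebyshev with the bound $\Var A_n = O(n^2)$ from Proposition \ref{lem:A_moments}(ii), Borel--Cantelli along the geometric subsequence $n_k = 2^k$, and monotonicity of $A_n$ to interpolate. The only cosmetic difference is that you invoke Theorem \ref{prop:EA-zero} for $\Exp A_n = O(n)$ where the paper reads this directly off Proposition \ref{lem:A_moments}(ii); either works.
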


\begin{proof}
By Chebyshev's inequality for $A_n$,
$$ \Pr \left(\frac{|A_n - \Exp A_n|}{n^{\alpha}} \geq \eps \right) = \Pr(|A_n - \Exp A_n| \geq \eps n^{\alpha}) \leq \frac{\Var(A_n)}{\eps^2 n^{2 \alpha}} .$$
Since $\Var(A_n) = O(n^2)$ by Proposition \ref{lem:A_moments}(ii), for any $\alpha >1$, as $n \to \infty$ we have
$$\Pr \left(\frac{|A_n - \Exp A_n|}{n^{\alpha}} \geq \eps \right) = O(n^{2-2\alpha}) .$$
So $n^{-\alpha} (A_n - \Exp A_n) \to 0$ in probability.

Take $n=n_k=2^k$ for $k \in \N$, we have
$$\Pr \left(\frac{|A_{n_k} - \Exp A_{n_k}|}{n_k^{\alpha}} \geq \eps \right) = O(n_k^{2-2\alpha}) = O(4^{k(1-\alpha)}) .$$
So for any $\eps > 0$,
$$\sum_{k=1}^{\infty} \Pr \left(\frac{|A_{n_k} - \Exp A_{n_k}|}{n_k^{\alpha}}\geq \eps \right) < \infty .$$
By Borel--Cantelli Lemma (Lemma \ref{borel-cantelli}), as $k \to \infty$
$$\frac{A_{n_k} - \Exp A_{n_k}}{n_k^{\alpha}} \to 0 ~\as$$
By Proposition \ref{lem:A_moments}(ii), $n_k^{-\alpha} \Exp A_{n_k} \to 0$ as $n \to \infty$, we get 
$$\frac{A_{n_k}}{n_k^{\alpha}} \to 0 ~ \as \As k \to \infty.$$

For any $n \in \N$, there exists $k(n) \in N$ such that $2^{k(n)} \leq n < 2^{k(n)+1}$. By monotonicity of $A_n$,
$$2^{-\alpha} \frac{A_{n_{k(n)}}}{n_{k(n)}^{\alpha}} = \frac{A_{2^{k(n)}}}{(2^{k(n)+1})^{\alpha}} \leq \frac{A_n}{n^{\alpha}} \leq \frac{A_{2^{k(n)+1}}}{(2^{k(n)})^{\alpha}} = 2^{\alpha} \frac{A_{n_{k(n)+1}}}{n_{k(n)+1}^{\alpha}} .$$
The result follows by the Squeezing Theorem.
\end{proof}

\begin{proposition}
Suppose $\Exp (\|Z_1\|^4) < \infty$. Then, for any $\alpha > 3/2$, $n^{-\alpha} A_n \to 0$ a.s. as $n \to \infty$. 
\end{proposition}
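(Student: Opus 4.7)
The plan is to follow the same template as Proposition \ref{LLN for A_n with 0 drift}, replacing the zero-drift moment estimates with the general ones from Proposition \ref{lem:A_moments}(i). Since the hypothesis $\Exp[\|Z_1\|^4] < \infty$ is the case $p = 2$ of Proposition \ref{lem:A_moments}(i), I can invoke the bounds $\Exp A_n = O(n^{3/2})$ and $\Var A_n = O(n^{3})$ without further work.

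First I would apply Chebyshev's inequality to obtain, for $\eps > 0$,
\[ \Pr\!\left( \frac{|A_n - \Exp A_n|}{n^\alpha} \geq \eps \right) \leq \frac{\Var A_n}{\eps^2 n^{2\alpha}} = O(n^{3 - 2\alpha}). \]
This already gives convergence in probability along the full sequence when $\alpha > 3/2$, but to upgrade to almost sure convergence I pass to the subsequence $n_k = 2^k$, for which
\[ \sum_{k=1}^\infty \Pr\!\left( \frac{|A_{n_k} - \Exp A_{n_k}|}{n_k^\alpha} \geq \eps \right) = O\!\left( \sum_{k=1}^\infty 2^{k(3 - 2\alpha)} \right) < \infty, \]
since $3 - 2\alpha < 0$. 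The Borel--Cantelli lemma (Lemma \ref{borel-cantelli}) then yields $n_k^{-\alpha}(A_{n_k} - \Exp A_{n_k}) \to 0$ almost surely, and combining with $n_k^{-\alpha} \Exp A_{n_k} = O(n_k^{3/2 - \alpha}) \to 0$ gives $n_k^{-\alpha} A_{n_k} \to 0$ a.s.

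Finally, to pass from the subsequence $n_k = 2^k$ to the full sequence I would use monotonicity of $A_n$ in $n$: since $\hull(S_0,\ldots,S_n) \subseteq \hull(S_0,\ldots,S_{n+1})$, we have $A_{n+1} \geq A_n$ almost surely. For each $n$, let $k(n)$ be defined by $2^{k(n)} \leq n < 2^{k(n)+1}$; then
\[ \frac{A_n}{n^\alpha} \leq \frac{A_{2^{k(n)+1}}}{(2^{k(n)})^\alpha} = 2^\alpha \cdot \frac{A_{n_{k(n)+1}}}{n_{k(n)+1}^\alpha}, \]
and the right-hand side tends to $0$ a.s.\ along the subsequence argument above, completing the proof by a sandwich with the trivial lower bound $A_n \geq 0$.

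No step here looks like a serious obstacle: the variance bound $O(n^3)$ from Proposition \ref{lem:A_moments}(i) is strong enough to accommodate all $\alpha > 3/2$ via the geometric subsequence $2^k$, and monotonicity of $A_n$ is essentially automatic from the nesting of the convex hulls (in contrast to $L_n$, where one needs the identity \eqref{L_monotone}). The only mild subtlety is confirming that the threshold $\alpha > 3/2$ is exactly what the subsequence-plus-Borel--Cantelli argument produces, which is the condition $3 - 2\alpha < 0$.
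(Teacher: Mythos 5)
Your proposal is correct and follows essentially the same route as the paper's own proof: Chebyshev with the $\Var A_n = O(n^3)$ bound from Proposition \ref{lem:A_moments}(i), Borel--Cantelli along the geometric subsequence $n_k = 2^k$, the bound $\Exp A_{n_k} = O(n_k^{3/2})$, and monotonicity of $A_n$ to interpolate back to the full sequence. The only cosmetic difference is how you index into Proposition \ref{lem:A_moments}(i) (reading the hypothesis as the $p=2$ case and bounding $\Var A_n \leq \Exp[A_n^2]$ rather than invoking the stated variance clause at $p=1$), which changes nothing.
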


\begin{proof}
By Chebyshev's inequality for $A_n$,
$$ \Pr \left(\frac{|A_n - \Exp A_n|}{n^{\alpha}} \geq \eps \right) = \Pr(|A_n - \Exp A_n| \geq \eps n^{\alpha}) \leq \frac{\Var(A_n)}{\eps^2 n^{2 \alpha}} .$$
Since $\Var(A_n) = O(n^3)$ by Proposition \ref{lem:A_moments}(i), for any $\alpha >3/2$, as $n \to \infty$ we have
$$\Pr \left(\frac{|A_n - \Exp A_n|}{n^{\alpha}} \geq \eps \right) = O(n^{3-2\alpha}) .$$
So $n^{-\alpha} (A_n - \Exp A_n) \to 0$ in probability.

Take $n=n_k=2^k$ for $k \in \N$, we have
$$\Pr \left(\frac{|A_{n_k} - \Exp A_{n_k}|}{n_k^{\alpha}} \geq \eps \right) = O(n_k^{3-2\alpha}) = O(4^{k(3/2-\alpha)}) .$$
So for any $\eps > 0$,
$$\sum_{k=1}^{\infty} \Pr \left(\frac{|A_{n_k} - \Exp A_{n_k}|}{n_k^{\alpha}}\geq \eps \right) < \infty .$$
By Borel--Cantelli Lemma (Lemma \ref{borel-cantelli}), as $k \to \infty$
$$\frac{A_{n_k} - \Exp A_{n_k}}{n_k^{\alpha}} \to 0 ~\as$$
By Proposition \ref{lem:A_moments}(i), $n_k^{-\alpha} \Exp A_{n_k} \to 0$ as $n \to \infty$, we get 
$$\frac{A_{n_k}}{n_k^{\alpha}} \to 0 ~ \as \As k \to \infty.$$

For any $n \in \N$, there exists $k(n) \in N$ such that $2^{k(n)} \leq n < 2^{k(n)+1}$. By monotonicity of $A_n$,
$$2^{-\alpha} \frac{A_{n_{k(n)}}}{n_{k(n)}^{\alpha}} = \frac{A_{2^{k(n)}}}{(2^{k(n)+1})^{\alpha}} \leq \frac{A_n}{n^{\alpha}} \leq \frac{A_{2^{k(n)+1}}}{(2^{k(n)})^{\alpha}} = 2^{\alpha} \frac{A_{n_{k(n)+1}}}{n_{k(n)+1}^{\alpha}} .$$
The result follows by the Squeezing Theorem.
\end{proof}

\section{Asymptotics for the variance}

Recall that Proposition \ref{prop:var-limit-zero u0} shows $\lim_{n \to \infty} n^{-1} \Var L_n = u_0 ( \Sigma )$. 
In this section, we will show that
\begin{alignat}{2}
\label{eq:three_vars}
\text{if } \mu \neq 0: ~~ &  \lim_{n \to \infty} n^{-3} \Var A_n   = v_+ \| \mu \|^2 \sperp ; \nonumber\\
\text{if } \mu = 0: ~~  &   \lim_{n \to \infty} n^{-2} \Var A_n   = v_0 \det \Sigma  
 .\end{alignat}
The quantities $ v_0$ and $v_+$ in \eqref{eq:three_vars} are finite and positive, 
as is $u_0( \blob )$ provided $\sigma^2 \in (0,\infty)$,
and these quantities are in fact variances associated with convex hulls of Brownian scaling limits for the walk. 

\begin{proposition}
\label{prop:var-limit-zero}
 Suppose that \eqref{ass:moments} holds for some $p >4$, 
and $\mu =0$. Then
\[ \lim_{n \to \infty} n^{-2} \Var A_n = v_0 \det \Sigma.\]
\end{proposition}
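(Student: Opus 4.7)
The plan mirrors the proof of Proposition~\ref{prop:var-limit-zero u0}: combine the distributional scaling limit provided by Corollary~\ref{cor:zero-limits} with a uniform integrability argument that upgrades convergence in distribution to convergence of first and second moments. Corollary~\ref{cor:zero-limits} already gives, under the hypotheses, that $n^{-1} A_n \tod \sqrt{\det \Sigma}\, a_1$ as $n \to \infty$, where $a_1 = \cA (h_1)$ is the area of the standard planar Brownian convex hull at time $1$.

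Next I would establish uniform integrability of the family $\{ (n^{-1} A_n)^2 : n \geq 1\}$; this is where the strengthened hypothesis $p > 4$ enters. Set $q := p/2 > 2$. Since $\Exp [ \| Z_1 \|^{2q} ] = \Exp [ \| Z_1 \|^p ] < \infty$, Proposition~\ref{lem:A_moments}(ii) yields $\Exp [ A_n^q ] = O(n^q)$, equivalently $\sup_n \Exp [ (n^{-1} A_n)^q ] < \infty$. Because $q > 2$, this uniform bound gives uniform integrability of $(n^{-1}A_n)^2$ (and hence also of $n^{-1}A_n$).

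Combining weak convergence with uniform integrability via Lemma~\ref{convergence of means}, I obtain
\[ \lim_{n \to \infty} n^{-1} \Exp A_n = \sqrt{\det \Sigma}\, \Exp a_1 , \qquad \lim_{n \to \infty} n^{-2} \Exp [A_n^2] = (\det \Sigma) \, \Exp [a_1^2] . \]
The first limit is reassuringly consistent with Theorem~\ref{prop:EA-zero}, which identifies $\Exp a_1 = \pi / 2$. Subtracting the square of the first display from the second,
\[ \lim_{n \to \infty} n^{-2} \Var A_n = (\det \Sigma) \bigl( \Exp [a_1^2] - (\Exp a_1)^2 \bigr) = v_0 \det \Sigma , \]
identifying $v_0 := \Var a_1$, the variance of the area of the unit-time standard two-dimensional Brownian convex hull.

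I do not anticipate any serious obstacle: the scaling-limit machinery of Chapter~\ref{chapter3} together with the moment bounds of Proposition~\ref{lem:A_moments} do essentially all the work. The only subtle point is the matching of exponents, which explains why the condition $p > 4$ appears here rather than the $p > 2$ sufficient for Proposition~\ref{prop:var-limit-zero u0}: uniform integrability of squares requires $L^q$-control of $n^{-1} A_n$ for some $q > 2$, and via the bound on $A_n$ by products of maxima of $|S_m \cdot \hat \mu|$ and $|S_m \cdot \hat \mu_\perp|$ used in Proposition~\ref{lem:A_moments}, this translates into a moment of $\|Z_1\|$ of order strictly greater than $4$.
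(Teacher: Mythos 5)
Your proof is correct and follows essentially the same route as the paper: both apply Proposition~\ref{lem:A_moments}(ii) with exponent $p/2$ (requiring $\Exp[\|Z_1\|^p]<\infty$) to get $\sup_n \Exp[(n^{-1}A_n)^{p/2}]<\infty$ with $p/2>2$, deduce uniform integrability of $n^{-2}A_n^2$, and then upgrade the weak convergence of Corollary~\ref{cor:zero-limits} to convergence of the first two moments and hence of the variance. Your explicit identification of $v_0=\Var a_1$ is a useful touch that the paper leaves implicit.
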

\begin{proof}
Lemma \ref{lem:A_moments}(ii) shows that
$\Exp [ A_n^{2(p/4)} ] = O ( n^{p/2} )$, so that 
$\Exp [ ( n^{-2} A^2_n )^{p/4} ]$
is uniformly bounded. Hence $ n^{-2} A_n^2$ is uniformly integrable,
and we deduce convergence of $n^{-2} \Var A_n$ in Corollary \ref{cor:zero-limits}.
\end{proof}

For the case with drift, we have the following variance result.

\begin{proposition}
\label{prop:var-limit-drift v+}
Suppose that \eqref{ass:moments} holds for some $p > 4$ and $\mu \neq 0$.
Then
\[ \lim_{n \to \infty} n^{-3} \Var A_n = v_+ \| \mu \|^2 \sperp.\]
\end{proposition}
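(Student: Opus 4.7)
The plan is to mirror the scaling-limit/uniform-integrability strategy used in the proof of Proposition \ref{prop:var-limit-zero}, but now applied to the non-zero drift regime in which Corollary \ref{cor:A-limit-drift} is the relevant weak convergence statement. The identification of $v_+$ will emerge as $\Var \tilde a_1$, with the factor $\|\mu\|^2 \sperp$ arising from the $(n^{-3/2})$-scaling together with the prefactor $\|\mu\| (\sperp)^{1/2}$ in Corollary \ref{cor:A-limit-drift}.

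First I would dispose of the degenerate case $\sperp = 0$: as observed in the remarks after Corollary \ref{cor:A-limit-drift}, $\sperp = 0$ forces the increments $Z_i$ to be collinear with $\mu$ almost surely, so all $S_0, \ldots, S_n$ lie on a line and $A_n = 0$ for all $n$, making both sides of the claim equal to zero. From now on assume $\sperp > 0$.

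Next, by Corollary \ref{cor:A-limit-drift} we have, as $n \to \infty$,
\[ n^{-3/2} A_n \tod \| \mu \| ( \sperp )^{1/2} \tilde a_1 , \]
and hence also
\[ \bigl( n^{-3/2} A_n \bigr)^{2} \tod \| \mu \|^2 \sperp \, \tilde a_1^{2} . \]
To upgrade the latter into convergence of means, I would establish uniform integrability of $\bigl( n^{-3/2} A_n \bigr)^{2}$. Applying Proposition \ref{lem:A_moments}(i) with exponent $p/2$, the moment hypothesis \eqref{ass:moments} for some $p > 4$ gives
\[ \Exp \bigl[ \bigl( n^{-3/2} A_n \bigr)^{p/2} \bigr] = n^{-3p/4} \Exp [ A_n^{p/2} ] = O(1) , \]
and since $p/2 > 2$, this uniformly bounded $(p/2)$-moment implies uniform integrability of $\bigl( n^{-3/2} A_n \bigr)^{2}$.

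Combining uniform integrability with the weak convergence (via Lemma \ref{convergence of means}) yields
\[ \lim_{n \to \infty} n^{-3} \Exp [ A_n^2 ] = \| \mu \|^2 \sperp \, \Exp [ \tilde a_1^2 ] . \]
On the other hand, Theorem \ref{prop:EA-drift} gives $n^{-3/2} \Exp A_n \to \| \mu \| (\sperp)^{1/2} \Exp \tilde a_1$, so squaring,
\[ \lim_{n \to \infty} \bigl( n^{-3/2} \Exp A_n \bigr)^{2} = \| \mu \|^2 \sperp \, (\Exp \tilde a_1 )^{2} . \]
Subtracting these two limits,
\[ \lim_{n \to \infty} n^{-3} \Var A_n = \| \mu \|^2 \sperp \bigl( \Exp [\tilde a_1^2] - ( \Exp \tilde a_1 )^2 \bigr) = \| \mu \|^2 \sperp \Var \tilde a_1 , \]
so the result holds with $v_+ := \Var \tilde a_1$. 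There is no serious obstacle here; the only delicate point is verifying the moment bound, which is precisely why the hypothesis is strengthened to $p > 4$ (compared with $p > 2$ in Proposition \ref{prop:var-limit-zero u0}), since the drift case requires control of $A_n^2$ with the heavier $n^{3/2}$ scaling of Proposition \ref{lem:A_moments}(i) rather than the $n$ scaling of part (ii).
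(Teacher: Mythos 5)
Your proposal is correct and follows essentially the same route as the paper: uniform integrability of $n^{-3}A_n^2$ via the moment bound of Proposition \ref{lem:A_moments}(i) applied with exponent $p/2$, combined with the weak convergence of Corollary \ref{cor:A-limit-drift} and the mean asymptotics of Theorem \ref{prop:EA-drift}, identifying $v_+ = \Var \tilde a_1$. Your explicit treatment of the degenerate case $\sperp = 0$ is a small tidy addition that the paper's terser proof leaves implicit.
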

\begin{proof}
Given  $\Exp [ \|Z_1\|^p ] < \infty$ for some $p >4$, Lemma \ref{lem:A_moments}(i) shows that
$\Exp [ A_n^{2(p/4)} ] = O ( n^{3p/4} )$, so that 
$\Exp [ ( n^{-3} A^2_n )^{p/4} ]$
is uniformly bounded. Hence $ n^{-3} A_n^2$ is uniformly integrable,
so Corollary \ref{cor:A-limit-drift} yields the result.
\end{proof}

\section{Variance bounds}
 
\begin{proposition}
\label{prop:var_bounds v0 v+}
We have $u_0 (\Sigma) =0$ if and only if $\trace \Sigma =0$.
The following inequalities for the quantities defined at \eqref{eq:var_constants} hold.
\begin{alignat}{1}
0 < \frac{4}{49} \left( \re^{- 7\pi^2 / 12} 
- 
\frac{1}{3} \re^{-21 \pi^2 / 4}
\right)^2  &{} \leq  v_0   \leq  16 (\log 2)^2 - \frac{\pi^2}{4}; \label{eq:v0-bounds} \\
0 < \frac{2}{225} \left( 
\re^{-25 \pi/9}
-\frac{1}{3} 
\re^{-25 \pi}
\right) &{} \leq  v_+   \leq 4 \log 2 - \frac{2 \pi}{9}. \label{eq:v1-bounds}
\end{alignat}
\end{proposition}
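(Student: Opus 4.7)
The plan is to identify $v_0$ and $v_+$ as variances of Brownian convex-hull functionals and then reduce both sides of each inequality to classical estimates on one-dimensional Brownian motion. By Propositions~\ref{prop:var-limit-zero} and~\ref{prop:var-limit-drift v+} together with Corollaries~\ref{cor:zero-limits} and~\ref{cor:A-limit-drift}, I have $v_0 = \Var a_1$ and $v_+ = \Var \tilde a_1$. The claim $u_0(\Sigma) = 0 \iff \trace \Sigma = 0$ is immediate: the $(\Leftarrow)$ direction holds because $\Sigma$ is positive semidefinite, so $\trace \Sigma = 0$ forces $\Sigma = 0$, whence $\cL(\Sigma^{1/2} h_1) = 0$ a.s.; the $(\Rightarrow)$ direction is an instance of the strictly positive lower bound in Proposition~\ref{prop:var_bounds u0}.

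For the upper bounds I would enclose each convex hull in a simple set and exploit independence. Writing $b = (b_1, b_2)$ with independent coordinates, $h_1$ sits inside its axis-aligned bounding rectangle, so $a_1 \leq R_1 R_2$, where $R_i$ is the range of $b_i$ on $[0,1]$; independence gives $\Exp a_1^2 \leq (\Exp R^2)^2$, and the classical identity $\Exp R^2 = 4 \log 2$ for the range of standard one-dimensional Brownian motion on $[0,1]$ yields $\Exp a_1^2 \leq 16(\log 2)^2$. Subtracting $(\Exp a_1)^2 = \pi^2/4$ (from Theorem~\ref{prop:EA-zero} via the scaling $\cA(\Sigma^{1/2} h_1) = \sqrt{\det \Sigma}\, a_1$) gives the upper bound in~\eqref{eq:v0-bounds}. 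For $\tilde a_1$, the hull $\tilde h_1$ lies in the unit-width strip $[0,1] \times [\inf w, \sup w]$, so $\tilde a_1 \leq R$, giving $\Exp \tilde a_1^2 \leq 4 \log 2$; subtracting $(\Exp \tilde a_1)^2 = 2\pi/9$ (Theorem~\ref{prop:EA-drift}) gives~\eqref{eq:v1-bounds}.

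For the lower bounds I would adapt the small-ball argument from the proof of Proposition~\ref{prop:var_bounds u0}, based on the elementary inequality $\Var X \geq \alpha^2 (\Exp X)^2 \Pr[X \leq (1-\alpha)\Exp X]$ for non-negative $X$ and $\alpha > 0$. Confinement of the Brownian path keeps the hull small: if $\sup_s |b_i(s)| \leq r_0$ for both $i$, then $h_1$ lies inside a disc of radius $\sqrt{2}\, r_0$ and so $a_1 \leq 2\pi r_0^2$; if $\sup_s |w(s)| \leq r_0$ then $\tilde a_1 \leq 2 r_0$. Choosing $\alpha = 1/7$ with $r_0^2 = 3/14$ in the first case, and $\alpha = 1/10$ with $r_0^2 = 9\pi/200$ in the second, makes each enclosing-set bound equal to $(1-\alpha)\Exp X$ exactly. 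Bounding the small-ball probability via independence in the first case as $\Pr[\sup |b_1| \leq r_0]^2$, and directly in the second, and then applying~\eqref{eq:brown_norm}, the exponents $\pi^2/(8 r_0^2) = 7\pi^2/12$ and $9\pi^2/(8 r_0^2) = 21\pi^2/4$ fall out in the first case (respectively $25\pi/9$ and $25\pi$ in the second); collecting the prefactors $\tfrac{1}{49} \cdot \tfrac{\pi^2}{4} \cdot (4/\pi)^2 = \tfrac{4}{49}$ and $\tfrac{1}{100} \cdot \tfrac{2\pi}{9} \cdot \tfrac{4}{\pi} = \tfrac{2}{225}$ reproduces the stated lower bounds.

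The main obstacle is the range identity $\Exp R^2 = 4 \log 2$ on which the upper bounds rest. This is classical (e.g.\ Feller; or via L\'evy's joint density for $(\sup_{[0,1]} w, \inf_{[0,1]} w)$ followed by an explicit double integral), but will need to be either cited or rederived. Everything else reduces to elementary convex-geometric containments, independence of Brownian coordinates, and the small-ball estimate~\eqref{eq:brown_norm} already used earlier.
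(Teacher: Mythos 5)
Your proposal is correct and follows essentially the same route as the paper: the same rectangle/strip containments with Feller's $\Exp[r_1^2]=4\log 2$ for the upper bounds, and the same small-ball argument via \eqref{eq:var-bound} and \eqref{eq:brown_norm} with $\alpha=1/7$ and $\alpha=1/10$ for the lower bounds (your disc containment is just the paper's inequality $a_1\leq\pi\sup\|b\|^2\leq\pi\sup|b_1|^2+\pi\sup|b_2|^2$ phrased geometrically). The only cosmetic difference is that you derive $\Exp a_1=\pi/2$ from Theorem \ref{prop:EA-zero} where the paper cites El Bachir, and you spell out the $u_0(\Sigma)=0\iff\trace\Sigma=0$ equivalence, which the paper leaves implicit.
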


\begin{proof}
Bounding $\tilde a_1$ by the area of a rectangle, we have
\begin{equation}
\label{eq:a1-upper}
\tilde a_1 \leq r_1 \leq 2 \sup_{0 \leq s \leq 1} | w (s) |, \as , \end{equation}
where $r_1 := \sup_{0 \leq s \leq 1} w (s) - \inf_{0 \leq s \leq 1} w (s)$. 
A result of Feller \cite{feller} states that $\Exp [ r_1^2 ] = 4 \log 2$.
So by the first inequality in \eqref{eq:a1-upper}, we have $\Exp [\tilde a_1^2] \leq 4 \log 2$, 
 and by Theorem \ref{prop:EA-drift}
we have $\Exp \tilde a_1 = \frac{1}{3} \sqrt{ 2 \pi}$; the upper bound in \eqref{eq:v1-bounds} follows.

Similarly, for any orthonormal basis $\{ e_1, e_2\}$ of $\R^2$, we bound $a_1$ by a rectangle
\[ a_1 \leq \left( \sup_{0 \leq s \leq 1} e_1 \cdot b(s) - \inf_{0 \leq s \leq 1 } e_1 \cdot b(s) \right) 
   \left( \sup_{0 \leq s \leq 1} e_2 \cdot b(s) - \inf_{0 \leq s \leq 1 } e_2 \cdot b(s) \right) ,\]
and the two (orthogonal) components are independent, so $\Exp [ a_1^2 ] \leq ( \Exp [ r_1^2 ] )^2 = 16 (\log 2)^2$,
which with the fact that $\Exp a_1 = \frac{\pi}{2}$ gives the upper bound in \eqref{eq:v0-bounds}.

We now move on to the lower bounds. Tractable upper bounds for $a_1$ and $\tilde a_1$ are easier to come by than
lower bounds, and thus we obtain a lower bound on the variance by showing
the appropriate area has positive probability of being smaller than the corresponding mean.

Consider $a_1$; note $\Exp a_1 = \pi/2$ \cite{elbachir}. Since, for any orthonormal basis $\{e_1, e_2\}$ of $\R^2$,
\[ a_1 \leq \pi \sup_{0 \leq s \leq 1} \| b (s) \|^2 \leq \pi \sup_{0 \leq s \leq 1 } |  e_1 \cdot b(s) |^2 +  \pi \sup_{0 \leq s \leq 1 } |  e_2\cdot b(s) |^2 ,\]
 using the fact that $e_1 \cdot b$ and $e_2 \cdot b$ are independent one-dimensional Brownian motions,
\[ \Pr [ a_1 \leq r ] \geq \Pr \left[   \sup_{0 \leq s \leq 1 } | w (s)   |^2 \leq \frac{r}{2\pi}  \right]^2  , ~ \text{for} ~ r >0 .\]
We apply \eqref{eq:var-bound} with $X = a_1$ and $\alpha \in (0,1)$, and set $r = (1-\alpha) \frac{\pi}{2}$ to obtain
\begin{align*}
\Var\, a_1 & \geq \alpha^2 \frac{\pi^2}{4}  \Pr \left[   \sup_{0 \leq s \leq 1 } |  w (s) | \leq \frac{\sqrt{1-\alpha}}{2}  \right]^2 \\
& \geq 4 \alpha^2 \left(  \exp \left\{-\frac{\pi^2}{2(1-\alpha) } \right\} 
- \frac{1}{3}  \exp \left\{-\frac{9 \pi^2}{2(1- \alpha) } \right\}  \right)^2 ,\end{align*}
by \eqref{eq:brown_norm}.
Taking $\alpha = 1/7$ is close to optimal, and gives the lower bound in \eqref{eq:v0-bounds}.

For $\tilde a_1$, we apply \eqref{eq:var-bound} with $X = \tilde a_1$ and $\alpha \in (0,1)$.
Using the fact
that $\Exp \tilde a_1 = \frac{1}{3} \sqrt{2 \pi}$ (from Theorem \ref{prop:EA-drift}) and the weaker of the two bounds in \eqref{eq:a1-upper},
we obtain
\begin{align*}
\Var\, \tilde a_1 & \geq \alpha^2 \frac{2 \pi}{9} \Pr \left[ \sup_{0 \leq s \leq 1} |  w (s) | \leq \frac{ (1-\alpha) \sqrt{2 \pi} }{6} \right]  \\
& \geq  \frac{8}{9} \alpha^2  \left( 
\exp \left\{-\frac{9\pi}{4 (1-\alpha)^2 } \right\} 
- \frac{1}{3}  \exp \left\{-\frac{81\pi}{4 (1-\alpha)^2 } \right\}   \right) ,\end{align*}
by \eqref{eq:brown_norm}.
Taking $\alpha = 1/10$ is close to optimal, and gives the lower bound in \eqref{eq:v1-bounds}.
\end{proof}

\begin{remark}
The main interest of the lower bounds in Proposition \ref{prop:var_bounds v0 v+} is that they are \emph{positive};
they are certainly not sharp. The bounds can surely be improved. We note just the following idea.
A lower bound for $\tilde a_1$ can be obtained by conditioning on $\theta := \sup \{ s \in [0,1] : w(s) =0\}$
and using the fact that the maximum of $w$ up to time $\theta$ is distributed as the maximum of a scaled Brownian bridge;
combining this with the previous argument improves the lower bound on $v_+$ to $2.09 \times 10^{-6}$.\\
\end{remark}

\pagestyle{myheadings} \markright{\sc Chapter 7}

\chapter{Conclusions and open problems}
\label{chapter7}

\section{Summary of the limit theorems}

We summarize in general the asymptotic behaviour of the expectation and variance of $L_n$ and $A_n$ as the following table.

\begin{table}[!h]
\center
\def\arraystretch{1.4}
\begin{tabular}{cc|ccc}
 & &  limit exists for $\Exp$ & limit exists for $\Var$ & limit law \\
\hline
\multirow{2}{*}{ $\mu = 0$ } 
 &  $L_n$ & $n^{-1/2} \Exp L_n$$^\mathsection$  & $n^{-1} \Var L_n$  & non-Gaussian  \\
  & $A_n$ & $n^{-1} \Exp A_n$$^{\mathparagraph}$  & $n^{-2} \Var A_n$  & non-Gaussian  \\
\hline 
 \multirow{2}{*}{ $\mu \neq 0$ }  
 &  $L_n$ & $n^{-1} \Exp L_n$$^\mathsection$$^\dagger$  & $n^{-1} \Var L_n$$^\ddagger$ &  Gaussian$^\ddagger$ \\
  & $A_n$ & $n^{-3/2} \Exp A_n$ & $n^{-3} \Var A_n $ & non-Gaussian
\end{tabular}
\caption{Results originate from: $\mathsection$\!\cite{sw}; $\dagger$\!\cite{ss}; $\ddagger$\!\cite{wx};  $\mathparagraph$\!\cite{bnb} (in part);
the rest are new.
The limit laws exclude degenerate cases when associated variances vanish.}
\label{table1}
\end{table}

Table \ref{table x3} collets the lower and upper bounds and simulation estimates for the constants defined at equation \eqref{eq:var_constants} and equation \eqref{eq:three_vars}.

\begin{table}[!h]
\center
\def\arraystretch{1.4}
\begin{tabular}{c|ccc}
        &   lower bound   & simulation estimate  & upper bound \\
\hline
  $u_0 ( I)$ &  $2.65 \times 10^{-3}$  &  1.08   &  9.87   \\
  $v_0$      &  $8.15  \times 10^{-7}$ &  0.30   &  5.22   \\
  $v_+$      &  $1.44  \times 10^{-6}$ &  0.019  &  2.08   
	\end{tabular}
\caption{Each of the simulation estimates is
 based on $10^5$ instances of a walk of length $n = 10^5$. The final decimal digit in each of the numerical upper (lower)
bounds has been rounded up (down).}
\label{table x3}
\end{table}

Claussen et al. \cite{claussen} give some numerical estimations that $\Var\, l_1 \approx 1.075$ and $\Var\, a_1 \approx 0.31$, which is a good agreement with our limit estimations $1.08$ and $0.30$.

\section{Exact evaluation of limiting variances}
\label{sec:exact evaluation of limiting variances}

It would, of course, be of interest to evaluate any of $u_0$, $v_0$, or $v_+$ exactly. 
In general this looks hard. The paper \cite{rs} provides a key component to a possible approach to evaluating $u_0$. 
By Cauchy's formula and Fubini's theorem, 
\[ \Exp [ \ell_1^2 ] = \int_{\Sp_1} \int_{\Sp_1} 
\Exp \left[ \left( \sup_{0 \leq s \leq 1} ( e_1 \cdot b (s)   ) \right)
\left( \sup_{0 \leq t \leq 1} ( e_2 \cdot b (t)   ) \right) \right]
\ud e_1 \ud e_2 .\]
Here, the two standard one-dimensional Brownian motions $e_1 \cdot b$ and $e_2 \cdot b$
have correlation determined by the cosine of the angle $\phi$ between them, i.e.,
\[ \Exp \left[  ( e_1 \cdot b (s)   )  ( e_2 \cdot b (t)   )  \right]
= ( s \wedge t ) \, e_1 \cdot e_2
= ( s \wedge t ) \cos   \phi   .\]
The result of Rogers and Shepp \cite{rs} then shows that
\[ \Exp \left[ \left( \sup_{0 \leq s \leq 1} ( e_1 \cdot b (s)   ) \right)
\left( \sup_{0 \leq t \leq 1} ( e_2 \cdot b (t)   ) \right) \right]
= c ( \cos   \phi  ) ,\]
where the function $c$ is given explicitly in \cite{rs}.
Using this result, we obtain
\[ \Exp [ \ell_1^2 ] = 4 \pi \int_{-\pi/2}^{\pi/2} c ( \sin \theta ) \ud \theta 
= 4 \pi \int_{-\pi/2}^{\pi/2}  \ud \theta \int_0^\infty \ud u  \cos \theta
\frac{ \cosh (u \theta  )}
{ \sinh ( u \pi /2 ) } \tanh \left( \frac{ (2 \theta + \pi) u }{4} \right)  .
\]
We have not been able to deal with this integral analytically, but numerical
integration gives $\Exp [ \ell_1^2 ] \approx 26.1677$, which with the fact that $\Exp \ell_1 = \sqrt{ 8 \pi}$
gives
$u_0(I) = \Var \ell_1 \approx 1.0350$,   in reasonable agreement with the
simulation estimate in Table~\ref{table2}.
 
Another possible approach to evaluating $u_0$ is suggested by a remarkable computation of Goldman \cite{goldman} for the analogue of $u_0(I)= \Var \ell_1$ for the planar \emph{Brownian bridge}. Specifically, if
$b'_t$ is the standard Brownian bridge in $\R^2$ with $b'_0
= b'_1 = 0$, and $\ell'_1 = \cL ( \hull b' [0,1] )$ the perimeter length of its convex hull, 
 \cite[Th\'eor\`eme 7]{goldman} states that 
\[ \Var   \ell'_1 {} = {} 
\frac{\pi^2}{6} \left( 2 \pi \int_0^\pi \frac{ \sin \theta}{\theta} \ud \theta -   2 - 3 \pi \right) \approx 0.34755 . \]

\section{Open problems}

\subsection{Degenerate case for $L_n$ when $\mu \neq 0$ and $\sigma_\mu^2 =0$}
\label{sec:degenerate case}

Recall Remark \ref{rmk:degenerate}(iii) for Theorem \ref{thm1}. For example, consider
$$ Z_1 = \begin{cases}\phantom{2} 
(1,1), & \text{with probability } 1/2; \\
(1,-1), & \text{with probability } 1/2. 
\end{cases} $$

Then the $\spara$ in Theorem \ref{thm1} is zero and our results on the second-order properties 
of $L_n$ in Chapter \ref{chapter5} can not be applied in this degenerate case.
See Figure \ref{fig:degenerate} for an example of random walk in this case.

\begin{figure}[h!]
  \centering
	\includegraphics[width=\textwidth]{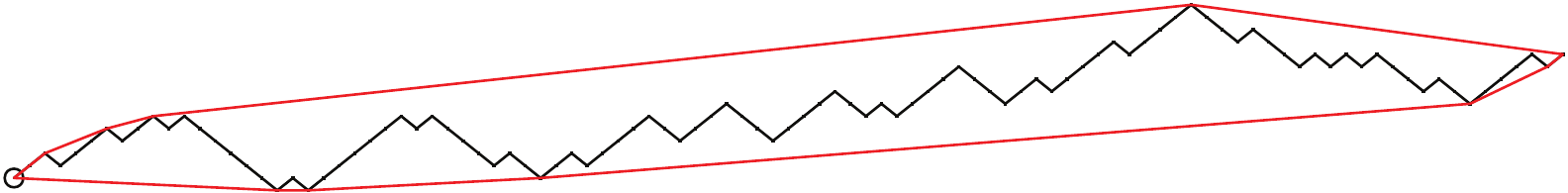} 
  \caption{Example of the degenerate case with $n=100$.} \label{fig:degenerate}
\end{figure}

For this example, we conjecture $\frac{\Var L_n}{\log n} \to \text{constant}$, based on some simulations. See Figure \ref{fig:deg sim1} below.

\begin{figure}[h!]
  \centering
	\includegraphics[width=0.6\textwidth]{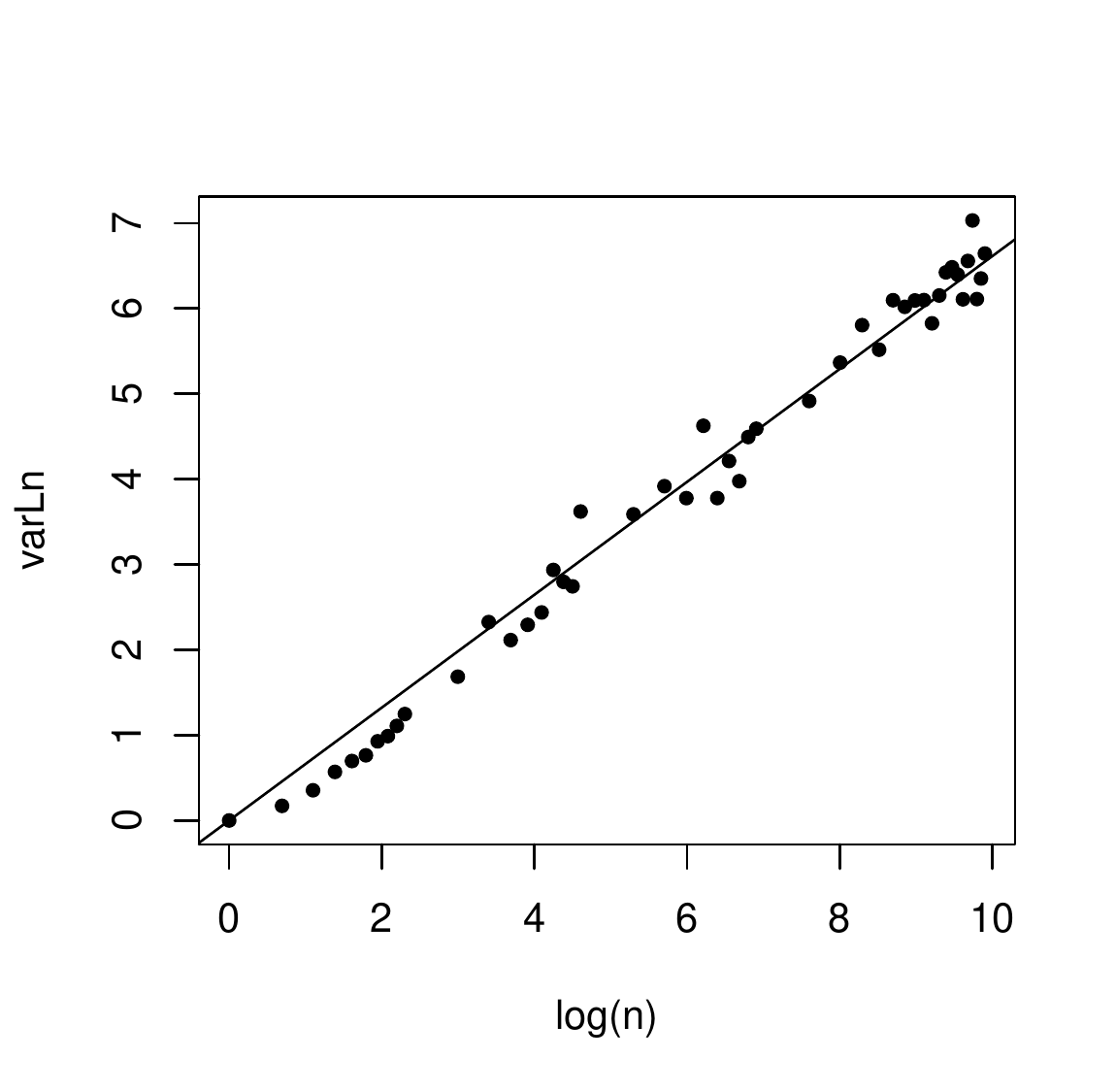} 
  \caption{Simulation for the degenerate case $\Var L_n =0.6612 \log(n)$.} \label{fig:deg sim1}
\end{figure}

A second open question is whether in this case $\frac{L_n - \Exp L_n}{\sqrt{\Var L_n}}$ has a distributional limit. If so, is that limit normal? 
We conjecture that there is a limit, but it is not normal (see Figure \ref{fig:deg normal}).

\begin{figure}[h!]
  \centering
	\includegraphics[width=0.49\textwidth]{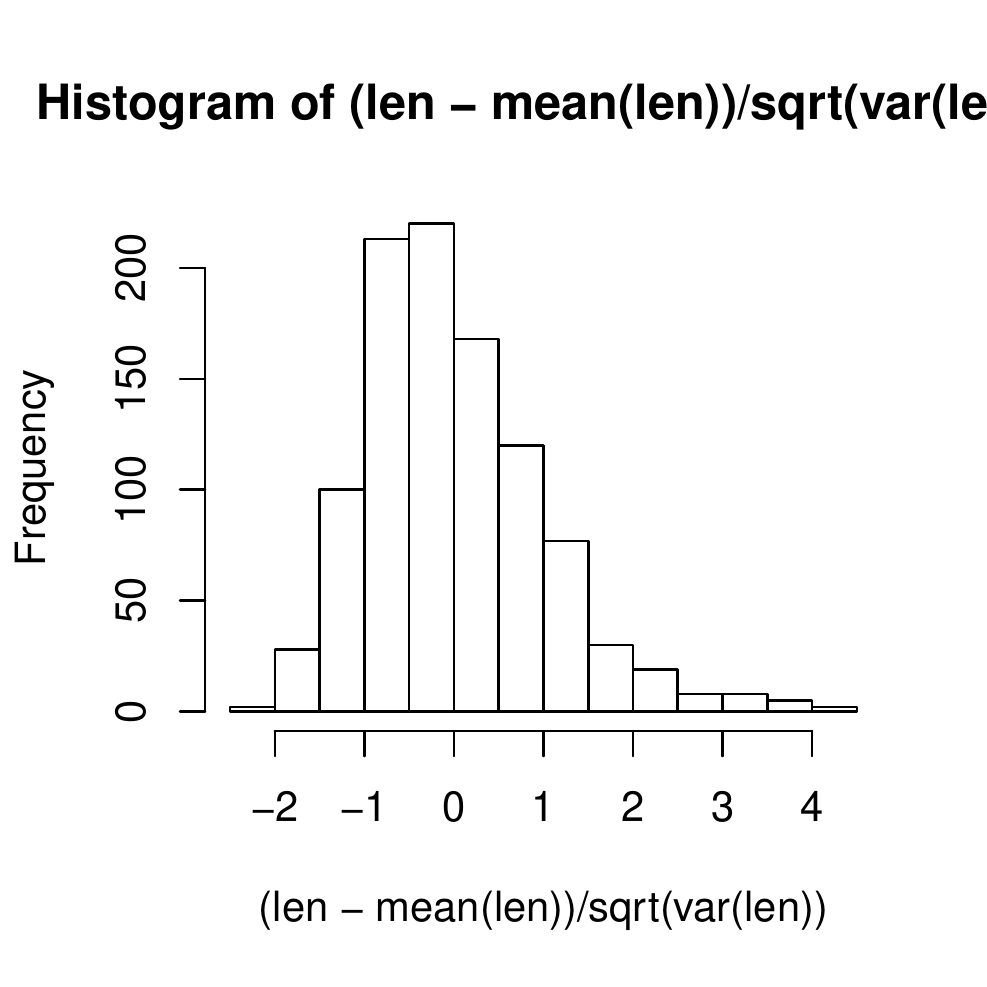} 
	\includegraphics[width=0.49\textwidth]{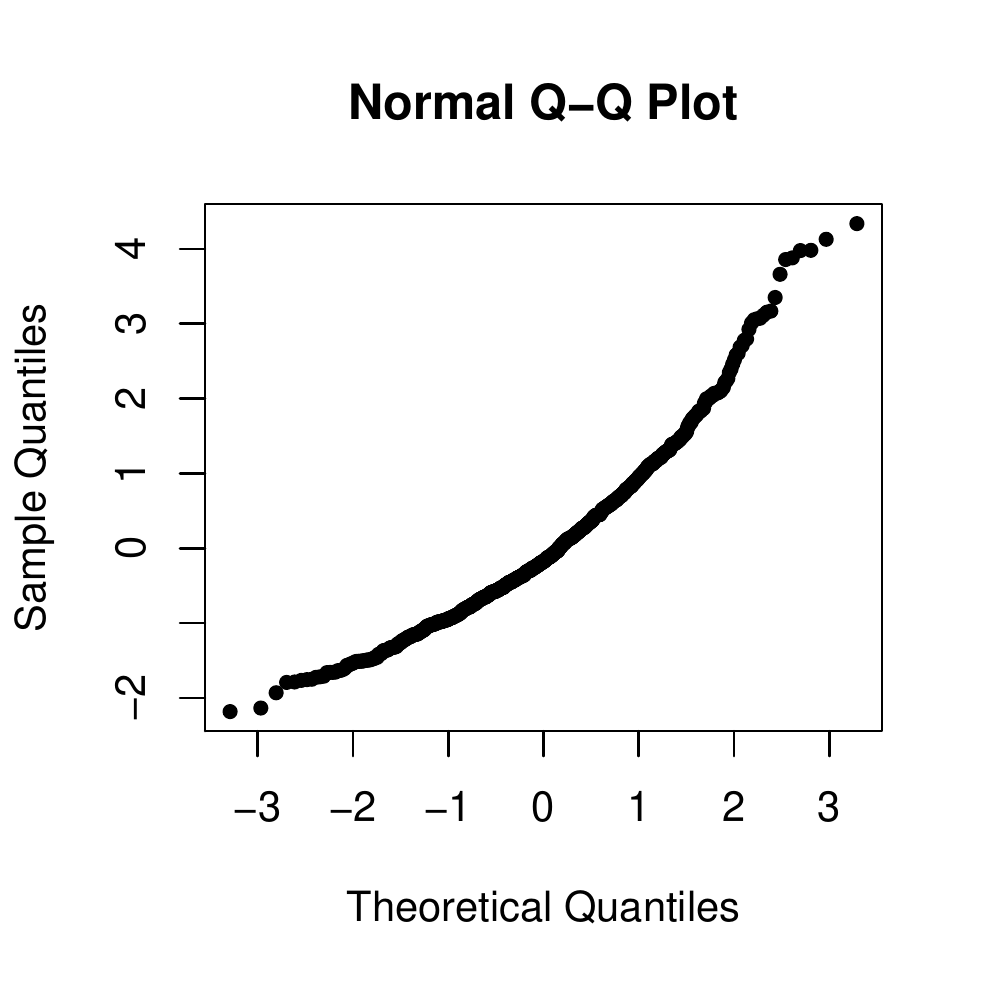} 
  \caption{Simulations for the degenerate case.} \label{fig:deg normal}
\end{figure}

\subsection{Heavy-tailed increments}

All main results from previous chapters are based on the assumption \ref{ass:moments} for $p=2$, that the second moments of increments are finite. 
But what happens in the heavy-tail problems, in which $\Exp( \|Z_1\|^2) = \infty$? We give two simulation examples.

\subsection{Centre-of-mass process}

We can associate to a random walk trajectory $S_0, S_1, S_2, \ldots$ its \emph{centre-of-mass}
process $G_0, G_1, G_2, \ldots$ defined by $G_0 := S_0 = 0$ and for $n \geq 1$ by
$G_n = \frac{1}{n} \sum_{k=1}^n S_k$.
By convexity, the convex hull of $\{ G_0, G_1, \ldots, G_n \}$ is contained in the convex hull of $\{ S_0, S_1, \ldots, S_n\}$.
What can one say about its perimeter length or area?
Note that one may express $G_n$ as a weighted sum of the increments of the walk as 
\[ G_n = \sum_{k=1}^n \left( \frac{n-k+1}{n} \right) Z_k .\]
Then, for example, we expect that the method of Section~\ref{sec:CLT for drift}
carries through to this case; this is one direction for future work.

\subsection{Higher dimensions}

Most of the analysis of $L_n$ in this thesis is restricted to $d=2$ because we rely on the Cauchy formula for planar convex sets. In higher dimensions,
the analogues of $L_n$ and $A_n$ are the \emph{intrinsic volumes} of the convex body. Analogues of Cauchy's formula are available,
but these seem more difficult to use as the basis for analysis.

However, the scaling limit theories in Chapter~\ref{chapter3} may have some relatively straightforward corollaries in higher dimensions. 
So, some analogous results for $A_n$ in Chapter~\ref{chapter6} may not be so difficult to figure out.


\end{document}